\newcommand{\ee}{{\mathbf{e}}}
\newcommand{\Z}{{\mathbb{Z}}}
\newcommand{\Q}{{\mathbb{Q}}}
\newcommand{\R}{{\mathbb{R}}}
\newcommand{\C}{{\mathbb{C}}}
\newcommand{\HH}{{\mathbb{H}}}
\newcommand{\Mform}{{\mathcal{M}}}
\newcommand{\Lform}{{\mathcal{L}}}
\newcommand{\LEigenform}{{\widetilde{\mathcal{L}}}}
\renewcommand{\phi}{\varphi}
\DeclareMathOperator{\re}{Re}
\DeclareMathOperator{\im}{Im}
\DeclareMathOperator*{\Res}{Res}
\DeclareMathOperator{\SL}{SL}
\DeclareMathOperator{\rad}{rad}
\newcommand{\ep}{\varepsilon}
\newcommand{\defeq}{\vcentcolon=}
\newcommand{\floor}[1]{\left\lfloor #1 \right\rfloor}
\newcommand{\ceil}[1]{\left\lceil #1 \right\rceil}
\renewcommand{\(}{\left(}
\renewcommand{\)}{\right)}
\newcommand{\Mod}[1]{\ (\mathrm{mod}\ #1)}
\DeclareMathOperator{\sgn}{sgn}
\newcommand{\mB}{{\mathcal{B}}}
\newcommand{\mS}{{\mathcal{S}}}
\newtheorem{theorem}{Theorem}[section]
\newtheorem{lemma}[theorem]{Lemma}
\newtheorem{corollary}[theorem]{Corollary}
\newtheorem{proposition}[theorem]{Proposition}
\newtheorem{conjecture}[theorem]{Conjecture}
\newtheorem{definition}[theorem]{Definition}
\theoremstyle{remark}
\newtheorem*{remark}{Remark}
\numberwithin{equation}{section}
\author{Danylo Radchenko}
\address{University of Lille, CNRS, Laboratoire Paul Painlev\'e, F-59655 Villeneuve d'Ascq, France}
\email{danradchenko@gmail.com}
\author{Qihang Sun}
\address{University of Lille, CNRS, Laboratoire Paul Painlev\'e, F-59655 Villeneuve d'Ascq, France}
\email{qihang.sun@univ-lille.fr}
\title[Fourier interpolation in dimensions 3 and 4]{Fourier interpolation in dimensions 3 and 4\\ and real-variable Kloosterman sums}
\date{\today}
\keywords{Fourier interpolation, Maass--Poincar\'e series, Kloosterman sums, nonholomorphic modular forms, modular integrals}
\subjclass[2020]{Primary 42B99, 11F03, 11F37, 11L05}
\begin{document}

\begin{abstract}
    We give a construction of radial Fourier interpolation formulas in dimensions 3 and 4 using Maass--Poincar\'e type series. As a corollary we obtain explicit formulas for the basis functions of these interpolation formulas in terms of what we call real-variable Kloosterman sums, which were previously introduced by Stoller. We also improve the bounds on the corresponding basis functions $a_{n,d}(x)$, $d=3,4$, for fixed $x$, in terms of the index $n$.
\end{abstract}

\maketitle

\setcounter{tocdepth}{1}
\tableofcontents

\section{Introduction}
In~\cite[Theorem~1]{RadchenkoViazovska2019} it was proved that any even Schwartz function $f\colon\R\to\C$ is uniquely determined by the values $f(\sqrt{n})$, $\widehat{f}(\sqrt{n})$, $n\ge0$, where 
    \[\widehat{f}(\xi) = \int_{\R}f(x)e^{-2\pi i \xi x}dx\]
is the Fourier transform of $f$. More precisely, there is a linear interpolation formula
    \[
    f(x) = \sum_{n\ge0} a_n(x)f(\sqrt{n}) + \sum_{n\ge0}\widehat{a_n}(x)\widehat{f}(\sqrt{n}),
    \]
where $a_n(x)$, $\widehat{a_n}(x)$ are certain Schwartz functions, explicitly defined as contour integrals of weakly holomorphic modular forms of weight $3/2$. Similar interpolation formulas exist also for radial Schwartz functions in higher dimensions: from~\cite[Theorem~3.1]{BondarenkoRadchenkoSeip2022} it follows that there exist radial Schwartz functions $a_{d,n}$, $\widetilde{a}_{d,n}$ such that
    \begin{equation} \label{eq:radialmain}
    f(x) = \sum_{n\ge 0} a_{d,n}(x)f(\sqrt{n}) + \sum_{n\ge 0}\widetilde{a}_{d,n}(x)\widehat{f}(\sqrt{n}),
    \end{equation}
holds for all $f\in\mathcal{S}_{\rad}(\R^d)$, and we abuse notation, denoting $g(r)=g(r,0,\dots,0)$ for any radial function $g$ on $\R^d$ and $r\in \R$. Interpolation formulas of similar kind, with $\sqrt{n}$ replaced by $\sqrt{2n}$ and including the values of derivatives of $f$ and $\widehat{f}$, have played an important role in sphere packing and energy minimization problems~\cite{Viazovska}, \cite{CKMRV17}, \cite{CKMRV22}, where they were used in conjunction with linear programming techniques to prove optimality of the $E_8$~lattice and the Leech lattice. For more context and recent developments on the analytic aspects of similar interpolation formulas see \cite{Ramos-Sousa, Ramos-Sousa-2, Ramos-Stoller, Adve, Kulikov_Nazarov_Sodin_2025}.

The coefficients $a_{d,n}(x)$ and $\widetilde{a}_{d,n}(x)$\footnote{The choice of coefficients for which~\eqref{eq:radialmain} holds is not unique. For a fixed $x$, the space of all interpolation formulas of the type~\eqref{eq:radialmain} forms a coset for the $\C$-vector space of all linear relations between $f(\sqrt{n})$ and $\widehat{f}(\sqrt{n})$, $n\ge0$, which is finite-dimensional and isomorphic to $M_{d/2}(\Gamma(2),\nu_{\Theta}^d)$. In~\cite{BondarenkoRadchenkoSeip2022} a particular choice of $a_{d,n}(x)$, $\widetilde{a}_{d,n}(x)$ is given, uniquely specified by certain vanishing conditions. Henceforth, the notation $a_{d,n}(x)$, $\widetilde{a}_{d,n}(x)$ will refer to this particular choice (for a more precise statement, see Theorem~\ref{theoremBonRadSeip-2022}).} from~\eqref{eq:radialmain} can be represented as contour integrals involving weakly holomorphic modular forms of weight $2-d/2$. For instance, in dimension~1 we have
    \[a_{1,0}(x)=\frac{1}{4}\int_{-1}^{1}\Theta^3(z)e^{\pi i zx^2}dz,\]
where the integral is taken over a semicircle in the upper half plane, and $\Theta(z)=\sum_{n\in\Z}e^{\pi i n^2z}$ is the usual theta function. Although this construction is explicit, obtaining precise estimates for $a_{d,n}(x)$ as a function of $n$ is not easy. Having such estimates is of interest, for example, since they allow one to extend the class of functions $f$ for which~\eqref{eq:radialmain} holds. In~\cite{RadchenkoViazovska2019} it was proved that $a_{1,n}(x), \widetilde{a}_{1,n}(x)$ are bounded by $O(n^2)$, uniformly in $x\in\R$, which was used to show that~\eqref{eq:radialmain} holds if $f(x)$ and $\widehat{f}(x)$ are both $O(|x|^{-13})$ at infinity. In~\cite{BondarenkoRadchenkoSeip2022} the estimates were improved to 
    \begin{equation} \label{eq:heckebound}
    a_{d,n}(x),\widetilde{a}_{d,n}(x) = 
        \begin{cases}
        O_x(n^{d/4+o(1)}),\qquad d \le 4,\\
        O_x(n^{d/2-1}),\;\;\;\qquad d > 4,
        \end{cases}
    \end{equation}
which roughly correspond to the classical Hecke bound for the coefficients of cusp forms and estimates for the coefficients of Eisenstein series, respectively. 

Getting anything more precise than the bounds in~\eqref{eq:heckebound} appears to be challenging. This motivates the search for different formulas of the basis functions $a_{d,n}(x)$, $\widetilde{a}_{d,n}(x)$. In \cite{StollerInterpolation} Stoller proposed an approach based on generalized Poincar\'e series. His construction works for $d\ge 5$ and produces coefficients $a_{d,n}^{\mathrm{Sto}}(x)$ and $\widetilde{a}_{d,n}^{\mathrm{Sto}}(x)$ that are easier to analyze for $n\to\infty$, but they end up not being Schwartz functions of~$x$, and in general they also differ from the basis functions in~\eqref{eq:radialmain}. 

The main goal of this paper is to give a construction of interpolation formulas of type~\eqref{eq:radialmain} in dimensions $d=3$ and $d=4$ based on (generalized) Maass--Poincar\'e series, and to explicitly relate these to the formulas from~\cite{BondarenkoRadchenkoSeip2022}. As one of the applications we obtain an explicit infinite sum representation for the functions $a_{d,n}$ and $\widetilde{a}_{d,n}$, in dimensions $d=3,4$, involving what we call \emph{real-variable Kloosterman sums}, as well as Hurwitz class numbers (for $d=3$). As another application, we are able to go beyond the bounds~\eqref{eq:heckebound} for $d=3,4$.

As is explained in~\cite[\S6]{RadchenkoViazovska2019}, the interpolation formula~\eqref{eq:radialmain} is equivalent to a functional equation for the generating functions of the sequences $a_{d,n}(x)$, $\widetilde{a}_{d,n}(x)$, $n\ge0$.
Our starting point is the following precise version of this claim.

\begin{theorem}[{Corollary of \cite[Theorem~3.1]{BondarenkoRadchenkoSeip2022}}]
	\label{theoremBonRadSeip-2022}
	Let $\HH$ be the complex upper-half plane. 

    \begin{itemize}
        \item[(1)]For any $r\ge0$ there exists a unique pair of $2$-periodic analytic functions 
        \[\mathcal{F}_2(\,\cdot\,;r), \widetilde{\mathcal{F}_2}(\,\cdot\,;r)\colon\HH\rightarrow \C\]
        of moderate growth with Fourier expansions of the form 
	\[\mathcal{F}_{2}(\tau;r)=\sum_{n\geq 1} a_{4,n}(r)e^{\pi i n \tau},\quad \widetilde{\mathcal{F}}_{2}(\tau;r)=\sum_{n\geq 1} \widetilde{a}_{4,n}(r)e^{\pi i n \tau},\quad \tau\in \HH,\]
	such that
	\[\mathcal{F}_2(\tau;r)+(\tau/i)^{-2} \mathcal{F}_2(-1/\tau;r)=
    e^{\pi i r^2 \tau}. \]
    \item[(2)]For any $r\ge0$ there exists a unique pair of $2$-periodic analytic function \[\mathcal{F}_{3/2}(\,\cdot\,;r), \widetilde{\mathcal{F}}_{3/2}(\,\cdot\,;r):\HH\rightarrow \C\]
    of moderate growth with Fourier expansions of the form 
	\[\mathcal{F}_{3/2}(\tau;r)=\sum_{n\geq 0} a_{3,n}(r)e^{\pi i n \tau},\quad \widetilde{\mathcal{F}}_{3/2}(\tau;r)=\sum_{n\geq 0} \widetilde{a}_{3,n}(r)e^{\pi i n \tau},\quad \tau\in \HH,\]
	and $a_{3,0}(r) = \widetilde{a}_{3,0}(r)$, such that
	\[\mathcal{F}_{3/2}(\tau;r)
    +(\tau/i)^{-3/2} \widetilde{\mathcal{F}}_{3/2}(-1/\tau;r)
    =e^{\pi i r^2 \tau}.\]
    \end{itemize}
\end{theorem}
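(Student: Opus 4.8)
The plan is to read off both functional equations from the interpolation formula~\eqref{eq:radialmain} by applying it to a Gaussian, following the dictionary between interpolation formulas and functional equations in~\cite[\S6]{RadchenkoViazovska2019}. For $\tau\in\HH$ put $g_\tau(x)=e^{\pi i\tau|x|^2}$; since $\im\tau>0$ this lies in $\mathcal{S}_{\rad}(\R^d)$, with Fourier transform $\widehat{g_\tau}=(\tau/i)^{-d/2}g_{-1/\tau}$. Fixing $r\ge0$ and taking $f=g_\tau$ in~\eqref{eq:radialmain}, the sampling values are $g_\tau(\sqrt n)=e^{\pi i n\tau}$ and $\widehat{g_\tau}(\sqrt n)=(\tau/i)^{-d/2}e^{-\pi i n/\tau}$, so~\eqref{eq:radialmain} becomes
\[
e^{\pi i r^2\tau}=\sum_{n\ge0}a_{d,n}(r)e^{\pi i n\tau}+(\tau/i)^{-d/2}\sum_{n\ge0}\widetilde a_{d,n}(r)e^{-\pi i n/\tau}.
\]
Writing $\mathcal{F}(\tau;r)=\sum_n a_{d,n}(r)e^{\pi i n\tau}$ and $\widetilde{\mathcal{F}}(\tau;r)=\sum_n\widetilde a_{d,n}(r)e^{\pi i n\tau}$, this is exactly $\mathcal{F}(\tau;r)+(\tau/i)^{-d/2}\widetilde{\mathcal{F}}(-1/\tau;r)=e^{\pi i r^2\tau}$. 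For $d=3$ this is the assertion of part~(2); for $d=4$, part~(1) is stated with $\mathcal{F}_2$ in both slots, reflecting the Fourier self-duality $\widetilde a_{4,n}=a_{4,n}$ of the dimension-$4$ basis, and with this identification the derivation yields precisely the stated equation. This establishes existence.

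Next I would verify the regularity claims. By the Hecke-type bounds~\eqref{eq:heckebound} one has $a_{d,n}(r),\widetilde a_{d,n}(r)=O_r(n^{d/4+o(1)})$ for $d\le4$, so for fixed $r$ and $\im\tau>0$ the exponential decay of $e^{\pi i n\tau}$ makes both series converge absolutely and locally uniformly on $\HH$. Hence $\mathcal{F}(\,\cdot\,;r)$ and $\widetilde{\mathcal{F}}(\,\cdot\,;r)$ are analytic, $2$-periodic, and have Fourier expansions of the stated shape; moderate growth as $\im\tau\to0$ follows from solving the functional equation for $\mathcal{F}$, whose right-hand side $e^{\pi i r^2\tau}$ is bounded while $(\tau/i)^{-d/2}\widetilde{\mathcal{F}}(-1/\tau;r)$ grows only polynomially. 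The normalization $a_{3,0}(r)=\widetilde a_{3,0}(r)$ is a property of the Bondarenko--Radchenko--Seip basis, inherited from its defining vanishing conditions.

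The substance of the statement, and the step I expect to be the main obstacle, is uniqueness. If two solutions exist, their differences $G=\mathcal{F}-\mathcal{F}'$ and $\widetilde G=\widetilde{\mathcal{F}}-\widetilde{\mathcal{F}}'$ are $2$-periodic, analytic, of moderate growth, and solve the \emph{homogeneous} equation $G(\tau)+(\tau/i)^{-d/2}\widetilde G(-1/\tau)=0$, with no constant term when $d=4$ and with equal constant terms $\gamma_0=\widetilde\gamma_0$ when $d=3$. The homogeneous equation reads $\widetilde G=-G|_S$ in weight $d/2$; combining this with $2$-periodicity one checks that $G$ is invariant under both generators $T^2$ and $ST^{-2}S^{-1}$ of $\Gamma(2)$, so that $G$ is a holomorphic modular form in $M_{d/2}(\Gamma(2),\nu_\Theta^d)$ (moderate growth forbidding poles at the cusps), consistent with the isomorphism recorded in the footnote to~\eqref{eq:radialmain}; moreover $\widetilde G=-G|_S$, so the constant terms of $G$ and $\widetilde G$ are those of this single form at the cusps $\infty$ and $0$. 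It then remains to see that the Fourier constraints annihilate this finite-dimensional space. For $d=4$, $\Gamma(2)$ has genus $0$, so there is no nonzero holomorphic weight-$2$ form with vanishing constant terms at all cusps (it would be a holomorphic differential on $X(2)\cong\mathbb{P}^1$); vanishing at $\infty$ and $0$ together with the weight-$2$ residue relation forces vanishing at the third cusp as well, so $G=\widetilde G\equiv0$. For $d=3$ the space $M_{3/2}(\Gamma(2),\nu_\Theta^3)$ is one-dimensional, spanned by $\Theta^3$, which is $S$-invariant ($\Theta^3|_S=\Theta^3$); writing $G=c\,\Theta^3$ gives $\widetilde G=-c\,\Theta^3$, hence $\gamma_0=c$ and $\widetilde\gamma_0=-c$, and the normalization $\gamma_0=\widetilde\gamma_0$ forces $c=0$, again $G=\widetilde G\equiv0$. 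The delicate bookkeeping---matching the two series to cusp expansions on $\Gamma(2)$, tracking the half-integral multiplier $\nu_\Theta^3$ and the $S^2=-I$ sign in the $d=3$ case, and reading off the constant terms of $\Theta^3$---is where the care is needed; the rest is a direct transcription of~\eqref{eq:radialmain} through the Gaussian.
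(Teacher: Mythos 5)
The paper does not actually prove Theorem~\ref{theoremBonRadSeip-2022} --- it is imported from \cite[Theorem~3.1]{BondarenkoRadchenkoSeip2022} --- so your self-contained sketch is a legitimate alternative, and its architecture (read the functional equation off \eqref{eq:radialmain} via the Gaussian $e^{\pi i\tau|x|^2}$, then reduce uniqueness to the vanishing of a form in $M_{d/2}(\Gamma(2),\nu_\Theta^d)$ subject to the constant-term constraints) matches the mechanism the paper alludes to in the footnote to \eqref{eq:radialmain} and in Section~\ref{SectionMaassPoincareSeries} (triviality of holomorphic cocycles for $\Gamma(2)$, with the ambiguity being exactly $M_{d/2}(\Gamma(2),\nu_\Theta^d)$). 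Your uniqueness argument is the real content and it is sound: for $d=4$ the residue relation for weight-$2$ forms on the genus-zero curve $X(2)$ kills any form whose constant terms vanish at $\infty$ and $0$, and for $d=3$ the space is spanned by the $S$-invariant form $\Theta^3$, so $\gamma_0=c$, $\widetilde\gamma_0=-c$ and the normalization forces $c=0$. One point you should make explicit: polynomial growth of the coefficients at $\infty$ gives $G(\tau)=O(y^{-N})$ uniformly as $y\to0$, and it is this uniform bound that rules out a pole at the third cusp $1$ of $\Gamma(2)$, where you have no Fourier expansion a priori.

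Two corrections. First, your explanation of the repeated $\mathcal{F}_2$ in part~(1) is wrong: the dimension-$4$ basis is \emph{not} self-dual, i.e.\ $\widetilde a_{4,n}\neq a_{4,n}$ in general --- compare $B_{4,n}(0)$ with $\widetilde B_{4,n}(0)$ in Theorem~\ref{theoremMainDim4}(3), and Corollary~\ref{corollaryGconstructedD/2=FBRSD/2} identifies $a_{4,n}=b_{4,n}$ and $\widetilde a_{4,n}=\widetilde b_{4,n}$, which are visibly distinct. The second $\mathcal{F}_2$ in the displayed equation is simply a typo for $\widetilde{\mathcal{F}}_2$, as the functional equation for $G_{d/2}$ and $\widetilde G_{d/2}$ following \eqref{eqDefMathcalG} makes clear; your Gaussian computation produces the corrected equation, so nothing downstream breaks, but the ``self-duality'' justification should be deleted. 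Second, for existence you still owe the step that the normalization ($a_{4,0}=\widetilde a_{4,0}=0$, resp.\ $a_{3,0}=\widetilde a_{3,0}$) can actually be achieved: starting from an arbitrary solution of the functional equation one must add a suitable $G\in M_{d/2}(\Gamma(2),\nu_\Theta^d)$ to $\mathcal{F}$ and $-G|S$ to $\widetilde{\mathcal{F}}$ so as to hit the prescribed constant terms, and the surjectivity of the relevant constant-term map is exactly the dual of your uniqueness computation (onto $\C^2$ for $d=4$ since no nonzero weight-$2$ form vanishes at both $\infty$ and $0$; onto the difference $\gamma_0-\widetilde\gamma_0$ for $d=3$ since $\Theta^3$ shifts it by $2c$). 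Asserting that the normalization ``is inherited from the BRS vanishing conditions'' presupposes the cited result rather than proving it.
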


Here the condition of moderate growth simply means polynomial growth (in $n$) of the coefficients $a_{4,n}(r), \widetilde{a}_{4,n}(r)$ and $a_{3,n}(r), \widetilde{a}_{3,n}(r)$. The functions appearing in~\eqref{eq:radialmain} for $d=3,4$ are precisely the coefficients $a_{d,n}(r), \widetilde{a}_{d,n}(r)$ that are uniquely determined by the conditions of Theorem~\ref{theoremBonRadSeip-2022} (by convention we set $a_{4,0}=\widetilde{a}_{4,0}=0$ and $a_{3,0}=\widetilde{a}_{3,0}$).

In this paper we will construct the analytic functions $\mathcal{F}_{2}$ and $\mathcal{F}_{3/2}$ from Theorem~\ref{theoremBonRadSeip-2022} via Maass--Poincar\'e-type series. Specifically, we will construct a Maass--Poincar\'e-type series with a spectral parameter $s\in \C$, weight $k\in\{\frac 32,2\}$ and a real parameter $r\in \R$, and prove that its Fourier expansion converges for $s=\frac k2$. Moreover, we will show that these series can be adjusted to satisfy the conditions of Theorem~\ref{theoremBonRadSeip-2022}. This will give us an explicit identity for $a_{d,n}$, $d=3,4$, in terms of sums of real-variable Kloosterman sums. We now formulate these results in more detail.

In what follows we denote by $J_\alpha(x)$ the $J$-Bessel function \cite[(10.2.2)]{dlmf}. In dimension $d=4$, we have the following result.

\begin{theorem}\label{theoremMainDim4}
	Let two sequences of entire functions $b_{4,n}, \widetilde b_{4,n}$, $n\ge1$, be given by
	\begin{align*}
		b_{4,n}(r)&=B_{4,n}(r)+ \frac{8\sin(\pi r^2)}{\pi r^2}\(\sigma_1(\tfrac n2)-(-1)^n \sigma_1(n)\),\\
		\widetilde{b}_{4,n}(r)&=\widetilde{B}_{4,n}(r)+ \frac{8\sin(\pi r^2)}{\pi r^2}\(\sigma_1(\tfrac n2)-(-1)^n \sigma_1(n)\),
	\end{align*}
	where $\sigma_1(n)=\sum_{d|n} d$ is the sum of divisors of $n$, $\sigma_1(x)=0$ if $x\notin \Z$. Here $B_{4,n}(r)$ and $\widetilde{B}_{4,n}(r)$ are defined as
	\begin{align*}
		B_{4,n}(r)=\pi\(\frac{n}{r^2}\)^{\frac 12} \sum_{2|c>0}\frac{K(r^2,n,c,\nu_{\Theta}^4)}{c} J_1\(\frac{2\pi |r|\sqrt n}c\),\\
		\widetilde{B}_{4,n}(r)=-\pi\(\frac{n}{r^2}\)^{\frac 12} \sum_{2\nmid \widetilde{c}>0}\frac{\widetilde{K}(r^2,n,\widetilde{c},\nu_{\Theta}^4)}{\widetilde{c}} J_1\(\frac{2\pi |r|\sqrt n}{\widetilde{c}} \),
	\end{align*}
where the real-variable Kloosterman sums $K$ and $\widetilde{K}$ are defined at \eqref{eqReal-Val-Kloosterman-Sums} and \eqref{eqReal-Val-Kloosterman-Sums-tilde}, respectively. Then we have the following. 
	\begin{itemize}
		\item [(1)]
    For every $f\in \mS_{\rad}(\R^4)$ and every $x\in \R^4$, we have
	\[f(x)=\sum_{n=1}^\infty f(\sqrt n) b_{4,n}(|x|)+\sum_{n=1}^\infty \widehat f(\sqrt n) \widetilde b_{4,n}(|x|). \]
	
	\item[(2)] We have the following estimates for $B_{4,n}(r)$ and $\widetilde{B}_{4,n}(r)$: 
	\[|B_{4,n}(r)|, |\widetilde{B}_{4,n}(r)| \ll_\ep \left\{\begin{array}{ll}
	     n^{1+\ep} & \text{if }r^2\leq 1/n, \\
	     (|r|^{-1}+|r|^{-\frac 12+\ep}) n^{\frac34+\ep}& \text{if }r^2\geq 1/n.
	\end{array}\right.   \]

    \item[(3)] Let $\nu_2(n)$ be the 2-adic valuation of~$n$. The values $B_{4,n}(0)$ and $\widetilde{B}_{4,n}(0)$ are given by
    \begin{equation*}
        B_{4,n}(0)=8\sigma_1(n)\cdot \left\{\begin{array}{ll}
            \displaystyle\frac{2^{\nu_2(n)}-3}{2^{\nu_2(n)+1}-1}, & 2|n,  \vspace{2px}\\
            0,& 2\nmid n, 
        \end{array}\right. \quad 
        \widetilde{B}_{4,n}(0)=8\sigma_1(n)\cdot \frac{2^{\nu_2(n)}}{2^{\nu_2(n)+1}-1}.  
    \end{equation*}
	\end{itemize}
\end{theorem}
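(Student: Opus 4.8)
The plan is to show that the functions $b_{4,n}$ and $\widetilde b_{4,n}$ coincide with the coefficients $a_{4,n}$ and $\widetilde a_{4,n}$ singled out in Theorem~\ref{theoremBonRadSeip-2022}; part~(1) then follows at once from the interpolation formula~\eqref{eq:radialmain}. To this end I would assemble the two generating series
\[
\mathcal{F}_2(\tau;r)=\sum_{n\ge1}b_{4,n}(r)\,e^{\pi i n\tau},\qquad
\widetilde{\mathcal{F}}_2(\tau;r)=\sum_{n\ge1}\widetilde b_{4,n}(r)\,e^{\pi i n\tau},
\]
and recognize them as the Fourier expansions, specialized to the holomorphic spectral value $s=k/2=1$, of the weight-$2$ Maass--Poincar\'e series with multiplier $\nu_\Theta^4$ constructed earlier in the paper; the summation over even moduli $c$ in $B_{4,n}$ and over odd moduli $\widetilde c$ in $\widetilde B_{4,n}$ reflects the two relevant families of coset representatives. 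Granting the convergence of these Fourier expansions at $s=1$, the functions $\mathcal{F}_2,\widetilde{\mathcal{F}}_2$ are automatically $2$-periodic, real-analytic, and of moderate growth, and have Fourier expansions of the shape required by Theorem~\ref{theoremBonRadSeip-2022}.

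The heart of part~(1) is the functional equation $\mathcal{F}_2(\tau;r)+(\tau/i)^{-2}\mathcal{F}_2(-1/\tau;r)=e^{\pi i r^2\tau}$. I would establish it by unfolding the Poincar\'e series and tracking how the coset representatives are permuted under $\tau\mapsto-1/\tau$, the seed $e^{\pi i r^2\tau}$ producing the right-hand side. The delicate point is that $k=2$ is the anomalous weight: the bare Poincar\'e series satisfies the transformation law only up to a holomorphic weight-$2$ form on $\Gamma(2)$ with multiplier $\nu_\Theta^4$, an avatar of the nonmodularity of $E_2$. This is exactly the function of the correction term $\tfrac{8\sin(\pi r^2)}{\pi r^2}\bigl(\sigma_1(n/2)-(-1)^n\sigma_1(n)\bigr)$, whose generating function is the pertinent weight-$2$ Eisenstein combination (rescaled by the $r$-dependent scalar forced by the seed). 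Once the exact functional equation is verified, uniqueness in Theorem~\ref{theoremBonRadSeip-2022} yields $b_{4,n}=a_{4,n}$ and $\widetilde b_{4,n}=\widetilde a_{4,n}$, and~\eqref{eq:radialmain} gives~(1). I expect this anomaly bookkeeping --- isolating the Eisenstein correction and fixing all constants --- to be the principal obstacle.

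For part~(2) I would bound $B_{4,n}(r)=\pi(n/r^2)^{1/2}\sum_{2|c>0}c^{-1}K(r^2,n,c,\nu_\Theta^4)\,J_1(2\pi|r|\sqrt n/c)$, and its tilde analogue, by combining a Weil-type estimate for the real-variable Kloosterman sums from~\eqref{eqReal-Val-Kloosterman-Sums} with the uniform bound $J_1(x)\ll\min(x,x^{-1/2})$, splitting the $c$-sum at the transition modulus $c\asymp|r|\sqrt n$ where the Bessel argument is of order one. When $r^2\le1/n$ every argument satisfies $2\pi|r|\sqrt n/c\ll1$, so $J_1(x)\ll x$ turns the sum into a rapidly convergent series bounded by $n^{1+\ep}$. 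When $r^2\ge1/n$, the ranges $c\le|r|\sqrt n$ (oscillatory Bessel factor, estimated by $x^{-1/2}$) and $c>|r|\sqrt n$ (small argument, estimated by $x$) are treated separately, and inserting the Kloosterman bound produces the two declared contributions $|r|^{-1}n^{3/4+\ep}$ and $|r|^{-1/2+\ep}n^{3/4+\ep}$.

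For part~(3) I would let $r\to0$ and use $J_1(x)=\tfrac x2+O(x^3)$, so that the prefactor $(n/r^2)^{1/2}$ exactly compensates the linear vanishing of each Bessel factor, giving
\[
B_{4,n}(0)=\pi^2 n\sum_{2|c>0}\frac{K(0,n,c,\nu_\Theta^4)}{c^2},\qquad
\widetilde B_{4,n}(0)=-\pi^2 n\sum_{2\nmid\widetilde c>0}\frac{\widetilde K(0,n,\widetilde c,\nu_\Theta^4)}{\widetilde c^2}.
\]
At first argument $0$ the real-variable Kloosterman sums collapse to twisted Ramanujan/Gauss sums; I would evaluate these explicitly from the theta multiplier and check that they are multiplicative in $c$, so that the Dirichlet series above factor as Euler products. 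The odd primes reassemble into the divisor sum $\sigma_1(n)$ (with $\zeta(2)=\pi^2/6$ absorbing the factor $\pi^2$), while the prime $2$ --- appearing through even $c$ in the untilded case and odd $c$ in the tilded case --- contributes the stated $2$-adic factors $\tfrac{2^{\nu_2(n)}-3}{2^{\nu_2(n)+1}-1}$ and $\tfrac{2^{\nu_2(n)}}{2^{\nu_2(n)+1}-1}$. Carrying out this local $2$-adic Gauss-sum computation cleanly is, together with the weight-$2$ anomaly of part~(1), where I expect the genuine work to lie.
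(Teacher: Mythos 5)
Your plan follows the paper's proof essentially step for step: Maass--Poincar\'e series specialized to the spectral value $s=1$, an $E_2$-type correction so that the resulting generating functions meet the hypotheses of Theorem~\ref{theoremBonRadSeip-2022} and uniqueness gives $b_{4,n}=a_{4,n}$, the Weil-plus-Bessel splitting at $c\asymp|r|\sqrt n$ for part (2), and the $r\to0$ Ramanujan-sum/Euler-product computation for part (3). The two points you gloss over that require genuine (but available) work are the Weil-type bound for the real-variable sums --- which the paper obtains by first interpolating $K(r^2,n,c,\nu_\Theta^4)$ as a weighted combination of integer-parameter Kloosterman sums $S(k,n,c,\nu_\Theta^4)$ over $|r^2-k|<c$ and only then applying Weil --- and the precise role of the correction term: the analytically continued series at $s=1$ does satisfy the exact functional equation, but its constant Fourier coefficient contains a non-holomorphic $\sin(\pi r^2)/(\pi^2 r^2 y)$ term, which is what the quasi-modular combination $\mathcal{E}_2(\tau)=-\tfrac{3}{\pi y}+E_2(\tfrac{\tau+1}{2})-E_2(\tau)$ (itself satisfying the homogeneous functional equation) is inserted to cancel.
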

\begin{remark}
    Note that the coefficients $A_4(n)\defeq \sigma_1(\frac n2)-(-1)^n \sigma_1(n)$ satisfy
    \begin{equation*}
    2\sum_{n=1}^\infty A_4(n)q^n = 
    \sum_{n\in\Z} n^2 q^{n^2}\Big/\sum_{n\in\Z}q^{n^2}. 
    \end{equation*}
\end{remark}

In dimension $d=3$, we have the following interpolation function. 
\begin{theorem}\label{theoremMainDim3}
	Let two sequences of entire functions $b_{3,n}, \widetilde b_{3,n}$, $n\ge0$, be given by
	\begin{align*}
        b_{3,0}(r)&=\widetilde{b}_{3,0}(r)=\frac{\sin(\pi r^2)}{2r\sinh(\pi r)},\\
		b_{3,n}(r)&=B_{3,n}(r)-\frac{\sin(\pi r^2)}{r\sinh(\pi r)}\(8H(n)+\frac{r_3(n)}{6}\),\quad n\geq 1,\\
		\widetilde{b}_{3,n}(r)&=\widetilde{B}_{3,n}(r)-\frac{\sin(\pi r^2)}{r\sinh(\pi r)}\(8H(n)+\frac{r_3(n)}{6}\),\quad n\geq 1,
	\end{align*}
    where $H(n)$ is the Hurwitz class number and $r_3(n)$ is the number of ways to write $n$ as a sum of three squares (also the coefficient of $e^{\pi i n \tau}$ in $\Theta(\tau)^3$ \eqref{eqThetafunction}). Here $B_{3,n}(r)$ and $\widetilde{B}_{3,n}(r)$ are defined as
	\begin{align*}
		B_{3,n}(r)&=\frac{\ee(\tfrac 18)}{|r|}\sum_{2|c>0}\frac{K(r^2,n,c,\nu_{\Theta}^3)}{\sqrt{c}} \sin\(\frac{2\pi |r|\sqrt n}c\),\\
		\widetilde{B}_{3,n}(r)&=\frac{\ee(-\tfrac 38)}{|r|}\sum_{2\nmid {\widetilde{c}}>0}\frac{\widetilde{K}(r^2,n,{\widetilde{c}},\nu_{\Theta}^3)}{\sqrt{{\widetilde{c}}}} \sin\(\frac{2\pi |r|\sqrt n}{\widetilde{c}}\),
	\end{align*}
	where the real-variable Kloosterman sums $K$ and $\widetilde{K}$ are defined at \eqref{eqReal-Val-Kloosterman-Sums} and \eqref{eqReal-Val-Kloosterman-Sums-tilde}, respectively. Then
	\begin{itemize}
	\item [(1)]	For every $f\in \mS_{\rad}(\R^3)$ and every $x\in \R^3$ we have
	\[f(x)=\sum_{n=0}^\infty f(\sqrt n) b_{3,n}(|x|)+\sum_{n=0}^\infty \widehat f(\sqrt n) \widetilde b_{3,n}(|x|). \]

	\item[(2)] We have the following estimates for $B_{3,n}(r)$ and $\widetilde{B}_{3,n}(r)$: 
	\[|B_{3,n}(r)|, |\widetilde{B}_{3,n}(r)| \ll_\ep \left\{\begin{array}{ll}
	       \max\big(n^{\frac {437}{588}}, n^{\frac {11+\kappa}{16}}\big)n^{\ep}& \text{if }r^2\leq 1/n, \vspace{2px}\\
	    (|r|^{-\frac 12}+|r|^{\ep}) n^{\frac 12+\ep} & \text{if }r^2\geq 1/n. 
	\end{array} \right.   \]
	Here $\kappa\in [0,1]$ is chosen so that 
    $2^{\nu_2(n)}\ll n^\kappa$.  Note that $\frac{437}{588}\approx 0.743\ldots<0.75$.

    \item[(3)] For $n\geq 1$, we have
    \[\begin{array}{rll}
    &B_{3,n}(0)=8H(n)-r_3(n)/3,\ & \widetilde{B}_{3,n}(0)=8H(n)+2r_3(n)/3, \\  
    &\:\,b_{3,n}(0)=-{r_3(n)}/2,\  &\;\widetilde{b}_{3,n}(0)={r_3(n)}/2.
    \end{array}\]
\end{itemize}
\end{theorem}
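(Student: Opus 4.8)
The plan is to deduce everything from the uniqueness statement in Theorem~\ref{theoremBonRadSeip-2022}. Form the generating functions
\[\mathcal{F}_{3/2}(\tau;r)=\sum_{n\ge0}b_{3,n}(r)e^{\pi i n\tau},\qquad \widetilde{\mathcal{F}}_{3/2}(\tau;r)=\sum_{n\ge0}\widetilde b_{3,n}(r)e^{\pi i n\tau}.\]
If I can show that this pair consists of $2$-periodic analytic functions on $\HH$ of moderate growth with $b_{3,0}=\widetilde b_{3,0}$ and satisfying $\mathcal{F}_{3/2}(\tau;r)+(\tau/i)^{-3/2}\widetilde{\mathcal{F}}_{3/2}(-1/\tau;r)=e^{\pi i r^2\tau}$, then by the uniqueness part of Theorem~\ref{theoremBonRadSeip-2022} the functions $b_{3,n}$, $\widetilde b_{3,n}$ must coincide with the coefficients $a_{3,n}$, $\widetilde a_{3,n}$ of~\eqref{eq:radialmain}, and part~(1) follows at once by specializing~\eqref{eq:radialmain} to $d=3$. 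Moderate growth will be a byproduct of the estimates in part~(2), $2$-periodicity is automatic from the $q$-expansion, and $b_{3,0}=\widetilde b_{3,0}$ holds by the explicit formula for them. Thus part~(1) reduces entirely to the functional equation.

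To produce these generating functions I would build a Maass--Poincar\'e series of weight $k=3/2$ for the multiplier $\nu_\Theta^3$, attached to a seed carrying a spectral parameter $s\in\C$ and the real index $r^2$, as constructed earlier in the paper. Its Fourier expansion has $n$-th coefficient equal to a sum over moduli $c$ of a real-variable Kloosterman sum $K(r^2,n,c,\nu_\Theta^3)$ against a Whittaker factor, split by the parity of $c$ into the two cusps of $\Gamma(2)$. The key analytic step is the specialization to the critical value $s=k/2=3/4$, at which the Whittaker factors degenerate into the elementary $\sin\big(\tfrac{2\pi|r|\sqrt n}{c}\big)$ appearing in $B_{3,n}$ and $\widetilde B_{3,n}$. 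Justifying convergence there is delicate: for $k=3/2$ the $c$-sum sits exactly at the Weil-bound threshold of absolute convergence, so one must extract genuine cancellation from the sine factors together with nontrivial bounds on the Kloosterman sums (this boundary phenomenon is what makes $d=3$ harder than $d=4$).

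The functional equation is where the correction terms enter, and this is the main obstacle. The Poincar\'e series is modular by construction, but at $s=k/2$ the holomorphic object built from the $B_{3,n}$ fails the clean relation of Theorem~\ref{theoremBonRadSeip-2022} by a genuine weight-$3/2$ (mock) modular contribution. I expect this obstruction to be captured exactly by Zagier's weight-$3/2$ Eisenstein series, whose holomorphic coefficients are the Hurwitz class numbers $H(n)$, together with $\Theta^3$, whose coefficients are $r_3(n)$: the subtracted terms $\frac{\sin(\pi r^2)}{r\sinh(\pi r)}\big(8H(n)+\tfrac{r_3(n)}{6}\big)$ in the definition of $b_{3,n}$, $\widetilde b_{3,n}$, and the $n=0$ term $\frac{\sin(\pi r^2)}{2r\sinh(\pi r)}$, should be precisely the pieces needed to absorb this contribution and restore the exact equation. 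Carrying out this matching of the mock-modular completion against the Poincar\'e series at the critical point is the delicate heart of the argument.

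For part~(2) I would insert bounds for $K(r^2,n,c,\nu_\Theta^3)$ into the $c$-sums. When $r^2\le 1/n$ one linearizes $\sin\big(\tfrac{2\pi|r|\sqrt n}{c}\big)\approx\tfrac{2\pi|r|\sqrt n}{c}$, the factor $|r|$ cancels, and one is left with a sum of shape $\sum_c c^{-3/2}K$; estimating its power-of-two part via a Weil-type bound refined by the $2$-adic valuation (the moduli are even) gives the $n^{(11+\kappa)/16}$ term, while an average bound for sums of Kloosterman sums gives the $n^{437/588}$ term, the two contributions being bounded by their maximum; note the first is $<n^{3/4}$ for generic $n$, which is what beats~\eqref{eq:heckebound}. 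When $r^2\ge 1/n$ the oscillation of the sine controls the tail and yields the $|r|$-dependent bound. For part~(3) I specialize to $r=0$: since $\sin\big(\tfrac{2\pi|r|\sqrt n}{c}\big)/|r|\to\tfrac{2\pi\sqrt n}{c}$, the quantities $B_{3,n}(0)$, $\widetilde B_{3,n}(0)$ become convergent sums of classical half-integral-weight (Sali\'e-type) Kloosterman sums $\sum_c c^{-3/2}K(0,n,c,\nu_\Theta^3)$, whose explicit evaluation relates them to Hurwitz class numbers and produces the stated values $8H(n)-r_3(n)/3$ and $8H(n)+2r_3(n)/3$; taking $r\to0$ in the definitions and using $\frac{\sin(\pi r^2)}{r\sinh(\pi r)}\to1$ then cancels the $8H(n)$ terms and yields $b_{3,n}(0)=-r_3(n)/2$ and $\widetilde b_{3,n}(0)=r_3(n)/2$.
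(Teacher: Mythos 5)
Your proposal follows essentially the same route as the paper: reduce part (1) to the functional equation via the uniqueness in Theorem~\ref{theoremBonRadSeip-2022}, construct the generating functions as Maass--Poincar\'e series with spectral parameter analytically continued to $s=3/4$, absorb the non-holomorphic obstruction using Zagier's weight-$3/2$ mock modular form together with $\Theta^3$ (the paper's $\mathcal{H}^*$), derive part (2) from Weil-type and averaged Kloosterman-sum bounds split by the size of $r^2 n$, and obtain part (3) by the $r\to0$ limit reducing to $\sum_c c^{-3/2}S(0,n,c,\nu_\Theta^3)$ evaluated via Bateman's three-squares formula. This is exactly the structure of Sections~\ref{SectionMaassPoincareSeries}, \ref{SectionKLweight3/2}, \ref{SectionRVKLweight3/2} and \ref{sectionProof-of-theorem-FourierExpansion-weight3/2-s3/4}.
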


\begin{remark}
    (1) Note that $B_{3,n}(0),\widetilde{B}_{3,n}(0)\ll_\ep n^{\frac 12+\ep}$, while the exponent $\frac{437}{588}$, appearing in the estimate for general $r>0$, is equal to $\frac 34-\frac 1{147}$. 

    (2) In \S\ref{sectionProof-of-theorem-FourierExpansion-weight3/2-s3/4} we prove\footnote{It is likely that~\eqref{eq:zagiertheta} is well-known, but we could not find it in the literature} the following identity between Zagier's weight $3/2$ non-holomorphic modular form $\mathcal{H}(\tau)$ (see \eqref{eq:Zagier'sMockModularForm} for definition) and $\Theta(\tau)$:
    \begin{equation} \label{eq:zagiertheta}
        \mathcal{H}(\tau)+(2\tau/i)^{-3/2}\mathcal{H}(-1/4\tau)=-\Theta(2\tau)^3/24.
    \end{equation}
    The quantity $A_3(n)\defeq 8H(n)+\frac{1}{6}r_3(n)$ comes from the Fourier coefficients of the function
    \[\widetilde{\mathcal{H}}(\tau)\defeq 8\mathcal{H}\(\frac \tau2\)+\frac{\Theta(\tau)^3}{6}=\sum_{n=1}^\infty A_3(n) e^{\pi i n \tau}+\text{non-holomorphic terms}  \]
    that satisfies $\widetilde{\mathcal{H}}(\tau)+(\tau/i)^{-3/2}\widetilde{\mathcal{H}}(-1/\tau)=0$. 
\end{remark}

\subsection{Relation with Theorem~\ref{theoremBonRadSeip-2022}}

For $d\in \{3,4\}$, the formulas for the functions $b_{d,n}(r)$ and $\widetilde{b}_{d,n}(r)$ will follow from a construction of the generating series
\begin{equation} \label{eqDefMathcalG}
	G_{\frac{d}2}(\tau;r)=\sum_{n=0}^\infty b_{d,n}(r)e^{\pi i n \tau} \quad \text{and}\quad  \widetilde{G}_{\frac{d}2}(\tau;r)=\sum_{n=0}^\infty \widetilde{b}_{d,n}(r)e^{\pi i n \tau}. 
\end{equation}
In the proofs of Theorem~\ref{theoremMainDim4} and Theorem~\ref{theoremMainDim3}, we will show that these series satisfy the functional equation
\begin{equation}
	G_{\frac{d}2}(\tau;r)+(-i\tau)^{-d/2}\widetilde{G}_{\frac{d}2}(-\tfrac 1\tau;r)=e^{\pi i r^2 \tau},
\end{equation}
which implies (and by \cite[\S6]{RadchenkoViazovska2019} is equivalent to) the corresponding Fourier interpolation formula. For $d=4$ we have $b_{4,0}=\widetilde{b}_{4,0}=0$, and for $d=3$ we have $b_{3,0}=\widetilde{b}_{3,0}$. Therefore, by the uniqueness part of Theorem~\ref{theoremBonRadSeip-2022}, we get the following result. 
\begin{corollary}\label{corollaryGconstructedD/2=FBRSD/2}
	Let $d\in \{3,4\}$. Then for $\mathcal{F}_{\frac{d}2}(\tau;r)$ and $\widetilde{\mathcal{F}}_{\frac{d}2}(\tau;r)$ defined in Theorem~\ref{theoremBonRadSeip-2022} and $\mathcal{G}_{\frac{d}2}^{\ep}(\tau;r)$ defined at \eqref{eqDefMathcalG}, we have
	\[\mathcal{G}_{\frac{d}2}(\tau;r)=\mathcal{F}_{\frac{d}2}(\tau;r),
    \qquad
     \widetilde{\mathcal{G}}_{\frac{d}2}(\tau;r)=\widetilde{\mathcal{F}}_{\frac{d}2}(\tau;r).\]
    In particular, we have $a_{3,n}=b_{3,n}$, $\widetilde{a}_{3,n}=\widetilde{b}_{3,n}$, $a_{4,n}=b_{4,n}$, and $\widetilde{a}_{4,n}=\widetilde{b}_{4,n}$ for all $n\ge0$.
\end{corollary}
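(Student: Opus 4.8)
The plan is to derive Corollary~\ref{corollaryGconstructedD/2=FBRSD/2} entirely from the \emph{uniqueness} clause of Theorem~\ref{theoremBonRadSeip-2022}. Concretely, for each $d\in\{3,4\}$ and each fixed $r\ge0$ I would check that the pair $(\mathcal{G}_{d/2}(\,\cdot\,;r),\widetilde{\mathcal{G}}_{d/2}(\,\cdot\,;r))$ defined in~\eqref{eqDefMathcalG} satisfies every condition that characterizes the pair $(\mathcal{F}_{d/2}(\,\cdot\,;r),\widetilde{\mathcal{F}}_{d/2}(\,\cdot\,;r))$. Since that characterization admits only one solution, the two pairs must agree, and comparing Fourier coefficients then gives $a_{d,n}=b_{d,n}$ and $\widetilde{a}_{d,n}=\widetilde{b}_{d,n}$ for all $n\ge0$.

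First I would record the structural hypotheses. By~\eqref{eqDefMathcalG} the function $\mathcal{G}_{d/2}(\tau;r)$ is a series in $e^{\pi i n\tau}$, hence $2$-periodic, and likewise for $\widetilde{\mathcal{G}}_{d/2}$. To invoke Theorem~\ref{theoremBonRadSeip-2022} I must confirm \emph{moderate growth}, i.e.\ polynomial growth in $n$ of the coefficients $b_{d,n}(r)$, $\widetilde{b}_{d,n}(r)$ at fixed $r$. This follows from part~(2) of Theorem~\ref{theoremMainDim4} (resp.\ Theorem~\ref{theoremMainDim3}), which bound $B_{d,n}(r)$ and $\widetilde{B}_{d,n}(r)$ polynomially in $n$ for each fixed $r$, combined with the observation that the elementary correction terms---built from $\sigma_1$ for $d=4$, and from $H(n)$ and $r_3(n)$ for $d=3$---are themselves of polynomial size. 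Consequently both Fourier series converge locally uniformly on $\HH$, define analytic functions of moderate growth, and have Fourier expansions of exactly the shape demanded in Theorem~\ref{theoremBonRadSeip-2022} (summation over $n\ge1$ for $d=4$, over $n\ge0$ for $d=3$).

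Next I would match the functional equation and the normalization at $n=0$. In the proofs of Theorems~\ref{theoremMainDim4} and~\ref{theoremMainDim3} the identity $\mathcal{G}_{d/2}(\tau;r)+(-i\tau)^{-d/2}\widetilde{\mathcal{G}}_{d/2}(-1/\tau;r)=e^{\pi i r^2\tau}$ is established; since $-i\tau=\tau/i$ as complex numbers, the automorphy factor $(-i\tau)^{-d/2}$ is identical to $(\tau/i)^{-d/2}$, so this is precisely the functional equation of Theorem~\ref{theoremBonRadSeip-2022}. For the $n=0$ data, the indexing $n\ge1$ in Theorem~\ref{theoremMainDim4} gives $b_{4,0}=\widetilde{b}_{4,0}=0$, while the explicit value $b_{3,0}(r)=\widetilde{b}_{3,0}(r)=\frac{\sin(\pi r^2)}{2r\sinh(\pi r)}$ in Theorem~\ref{theoremMainDim3} gives $b_{3,0}=\widetilde{b}_{3,0}$; these coincide with the built-in normalizations $a_{4,0}=\widetilde{a}_{4,0}=0$ and $a_{3,0}=\widetilde{a}_{3,0}$. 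All hypotheses of Theorem~\ref{theoremBonRadSeip-2022} now hold, so uniqueness yields $\mathcal{G}_{d/2}=\mathcal{F}_{d/2}$ and $\widetilde{\mathcal{G}}_{d/2}=\widetilde{\mathcal{F}}_{d/2}$, and the coefficient identities follow. I expect no serious obstacle inside the corollary itself: the real work---proving the functional equation and the polynomial bounds for the Maass--Poincar\'e construction---is carried out in the main theorems, and the only point requiring care here is verifying that the moderate-growth hypothesis genuinely holds, since uniqueness in Theorem~\ref{theoremBonRadSeip-2022} would fail for coefficients growing faster than polynomially.
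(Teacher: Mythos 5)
Your proposal is correct and follows essentially the same route as the paper: verify that $(\mathcal{G}_{d/2},\widetilde{\mathcal{G}}_{d/2})$ satisfies the periodicity, moderate-growth, functional-equation, and $n=0$ normalization conditions of Theorem~\ref{theoremBonRadSeip-2022}, then conclude by the uniqueness clause. The paper is terser (it simply cites the functional equation established in the proofs of Theorems~\ref{theoremMainDim4} and~\ref{theoremMainDim3} together with the $n=0$ normalizations), but your explicit check of moderate growth via the polynomial bounds on $B_{d,n}(r)$, $\widetilde{B}_{d,n}(r)$ and the elementary correction terms is exactly the implicit content of that step.
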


In \cite[\S7]{RadchenkoViazovska2019}, the first author together with Viazovska constructed an interpolation basis for one-dimensional odd Schwartz functions.
\begin{theorem}[{Corollary of \cite[Prop.~3, Thm.~7]{RadchenkoViazovska2019}}] \label{theoremRV2019}
There exists a collection of odd Schwartz functions $d_{n}^{\ep}$, $n\ge0$, $\ep\in \{\pm\}$, satisfying
    \[\widehat{d_n^\ep}(x)=\ep(-i)d_n^\ep(x)\quad \text{and}\quad d_n^\ep(\sqrt m)=\delta_{n,m}\sqrt m,\quad m\geq 0, \]
together with ${d_{n}^{+}}'(0)=\delta_{n,0}$ and ${d_{n}^{-}}'(0)=0$, $n\ge0$. The Schwartz functions $d_{n}^{\ep}$ are uniquely determined by the above properties.
\end{theorem}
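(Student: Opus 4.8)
The plan is to derive the statement from the odd-function interpolation of \cite{RadchenkoViazovska2019} by passing to the eigenbasis of the Fourier transform. On odd functions one has $\widehat{\,\cdot\,}^2=-\mathrm{Id}$, so the Fourier transform acts with the two eigenvalues $\pm i$, and $\mS_{\mathrm{odd}}(\R)$ splits as the direct sum of the $(-i)$- and $(+i)$-eigenspaces, with projections $P_\ep f=\tfrac12\big(f+\ep\, i\,\widehat f\big)$, $\ep\in\{+,-\}$. By \cite[Prop.~3, Thm.~7]{RadchenkoViazovska2019} there is an odd-function interpolation formula reconstructing every $f\in\mS_{\mathrm{odd}}(\R)$ from the data $\{f(\sqrt m)\}_{m\ge1}$, $\{\widehat f(\sqrt m)\}_{m\ge1}$, $f'(0)$ and $\widehat f'(0)$, whose cardinal functions are built as contour integrals of weight $1/2$ weakly holomorphic modular forms against the odd Gaussian $x\,e^{\pi i\tau x^2}$. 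First I would record the transformation $\widehat{x\,e^{\pi i\tau x^2}}(\xi)=-i\,(-i\tau)^{-3/2}\,\xi\,e^{\pi i(-1/\tau)\xi^2}$, which shows that this Gaussian carries weight $3/2$ and is therefore paired with weight $1/2$ forms, and which makes the eigenvalue bookkeeping below consistent.

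Next I would form the eigenfunctions. Writing $c_n$ (resp.\ $\widehat{c_n}$) for the cardinal functions of \cite{RadchenkoViazovska2019} attached to $f(\sqrt n)$ (resp.\ $\widehat f(\sqrt n)$), I set
\[
d_n^\ep\defeq\sqrt{n}\,\big(c_n+\ep\, i\,\widehat{c_n}\big),\qquad n\ge1 .
\]
A direct computation using $\widehat{\widehat{c_n}}=-c_n$ gives $\widehat{d_n^\ep}=\ep(-i)\,d_n^\ep$, so $d_n^\ep$ is an eigenfunction with the required eigenvalue, and it is odd and Schwartz since $c_n$ is. From the interpolation normalisation $c_n(\sqrt m)=\delta_{n,m}$ and $\widehat{c_n}(\sqrt m)=0$ one reads off $d_n^\ep(\sqrt m)=\delta_{n,m}\sqrt m$, and from $c_n'(0)=\widehat{c_n}'(0)=0$ (a property of the basis for $n\ge1$ that must be checked) one gets $(d_n^\ep)'(0)=0$. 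The function $d_0^+$ is defined in the same eigen-projected way from the cardinal function attached to the datum $f'(0)$, normalised so that $(d_0^+)'(0)=1$.

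The delicate point, and the step I expect to be the main obstacle, is the behaviour at the origin. For odd functions the value $f(\sqrt0)=f(0)$ is identically zero, so the genuine datum at $0$ is $f'(0)$; transporting this derivative datum through the generating-function/contour-integral construction corresponds to a first-order term in the cusp expansion and must be matched against the \emph{single} linear relation among the interpolation data coming from the one-dimensional space of holomorphic weight $1/2$ forms. This relation is not symmetric under the eigenvalue decomposition: it lives entirely in one of the two eigenspaces, forcing the $(+i)$-eigenspace to carry no independent derivative cardinal function, so that the unique $(+i)$-eigenfunction vanishing at all $\sqrt m$ with vanishing derivative at $0$ is $d_0^-=0$, while $d_0^+$ is genuinely nonzero. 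One must then explain how $f'(0)$ is nonetheless reproduced in the $(+i)$-eigenspace: although each $d_n^-$ has vanishing derivative at the origin, the series $\sum_n\tfrac{f(\sqrt n)}{\sqrt n}\,d_n^-$ can still recover a nonzero $f'(0)$ because term-by-term differentiation fails at $x=0$. Establishing this requires the convergence and cusp estimates of \cite{RadchenkoViazovska2019}, which is where the real work sits.

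Finally, uniqueness would follow from the completeness (injectivity) half of \cite[Thm.~7]{RadchenkoViazovska2019}. If two collections both satisfy the stated conditions, their difference $g$ is an odd Schwartz eigenfunction with $g(\sqrt m)=0$ for all $m\ge1$ and $g'(0)=0$; since $g$ is a $\pm i$-eigenfunction, the remaining data $\widehat g(\sqrt m)=\ep(-i)g(\sqrt m)$ and $\widehat g'(0)=\ep(-i)g'(0)$ vanish as well, so all interpolation data of $g$ is zero and the interpolation formula forces $g\equiv0$. This yields the asserted uniqueness.
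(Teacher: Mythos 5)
The paper itself gives no proof of this statement (it is imported directly from the cited reference), so I can only judge your reconstruction on its own terms. The eigenspace bookkeeping is fine: the projections $P_\ep$, the verification that $d_n^\ep\defeq\sqrt n\,(c_n+\ep i\,\widehat{c_n})$ is an $\ep(-i)$-eigenfunction, the transformation law for $x e^{\pi i\tau x^2}$, and the uniqueness argument at the end are all correct. The existence step, however, has a genuine gap, and it sits precisely at the point you yourself flag as delicate.

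The one linear relation among the odd interpolation data is Poisson summation for $\Z^3$ transported to odd functions via $g(x)=xF(|x|)$: every odd Schwartz $g$ satisfies
\begin{equation*}
\bigl(g'(0)-i\widehat{g}'(0)\bigr)+\sum_{n\ge1}\frac{r_3(n)}{\sqrt n}\bigl(g(\sqrt n)-i\widehat{g}(\sqrt n)\bigr)=0,
\end{equation*}
and it constrains only the $(+i)$-eigenspace (for $\widehat g=-ig$ it is vacuous). This has two consequences that your argument misses. First, a cardinal basis with all of $c_n(\sqrt m)=\delta_{nm}$, $\widehat{c_n}(\sqrt m)=0$, and $c_n'(0)=\widehat{c_n}'(0)=0$ cannot exist: substituting $g=c_n$ into the relation gives $r_3(n)/\sqrt n=0$, so your starting object is empty whenever $r_3(n)\neq0$, and the conclusion $(d_n^\ep)'(0)=0$ for both signs cannot be read off from it. Second, substituting $g=d_n^-$ forces $(d_n^-)'(0)=-r_3(n)$, which agrees with the explicit formulas elsewhere in the paper: by \eqref{eqconstruct-cn+-as-rBHr3} and Theorem~\ref{theoremMainDim3}(3) one has $(d_n^-)'(0)=b_{3,n}(0)-\widetilde b_{3,n}(0)=-r_3(n)$. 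Your proposed mechanism --- that each $(d_n^-)'(0)$ vanishes and $f'(0)$ is recovered only because termwise differentiation fails at the origin --- is therefore not what happens: in the $(+i)$-eigenspace the termwise-differentiated series equals $-\sum_{n\ge1}r_3(n)f(\sqrt n)/\sqrt n$, which is exactly $f'(0)$ by the relation above. (This also shows that the normalization $(d_n^-)'(0)=0$ is compatible with the remaining conditions only when $r_3(n)=0$; a derivative normalization is genuinely needed, and available, only in the unconstrained $(-i)$-eigenspace, where $(d_n^+)'(0)=\delta_{n,0}$ is forced by applying the interpolation formula to $d_n^+$ itself.) The repair is to work directly with the eigenfunction basis constructed in the cited reference rather than with a hypothetical free cardinal basis.
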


From Theorem~\ref{theoremMainDim3}, Corollary~\ref{corollaryGconstructedD/2=FBRSD/2}, and~\cite[Eq. (39)]{RadchenkoViazovska2019} we get that
\begin{align}\label{eqconstruct-cn+-as-rBHr3}
\begin{split}
    d_n^+(x)&=xB_{3,n}(|x|)+x\widetilde{B}_{3,n}(|x|) -\frac{2\sin(\pi x^2)}{\sinh(\pi x)}\(8H(n)+ \frac{r_3(n)}{6}\),\\
    d_n^-(x)&=xB_{3,n}(|x|)-x\widetilde{B}_{3,n}(|x|).
\end{split}
\end{align}

\begin{remark}
The above equation for $n=0$ implies (using that $H(0)=-\tfrac1{12}$) that $d_{0}^{+}(x)=\frac{\sin(\pi x^2)}{\sinh(\pi x)}$, as noted in~\cite[\S8]{RadchenkoViazovska2019}.    
\end{remark}

When rewritten in terms of real-variable Kloosterman sums, we get the following corollary.
\begin{corollary}
    For $n\geq 1$, let us define 
    \begin{align}
        K_n(r)&\defeq \sum_{2|c>0}\frac{K(r^2,n,c,\nu_{\Theta}^3)}{\sqrt{c}} \sin\(\frac{2\pi r\sqrt n}{c}\)=\ee(-\tfrac 18)rB_{3,n}(|r|),\\
        \widetilde{K}_n(r) & \defeq \sum_{2\nmid {\widetilde{c}}>0}\frac{\widetilde{K}(r^2,n,{\widetilde{c}},\nu_{\Theta}^3)}{\sqrt{{\widetilde{c}}}} \sin\(\frac{2\pi r\sqrt n}{{\widetilde{c}}}\)=\ee(\tfrac 38)r\widetilde{B}_{3,n}(|r|).
    \end{align}
    Then
    \begin{itemize}
        \item[(1)] $K_n(r),\widetilde{K}_n(r)$ are odd Schwartz functions,
        \item[(2)] $\widehat{K_n}(r)=i\,\widetilde{K}_n(r)$, $\widehat{\widetilde{K}_n}(r)=i\,K_n(r),$ and
        \item[(3)] for $m\geq 1$, \begin{equation}\label{eq_Knsqrtm_kroneckerdelta}
        K_n(\sqrt m)=\ee(-\tfrac 18)\delta_{m,n}\,\sqrt m,\quad \widetilde{K}_n(\sqrt m)=0.
    \end{equation} 
    \end{itemize}
\end{corollary}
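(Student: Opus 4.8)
The plan is to reduce all three claims to the known properties of the odd interpolation basis $d_n^{\pm}$ from Theorem~\ref{theoremRV2019}, using the identities \eqref{eqconstruct-cn+-as-rBHr3}. Write $P(r)\defeq rB_{3,n}(|r|)$ and $Q(r)\defeq r\widetilde{B}_{3,n}(|r|)$, so that by the very definitions in the statement $K_n=\ee(-\tfrac18)P$ and $\widetilde{K}_n=\ee(\tfrac38)Q$ (one checks that the $\sin(2\pi r\sqrt n/c)$ form matches $\ee(\pm\cdots)\,rB_{3,n}(|r|)$ since $\tfrac{r}{|r|}\sin(2\pi|r|\sqrt n/c)=\sin(2\pi r\sqrt n/c)$). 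By the remark following \eqref{eqconstruct-cn+-as-rBHr3}, namely $d_0^+(x)=\sin(\pi x^2)/\sinh(\pi x)$, the correction term in $d_n^+$ equals $c_n d_0^+$ with $c_n\defeq 2\bigl(8H(n)+\tfrac16 r_3(n)\bigr)$. Thus \eqref{eqconstruct-cn+-as-rBHr3} reads $d_n^+=P+Q-c_n d_0^+$ and $d_n^-=P-Q$, and solving this $2\times2$ linear system gives
\[
P=\tfrac12\bigl(d_n^++d_n^-\bigr)+\tfrac{c_n}2\,d_0^+,\qquad Q=\tfrac12\bigl(d_n^+-d_n^-\bigr)+\tfrac{c_n}2\,d_0^+.
\]

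With $P$ and $Q$ thus exhibited as finite linear combinations of the Schwartz functions $d_n^+,d_n^-,d_0^+$, part~(1) is immediate: each of $d_n^{\pm}$ and $d_0^+$ is an odd Schwartz function, so $P$ and $Q$ are odd Schwartz functions, and multiplication by the unimodular constants $\ee(-\tfrac18)$, $\ee(\tfrac38)$ preserves these properties. For part~(3) I would simply evaluate at $\sqrt m$ for $m\ge1$, using $d_n^{\pm}(\sqrt m)=\delta_{n,m}\sqrt m$ and $d_0^+(\sqrt m)=\delta_{0,m}\sqrt m=0$; this yields $P(\sqrt m)=\delta_{n,m}\sqrt m$ and $Q(\sqrt m)=0$, hence $K_n(\sqrt m)=\ee(-\tfrac18)\delta_{n,m}\sqrt m$ and $\widetilde{K}_n(\sqrt m)=0$.

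For part~(2) I would invoke the eigenfunction relations $\widehat{d_n^\ep}=\ep(-i)d_n^\ep$, together with $\widehat{d_0^+}=-i\,d_0^+$ (the $n=0$, $\ep=+$ case). Taking Fourier transforms term-by-term in the two displayed formulas, the $+$ and $-$ sectors acquire opposite eigenvalues and one finds
\[
\widehat{P}=-i\,Q,\qquad \widehat{Q}=-i\,P.
\]
The phase bookkeeping then rests on the single identity $\ee(\tfrac38)=-\ee(-\tfrac18)$, equivalently $\ee(\tfrac38)/\ee(-\tfrac18)=\ee(\tfrac12)=-1$. Granting this, $\widehat{K_n}=\ee(-\tfrac18)\widehat{P}=-i\,\ee(-\tfrac18)Q=i\,\ee(\tfrac38)Q=i\,\widetilde{K}_n$, and symmetrically $\widehat{\widetilde{K}_n}=\ee(\tfrac38)\widehat{Q}=-i\,\ee(\tfrac38)P=i\,\ee(-\tfrac18)P=i\,K_n$.

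The step I expect to require the most care is this last one: making sure that the $d_0^+$ correction, which appears with the \emph{same} sign in both $P$ and $Q$, carries the correct Fourier eigenvalue (it lies in the $+$ sector, so $\widehat{d_0^+}=-i\,d_0^+$) so that it combines consistently with the genuine $d_n^{\pm}$ contributions, and that the constant phases $\ee(-\tfrac18)$ and $\ee(\tfrac38)$ convert the bare eigenvalue $-i$ into the desired $+i$ via $\ee(\tfrac38)=-\ee(-\tfrac18)$. Everything else is a direct transcription of the properties of $d_n^{\pm}$.
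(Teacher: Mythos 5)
Your proposal is correct and is exactly the derivation the paper intends: the corollary is stated immediately after \eqref{eqconstruct-cn+-as-rBHr3} as a direct rewriting, and inverting that $2\times 2$ system to express $rB_{3,n}(|r|)$ and $r\widetilde{B}_{3,n}(|r|)$ in terms of $d_n^{\pm}$ and $d_0^+$, then applying the properties in Theorem~\ref{theoremRV2019}, is the intended argument. Your phase bookkeeping ($\ee(\tfrac38)=-\ee(-\tfrac18)$) and the handling of the $d_0^+$ correction term are both right.
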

\begin{remark}
    The property \eqref{eq_Knsqrtm_kroneckerdelta} provides another proof of the following identity on half-integral weight Kloosterman sums on $\Gamma_0(4)$ (for definitions see \eqref{eqKloosterman-Sums-General-Def} and \eqref{eqKloosterman-Sums-Relation-Even-Theta-to-theta}): 
    \begin{equation}
        2\pi\ee(-\tfrac 18)\(\frac n m\)^{\frac 14} \sum_{4|c>0} \frac{S(-m,-n,c,\nu_{\theta})}c J_{\frac 12}\(\frac{4\pi\sqrt{mn}}c\)=\delta_{m,n}, \quad \text{for }m,n\geq 1. 
    \end{equation}
\end{remark}
We also significantly improve the growth estimates for $d_n^\ep(r)$ in \cite[Theorem~6]{RadchenkoViazovska2019}, and improve the bound \eqref{eq:heckebound} from \cite{BondarenkoRadchenkoSeip2022}. 
\begin{corollary}
    For the function $d_n^\ep(x)$ in Theorem~\ref{theoremRV2019} where $\ep\in\{\pm\}$, we have 
    \[
        d_n^\ep(x)=-(1+\ep)\frac{\sin(\pi x^2)}{\sinh(\pi x)}\(8H(n)+\frac{r_3(n)}{6}\)+B_n^\ep(x),
    \]
    where $B_n^{\ep}(x)=xB_{3,n}(|x|)+\ep x\widetilde{B}_{3,n}(|x|)$ is an odd Schwartz function of $x$, given by a sum of real-variable Kloosterman sums, and can be estimated by
    \[B_n^{\ep}(x)\ll_\delta \left\{\begin{array}{ll}
	       |x|\max\big(n^{\frac 34-\frac 1{147}}, n^{\frac {11+\kappa}{16}}\big)n^{\delta}& \text{ if }x^2\leq 1/n, \vspace{2px}\\
	       (|x|^{\frac 12}+|x|^{1+\delta})n^{\frac 12+\delta} & \text{ if }x^2\geq 1/n, 
	\end{array} \right. \]
    for any $\delta>0$. As Theorem~\ref{theoremMainDim3}, $\kappa\in[0,1]$ is chosen so that $2^{\nu_2(n)}\ll n^\kappa$. 
\end{corollary}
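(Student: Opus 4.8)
The plan is to read this corollary off directly from the already-established identities~\eqref{eqconstruct-cn+-as-rBHr3} together with the estimates of Theorem~\ref{theoremMainDim3}(2); no new analytic input is needed, so the argument is essentially bookkeeping. First I would unify the two lines of~\eqref{eqconstruct-cn+-as-rBHr3} by means of the sign $\ep\in\{\pm\}$. Setting $B_n^\ep(x)\defeq xB_{3,n}(|x|)+\ep\, x\widetilde{B}_{3,n}(|x|)$, both displayed formulas collapse into the single identity
\[d_n^\ep(x)=-(1+\ep)\frac{\sin(\pi x^2)}{\sinh(\pi x)}\(8H(n)+\frac{r_3(n)}{6}\)+B_n^\ep(x),\]
since for $\ep=+$ the prefactor $-(1+\ep)=-2$ reproduces the formula for $d_n^+$, while for $\ep=-$ the prefactor vanishes and we recover the formula for $d_n^-$.

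Next I would check that $B_n^\ep$ is an odd Schwartz function. Oddness is immediate: $B_{3,n}$ and $\widetilde{B}_{3,n}$ depend only on $r=|x|$ and are therefore even, so multiplying by $x$ yields odd functions. For the Schwartz property, recall that $d_n^\ep$ is Schwartz by Theorem~\ref{theoremRV2019}, and that $\frac{\sin(\pi x^2)}{\sinh(\pi x)}$ is itself Schwartz: it extends smoothly across $x=0$ (numerator and denominator both vanish there, the quotient behaving like $x$), and at infinity the factor $\sinh(\pi x)^{-1}$ forces exponential decay of the function and all of its derivatives, dominating the at-most-polynomial growth of the derivatives of $\sin(\pi x^2)$. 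Hence
\[B_n^\ep(x)=d_n^\ep(x)+(1+\ep)\frac{\sin(\pi x^2)}{\sinh(\pi x)}\(8H(n)+\frac{r_3(n)}{6}\)\]
is a finite linear combination of Schwartz functions, and therefore Schwartz; its expression as a sum of real-variable Kloosterman sums is precisely the definition of $B_{3,n}$ and $\widetilde{B}_{3,n}$ in Theorem~\ref{theoremMainDim3}.

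Finally, I would obtain the stated bound by the triangle inequality
\[|B_n^\ep(x)|\le |x|\(|B_{3,n}(|x|)|+|\widetilde{B}_{3,n}(|x|)|\),\]
substituting $r=|x|$ into the two-case estimate of Theorem~\ref{theoremMainDim3}(2) and multiplying through by $|x|$. In the regime $x^2\le 1/n$ this produces $|x|\max\big(n^{437/588},n^{(11+\kappa)/16}\big)n^{\delta}$, and using $\frac{437}{588}=\frac34-\frac1{147}$ (as recorded in the Remark) gives the first line; in the regime $x^2\ge 1/n$ it produces $\(|x|^{1/2}+|x|^{1+\delta}\)n^{1/2+\delta}$, which is the second line, after renaming the implied-constant parameter from $\ep$ to $\delta$.

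I do not expect a genuine obstacle here: all of the difficulty has been absorbed into Theorem~\ref{theoremMainDim3}, whose proof contains the real analytic work (in particular the Kloosterman-sum estimates underlying the exponent $\frac34-\frac1{147}$). The only point demanding a little care is the verification that $\frac{\sin(\pi x^2)}{\sinh(\pi x)}$ lies in the Schwartz class, which reduces to the elementary fact that the exponential decay of $\sinh(\pi x)^{-1}$ beats the polynomial growth arising from differentiating $\sin(\pi x^2)$.
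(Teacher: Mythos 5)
Your proposal is correct and matches what the paper does: the corollary is stated as an immediate consequence of the identities~\eqref{eqconstruct-cn+-as-rBHr3} and the bounds in Theorem~\ref{theoremMainDim3}(2), and your bookkeeping (unifying the two cases via the prefactor $-(1+\ep)$, deducing the Schwartz property from that of $d_n^\ep$ and of $\sin(\pi x^2)/\sinh(\pi x)$, and multiplying the two-case estimate by $|x|$ with $\tfrac{437}{588}=\tfrac34-\tfrac1{147}$) is exactly the intended argument.
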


The paper is organized as follows. In Section~\ref{sectionPrelimandNotation} we setup notation and recall some preliminary results on Kloosterman sums and Maass forms. In Section~\ref{SectionMaassPoincareSeries} we construct (following the ideas of~\cite{StollerInterpolation}) modular integrals via Maass--Poincar\'e series by a special choice of coset representatives modulo parabolic subgroup for $\Gamma(2)$ and compute their Fourier expansions. Section~\ref{SectionKLweight2} reviews properties of weight $2$ Kloosterman sums, which we then use to prove properties of the corresponding weight $2$ \textit{real-variable} Kloosterman sums in Section~\ref{SectionRVKLweight2}. We prove Theorem~\ref{theoremMainDim4} in Section~\ref{sectionProof-of-theorem-FourierExpansion-weight2-s1}. Sections~\ref{SectionKLweight3/2}, \ref{SectionRVKLweight3/2} and \ref{sectionProof-of-theorem-FourierExpansion-weight3/2-s3/4} treat, respectively, the properties of weight $3/2$ Kloosterman sums, the properties of weight $3/2$ \textit{real-variable} Kloosterman sums, and the proof of Theorem~\ref{theoremMainDim3}. We conclude with a brief discussion of open questions in Section~\ref{SectionDiscussion}.

\subsection*{Acknowledgement}
The authors acknowledge funding by the European Union (ERC, FourIntExP, 101078782).
Views and opinions expressed are those of the author(s) only and do not necessarily reflect those of the European Union or European Research Council (ERC). Neither the European Union nor ERC can be held responsible for them.
\subsection*{Conflict of interest statement}
On behalf of all authors, the corresponding author states that there is no conflict of interest. 
\subsection*{Data availability statement}
No data were generated or analyzed in this study.

\section{Preliminaries and notation}
\label{sectionPrelimandNotation}
We use $\Z_+$ and $\R_+$ to denote the sets of positive integers and positive real numbers, respectively. Let  $I=\begin{psmallmatrix}
	1&0\\0&1
\end{psmallmatrix}$ denote the $2\times 2$ identity matrix, $T=\begin{psmallmatrix}
1&1\\0&1
\end{psmallmatrix}$ and $S=\begin{psmallmatrix}
0&-1\\1&0
\end{psmallmatrix}$. 
For $n\in \Z_+$, let $\phi(n)$ denote Euler's totient function and define the divisor sum function by
    \[\sigma_s(n)\defeq \sum_{d|n} d^s. \]
As a special case, $\sigma_0(n)$ is the number of divisors of $n$. Let $\sgn n\in \{-1,0,1\}$ denote the sign function for $n\in \Z$. For any prime $p$, let $\nu_p(n)$ denote the $p$-adic valuation of $n$, i.e., $p^{\nu_p(n)}\| n$, where $p^t\|b$ means $p^t|b$ and $p^{t+1}\nmid b$. We also denote
    \[\ee(\tau) \defeq e^{2\pi i \tau}\]
    and let $\Gamma(a,\tau)$ be the incomplete Gamma function \cite[(8.2.2)]{dlmf}. 

Let $\zeta(s)$ be the Riemann zeta function. If $\chi$ is a Dirichlet character, let $L(s,\chi)$ be the Dirichlet $L$-function for $s\in \C$ where
\[L(s,\chi)=\sum_{n=1}^\infty \frac{\chi(n)}{n^s}\qquad  \text{for }\re(s)>1. \]
For $m,n\in \Z$, let $(\frac mn)$ be the Kronecker symbol where $(\frac m{-1})= 1$ if $m\geq 0$ and $(\frac m{-1})= -1$ if $m<0$. In particular, for any $m\in \Z$, $(\frac{-4m}\cdot)$ is a Dirichlet character modulo $|4m|$. 

We recall the following standard congruence subgroups of $\SL_2(\Z)$. For $N\in \Z_+$, define 
\[\Gamma_0(N)=\{\begin{psmallmatrix}
	*&*\\c&*
\end{psmallmatrix}\in \SL_2(\Z):c\equiv 0\Mod N\},\quad \Gamma(N)=\{\gamma\in \SL_2(\Z):\gamma\equiv I\Mod N\}.\]
Whether $\Gamma(\cdot)$ refers the congruence subgroup (we only use $\Gamma(2)$) or the Gamma function (like $\Gamma(s\pm \frac k2)$) shall be clear among the context.  

Let $\Gamma_{\Theta}$ denote another congruence subgroup of $\SL_2(\Z)$:
\begin{equation}\label{eqGammaThetaGroup}
	\Gamma_{\Theta}=\{\gamma\in \SL_2(\Z):\gamma\equiv I \text{ or }S\Mod 2\}. 
\end{equation} 
It is known that $\Gamma_{\Theta}$ is freely generated by $T^2$ and $S$ subject to $S^2=-1$.

For $\tau \in \C^{\times}$, we define the argument $\arg (\tau)$ to lie in $(-\pi,\pi]$. Let $\HH\defeq \{\tau\in \C: \im \tau>0\}$ be the complex upper half-plane. For any $\gamma=\begin{psmallmatrix}
	*&*\\c&d
\end{psmallmatrix}\in \SL_2(\R)$ and $\tau\in\HH$, we define the automorphic factor
    \[j(\gamma,\tau)\defeq c\tau+d. \]
For $k\in \frac 12\Z$ and $\tau\in \C$, we define
    \[\tau^k=|\tau|^k \exp(ik\arg(\tau))\]
and the weight $k$ slash operator 
    \begin{equation}\label{eq:def:slashoperator}
        (f|_k\gamma)(\tau)\defeq j(\gamma,\tau)^{-k}f(\gamma \tau).
    \end{equation}
    
\begin{definition}\label{defMultiplierSystem}
	We say that $\nu:\Gamma\to \C^\times$ is a multiplier system of weight $k\in \frac 12\Z$ if
	\begin{enumerate}[label=(\roman*)]
		\item $|\nu|=1$,
		\item $\nu(-I)=e^{-\pi i k}$, and
		\item $\nu(\gamma_1 \gamma_2) =w_k(\gamma_1,\gamma_2)\nu(\gamma_1)\nu(\gamma_2)$ for all $\gamma_1,\gamma_2\in \Gamma$, where
		\[w_k(\gamma_1,\gamma_2)\defeq j(\gamma_2,\tau)^k j(\gamma_1,\gamma_2\tau)^k j(\gamma_1\gamma_2,\tau)^{-k}.\]
	\end{enumerate}
\end{definition}

If $\nu$ is a multiplier system of weight $k$, then it is also a multiplier system of weight $k'$ for any $k'\equiv k\pmod 2$, and its conjugate $\overline\nu$ is a multiplier system of weight $-k$. 
One can also easily check that
    \begin{equation}\label{MultiplierSystemBasicProprety}
    \nu(\gamma)\nu(\gamma^{-1})=1\quad\text{and} \quad  \nu(\gamma \begin{psmallmatrix}
    	1&bt\\0&1
    \end{psmallmatrix})=\nu(\gamma)\nu(\begin{psmallmatrix}
    	1&b\\0&1
    \end{psmallmatrix})^t\quad \text{for }b,t\in \Z. 
    \end{equation}
From the definition it follows that if $\nu$ is a multiplier system of weight $\frac12$, then
    \begin{equation}\label{eqMultiplierOn-Minus-gamma}
	\nu(-\gamma)= i\,\nu(\gamma) \quad \text{if }\gamma=\begin{psmallmatrix}
		*&*\\c&*
	\end{psmallmatrix} \text{ and }c>0. 
    \end{equation}

For any congruence subgroup $\Gamma$ and any cusp $\mathfrak{a}\in \mathbb{P}^1(\Q)/\Gamma$, let $\Gamma_{\mathfrak{a}}$ be the stabilizer of $\mathfrak{a}$ in $\Gamma$. For example, $\Gamma_\infty=\{\pm\begin{psmallmatrix}
	1&b\\0&1
\end{psmallmatrix}:b\in\Z\}\cap \Gamma$. Let $\sigma_{\mathfrak{a}}\in\SL_2(\R)$ denote a scaling matrix satisfying $\sigma_{\mathfrak{a}}\infty=\mathfrak{a}$ and $\sigma_{\mathfrak{a}}^{-1} \Gamma_{\mathfrak{a}}\sigma_{\mathfrak{a}}=\Gamma_\infty$.

Let $\overline{\Gamma}\defeq \Gamma/\{\pm I\}$ for any congruence subgroup $\Gamma$. For an element $\gamma \in \SL_2(\Z)$, we let $[\gamma]$ denote its class modulo $\{\pm I\}$. In this subsection we focus on $\Gamma(2)=\{\gamma\in \SL_2(\Z): \gamma\equiv I\Mod 2\}$ as a congruence subgroup of $\Gamma_{\Theta}$ \eqref{eqGammaThetaGroup}. We know that $\overline{\Gamma(2)}$ is freely generated by $A=[T^2]$ and $B=[ST^2S]$, and  $\Gamma_{\Theta}=\Gamma(2)\sqcup \Gamma(2)S$, where $\sqcup$ means the disjoint union.

\subsection{Kloosterman sums}
\label{subsectionKloosterman-Sums-Defs}
For any congruence subgroup $\Gamma$, we write the stabilizer of the cusp at $\infty$ as $\Gamma_{\infty}=\{\pm \begin{psmallmatrix}
	1&b\\0&1
\end{psmallmatrix}:b\in w\Z\}$ for some $w\in \Z_+$ which denotes the width of the cusp. For $c>0$, we define the Kloosterman sums with multiplier system $\nu$ on $\Gamma$ as
\begin{equation}\label{eqKloosterman-Sums-General-Def}
	S(m,n,c,\nu)=\sum_{\gamma=\begin{psmallmatrix}
			a&b\\c&d
	\end{psmallmatrix}\in \Gamma_{\infty}\setminus \Gamma/\Gamma_{\infty}} \nu(\gamma)^{-1} \ee\(\frac{ma+nd}{wc}\). 
\end{equation}
Let $S(m,n,c)\defeq S(m,n,c,{\text{id}}_{\SL_2(\Z)})$ denote the standard Kloosterman sums as \cite[(1)]{SarnakTsimerman09}. 

We define the theta function as
\begin{equation}\label{eqThetafunction}
	\Theta(\tau)=\sum_{n\in \Z} e^{\pi i n^2 \tau}.
\end{equation}
It satisfies
\begin{equation}
    \Theta(\gamma \tau)=\nu_{\Theta}(\gamma) (c\tau+d)^{\frac 12}\Theta(\tau),\quad \gamma\in \Gamma_\Theta, 
\end{equation}
where $\nu_{\Theta}$ is a weight $\frac 12$ multiplier system on $\Gamma_{\Theta}$. For $c>0$, we have
\begin{equation}\label{ThetaMultiplier}
	\nu_{\Theta}
	\begin{psmallmatrix}
		a&b\\c&d
	\end{psmallmatrix}=\left\{
	\begin{array}{ll}
		\ep_d^{-1}\(\frac{2c}d \),& c\equiv 0\Mod 2,\\
		\ee(-\frac 18)\ep_c\(\frac{2d}c \),&c\equiv 1\Mod 2,
	\end{array}
	\right.\quad \text{where } \ep_d=\left\{\begin{array}{ll}
		1,&d\equiv 1\Mod 4,\\
		i,&d\equiv 3\Mod 4,\\
		0,&\text{otherwise.}
	\end{array}
	\right. 
\end{equation}
Equivalently, by \cite[Theorem~7.1]{Tata1}, for $c>0$ we have
\begin{equation}
	\nu_{\Theta}
	\begin{psmallmatrix}
		a&b\\c&d
	\end{psmallmatrix}=\left\{
	\begin{array}{ll}
		i^{\frac {d-1}2}(\frac{c}{|d|} ),& c\equiv 0\Mod 2,\\
		\ee(-\frac c8)(\frac{d}c ),&c\equiv 1\Mod 2. 
	\end{array}
	\right.
\end{equation}
Specifically, we have $\nu_{\Theta}\begin{psmallmatrix}
	*&*\\0&*
\end{psmallmatrix}=1$,  $\nu_{\Theta}(S)=\ee(-\frac 18)$, and
\begin{equation}\label{eqNuTheta-gammaS-property}
    \nu_{\Theta}(\gamma S)=(\sgn d)\cdot \ee(-\tfrac 18)\,\nu_{\Theta}(\gamma) \quad \text{for }c>0.
\end{equation}
For $c<0$ we recall \eqref{eqMultiplierOn-Minus-gamma}. 

Another normalization for Jacobi's theta function is
\begin{equation}\label{eqthetafunction}
	\theta(\tau)\defeq \sum_{n\in \Z} \ee(n^2 \tau)=\Theta(2\tau). 
\end{equation}
It is a modular form of weight $\frac12$ on $\Gamma_0(4)$ with a multiplier system $\nu_\theta$ that is given by
\begin{equation}
	\theta(\gamma \tau)=\nu_\theta(\gamma) (c\tau+d)^{\frac 12}\theta(\tau), \quad  \nu_\theta\begin{psmallmatrix}
		a&b\\c&d
	\end{psmallmatrix}=\ep_d^{-1}(\tfrac cd), \quad \text{for } \gamma=\begin{psmallmatrix}
	a&b\\c&d
	\end{psmallmatrix}\in \Gamma_0(4). 
\end{equation}

Since the stabilizer in $\Gamma_{\Theta}$ of the cusp at $\infty$ is $(\Gamma_{\Theta})_{\infty}=\{\pm T^{2n}:n\in \Z\}$, the width of the cusp at $\infty$ is 2. For $k\in \frac 12\Z$ and $c\in \Z_+$, we can explicitly write down the weight $k$ Kloosterman sums defined on $\Gamma_{\Theta}$ with multiplier system $\nu_{\Theta}^{2k}$, where we shorten $a\Mod c$ to $a(c)$ for simplicity: 
\begin{equation}\label{eqKloosterman-Sums-on-Theta}
	S(m,n,c,\nu_{\Theta}^{2k})=\left\{\begin{array}{ll}
		\displaystyle \sum_{\substack{d(2c)\\ad\equiv 1(2c)}} \ep_d^{2k} \(\frac{2c}d\)^{2k} \ee\(\frac{ma+nd}{2c}\),&\text{ if }c\text{ is even,}\\
		\displaystyle \sum_{\substack{d(2c)\\2|a,\  2|d\\(a,c)=(d,c)=1\\ad\equiv 1(c)}}\ee(\tfrac k4)\ep_c^{-2k}\(\frac{2d}c\)^{2k} \ee\(\frac{ma+nd}{2c}\), &\text{ if } c\text{ is odd.}
	\end{array}
	\right.
\end{equation}
For $m,n\in \Z$ and $c\in 2\Z_+$, we have $4|2c$ and the following relations are clear by definition: 
\begin{equation}\label{eqKloosterman-Sums-Relation-Even-Theta-to-theta}
S(m,n,c,\nu_{\Theta}^{2k})=S(m,n,2c,\nu_\theta^{2k}),\quad \text{in particular, \ } S(m,n,c,\nu_{\Theta}^4)=S(m,n,2c).  
\end{equation}

We also need the conjugation property of Kloosterman sums. For any multiplier system~$\nu$ on a congruence subgroup $\Gamma$, if $\nu(\gamma)=1$ for all $\gamma\in \Gamma_{\infty}$, then 
\begin{equation}
	\overline{S(m,n,c,\nu)}=S(-m,-n,c,\overline{\nu}). 
\end{equation}
Specifically, we have
\begin{equation}\label{eqKloosterman-Sums-Conjugate-ThetasMulti}
	\overline{S(m,n,c,\nu_{\Theta}^{2k})}=S(-m,-n,c,\nu_{\Theta}^{-2k})\quad \text{and}\quad  \overline{S(m,n,c,\nu_{\theta}^{2k})}=S(-m,-n,c,\nu_{\theta}^{-2k}).
\end{equation}
We also have
\begin{equation}\label{eqKloosterman-Sums-alter-mn-inTheta}
S(m,n,c,\nu_{\Theta}^{2k})=S(n,m,c,\nu_{\Theta}^{2k}) \quad \text{for }2|c,
\end{equation}
because $ad\equiv 1\Mod {2c}$, for even $c$, implies $\ep_a=\ep_d$ and $(\frac{2c}{a})=(\frac{2c}d)$.

\subsection{Other relations between Kloosterman sums} \label{subsectionRelation-KL-sums}
In this subsection we prove the following proposition.
\begin{proposition}\label{propAppendix-KLsums-evenodd-relation}
	For $m,n\in \Z$ and $c=2\widetilde{c}$ where $\widetilde{c}\in \Z_+$ is odd, we have: 
	\begin{equation}\label{eqrelation-KLsum-evenodd-k1/2}
		S(m,4n,c,\nu_{\Theta})=\left\{\begin{array}{ll}
			\sqrt{2}\,S(m,n,\widetilde{c},\nu_{\Theta}),& \text{if }m\equiv 0,1\Mod 4;\\
			-\sqrt{2}\, S(m,n,\widetilde{c},\nu_{\Theta}),& \text{if }m\equiv 2,3\Mod 4;\\
		\end{array}\right.
	\end{equation}
	\begin{equation}\label{eqrelation-KLsum-evenodd-k3/2}
		S(m,4n,c,\nu_{\Theta}^3)=\left\{\begin{array}{ll}
			-\sqrt{2}\,S(m,n,\widetilde{c},\nu_{\Theta}^3),& \text{if }m\equiv 0,3\Mod 4;\\
			\sqrt{2}\,S(m,n,\widetilde{c},\nu_{\Theta}^3),& \text{if }m\equiv 1,2\Mod 4;\\
		\end{array}\right.
	\end{equation}
	\begin{equation}\label{eqrelation-KLsum-evenodd-k2}
		S(2m,2n,c,\nu_{\Theta}^4)=2(-1)^{m+n+1}S(m,n,\widetilde{c},\nu_{\Theta}^4). 
	\end{equation}
\end{proposition}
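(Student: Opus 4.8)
The plan is to prove all three identities by the same mechanism. Since $c=2\widetilde{c}$ is even while $\widetilde{c}$ is odd, the left-hand sides are computed from the even-modulus branch of \eqref{eqKloosterman-Sums-on-Theta}, namely a sum over residues $d$ modulo $2c=4\widetilde{c}$ with $ad\equiv 1\Mod{4\widetilde{c}}$, whereas the right-hand sides use the odd-modulus branch with modulus $2\widetilde{c}$. As $\gcd(4,\widetilde{c})=1$, I would invoke the Chinese Remainder Theorem to write $d \bmod 4\widetilde{c}$ as a pair $(\delta,e)$ with $\delta=d\bmod 4\in\{1,3\}$ and $e=d\bmod\widetilde{c}$, and split the phase accordingly: in the half-integral cases the phase is $\ee\!\left(\frac{ma+4nd}{4\widetilde{c}}\right)$, whose $n$-part is simply $\ee\!\left(\frac{nd}{\widetilde{c}}\right)=\ee\!\left(\frac{ne}{\widetilde{c}}\right)$, while the $m$-part $\ee\!\left(\frac{ma}{4\widetilde{c}}\right)$ splits through $\frac{1}{4\widetilde{c}}\equiv\frac{\overline{\widetilde{c}}}{4}+\frac{\overline 4}{\widetilde{c}}\Mod 1$ into a factor depending on $\delta$ and one depending on $e$. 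This factors each left-hand sum as a product of a $2$-part (a length-two sum over $\delta$) and an odd part (a sum over $e$).

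First I would match the odd parts. Multiplicativity of the Kronecker symbol and $\left(\frac{4}{d}\right)=1$ for odd $d$ give $\left(\frac{4\widetilde{c}}{d}\right)=\left(\frac{\widetilde{c}}{d}\right)$, and quadratic reciprocity turns $\left(\frac{\widetilde{c}}{d}\right)$ into $\left(\frac{d}{\widetilde{c}}\right)$ times a sign depending only on $\delta$ and $\widetilde{c}\bmod 4$. Since $\left(\frac{d}{\widetilde{c}}\right)=\left(\frac{e}{\widetilde{c}}\right)$ and $\ee\!\left(\frac{ne}{\widetilde{c}}\right)$ depend only on $e$, the odd part becomes, after reindexing $e\bmod\widetilde{c}$ as an even residue modulo $2\widetilde{c}$, exactly the odd branch of \eqref{eqKloosterman-Sums-on-Theta} computing $S(m,n,\widetilde{c},\nu_{\Theta}^{2k})$, up to the explicit constant coming from the difference between $\left(\frac{e}{\widetilde{c}}\right)$ and $\left(\frac{2e}{\widetilde{c}}\right)$ and from the normalizing factor $\ee(k/4)\ep_{\widetilde{c}}^{-2k}$ in that branch.

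The heart of the argument is the $2$-part, a sum over $\delta\in\{1,3\}$ weighted by $\ep_\delta^{2k}$, the reciprocity sign, and the mod-$4$ phase $\ee\!\left(\frac{m\delta^{-1}\overline{\widetilde{c}}}{4}\right)$. For the half-integral weights $2k\in\{1,3\}$ this is a two-term Gauss-type sum of absolute value $\sqrt{2}$, and running the four cases $m\bmod 4$ yields precisely the signs in \eqref{eqrelation-KLsum-evenodd-k1/2} and \eqref{eqrelation-KLsum-evenodd-k3/2}. For weight $2$ ($2k=4$) all symbols are trivial, and by \eqref{eqKloosterman-Sums-Relation-Even-Theta-to-theta} the assertion becomes the classical ordinary-Kloosterman identity $S(2m,2n,4\widetilde{c})=2(-1)^{m+n+1}S(m,n,2\widetilde{c})$. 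This follows from standard twisted multiplicativity applied to $4\widetilde{c}=4\cdot\widetilde{c}$ and $2\widetilde{c}=2\cdot\widetilde{c}$, together with the elementary evaluations of $S(\cdot,\cdot,2)$ and $S(\cdot,\cdot,4)$; here the factor $2$ reflects $\varphi(4)/\varphi(2)=2$ and the sign $(-1)^{m+n+1}$ arises from the mod-$2$ and mod-$4$ local sums.

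The main obstacle is the phase bookkeeping in the half-integral cases: the factors $\ep_d$, the reciprocity sign, and the mod-$4$ part of $\ee\!\left(\frac{ma}{4\widetilde{c}}\right)$ must be combined correctly, and one must check that the product of the $2$-part value with the odd-part constant collapses to the stated $\pm\sqrt{2}$ with no residual dependence on $\widetilde{c}$. I expect the cleanest route is to fix the class of $m\bmod 4$, compute the $2$-part and odd-part constants simultaneously, and verify that all $\widetilde{c}$-dependence (through $\ep_{\widetilde{c}}$, $\left(\frac{2}{\widetilde{c}}\right)$, and the reciprocity sign) cancels; the four residue classes then reproduce the four-line sign tables in the statement.
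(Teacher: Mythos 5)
Your plan for the two half-integral identities \eqref{eqrelation-KLsum-evenodd-k1/2} and \eqref{eqrelation-KLsum-evenodd-k3/2} is sound and is essentially the paper's own argument in different clothing: the paper extracts the $2$-part by pairing $d\leftrightarrow d+c$ (reducing to $d\equiv 1\Mod 4$ and producing the prefactor $1+i^{2k}(-1)^{\frac{\widetilde{c}-1}{2}\cdot 2k+m}$), splits the phase via $4\cdot\overline{4_{\widetilde{c}}}=1+\beta\widetilde{c}$, and reindexes with Lemma~\ref{lemmaAppendix-two-ranges-modulo-2c}; your CRT factorization into a length-two $2$-part times an odd part performs the same computation. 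I checked a representative case and your $2$-part does collapse to $\sqrt2\,\ee(\tfrac k4)\ep_{\widetilde{c}}^{-2k}$ times the stated sign, which is exactly the normalization of the odd branch of \eqref{eqKloosterman-Sums-on-Theta}, so the bookkeeping you defer is genuinely routine.

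The weight-$2$ case, however, contains a real error. The ``classical ordinary-Kloosterman identity'' you reduce to, $S(2m,2n,4\widetilde{c})=2(-1)^{m+n+1}S(m,n,2\widetilde{c})$, is false: twisted multiplicativity gives $S(2m,2n,4\widetilde{c})=S\bigl(2m\overline{\widetilde{c}},2n\overline{\widetilde{c}},4\bigr)\,S\bigl(\overline{2}m,\overline{2}n,\widetilde{c}\bigr)=2(-1)^{m+n}S(\overline{2}m,\overline{2}n,\widetilde{c})$, while $S(m,n,2\widetilde{c})=(-1)^{m+n}S(\overline{2}m,\overline{2}n,\widetilde{c})$, so the local factors at $2$ and $4$ cancel and the true identity is $S(2m,2n,4\widetilde{c})=2\,S(m,n,2\widetilde{c})$ with no sign. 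The source of the slip is that \eqref{eqKloosterman-Sums-Relation-Even-Theta-to-theta} only applies to even moduli, so you may not identify $S(m,n,\widetilde{c},\nu_{\Theta}^4)$ with $S(m,n,2\widetilde{c})$ for odd $\widetilde{c}$; unwinding the odd branch of \eqref{eqKloosterman-Sums-on-Theta} (with its factor $\ee(\tfrac k4)=-1$ and the constraint that $a,d$ be even) gives $S(m,n,\widetilde{c},\nu_{\Theta}^4)=-S(\overline{2}m,\overline{2}n,\widetilde{c})=(-1)^{m+n+1}S(m,n,2\widetilde{c})$. Your two sign errors happen to cancel, so the final statement \eqref{eqrelation-KLsum-evenodd-k2} is recovered, but the proof as written would break down at the point where you try to verify your intermediate identity from the local sums $S(\cdot,\cdot,2)$ and $S(\cdot,\cdot,4)$ — the sign $(-1)^{m+n+1}$ does not come from there. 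The paper instead evaluates $S(2m,2n,c,\nu_{\Theta}^4)$ directly, splitting the phase with $\overline{4_{\widetilde{c}}}$ and reading off the sign from $\ee\bigl(\tfrac{2(m+n)\widetilde{c}}{4}\bigr)$ together with $\ee(\tfrac k4)=-1$; you should either do the same or correctly track the discrepancy between $S(m,n,\widetilde{c},\nu_{\Theta}^4)$ and $S(m,n,2\widetilde{c})$.
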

These relations will help us in estimating sums of $S(m,n,\widetilde{c},\nu_{\Theta}^{2k})$ for $k=\frac 32$ and $2$.

The proof goes along similar lines as Bir\'o's work in \cite[Appendix \S A.3]{Biro2000cycleintegral}, which is about Kloosterman sums $S(m,n,c,\nu_{\theta})$ on $\Gamma_0(4)$ with cusp pairs $(\infty,\infty)$, $(\infty,0)$ and $(\infty,\frac 12)$. One can show that the Kloosterman sum $S(m,n,\widetilde{c},\nu_{\Theta}^{2k})$ for odd~$\widetilde{c}$ is essentially the Kloosterman sum with cusp pair $(\infty,0)$ on $\Gamma(2)$. The relation between $\Gamma_0(4)$ and $\Gamma(2)$ allows us to translate Bir\'o's computations to our setting.

The following lemma is direct and we omit the proof.

\begin{lemma}\label{lemmaAppendix-two-ranges-modulo-2c}
	For odd $\widetilde{c}\in \Z_+$, the following integer sets are the same modulo $2\widetilde{c}$: 
	\begin{itemize}
		\item[(1)] $\{d:\ \  1\leq d\leq 2\widetilde{c},\ 2|d,\ (d,\widetilde{c})=1\}$;
		\item[(2)] $\{2d:\ 1\leq d\leq 4\widetilde{c},\ (d,2\widetilde{c})=1,\ d\equiv 1\Mod 4\}$; 
		\item[(3)] $\{\beta d:\ 1\leq d\leq 2\widetilde{c},\ (d,2\widetilde{c})=1\}$, for any even $\beta$ with $(\beta,\widetilde{c})=1$. 
	\end{itemize}
\end{lemma}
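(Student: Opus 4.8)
The plan is to reduce all three sets modulo $2\widetilde{c}$ via the Chinese Remainder Theorem, exploiting that $\widetilde{c}$ is odd so that $\gcd(2,\widetilde{c})=1$ and hence $\Z/2\widetilde{c}\Z\cong\Z/2\Z\times\Z/\widetilde{c}\Z$. Under this isomorphism I would isolate a common target set $\mathcal{T}$, namely the residues mod $2\widetilde{c}$ corresponding to $\{0\}\times(\Z/\widetilde{c}\Z)^\times$, that is, the residues that are even and coprime to $\widetilde{c}$; this set has exactly $\phi(\widetilde{c})$ elements. The entire lemma then reduces to showing that each of sets (1), (2), (3), reduced mod $2\widetilde{c}$, equals $\mathcal{T}$.

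First I would dispose of set (1) directly: the conditions $2\mid d$ and $(d,\widetilde{c})=1$ say precisely that $d$ reduces into $\mathcal{T}$, and since $d$ runs over the complete residue system $\{1,\dots,2\widetilde{c}\}$, each class in $\mathcal{T}$ is represented exactly once. Thus set (1) is a full system of representatives for $\mathcal{T}$.

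For sets (2) and (3) I would use the same two-step pattern of \emph{containment} followed by a \emph{bijectivity} count. Containment is immediate: $2d$ (resp.\ $\beta d$) is even, and is coprime to $\widetilde{c}$ since $\gcd(2,\widetilde{c})=1$ and $(d,\widetilde{c})=1$ (resp.\ since $(\beta,\widetilde{c})=1$ and $(d,\widetilde{c})=1$), so both sets reduce into $\mathcal{T}$. For bijectivity I track the residue mod $\widetilde{c}$ alone. In set (2), applying CRT to $\Z/4\widetilde{c}\Z\cong\Z/4\Z\times\Z/\widetilde{c}\Z$, the constraints $d\equiv1\Mod4$ and $(d,\widetilde{c})=1$ with $1\le d\le4\widetilde{c}$ force $d\bmod\widetilde{c}$ to range over $(\Z/\widetilde{c}\Z)^\times$ exactly once; multiplication by the unit $2$ permutes $(\Z/\widetilde{c}\Z)^\times$, so $2d\bmod\widetilde{c}$ also covers every unit once, and together with evenness this yields all of $\mathcal{T}$ once. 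In set (3), CRT for $\Z/2\widetilde{c}\Z$ shows that $d\bmod\widetilde{c}$ runs over $(\Z/\widetilde{c}\Z)^\times$ exactly once as $d$ ranges over residues coprime to $2\widetilde{c}$ in $[1,2\widetilde{c}]$; multiplication by the unit $\beta\bmod\widetilde{c}$ again permutes the units, giving $\mathcal{T}$ once.

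There is no genuine obstacle here, since the statement is elementary; the only point requiring care is bookkeeping. One must verify equality of the reduced sets and not merely containment, which is exactly what the bijectivity (equivalently, the cardinality count to $\phi(\widetilde{c})$) secures, and one should confirm that the interval endpoints introduce no spurious or missing residues (for instance $d=2\widetilde{c}$ and $d=4\widetilde{c}$ are automatically excluded by the coprimality conditions). The essential mechanism throughout is that $2$ and the even $\beta$ are units modulo the odd $\widetilde{c}$, so multiplication by either induces a bijection of $(\Z/\widetilde{c}\Z)^\times$.
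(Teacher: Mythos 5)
Your argument is correct. The paper states this lemma without proof (``the following lemma is direct and we omit the proof''), and your CRT reduction to the common target set $\{0\}\times(\Z/\widetilde{c}\Z)^\times$, together with the observation that $2$ and $\beta$ are units modulo the odd $\widetilde{c}$ and the cardinality count $\phi(\widetilde{c})$, is a complete and accurate way to fill in the omitted verification.
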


Now we suppose $\widetilde{c}\in \Z_+$ is odd and $c=2\widetilde{c}$. Let $\overline{x_{n}}$ denote the inverse of $x$ modulo~$n$. For every $d$ modulo $2c$ such that $(d,2c)=1$, we use the pairing $d\leftrightarrow d+c$, the fact that $d\equiv (-1)^n\Mod 4$ if and only if $d+c\equiv (-1)^{n+1}\Mod 4$, and the fact that $\overline{(d+c)_{2c}}\equiv \overline{d_{2c}}+c$ modulo $2c$. Then we have
\begin{align}\label{eqAppendix-KL-m4n2c-compute}
	\begin{split}
		S(m,4n,c,\nu_{\Theta}^{2k})&=\sum_{\substack{d\Mod{2c}\\ad\equiv 1(2c)}} \ep_d^{2k} \(\frac{2c}d\)^{2k} \ee\(\frac{ma+4nd}{2c}\)\\
		&=\(1+i^{2k}(-1)^{\frac{\widetilde{c}-1}2\cdot 2k+m}\)\sum_{\substack{d\Mod{2c}\\ad\equiv 1(2c)\\d\equiv 1\Mod 4}}  \(\frac{d}{c/2}\)^{2k} \ee\(\frac{ma+4nd}{2c}\).
	\end{split}
\end{align}
Since $4\cdot\overline{4_{\widetilde{c}}}=1+\beta \widetilde{c}$ implies $\beta+\widetilde{c}\equiv 0\Mod 4$, for $a\equiv 1\Mod 4$ we have
\begin{equation}\label{eqAppendix-ma2c-to-m2a4inverse}
	\ee\(\frac{ma}{2c}\)=\ee\(\frac{ma\cdot \overline{4_{\widetilde{c}}}}{\widetilde{c}}\)\ee\(\frac{m\widetilde{c}}4\). 
\end{equation}
We obtain that
\begin{equation}\label{eqAppendix-quadratic-tildec-relation-alotof-2k134}
\(1+i^{2k}(-1)^{\frac{\widetilde{c}-1}2\cdot 2k+m}\)\ee\(\frac{m\widetilde{c}}4\) =\left\{\begin{array}{ll}
	(1+i) \ep_{\widetilde{c}}^{-1}, &\text{if }2k=1,\ m\equiv 0,1\Mod 4;\\
	-(1+i) \ep_{\widetilde{c}}^{-1}, &\text{if }2k=1,\ m\equiv 2,3\Mod 4;\\
	(1-i)\ep_{\widetilde{c}}, &\text{if }2k=3,\ m\equiv 0,3\Mod 4;\\
	-(1-i)\ep_{\widetilde{c}}, &\text{if }2k=3,\ m\equiv 1,2\Mod 4.\\
\end{array}
\right.
\end{equation}

Taking the following facts into account: 
\[\(\frac{d}{c/2}\)=\(\frac{2\cdot 2d}{\widetilde{c}}\),\quad (2a\cdot \overline{4_{\widetilde{c}}})\cdot(2d)\equiv 1\Mod {\widetilde{c}},\]
the equations \eqref{eqrelation-KLsum-evenodd-k1/2} and \eqref{eqrelation-KLsum-evenodd-k3/2} in Proposition~\ref{propAppendix-KLsums-evenodd-relation} are then proved by combining \eqref{eqKloosterman-Sums-on-Theta}, \eqref{eqAppendix-KL-m4n2c-compute}, \eqref{eqAppendix-ma2c-to-m2a4inverse},  \eqref{eqAppendix-quadratic-tildec-relation-alotof-2k134}, and Lemma~\ref{lemmaAppendix-two-ranges-modulo-2c}. 

Note that the left hand side of \eqref{eqAppendix-quadratic-tildec-relation-alotof-2k134} is always $0$ when $k=2$ and $m\equiv 1,3\Mod 4$. Hence from there we cannot conclude the relation between $S(m,4n,c,\nu_{\Theta}^4)$ and $S(m,n,\widetilde{c},\nu_{\Theta}^4)$. In fact, for $k=2$ we need to modify the coefficients in \eqref{eqrelation-KLsum-evenodd-k2}. By \eqref{eqAppendix-ma2c-to-m2a4inverse} we have
\begin{align*}
	S(2m,2n,c,\nu_{\Theta}^4)&=\sum_{\substack{d\Mod{2c}\\ad\equiv 1(2c)}} \ee\(\frac{2ma}{2c}\)\ee\(\frac{2nd}{2c}\)\\
	&=2\sum_{\substack{d\Mod{c}\\ad\equiv 1(2c)}} \ee\(\frac{2ma}{4\widetilde{c}}\)\ee\(\frac{2nd}{4\widetilde{c}}\)\\
	&=2\sum_{\substack{d\Mod{2\widetilde{c}}\\ad\equiv 1(\widetilde{c})}} \ee\(\frac{m(4\cdot \overline{4_{\widetilde{c}}}) a}{2\widetilde{c}}\)\ee\(\frac{n(4\cdot \overline{4_{\widetilde{c}}}) d}{2\widetilde{c}}\)\ee\(\frac{2(m+n)\widetilde{c}}4\)
\end{align*}
We finish the proof of \eqref{eqrelation-KLsum-evenodd-k2} by Lemma~\ref{lemmaAppendix-two-ranges-modulo-2c} and $\ee(\frac k4)=-1$.

\subsection{Maass forms}
\label{subsectionMaass-forms}
Details in this subsection can be find in various references \cite{Proskurin2005,DFI2002,DFI12,AAimrn,ahlgrendunn}; we are mainly following \cite[\S3]{QihangFirstAsympt}.

We call a function $f:\HH\rightarrow \C$ automorphic of weight $k$ with a multiplier $\nu$ on a congruence subgroup $\Gamma$ if
\[\(\frac{cz+d}{|cz+d|}\)^{-k} f(\gamma \tau)=\nu(\gamma) f(\tau)\quad \text{for all }\gamma=\begin{psmallmatrix}
	a&b\\c&d
\end{psmallmatrix}\in \Gamma. \]
Let $\mathcal{A}_k(\Gamma,\nu)$ denote the linear space consisting of all such functions, and let $\Lform_k(\Gamma,\nu)$ denote the space of square-integrable functions on $\Gamma\setminus \HH$ with respect to the hyperbolic measure 
\[d\mu(\tau)=\frac{dxdy}{y^2}\,.\]
The space $\Lform_k(\Gamma,\nu)$ is equipped with the Petersson inner product
\begin{equation}
	\langle f,g\rangle_{\Gamma}\defeq \int_{\Gamma\setminus \HH} f(\tau) \overline{g(\tau)}\frac{dxdy}{y^2} \quad\text{for }f,g\in \Lform_k(\Gamma,\nu). 
\end{equation}
The weight $k$ hyperbolic Laplacian is defined as
\[\Delta_k\defeq y^2\(\frac{\partial^2}{\partial x^2}+\frac{\partial^2}{\partial y^2}\)-iky \frac{\partial}{\partial x}. \]
It is known that $-\Delta_k$ is a self-adjoint operator on the Hilbert space $\Lform_k(\Gamma,\nu)$. The spectrum of $\Delta_k$ contains two parts: the continuous spectrum $\lambda\in [\frac 14,\infty)$ and the discrete spectrum of finite multiplicity $\lambda_0=\frac{|k|}2(1-\frac {|k|}2)<\lambda_1<\cdots \rightarrow \infty$. 

Let $M_k(\Gamma,\nu)$ denote the space of weight $k$ holomorphic modular forms on $(\Gamma,\nu)$. We recall the Serre-Stark basis theorem. 
\begin{theorem}[{\cite[Theorem~A]{SerreStark}}]
	The basis of $M_{\frac 12}(\Gamma_0(N),\nu_{\theta})$ consists of the theta series
	\[\theta_{\psi,t}(\tau)=\sum_{n\in  \Z} \psi(n) \ee(tn^2 z)\]
	where $t\in \Z_+$ and $\psi$ is an even primitive character with conductor $r(\psi)$, satisfying both $4r(\psi )^2 t|N$ and $\psi=(\frac D\cdot)$ where $D$ is the discriminant of $\Q(\sqrt t)/\Q$.
\end{theorem}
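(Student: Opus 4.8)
The plan is to prove the two inclusions separately: that each candidate theta series lies in $M_{1/2}(\Gamma_0(N),\nu_\theta)$, and conversely that these series span the entire space.

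For the first inclusion I would check directly that, for an even primitive character $\psi$ of conductor $r(\psi)$ with $4r(\psi)^2 t \mid N$ and $\psi=(\frac D\cdot)$, the series $\theta_{\psi,t}$ is a holomorphic modular form of weight $\tfrac12$ on $\Gamma_0(N)$ transforming with \emph{exactly} the theta multiplier $\nu_\theta$. Invariance under $T$ is immediate from the $q$-expansion, while the transformation under the Fricke-type involution $\tau\mapsto -1/(4r(\psi)^2 t\,\tau)$ is a Poisson-summation computation whose root number is the normalized Gauss sum of $\psi$; the hypotheses that $\psi$ is even and equals $(\frac D\cdot)$ for the discriminant $D$ of $\Q(\sqrt t)$ are precisely what make this root number trivial, so that the nebentypus is trivial and the multiplier is $\nu_\theta$ rather than $\nu_\theta\cdot\chi$ for a nontrivial $\chi$. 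Holomorphy and the correct growth at every cusp follow from the explicit $q$-expansion. Finally, distinct admissible pairs $(\psi,t)$ produce series supported on distinct square classes $\{t m^2\}$, so the $\theta_{\psi,t}$ are linearly independent, and it remains only to show they span.

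For the spanning statement I would first invoke finite-dimensionality of $M_{1/2}(\Gamma_0(N),\nu_\theta)$ (the valence formula in half-integral weight) and then diagonalize the commuting family of Hecke operators $T_{p^2}$, $p\nmid N$, which preserve the space, are normal for the Petersson product on its cuspidal part, and admit the $\theta_{\psi,t}$ as simultaneous eigenforms. This reduces the problem to showing that every Hecke eigenform $f=\sum_n a(n)q^n$ is proportional to a single theta series. Writing $\omega_p$ for the $T_{p^2}$-eigenvalue, the weight-$\tfrac12$ Hecke recursion (trivial nebentypus) reads
\[
a(p^2 n)=\Big(\omega_p-\big(\tfrac np\big)p^{-1}\Big)a(n)-p^{-1}a(n/p^2),
\]
and the goal is to force $\omega_p$ into the theta shape and to show that the coefficients $a(n)$ are supported on a single square class, whence $f$ is a theta series.

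The crux — and the step I expect to be the main obstacle — is precisely this rigidity, which is special to the minimal weight $\tfrac12$. Fixing a squarefree $n_0$ with $a(n_0)\neq0$ and iterating the recursion along $c_k\defeq a(p^{2k}n_0)$ yields $c_{k+1}=\omega_p c_k-p^{-1}c_{k-1}$, whose characteristic roots have product of modulus $p^{-1}$; the Hecke bound $a(m)\ll_\ep m^{1/4+\ep}$ then forces $|\omega_p|\ll p^{1/2}$, but this alone does \emph{not} pin $\omega_p$ to the theta eigenvalue. To close the gap I would pass through the Shimura correspondence, which attaches to such an eigenform a Hecke eigenform of weight $0$; since holomorphic forms of weight $0$ are constants and there are no nonzero weight-$0$ cusp forms, the cuspidal shadow of $f$ must vanish and $f$ is pinned to a theta series. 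Alternatively one can run Serre and Stark's more elementary refinement, showing directly that $a(n)=0$ unless $n$ lies in an admissible square class and that within each class the coefficients are governed by a single character. The delicate bookkeeping is then to match each eigensystem to a unique admissible pair $(\psi,t)$ with $4r(\psi)^2 t\mid N$ and $\psi=(\frac D\cdot)$, which I expect to be the most technical part of the argument.
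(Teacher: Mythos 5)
First, note that the paper does not prove this statement at all: it is quoted verbatim as \cite[Theorem~A]{SerreStark} and used as a black box (its only role here is the corollary that $M_{\frac12}(\Gamma_0(4),\nu_\theta)=M_{\frac12}(\Gamma_0(8),\nu_\theta)=\C\theta$). So there is no in-paper argument to compare yours against; what can be assessed is whether your blind sketch would actually prove the Serre--Stark theorem.

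The forward direction (modularity of $\theta_{\psi,t}$ with exactly the multiplier $\nu_\theta$, linear independence via disjoint square classes) is fine and matches the standard treatment. The gap is in the converse, and it sits exactly where you flagged it. Your proposed fix --- ``pass through the Shimura correspondence, which attaches to such an eigenform a Hecke eigenform of weight $0$'' --- is not available: Shimura's correspondence is constructed only for weight $\ge \tfrac32$, and the weight-$\tfrac12$ case is precisely the one it excludes; there is no ``cuspidal shadow in weight $0$'' to appeal to, and the Serre--Stark theorem is itself the substitute for such a lift in minimal weight, so invoking it here is circular. The actual argument closes the gap with a different analytic input: not the individual Hecke bound $a(m)\ll_\ep m^{1/4+\ep}$ (which, as you correctly observe, only gives $|\omega_p|\ll p^{1/2}$ and pins down nothing), but a Rankin--Selberg mean-square estimate of the shape $\sum_{n\le x}|a(n)|^2=O(x^{1/2})$ together with a careful analysis of the expansions at all cusps; this is what forces the coefficients of an eigenform to be supported on a single square class $t(\Z_+)^2$ and, within it, to be multiplicative of modulus bounded away from growth, whence $a(tm^2)=\psi(m)a(t)$ for a character $\psi$ and the eigenvalue takes the theta value $\psi(p)(1+p^{-1})$. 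Your closing sentence (``alternatively one can run Serre and Stark's more elementary refinement'') simply defers to that argument without supplying it, so the crux of the theorem is left unproved in your write-up.
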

We have the following simple corollary. 
\begin{corollary}
	The two spaces are identical: $M_{\frac 12}(\Gamma_0(4),\nu_{\theta})=M_{\frac 12}(\Gamma_0(8),\nu_{\theta})$. Both of them are one-dimensional and generated by $\theta$, defined in \eqref{eqthetafunction}. 
\end{corollary}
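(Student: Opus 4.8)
The plan is to apply the Serre--Stark basis theorem directly in each of the two cases $N=4$ and $N=8$, enumerate all admissible pairs $(\psi,t)$, and verify that in both cases the only surviving pair produces $\theta$.

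First I would record the precise shape of the admissibility conditions. For a fixed $t\in\Z_+$, the requirement $\psi=(\frac{D}{\cdot})$, where $D$ is the discriminant of $\Q(\sqrt t)/\Q$, pins down $\psi$ completely: since such a $D$ is a fundamental discriminant, $(\frac{D}{\cdot})$ is a primitive character of conductor $r(\psi)=|D|$. Thus the level condition $4r(\psi)^2t\mid N$ becomes $4D^2t\mid N$, and the entire search reduces to determining which $t$ satisfy this divisibility.

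Next I would split on the value of $D$. We have $D=1$ precisely when $\Q(\sqrt t)=\Q$, i.e.\ when $t$ is a perfect square, in which case $\psi$ is trivial; otherwise $\Q(\sqrt t)$ is a real quadratic field and $|D|\ge 5$. In the latter case $4D^2t\ge 4\cdot 5^2=100>8\ge N$, so the level condition fails for both $N=4$ and $N=8$ and no such pair contributes. In the former case ($t$ a perfect square, $D=1$) the condition is $4t\mid N$: for $N=4$ this forces $t=1$, and for $N=8$ it would allow $t\in\{1,2\}$, but $t=2$ is not a perfect square, so again only $t=1$ survives. In either case the unique admissible pair is $(\psi,t)=(\mathbf 1,1)$, giving $\theta_{\mathbf 1,1}(\tau)=\sum_{n\in\Z}\ee(n^2\tau)=\theta(\tau)$.

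Concluding, both $M_{\frac12}(\Gamma_0(4),\nu_\theta)$ and $M_{\frac12}(\Gamma_0(8),\nu_\theta)$ have $\{\theta\}$ as a basis, hence are one-dimensional, equal, and generated by $\theta$. The argument is essentially mechanical; the only point requiring genuine care is the bookkeeping in the middle step---correctly observing that the condition $\psi=(\frac{D}{\cdot})$ forces $r(\psi)=|D|$, which then feeds back into the level divisibility. In particular, $t=2$ must be excluded for $N=8$ not because $t=2\nmid 2$, but because $\Q(\sqrt 2)$ has discriminant $8$ rather than $1$, so the associated $\psi$ is nontrivial of conductor $8$ and the resulting level $4\cdot 8^2\cdot 2=512$ fails to divide $8$.
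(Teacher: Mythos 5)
Your proposal is correct and is exactly the argument the paper intends: the corollary is stated as an immediate consequence of the Serre--Stark theorem, and the enumeration you carry out (only $D=1$, $t=1$ survives the condition $4r(\psi)^2t\mid N$ for $N=4,8$, since any nontrivial $\psi=(\frac{D}{\cdot})$ has conductor $|D|\ge 5$ and already forces $4r(\psi)^2t\ge 100$) is the intended verification. Your closing remark correctly identifies the one point of bookkeeping where a careless reading could go wrong.
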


Let $\LEigenform_k(\Gamma,\nu,\lambda)\subset \Lform_k(\Gamma,\nu)$ be the subspace spanned by eigenfunctions of $\Delta_k$ on $(\Gamma,\nu)$ with eigenvalue $\lambda$. There is a one-to-one correspondence between all $f\in \LEigenform_k(\Gamma,\nu,\lambda_0)$ and $F\in M_k(\Gamma,\nu)$ by
\begin{equation}\label{eqCorrespondence-Maass-and-holomorphic}
	f(\tau)=\left\{\begin{array}{ll}
		y^{\frac k2}F(\tau),&\quad k\geq 0,\ F\in M_k(\Gamma,\nu);\\
		y^{-\frac k2}\overline{F(\tau)}, &\quad k<0,\ F\in M_{-k}(\Gamma,\overline{\nu}). 
	\end{array}
	\right. 
\end{equation}
For example, every $f\in \Lform_{\frac 12}(\Gamma_0(4),\nu_{\theta},\frac 3{16})$ and every $g\in \Lform_{-\frac 12}(\Gamma_0(4),\overline{\nu_{\theta}},\frac 3{16})$ are of the form
\begin{equation}\label{eqExpansion-theta-Weight-pm1/2}
	f(\tau)=C_f y^{\frac 14}\theta(\tau), \quad g(\tau)=C_g y^{\frac 14}\overline{\theta(\tau)}\quad \text{for some }C_f,C_g\in \C,
\end{equation} 
 Moreover, such $f$ and $g$ are eigenfunctions of corresponding hyperbolic Laplacian with eigenvalue $\lambda_0=\frac 3{16}$. By \cite[(2.18)]{QihangSecondAsympt} (or also \cite[(6.2)]{AAAlgbraic16}) and \eqref{eqExpansion-theta-Weight-pm1/2} above, their Fourier expansions satisfy:
\begin{align}\label{eqMaass-form-coeffs-Whittaker}
	\begin{split}
		f(\tau)&= C_f y^{\frac 14}+\sum_{n=1}^\infty \rho_0^{(f)}(n) W_{\frac 14,\frac 14}(4\pi n y)\ee(nx)= C_f y^{\frac 14}+\sum_{m=1}^\infty 2C_f y^{\frac 14} \ee(m^2 \tau),\\
	g(\tau)&= C_g y^{\frac 14}+\sum_{n=-1}^{-\infty} \rho_0^{(g)}(n) W_{\frac 14,\frac 14}(4\pi |n| y) \ee(nx)=C_g y^{\frac 14}+\sum_{m=1}^\infty 2C_g y^{\frac 14} \ee(-m^2 \overline{\tau}).
	\end{split}
\end{align}

We need to know $\rho_0^{(f)}(n)$ and $\rho_0^{(g)}(n)$ when $f$ and $g$ are normalized eigenforms, i.e. when $\langle f,f\rangle_{\Gamma_0(4)}=\langle g,g\rangle_{\Gamma_0(4)}=1$, in order to apply theorems like \cite[Theorem~2]{gs} to estimate sums of Kloosterman sums. If $\langle f,f\rangle_{\Gamma_0(4)}=\langle g,g\rangle_{\Gamma_0(4)}=1$, then
\[C_f=C_g=(2\pi)^{-\frac 12}\]
due to the regularized Petersson inner product on the theta function: 
	\[\int_{\Gamma_0(4)\setminus \HH} y^{\frac 12}|\theta(\tau)|^2\frac{dxdy}{y^2}=2\pi.\]
This result can be found in \cite[Theorem~2.2]{Chiera2007PeterssonProduct}, noting that $[\SL_2(\Z):\Gamma_0(4)]=6$. Similarly, identities
\[\int_{\Gamma_0(8)\setminus \HH} y^{\frac 12}|\theta(\tau)|^2\frac{dxdy}{y^2}=4\pi \quad \text{and}\quad [\SL_2(\Z):\Gamma_0(8)]=12\]
help us deal with the case on $\Gamma_0(8)$.

By \cite[(13.18.2)]{dlmf}, $W_{\frac 14,\frac 14}(4\pi |n|y)=e^{-2\pi |n|y}(4\pi |n|y)^{\frac 14}$. Therefore, we get $\rho_0^{(f)}(n)$ and $\rho_0^{(g)}(n)$ by comparing coefficients in  \eqref{eqMaass-form-coeffs-Whittaker}. 
\begin{lemma}\label{lemmaMaass-form-coeffs-thetapm}
	Suppose $f\in \LEigenform_{\frac 12}(\Gamma_0(4),\nu_{\theta},\frac 3{16})$, $g\in \LEigenform_{-\frac 12}(\Gamma_0(4),\overline{\nu_{\theta}},\frac 3{16})$, $\langle f,f\rangle_{\Gamma_0(4)}=1$, and $\langle g,g\rangle_{\Gamma_0(4)}=1$. Then $f$ and $g$ have the Fourier expansion as in~\eqref{eqMaass-form-coeffs-Whittaker} with coefficients
	\begin{equation}\label{eqMaass-form-coeffs-thetapm}
		\rho_0^{(f)}(n)=\left\{ \begin{array}{ll}
			(\pi^3 n)^{-\frac 14},&\ n=m^2>0;\\
			0,&\ \text{other }n\neq 0,
		\end{array}
		\right.\quad \rho_0^{(g)}(n)=\left\{ \begin{array}{ll}
			|\pi^{3}n|^{-\frac 14},&\ n=-m^2<0;\\
			0,&\ \text{other }n\neq 0.
		\end{array}
		\right.
	\end{equation}
	Similarly, if $f\in \LEigenform_{\frac 12}(\Gamma_0(8),\nu_{\theta},\frac 3{16})$, $g\in \LEigenform_{-\frac 12}(\Gamma_0(8),\overline{\nu_{\theta}},\frac 3{16})$, and $\langle f,f\rangle_{\Gamma_0(8)}=\langle g,g\rangle_{\Gamma_0(8)}=1$, then
	\begin{equation}\label{eqMaass-form-coeffs-thetapm-Level8}
		\rho_0^{(f)}(n)=\left\{ \begin{array}{ll}
			(4\pi^{3}n)^{-\frac 14},&\ n=m^2>0;\\
			0,&\ \text{other }n\neq 0,
		\end{array}
		\right.\quad \rho_0^{(g)}(n)=\left\{ \begin{array}{ll}
			|4\pi^{3}n|^{-\frac 14},&\ n=-m^2<0;\\
			0,&\ \text{other }n\neq 0.
		\end{array}
		\right.
	\end{equation}
\end{lemma}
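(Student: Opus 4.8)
The plan is to prove the lemma by directly comparing the two Fourier expansions already recorded in \eqref{eqMaass-form-coeffs-Whittaker}. Since the eigenvalue $\tfrac 3{16}$ is exactly $\lambda_0=\tfrac{|k|}2(1-\tfrac{|k|}2)$ for $k=\pm\tfrac 12$, the correspondence \eqref{eqCorrespondence-Maass-and-holomorphic} together with the one-dimensionality of $M_{1/2}(\Gamma_0(4),\nu_\theta)$ forces $f(\tau)=C_f y^{1/4}\theta(\tau)$ and $g(\tau)=C_g y^{1/4}\overline{\theta(\tau)}$ for scalars $C_f,C_g$, as stated in \eqref{eqExpansion-theta-Weight-pm1/2}. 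Thus all of the content lies in pinning down $C_f,C_g$ from the normalization and in unwinding the half-integral-weight Whittaker normalization.

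First I would fix the constants. Writing $\langle f,f\rangle_{\Gamma_0(4)}=|C_f|^2\int_{\Gamma_0(4)\setminus\HH} y^{1/2}|\theta(\tau)|^2\,\tfrac{dxdy}{y^2}$ and invoking the regularized Petersson inner product $\int_{\Gamma_0(4)\setminus\HH} y^{1/2}|\theta|^2\,\tfrac{dxdy}{y^2}=2\pi$ from \cite[Theorem~2.2]{Chiera2007PeterssonProduct}, the condition $\langle f,f\rangle=1$ gives $C_f=(2\pi)^{-1/2}$, and likewise $C_g=(2\pi)^{-1/2}$; on $\Gamma_0(8)$ the same computation with the regularized norm $4\pi$ gives $C_f=C_g=(4\pi)^{-1/2}$.

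Next I would match coefficients. Expanding $\theta(\tau)=1+2\sum_{m\ge1}\ee(m^2\tau)$ shows that $f$ has nonzero frequencies only at perfect squares, which immediately accounts for the vanishing of $\rho_0^{(f)}(n)$ for non-square $n>0$. Substituting the explicit value $W_{1/4,1/4}(4\pi ny)=e^{-2\pi ny}(4\pi ny)^{1/4}$ from \cite[(13.18.2)]{dlmf} into the Whittaker-form expansion, the factors $e^{-2\pi m^2 y}\ee(m^2 x)$ and the power $y^{1/4}$ cancel against those coming from $2C_f y^{1/4}\ee(m^2\tau)$, leaving the algebraic identity $\rho_0^{(f)}(m^2)(4\pi m^2)^{1/4}=2C_f$. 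Inserting $C_f=(2\pi)^{-1/2}$ and simplifying $(2\pi)^{1/2}(4\pi m^2)^{1/4}=2(\pi^3 m^2)^{1/4}$ yields $\rho_0^{(f)}(m^2)=(\pi^3 m^2)^{-1/4}$, i.e. the stated formula with $n=m^2$. The computation for $g$ is identical after replacing $\theta$ by $\overline\theta$ and $m^2$ by $-m^2$ (so that the negative-frequency Whittaker terms match), and the $\Gamma_0(8)$ formulas follow verbatim from $C_f=C_g=(4\pi)^{-1/2}$, producing the extra factor $4^{-1/4}$.

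The argument is essentially bookkeeping: the only genuinely external input is the value $2\pi$ (resp. $4\pi$) of the regularized Petersson norm of $\theta$, imported from \cite{Chiera2007PeterssonProduct}. The one point requiring care — and the place most likely to introduce errors — is keeping the half-integral-weight conventions consistent, so that the powers of $y$, $2$, and $\pi$ in the Whittaker normalization, the theta coefficient $2$, and the constant $C_f$ combine correctly into the claimed $(\pi^3 n)^{-1/4}$.
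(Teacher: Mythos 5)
Your proposal is correct and follows essentially the same route as the paper: identify $f=C_fy^{1/4}\theta$ and $g=C_gy^{1/4}\overline{\theta}$ via \eqref{eqExpansion-theta-Weight-pm1/2}, fix $C_f=C_g=(2\pi)^{-1/2}$ (resp. $(4\pi)^{-1/2}$) from the regularized Petersson norm of $\theta$, and read off the coefficients by comparing with $W_{\frac14,\frac14}(4\pi|n|y)=e^{-2\pi|n|y}(4\pi|n|y)^{1/4}$ in \eqref{eqMaass-form-coeffs-Whittaker}. The arithmetic checks out, including the $4^{-1/4}$ discrepancy between the level $4$ and level $8$ normalizations.
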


For the discrete spectrum of hyperbolic Laplacian, other than $\lambda_0$, Selberg conjectured \cite{selberg} that $\lambda_1\geq\frac 14$ for $\Delta_0$ on $(\Gamma,\text{id})$ for all congruence subgroups $\Gamma$ of $\SL_2(\Z)$ and showed that $\lambda_1\geq\frac 3{16}$. We call any $\lambda\in (\lambda_0,\frac 14)$ as an exceptional eigenvalue. The best progress is known today is $\lambda_1\geq \frac 14-\(\frac 7{64}\)^2$ for $(\Gamma_0(N),\text{id})$, for all $N\in \Z_+$ by \cite{KimSarnak764}. For small $N$, the following fact is also known. 
\begin{lemma}\cite{HuxleyKloostermania,BookerStrombergsson2007}
	\label{lemmaSelberg-eigenvalue}
	There is no exceptional eigenvalue on $\Gamma_0(N)$ for $N\leq 18$, i.e. Selberg's eigenvalue conjecture is known to be true for $N\leq 18$.   
\end{lemma}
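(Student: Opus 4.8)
The plan is to show that the Laplacian $\Delta_0$ on $L^2(\Gamma_0(N)\backslash\HH)$ has no eigenvalue in the open interval $(0,\tfrac14)$ for each $N\le 18$. Writing an eigenvalue as $\lambda=\tfrac14+r^2$, an exceptional eigenvalue corresponds to a spectral parameter $r=it$ with $t\in(0,\tfrac12)$, i.e.\ to a Maass cusp form of exceptionally small eigenvalue (the residual spectrum of $\Delta_0$ being only the constants, at $\lambda_0=0$). First I would invoke Atkin--Lehner newform theory: every Maass cusp form on $\Gamma_0(N)$ is built from newforms at levels $M\mid N$ via the oldform-raising operators, which preserve the Laplace eigenvalue. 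Hence it suffices to rule out exceptional \emph{newforms} at each of the few levels $M\le 18$, and the problem becomes finite.

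The central tool is the Selberg trace formula for $(\Gamma_0(N),\mathrm{id})$ in weight $0$. For an even test function $h(r)$, holomorphic and rapidly decaying in a strip $|\im r|\le\tfrac12+\ep$, it reads, schematically,
\[
\sum_j h(r_j)+(\text{continuous spectrum})
=\frac{\mathrm{vol}}{4\pi}\int_\R r\,h(r)\tanh(\pi r)\,dr+(\text{elliptic})+(\text{hyperbolic})+(\text{parabolic}),
\]
where $\lambda_j=\tfrac14+r_j^2$ runs over the discrete spectrum. For small $N$ every geometric term is explicit: the volume is proportional to the index $[\SL_2(\Z):\Gamma_0(N)]$; the elliptic contribution is a finite sum over order-$2$ and order-$3$ fixed points; the parabolic contribution is governed by the number of cusps; and the hyperbolic contribution is a sum over primitive closed geodesics, i.e.\ over hyperbolic conjugacy classes, whose lengths are read off from units of real quadratic orders. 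I would then choose $h\ge0$ with $h(it)$ bounded below by a positive constant for $t\in(0,\tfrac12)$, so that a single hypothetical exceptional eigenvalue forces a definite lower bound on the spectral side; ruling it out amounts to showing that the geometric side, after subtracting the known contributions of the constant eigenfunction and of the continuous spectrum, falls below that constant.

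Making this inequality rigorous is the crux, and two routes are available. Huxley's route exploits that for these very small levels the geometric side is explicitly summable and the scattering matrices $\varphi$ are known in closed form (the congruence Eisenstein series for $\Gamma_0(N)$ have scattering determinants expressible through $\zeta(s)$ and Dirichlet $L$-functions $L(s,\chi)$ and the divisors of $N$), so the trace-formula inequality can be verified by hand. The Booker--Strombergsson route instead computes the low-lying spectrum directly---locating candidate eigenvalues and their Fourier coefficients via the automorphy (Hejhal) method with verified a posteriori error bounds---and then uses the trace formula as a \emph{completeness certificate}: a verified evaluation of both sides guarantees that no eigenvalue below $\tfrac14$ has been overlooked.

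The main obstacle, in either approach, is controlling the two infinite/analytic pieces rigorously: the sum over closed geodesics, which converges only conditionally against a sharply concentrated test function and demands careful truncation with explicit tail bounds; and the continuous-spectrum term built from the logarithmic derivative $\varphi'/\varphi$ of the scattering determinant, which requires rigorous bounds along the critical line, and hence on the relevant $L$-functions. Only once these are pinned down with interval arithmetic does the trace-formula balance become a proof rather than a heuristic, and it is precisely this verified bookkeeping for all $N\le 18$ that the cited works of Huxley and of Booker--Strombergsson supply.
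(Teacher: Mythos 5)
The paper does not prove this lemma at all: it is imported verbatim from the literature, with the citation to Huxley and to Booker--Str\"ombergsson standing in for the proof. So there is no ``paper's own proof'' to compare against, and the only question is whether your outline is a faithful account of how the cited works establish the statement. It essentially is: the reduction of exceptional eigenvalues to newforms of divisor level, the Selberg trace formula for $(\Gamma_0(N),\mathrm{id})$ with a nonnegative test function $h$ satisfying $h(it)\ge c>0$ on $t\in(0,\tfrac12)$, the explicit evaluation of the identity, elliptic, parabolic and scattering contributions for small level, and (in the Booker--Str\"ombergsson route) the use of the trace formula as a completeness certificate for a rigorously computed low-lying spectrum --- this is the correct skeleton of both arguments. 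You are also right to identify the hyperbolic (closed-geodesic) sum and the $\varphi'/\varphi$ term as the places where all the real work lies.

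That said, be aware that what you have written is a plan, not a proof: every quantitative step (the truncation of the geodesic sum with explicit tail bounds, the bounds on the scattering determinant along $\re s=\tfrac12$, the interval-arithmetic verification for each $N\le 18$) is deferred to the cited papers, exactly as the lemma itself is in the text above. Since the lemma is used here purely as a black box (to eliminate exceptional spectral parameters in the proof of Proposition~\ref{propSumofKloostermansums-weight32-mnnot0}), this level of treatment is appropriate, and I see no gap to flag --- only the caveat that reproducing the result from your outline would require carrying out the full verified computation of the references rather than the sketch alone.
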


\subsection{Some useful elementary inequalities}
For $r\in \R$, let $\ceil{r}$ denote the smallest integer larger than or equal to $r$, and $\floor{r}$ denote the largest integer smaller than or equal to $r$. Let $\|r\|=\min(r-\floor{r},\ceil{r}-r)$ denote the distance from $r$ to its closest integer. 

The following lemmas will be helpful in the proofs. 
\begin{lemma}\label{lemmaExponential-interpolation}
	For $r\in \R\setminus \Z$, $a,c\in \Z$, $-c\leq a < c$, we have
	\begin{equation}
		\ee\(\frac{ra}{2c}\)=\frac 1{2c}\sum_{k\Mod {2c}} \frac{2i\sin(\pi(r-k))}{\ee(\frac{r-k}{2c})-1} \ee\(\frac{ka}{2c}\). 
	\end{equation}
\end{lemma}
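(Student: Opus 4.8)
The plan is to recognize the claimed identity as the discrete Fourier inversion formula on $\Z/2c\Z$, applied to the single exponential $a\mapsto\ee(\frac{ra}{2c})$, with the Fourier coefficients written in closed form as a symmetric geometric sum. There is no deep input required; the content lies entirely in matching the ranges of summation to the hypothesis $-c\le a<c$.

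First I would record the elementary geometric-sum identity that produces the coefficient. For $r\in\R\setminus\Z$ and any $k\in\Z$ the ratio $\ee(\frac{r-k}{2c})$ is never equal to $1$, since $r-k\notin\Z$, so summing the geometric series over the symmetric range $-c\le b<c$ gives
\[
\sum_{b=-c}^{c-1}\ee\!\(\frac{(r-k)b}{2c}\)=\frac{\ee(\frac{(r-k)c}{2c})-\ee(\frac{-(r-k)c}{2c})}{\ee(\frac{r-k}{2c})-1}=\frac{2i\sin(\pi(r-k))}{\ee(\frac{r-k}{2c})-1},
\]
where in the last step I use $\ee(\pm\frac{r-k}{2})=e^{\pm i\pi(r-k)}$ together with $2i\sin\theta=e^{i\theta}-e^{-i\theta}$. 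The symmetric choice of representatives for $b$ is exactly what turns the numerator into $2i\sin(\pi(r-k))$ rather than a phase-shifted variant; this alignment is the only point that needs care.

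Next I would substitute this identity into the right-hand side of the asserted formula and exchange the two finite sums:
\[
\frac1{2c}\sum_{k\Mod{2c}}\frac{2i\sin(\pi(r-k))}{\ee(\frac{r-k}{2c})-1}\ee\!\(\frac{ka}{2c}\)=\frac1{2c}\sum_{b=-c}^{c-1}\ee\!\(\frac{rb}{2c}\)\sum_{k\Mod{2c}}\ee\!\(\frac{k(a-b)}{2c}\).
\]
The inner sum over $k$ is the standard orthogonality relation on $\Z/2c\Z$: it equals $2c$ when $a\equiv b\Mod{2c}$ and vanishes otherwise.

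Finally, since both $a$ and $b$ range over the single set of representatives $\{-c,\dots,c-1\}$, the congruence $a\equiv b\Mod{2c}$ forces $a=b$. Hence only the term $b=a$ survives, and the right-hand side collapses to $\ee(\frac{ra}{2c})$, which is the left-hand side. I do not expect any substantial obstacle: the argument is a direct computation, and its whole substance is the matched choice of symmetric ranges for $a$, $b$, and $k$ so that the geometric sum, the orthogonality relation, and the hypothesis $-c\le a<c$ fit together.
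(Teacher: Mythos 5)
Your proposal is correct and follows essentially the same route as the paper: both compute the discrete Fourier transform of $a\mapsto\ee(\frac{ra}{2c})$ over the symmetric range $-c\le a<c$ as a geometric sum equal to $\frac{2i\sin(\pi(r-k))}{\ee(\frac{r-k}{2c})-1}$, and then recover the identity by Fourier inversion/orthogonality on $\Z/2c\Z$. The only difference is that you spell out the sum exchange and orthogonality step that the paper dismisses as "straightforward to verify."
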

\begin{proof}
	On the right hand side, note that the summand (as a function of $k$) is periodic modulo~$2c$, so the sum is well-defined. We define and compute the discrete Fourier transform: 
	\[x_c(k)\defeq \sum_{a=-c}^{c-1} \ee\(\frac{ra}{2c}\)\ee\(-\frac{ka}{2c}\)=\frac{2i\sin(\pi(r-k))}{\ee(\frac{r-k}{2c})-1}. \]
	It is then straighforward to verify that for any $d\in \Z$ and $d\equiv a\Mod {2c}$, 
	\[\frac 1{2c}\sum_{k\Mod {2c}} x_c(k) \ee\(\frac{kd}{2c}\)=\ee\(\frac{ra}{2c}\).\qedhere \]
\end{proof}

\begin{remark}
	When applying the lemma above, we will usually require the range $|r-k|<c$ for $k \Mod {2c}$, because $z=\frac {r-k}{2c}$ will satisfy $|z|<\frac 12$ and we would be able to use the expansion from the lemma that follows. 
\end{remark}

\begin{lemma}\label{lemmaBernoulli-numbers-expansion}
	For $z\in \C$ and $0<|z|<1$, we have 
	\[\frac 1{\ee(z)-1}=\frac 1{2\pi i z}+\sum_{\ell=1}^\infty B_{\ell}\frac{(2\pi i z)^{\ell-1}}{\ell!},\]
	where $B_\ell$ is the $\ell$-th Bernoulli number, $B_0=1$, $B_1=-\frac 12$. 
\end{lemma}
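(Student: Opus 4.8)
The plan is to recognize the asserted expansion as the classical generating function for Bernoulli numbers after a linear change of variable. I would start from the defining identity
\[\frac{w}{e^w-1}=\sum_{\ell=0}^\infty B_\ell \frac{w^\ell}{\ell!},\]
which is the Taylor expansion about the origin of the function $w/(e^w-1)$, extended by its removable value $1$ at $w=0$. Since the nearest singularities of $1/(e^w-1)$ are the simple poles at $w=\pm 2\pi i$, this power series converges precisely on the disc $|w|<2\pi$.

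Next I would substitute $w=2\pi i z$. For $z\neq 0$ this yields
\[\frac{1}{e^{2\pi i z}-1}=\frac{1}{2\pi i z}\sum_{\ell=0}^\infty B_\ell\frac{(2\pi i z)^\ell}{\ell!}=\sum_{\ell=0}^\infty B_\ell\frac{(2\pi i z)^{\ell-1}}{\ell!}.\]
I would then split off the $\ell=0$ term, which equals $B_0/(2\pi i z)=1/(2\pi i z)$ since $B_0=1$, so that the remaining sum over $\ell\geq 1$ is exactly the series in the statement. Because $\ee(z)=e^{2\pi i z}$ by definition, the left-hand side is $1/(\ee(z)-1)$, matching the claim.

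Finally I would check the region of convergence: the linear substitution $w=2\pi i z$ carries the disc $|w|<2\pi$ onto $|z|<1$, and the pole factored out as $1/(2\pi i z)$ accounts for the excluded point $z=0$, giving precisely the hypothesis $0<|z|<1$. There is no genuine obstacle in this lemma; the only points requiring a little care are tracking the single negative power of $z$ produced when dividing the generating function by $w=2\pi i z$, and confirming that the disc of convergence transforms correctly under the substitution.
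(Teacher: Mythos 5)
Your proof is correct: the paper states this lemma without proof, treating it as the classical Bernoulli generating function identity, and your derivation via $w/(e^w-1)=\sum_{\ell\ge0}B_\ell w^\ell/\ell!$ on $|w|<2\pi$ followed by the substitution $w=2\pi i z$ is exactly the standard argument one would supply. The convergence check ($|w|<2\pi$ maps to $|z|<1$, with $z=0$ excluded by the factored pole) is handled correctly.
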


We will also need the basic inequality
\begin{equation}\label{eqInequalityX+YpowAlpha}
	 (X+Y)^\alpha\leq X^\alpha+Y^\alpha \quad \text{for }X,Y>0\text{ and }\alpha\in [0,1]
\end{equation}
and the following lemma. 
\begin{lemma}\label{lemmaInequality-estimate-kAlpha-to-r-to-x}
	Let $r\in \R_+\setminus \Z$ and $x\geq 1$. Then for $\alpha\in (0,1]$ we have 
	\[\left. \begin{array}{r}
		\displaystyle \sum_{k=\ceil{r-x}}^{\floor{r}}\frac {k^\alpha}{r-k}\\
		\displaystyle \sum_{k=\ceil{r}}^{\floor{r+x}}\frac {k^\alpha}{k-r}
	\end{array}\right\}
	\ll_\ep \frac{(r^\alpha+1)x^\ep}{\|r\|}+\frac{x^\alpha}{\alpha}\quad \text{for any }\ep>0. \]
\end{lemma}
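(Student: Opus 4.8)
The plan is to treat both sums by the same two-step strategy: isolate the single term nearest to $r$ (the one with the smallest denominator), which is governed by $\|r\|$, and then bound the remaining terms, all of whose denominators are at least $1$. Throughout I assume $k\ge1$ in both ranges (terms with $k\le0$ either do not occur in the intended range or contribute nothing), so that the quantities $k^\alpha$ are unambiguous.

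First I would treat $S_1=\sum_{k=\ceil{r-x}}^{\floor{r}}\frac{k^\alpha}{r-k}$. Since $k\le\floor{r}\le r$ here, the crude bound $k^\alpha\le r^\alpha$ is available for every term. The nearest term $k=\floor{r}$ has denominator $r-\floor{r}\ge\|r\|$, hence is $\le r^\alpha/\|r\|$. For the remaining terms, reindexing by $j=\floor{r}-k=1,2,\dots,J$ with $J=\floor{r}-\ceil{r-x}\le x$ makes every denominator $r-k=(r-\floor{r})+j\ge1$, so
\[
\sum_{k<\floor{r}}\frac{k^\alpha}{r-k}\le r^\alpha\sum_{j=1}^{J}\frac1j\ll r^\alpha\log x\ll_\ep r^\alpha x^\ep .
\]
Combining the two contributions and using $\|r\|\le\tfrac12$, hence $\|r\|^{-1}\ge1$ and $x^\ep\ge1$, gives $S_1\ll_\ep (r^\alpha+1)x^\ep/\|r\|$, which is already within the claimed bound (the term $x^\alpha/\alpha$ is not even needed for $S_1$).

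Next I would treat $S_2=\sum_{k=\ceil{r}}^{\floor{r+x}}\frac{k^\alpha}{k-r}$, where now $k\ge\ceil{r}>r$, so $k^\alpha\le r^\alpha$ fails; this is exactly where the term $x^\alpha/\alpha$ enters. The nearest term $k=\ceil{r}$ has denominator $\ceil{r}-r\ge\|r\|$ and numerator $\ceil{r}^\alpha\le(r+1)^\alpha\le r^\alpha+1$ by \eqref{eqInequalityX+YpowAlpha}, so it is $\le(r^\alpha+1)/\|r\|$. For the remaining terms I set $t=k-r$, which runs over an arithmetic progression of step $1$ inside $(1,x]$, and apply \eqref{eqInequalityX+YpowAlpha} in the form $k^\alpha=(r+t)^\alpha\le r^\alpha+t^\alpha$, splitting the sum as
\[
\sum_{k>\ceil{r}}\frac{k^\alpha}{k-r}\le r^\alpha\sum_{t}\frac1t+\sum_{t}t^{\alpha-1}.
\]
The first piece is a harmonic sum, $\ll_\ep r^\alpha x^\ep$. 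For the second piece, since $\alpha\le1$ the function $t\mapsto t^{\alpha-1}$ is nonincreasing, so comparison with the integral yields $\sum_{t}t^{\alpha-1}\le\int_0^x t^{\alpha-1}\,dt=x^\alpha/\alpha$. Adding the three contributions and again using $\|r\|^{-1}\ge1$, $x^\ep\ge1$ gives $S_2\ll_\ep (r^\alpha+1)x^\ep/\|r\|+x^\alpha/\alpha$, as claimed.

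The routine parts are the harmonic estimate ($\log x\ll_\ep x^\ep$) and the book-keeping of the reindexing; the one genuinely load-bearing step is the integral comparison for $\sum t^{\alpha-1}$, which produces the factor $1/\alpha$ and keeps the estimate uniform as $\alpha\to0^+$. The only point requiring care is the convention at the lower end of the range for $S_1$ when $r-x$ is small: one must restrict to $k\ge1$ (or note that the omitted terms vanish or are negligible) so that $k^\alpha\le r^\alpha$ remains legitimate. I expect no further obstacles, since everything reduces to the two elementary mechanisms above.
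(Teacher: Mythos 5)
Your proposal is correct and follows essentially the same route as the paper's proof: isolate the term closest to $r$ (bounded via $\|r\|$), bound the remaining terms by a harmonic sum plus $\sum_j j^{\alpha-1}\le x^\alpha/\alpha$ via integral comparison, using \eqref{eqInequalityX+YpowAlpha} for $\ceil{r}^\alpha\le r^\alpha+1$. The only (harmless) difference is that you observe the $x^\alpha/\alpha$ term is unnecessary for the first sum, where the paper's slightly more wasteful bound $k^\alpha\le r^\alpha+u^\alpha$ reintroduces it.
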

\begin{proof}
	These inequalities are helpful: $\sum_{u=1}^n u^{-1} \leq \log n+1$, and for $\alpha\in(0,1]$, 
	\[\sum_{u=1}^n u^{\alpha-1}\leq 1+\int_1^n x^{\alpha-1}dx\leq 1+\frac{n^\alpha}{\alpha}. \]
	First we consider $\sum_{k=\ceil{r-x}}^{\floor{r}}\frac {k^\alpha}{r-k}$. For $k=\floor{r}$, the summand is bounded by $r^\alpha/\|r\|$ (note that $0<r<1$ implies $\floor{r}=0$). For the other terms we have
	\[\sum_{k=\ceil{r-x}}^{\floor{r}-1}\frac {k^\alpha}{r-k}\leq \sum_{u=1}^{\floor{x}}\frac {r^\alpha+u^\alpha}{u}\leq r^\alpha(1+\log x)+\frac{x^\alpha}{\alpha}+1. \] 
	Next we prove the bound for $\sum_{k=\ceil{r}}^{\floor{r+x}}\frac {k^\alpha}{k-r}$. The first term $k=\ceil{r}$ gives
	\[\frac{\ceil{r}^\alpha}{\ceil{r}-r}\leq \frac{(r+1)^\alpha}{\|r\|}\leq \frac{r^\alpha+1}{\|r\|}\]
	by \eqref{eqInequalityX+YpowAlpha}. The remaining terms are bounded by
	\[\sum_{k=\ceil{r}+1}^{\floor{r+x}}\frac {k^\alpha}{k-r}\leq \sum_{u=1}^{\floor{x}}\frac {u^\alpha+r^\alpha+1}{u}\leq (r^\alpha+1)(\log x+1)+\frac{x^\alpha}{\alpha}+1.\]

    Combining the calculations above we get
    \begin{equation}
        \left. \begin{array}{r}
		\displaystyle \sum_{k=\ceil{r-x}}^{\floor{r}}\frac {k^\alpha}{r-k}\\
		\displaystyle \sum_{k=\ceil{r}}^{\floor{r+x}}\frac {k^\alpha}{k-r}
	\end{array}\right\}
	\leq \frac {r^\alpha+1}{\|r\|}+(r^\alpha+1)(\log x+1)+\frac{2x^\alpha}{\alpha}+1. 
    \end{equation}
    The concluded bound in the lemma is clear. 
\end{proof}

\section{Construction of modular integrals via Poincar\'e-type sums}
\label{SectionMaassPoincareSeries}

In this section we recall Stoller's construction of modular integrals for the group $\Gamma(2)$ via Poincar\'e-type series. First, we recall a few basic notions on cocycles and modular integrals and explain the connection to Theorem~\ref{theoremBonRadSeip-2022}.

Given a Fuchsian group $\Gamma\subset\operatorname{PSL}_2(\R)$
acting on functions on the upper half-plane via slash operators 
\[|\gamma=|_{k,\nu}\gamma:\quad (f|\gamma)(\tau)=\overline{\nu(\gamma)} j(\gamma,\tau)^{-k} f(\gamma\tau)\]
(for simplicity, we will omit the weight and the multiplier system from notation), we say that a collection of functions $\{f_{\gamma}\}_{\gamma\in\Gamma}$ is a cocycle if
    \[f_{\gamma_1\gamma_2} = f_{\gamma_1}|\gamma_2+f_{\gamma_2}\,,\qquad \gamma_1,\gamma_2\in\Gamma.\]
A cocycle $\{f_{\gamma}\}_{\gamma}$ is called trivial if there exists a function $F$ such that $f_{\gamma}=F-F|\gamma$ for all $\gamma\in\Gamma$. If this is the case, we say that $F$ is a modular integral for the cocycle $\{f_{\gamma}\}_{\gamma}$. We are usually interested in cocycles restricted to a certain class of functions on $\HH$, for example, with all $f_{\gamma}$ holomorphic and of moderate growth, in which case we require $F$ to belong to the same class. (The terminology modular integral comes from Eichler integrals, which are modular integrals for polynomial-valued cocycles). 
    
For the group $\Gamma(2)$, which is free on two generators $T^2$ and $ST^2S$, any cocycle is uniquely determined by an arbitrary choice of two functions $\phi_1=f_{T^2}$ and $\phi_2=f_{ST^2S}$. It follows from the results of Knopp~\cite{knopp1974some} (for weights $k>2$), and, e.g., from~\cite[Theorem~3.1]{BondarenkoRadchenkoSeip2022} (for all $k\ge0$), that any cocycle (with values in holomorphic functions of moderate growth) for the group $\Gamma(2)$ in non-negative weight is trivial. Hence for any holomorphic, moderately growing $\phi_i$, $i=1,2,$ there exists a holomorphic function of moderate growth $F$ such that
    \[
    \begin{cases}
    F-F|T^2 = \phi_1,\\
    F-F|ST^2S = \phi_2.
    \end{cases}
    \]
Choosing $\phi_1=0$ and $\phi_2(\tau)=\phi-\phi|ST^2S$, where $\phi(\tau)=e^{\pi i r^2 \tau}$ then leads to functions satisfying the conditions of Theorem~\ref{theoremBonRadSeip-2022}. In~\cite{RadchenkoViazovska2019} and~\cite{BondarenkoRadchenkoSeip2022} modular integrals are constructed using contour integrals against modular Green functions, which is convenient for proving analytic properties, but is hard to use to estimates the size of the Fourier coefficients of~$F$. In~\cite{StollerInterpolation} it is shown that $F$ can be constructed as $F=\sum_{\gamma\in\mathcal{B}}\phi|\gamma$, for a suitably chosen set $\mathcal{B}$ of coset representatives for $\Gamma(2)_\infty \setminus \Gamma(2)$ (since $\phi$ is not assumed to be periodic, the choice of representatives matters). We call these Poincar\'e-type series since they becomes the usual Poincar\'e series in the case when $\phi$ is a 2-periodic function.

Unfortunately, the above Poincar\'e-type series only converges when the weight $k$ is larger than $2$ and the resulting modular integral does not agree with the one obtained by contour integrals. We overcome these difficulties by considering $F_s=\sum_{\gamma\in\mathcal{B}}\phi_s|\gamma$ (a Maass--Poincar\'e-type series) with $\phi_s$ an eigenfunction of the weight $k$ hyperbolic Laplace operator. We show that the series converges for $\re(s)>1$, and via its Fourier expansion $F_s$ admits an analytic continuation (in the variable $s$) to the point $s=k/2$. For this value of $s$ the function $\phi_s$ specializes to $e^{\pi i r^2\tau}$ and $F_s$ becomes harmonic. By analyzing the specialization more carefully we are then able to precisely identify the holomorphic part of $F_{k/2}$ and relate it to the solutions from Theorem~\ref{theoremBonRadSeip-2022}.
    
\subsection{A special choice of coset representatives}
\label{subsectionCosetRepB}
Following \cite{StollerInterpolation}, we make a special choice of coset representatives for $\Gamma(2)_\infty \setminus \Gamma(2) $ and $(\Gamma(2)_\infty \setminus \Gamma(2))S $. Recall the notation from \S\ref{sectionPrelimandNotation}:  $\overline{\Gamma}=\Gamma/\{\pm I\}$, $A=[T^2]$ and $B=[ST^2S]$ are the classes of $T^2$ and $ST^2S$, respectively. Also recall that $\Gamma_{\Theta}=\Gamma(2)\sqcup \Gamma(2)S$.  
\begin{definition}[{\cite[Definition~5.1]{StollerInterpolation}}]\label{defMB-and-tildeMB}
	The subset $\mB\subset \overline{\Gamma(2)}$ is defined as the set of all nonempty finite reduced words in $A$ and $B$ that start with a nonzero power of $B$. More formally, an element $\gamma \in \overline{\Gamma(2)}$ belongs to $\mB$, if and only if there are integers $m\geq 1$, and $e_1,\cdots, e_m,f_1,\cdots,f_m$, all non-zero, except possibly $e_m$, such that $\gamma=B^{f_1}A^{e_1}\cdots B^{f_m} A^{e_m}$. We also define
	\[\widetilde{\mB}\defeq \mB[S]\sqcup\{[S]\}=\{\gamma [S]:\gamma\in \mB\}\sqcup\{[S]\}\subset \overline{\Gamma(2)}. \]
\end{definition}

\begin{proposition}[{\cite[Lemma~5.1, Lemma~5.2]{StollerInterpolation}}]
	\label{propMB-and-tildeMB-properties}
	We have the following properties. 
	\begin{itemize}
		\item[(1)] For each $\gamma\in \mB$, each $\widetilde \gamma \in\widetilde{\mB}$, and each $\ell\in \Z$,  one has $\gamma A^\ell\in \mB $ and $\widetilde \gamma A^\ell \in \widetilde{\mB}$. 
		\item[(2)]
	Define
	\begin{align*}
		\mathcal{P}&=\{(c,d)\in \Z^2:\ \gcd(c,d)=1,\ c\equiv 0,\ d\equiv 1\Mod 2,\ c\neq 0\},\\
		\mathcal{P}_I&=\mathcal{P}\sqcup\{(0,1),\ (0,-1)\},\\
		\widetilde {\mathcal{P}}&=\{(c,d)\in \Z^2:\ \gcd(c,d)=1,\ c\equiv 1,\ d\equiv 0\Mod 2\}.
	\end{align*} 
	Then 
	\[\left[\begin{pmatrix}
		a&b\\c&d
	\end{pmatrix}\right] \rightarrow [(c,d)]\]
	defines $\Z$-equivariant bijections $\mB\cong \mathcal{P}/\{\pm 1\}$, $\mB\sqcup\{[ I]\}\cong \mathcal{P}_I/\{\pm 1\}$ and $\widetilde{\mB}\cong \widetilde {\mathcal{P}}/\{\pm 1\}$. 
	\item[(3)] For every $\gamma=\left[\begin{psmallmatrix}
		a&b\\c&d
	\end{psmallmatrix}\right]\in \mB$, we have $|a|<|c|$ and $|b|<|d|$; For every $\widetilde{\gamma}=\left[\begin{psmallmatrix}
	\widetilde{a}&\widetilde{b}\\ \widetilde{c}&\widetilde{d}
	\end{psmallmatrix}\right]\in \widetilde{\mB}$, we have $|\widetilde{a}|<|\widetilde{c}|$. 
	\end{itemize}
\end{proposition}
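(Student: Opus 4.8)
The plan is to reduce all three statements to the combinatorics of reduced words in the free group $\overline{\Gamma(2)}=\langle A,B\rangle$ together with a ping-pong argument for the action on $\mathbb{P}^1(\R)$. Throughout I use the matrix representatives $A=\begin{psmallmatrix}1&2\\0&1\end{psmallmatrix}$ and $B=\begin{psmallmatrix}-1&0\\2&-1\end{psmallmatrix}$ (mod $\pm I$), together with the identity $[S]A^\ell[S]=B^\ell$, which follows from $S^2=-I$ and $ST^2S=B$; equivalently, conjugation by $[S]$ is the automorphism of $\overline{\Gamma(2)}$ interchanging $A$ and $B$. I would prove (1) and (3) before (2), since (3) supplies the nonvanishing $c\neq0$ needed in (2).

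For (1), recall that $\mB$ consists precisely of the reduced words beginning with a nonzero power of $B$, the trailing $A$-exponent $e_m$ being the only one allowed to vanish. Right multiplication by $A^\ell$ either increments this trailing exponent, $e_m\mapsto e_m+\ell$, or (if $e_m=0$) appends $A^\ell$; in either case the word still begins with $B^{f_1}$ and lies in $\mB$, proving $\gamma A^\ell\in\mB$. For $\widetilde{\mB}$ I would write $\widetilde\gamma=\gamma[S]$ with $\gamma\in\mB$, or $\widetilde\gamma=[S]$, and use $[S]A^\ell[S]=B^\ell$ to get $\widetilde\gamma A^\ell=(\gamma B^\ell)[S]$. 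A short case check shows $\gamma B^\ell\in\mB\sqcup\{[I]\}$ (it reduces to $[I]$ only when $\gamma=B^{f_1}$ and $\ell=-f_1$); since $\widetilde{\mB}=(\mB\sqcup\{[I]\})[S]$ with $[I][S]=[S]\in\widetilde{\mB}$, this gives $\widetilde\gamma A^\ell\in\widetilde{\mB}$, and the case $\widetilde\gamma=[S]$ is immediate.

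For (3) I would run a ping-pong argument with the two sets $I_0=(-1,1)$ and $I_\infty=\mathbb{P}^1(\R)\setminus[-1,1]$. A direct computation gives the inclusions $A^e(I_0)\subset I_\infty$ and $B^f(I_\infty)\subset I_0$ for all $e,f\neq0$ (for $B$ this is cleanest in the coordinate $w=1/z$, in which $B^f$ acts by $w\mapsto w-2f$). Since $\gamma(\infty)=a/c$ and $\gamma(0)=b/d$, the inequalities $|a|<|c|$ and $|b|<|d|$ are equivalent to $\gamma(\infty),\gamma(0)\in I_0$. Starting from $\infty\in I_\infty$ and $0\in I_0$ and feeding them through the alternating word $B^{f_1}A^{e_1}\cdots B^{f_m}A^{e_m}$ from the inside out, the ping-pong inclusions force both orbits to terminate in $I_0$; the only point needing care is $\gamma(0)$ when $e_m=0$, where one uses that $B$ fixes $0$. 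This simultaneously yields $c\neq0$. For $\widetilde\gamma=\gamma[S]\in\widetilde{\mB}$ one has $\widetilde\gamma(\infty)=\gamma(S\infty)=\gamma(0)\in I_0$, giving $|\widetilde a|<|\widetilde c|$, and the case $\widetilde\gamma=[S]$ is trivial.

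For (2) I would compose two bijections. First, since $\Gamma(2)_\infty=\{\pm T^{2\ell}\}$ acts on the left preserving the bottom row up to sign, the map $\gamma\mapsto[(c,d)]$ identifies $\Gamma(2)_\infty\setminus\Gamma(2)$ with $\mathcal{P}_I/\{\pm1\}$: every coprime pair with $c$ even and $d$ odd is the bottom row of some element of $\Gamma(2)$ (adjust $b$ using the free parameter in $ad-bc=1$ to fix its parity), and the bottom row is a complete coset invariant. Second, the free-group normal form says each left coset of $\langle A\rangle=\overline{\Gamma(2)_\infty}$ contains a unique reduced word not beginning with a nonzero power of $A$, and these are exactly the elements of $\mB\sqcup\{[I]\}$, the empty word $[I]$ being the coset $\langle A\rangle$ itself. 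Composing gives $\mB\sqcup\{[I]\}\cong\mathcal{P}_I/\{\pm1\}$; discarding the trivial coset (the pairs $(0,\pm1)$, i.e. $[I]$) leaves $\mB\cong\mathcal{P}/\{\pm1\}$, where $c\neq0$ is guaranteed by (3). For $\widetilde{\mB}=(\mB\sqcup\{[I]\})[S]$, right multiplication by $S$ sends a bottom row $(c,d)$ to $(d,-c)$, a bijection $\mathcal{P}_I/\{\pm1\}\to\widetilde{\mathcal{P}}/\{\pm1\}$, which yields the last identification. In each case $\Z$-equivariance is a one-line check: right multiplication by $A^\ell$ sends $(c,d)\mapsto(c,d+2\ell c)$. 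The main obstacle is the bookkeeping in the ping-pong step of (3)—in particular the degenerate orbit of $0$ passing through the fixed point of $B$—but once that is settled, (2) is essentially the free-group normal form packaged through bottom rows.
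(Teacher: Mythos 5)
Your argument is correct. Note that the paper itself gives no proof of this proposition: it is quoted verbatim from Stoller's Lemmas~5.1 and~5.2, whose proofs proceed by induction on the length of the reduced word with explicit matrix computations. Your version packages the same content more conceptually: the free-group normal form (unique coset representative for $\langle A\rangle\backslash\overline{\Gamma(2)}$ not beginning with a power of $A$) handles part (2), and the ping-pong inclusions $A^e(I_0)\subset I_\infty$, $B^f(I_\infty)\subset I_0$ handle part (3), with part (3) supplying the nonvanishing $c\neq 0$ needed to pass from $\mathcal{P}_I$ to $\mathcal{P}$. All the delicate cases are accounted for: the trailing exponent $e_m=0$ in part (1), the cancellation $\gamma B^\ell=[I]$ occurring only for $\gamma=B^{f_1}$, $\ell=-f_1$ (absorbed by $[I][S]=[S]\in\widetilde{\mB}$), and the degenerate orbit of $0$ through the fixed point of $B$ in part (3). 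One small point worth making explicit: the $\Z$-equivariance of $\mB\sqcup\{[I]\}\cong\mathcal{P}_I/\{\pm1\}$ must be read at the level of cosets of $\langle A\rangle$ (since $[I]A^\ell=A^\ell$ is not literally an element of $\mB\sqcup\{[I]\}$ for $\ell\neq 0$, while $(0,\pm1)$ is fixed by the action on $\mathcal{P}_I$); your formulation via the trivial coset $\langle A\rangle$ already does this correctly, and on $\mB$ itself the coset action agrees with right multiplication by part (1). The ping-pong route is arguably cleaner than the inductive matrix computation and would be a legitimate self-contained substitute for the citation.
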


\begin{remark}
    If we denote by $\mathcal{D}=\{z\in\HH\colon |\re(z)|<1, |z\pm1/2|>1/2\}$ the standard fundamental domain for $\Gamma(2)$, then from the figure below we can see that this choice of coset representatives ensures that $\{\gamma\mathcal{D}:\gamma\in \mathcal{B}\}$ fill the region $\mathcal{D}_{\infty} = \{z\in \HH: |\re(z)| < 1\}$
    up to a measure-zero set. In other words, when doing the ``unfolding trick" to compute the Fourier expansion of Maass--Poincar\'e series in \S\ref{subsectionComputeFourierExpofMaassPoincare} below, we end up with an integral over $\mathcal{D}_{\infty}$, same as in the classical case.
\end{remark}
\begin{center}
\begin{tikzpicture}[scale=4] 
    \fill[gray!15]
    (-1,1.54) -- (1,1.54)  
    -- (1,0) arc[start angle=0, end angle=180, radius=1] -- cycle;
    
    \fill[gray!15, even odd rule]
    (1,0) arc[start angle=0, end angle=180, radius=0.5]
    -- cycle
    (1,0) arc[start angle=0, end angle=180, radius=0.333334]
    -- cycle
    (0.333334,0) arc[start angle=0, end angle=180, radius=0.166667]
    -- cycle;

    \fill[gray!15, even odd rule]
    (0,0) arc[start angle=0, end angle=180, radius=0.5]
    -- cycle
    (-0.333334,0) arc[start angle=0, end angle=180, radius=0.333334]
    -- cycle
    (0,0) arc[start angle=0, end angle=180, radius=0.166667]
    -- cycle;

    \fill[gray!15, even odd rule]
    (-0.5,0) arc[start angle=0, end angle=180, radius=0.25]
    -- cycle
    (-0.6,0) arc[start angle=0, end angle=180, radius=0.2]
    -- cycle
    (-0.5,0) arc[start angle=0, end angle=180, radius=0.05]
    -- cycle;

    \fill[gray!15, even odd rule]
    (1,0) arc[start angle=0, end angle=180, radius=0.25]
    -- cycle
    (0.6,0) arc[start angle=0, end angle=180, radius=0.05]
    -- cycle
    (1,0) arc[start angle=0, end angle=180, radius=0.2]
    -- cycle;
    
  \draw (-1.1,0) -- (1.1,0);
  \draw (1,0) -- (1,1.55) ;    
  \draw (-1,0) -- (-1,1.55) ;   
  \fill (0.5, 1.5) circle[radius=0.01];
  \node[below right] at (0.5, 1.5) {$\tau$};

  \draw (1,0) arc[start angle=0, end angle=180, radius=1]; 
  \fill (-0.2, 0.6) circle[radius=0.01];
  \node[above left] at (-0.2, 0.6) {$S\tau$};
  \draw (0,0) arc[start angle=0, end angle=180, radius=0.5];
  \draw (1,0) arc[start angle=0, end angle=180, radius=0.5];
  \fill (-0.294118, 0.176471) circle[radius=0.01];
  \node[above] at (-0.294118, 0.176471) {\small $ST^2\tau$};
  \fill (0.333333, 0.333333) circle[radius=0.01];
  \node[above] at (0.333333, 0.333333) {\small $ST^{-2}\tau$};
  \draw (-0.333333,0) arc[start angle=0, end angle=180, radius=0.333333];
  \draw (0,0) arc[start angle=0, end angle=180, radius=0.166667];
  \draw (0.333333,0) arc[start angle=0, end angle=180, radius=0.166667];
  \draw (1,0) arc[start angle=0, end angle=180, radius=0.333333];

  \node[rotate=-30] at (-0.58, 0.26) {\tiny $ST^{2}S$};
  \draw (-0.5,0) arc[start angle=0, end angle=180, radius=0.25];
  \draw (-0.333333,0) arc[start angle=0, end angle=180, radius=0.083333];
  
  \node[rotate=10] at (-0.2,0.11) {\tiny $ST^{4}$};
  \draw (-0.2,0) arc[start angle=0, end angle=180, radius=0.066667];
  \draw (0,0) arc[start angle=0, end angle=180, radius=0.1];

  \node[rotate=-10] at (0.2,0.11) {\scalebox{0.5}{$ST^{-4}$}};
  \draw (0.2,0) arc[start angle=0, end angle=180, radius=0.1];
  \draw (0.333333,0) arc[start angle=0, end angle=180, radius=0.066667];
  
  \node[rotate=30] at (0.56, 0.25) {\tiny $ST^{-2}S$};
  \draw (1,0) arc[start angle=0, end angle=180, radius=0.25];
  \draw (0.5,0) arc[start angle=0, end angle=180, radius=0.083333];

  \node[rotate=-30] at (-0.64, 0.168) {\scalebox{0.4}{$ST^{2}ST^2$}};
  \draw (-0.6,0) arc[start angle=0, end angle=180, radius=0.2];
  \draw (-0.5,0) arc[start angle=0, end angle=180, radius=0.05];

  \node[rotate=30] at (0.64, 0.168) {\scalebox{0.4}{$ST^{-2}ST^{-2}$}};
  \draw (0.6,0) arc[start angle=0, end angle=180, radius=0.05];
  \draw (1,0) arc[start angle=0, end angle=180, radius=0.2];

  \node[below] at (-1,0) {$-1$};
  \node[below] at (1,0) {$1$};
  \node[below] at (0,0) {$0$};
  \fill (0, 1) circle [radius=0.01];
  \node[above left] at (0,1) {$i$};
  \node[below] at (-0.5,0) {\tiny $-\frac 12$};
  \node[below] at (0.5,0) {\tiny $\frac 12$};
  \node[below] at (0.6,0) {\tiny{$\frac 35$}};
  \node[below] at (0.333333,0) {\tiny{$\frac 13$}};
  \node[below] at (-0.2,0) {\tiny{$-\frac 15$}};
\end{tikzpicture}
\end{center}

\subsection{Real-variable Kloosterman sums}

In the proof we need to define real-variable Kloosterman sums analogous to \eqref{eqKloosterman-Sums-General-Def}. Let $r\in \R$, $n\in \Z$, and $k\in \frac 12\Z$. For $2|c>0$, we define
\begin{align}\label{eqReal-Val-Kloosterman-Sums}
	\begin{split}
	K(r,n,c,\nu_{\Theta}^{2k})&\defeq \sum_{\substack{-c<a<c\\(a,2c)=1\\ad\equiv 1\Mod {2c}}} \nu_{\Theta}\begin{psmallmatrix}
		a&*\\c&d
	\end{psmallmatrix}^{-2k} \ee\(\frac{ra+nd}{2c}\)\\
	&= \sum_{\substack{-c<a<c\\(a,2c)=1\\ad\equiv 1\Mod {2c}}} \ep_d^{2k}\(\frac{2c}d\)^{-2k} \ee\(\frac{ra+nd}{2c}\);
	\end{split}
\end{align}
for $2\nmid d>0$, we define
\begin{align}\label{eqReal-Val-Kloosterman-Sums-tilde}
\begin{split}
		\widetilde K(r,n,d,\nu_{\Theta}^{2k})&\defeq \sum_{\substack{-d<b<d\\2|b,\ (b,d)=1\\2|c,\ bc\equiv -1\Mod d}}\nu_{\Theta}\begin{psmallmatrix}
		b&*\\d&-c
	\end{psmallmatrix}^{-2k} \ee\(\frac{rb-nc}{2d}\)\\
	&= \sum_{\substack{-d<b<d\\2|b,\ (b,d)=1\\2|c,\ bc\equiv -1\Mod d}} \ee(\tfrac k4) \ep_d^{-2k} \(\frac{-2c}d\)^{-2k} \ee\(\frac{rb-nc}{2d}\).
\end{split}
\end{align}
As a special case, we set $\widetilde{K}(r,n,1,\nu_{\Theta}^{2k})=\ee(\frac k4)$.

\begin{remark}
	Here are a few remarks about real-variable Kloosterman sums. 
	\begin{itemize}
		\item[(1)] Note that we specify $-c<a<c$ in \eqref{eqReal-Val-Kloosterman-Sums} and $-d<b<d$ in \eqref{eqReal-Val-Kloosterman-Sums-tilde}. This restriction is important because $r$ is a real number and changing $a\rightarrow a+2c$ (or $b\rightarrow b+2d$) will change the value of the sum. This aligns with part (3) of Proposition~\ref{propMB-and-tildeMB-properties}. 
		\item[(2)] The function $K(r,n,c,\nu_{\Theta}^{2k})$ is the real-variable analogue of $S(m,n,c,\nu_{\Theta}^{2k})$ for $2|c$, in the sense that setting $r=m$ specializes $K$ to $S$, and the function $\widetilde{K}(r,n,d,\nu_{\Theta}^{2k})$ is the real-variable analogue of $S(m,n,d,\nu_{\Theta}^{2k})$ for $2\nmid d$. 
        \item[(3)] This definition of real-variable Kloosterman sums originated from \cite[(8.4)]{StollerInterpolation}. Stoller's estimate focused on the properties of Poincar\'e series. In this paper, we prove properties of these Kloosterman sums directly with more precise results. 
        \item[(4)] To avoid ambiguity about the meaning of $d$, we will use $\widetilde{K}(m,n,\widetilde{c},\nu_{\Theta}^{2k})$ for $2\nmid \widetilde{c}>0$ in the rest of the paper. 
	\end{itemize}
\end{remark}

\subsection{Maass--Poincar\'e type series}
Let $M_{\beta,\mu}$ and $W_{\beta,\mu}$ denote the $M$- and $W$-Whittaker functions (for definition see \cite[(13.14.2-3)]{dlmf}). For $s\in \C$, $x,y\in \R$, $z=x+iy$ and $k\in \frac 12 \Z$, we define
\begin{equation}
	\Mform_s(y)\defeq |y|^{-\frac k2} M_{\frac k2\sgn y,\,s-\frac 12}(|y|) \quad \text{and}\quad \mathcal{W}_s(y)\defeq |y|^{-\frac k2} W_{\frac k2\sgn y,\,s-\frac 12}(|y|) . 
\end{equation}
We also define
\begin{equation}
	\varphi_{s,k}(x+iy)\defeq \Mform_s(4\pi y)\ee(x). 
\end{equation}
These functions have the following properties. For $y>0$, by \cite[(13.18.2)]{dlmf}, we have 
\begin{equation}\label{eqMWhittaker-special-form}
	\Mform_{\frac k2}(y)=y^{-\frac k2} M_{\frac k2,\,\frac k2-\frac 12}(y)=e^{-\frac y 2}; 
\end{equation}
by \cite[(13.14.31), (13.18.2)]{dlmf}, we have 
\begin{equation}\label{eqWWhittaker-special-form}
	W_{\kappa,\mu}(z)=W_{\kappa,-\mu}(z),\quad W_{-\frac k2,\,\frac k2-\frac 12}(y)=y^{\frac k2}e^{\frac y2}\Gamma(1-k,y)\  \text{and}\  W_{\frac k2,\,\frac k2-\frac 12}(y)=y^{\frac k2}e^{-\frac y2}; 
\end{equation}
moreover, by \cite[(13.14.14)]{dlmf}, we have
\begin{equation}\label{eqWhittakerVarphi-growthrate}
	\varphi_{s,k}(z)=O\(y^{\re s-\frac k2}\) \quad \text{for }z\rightarrow 0. 
\end{equation}

Recall Definition~\ref{defMB-and-tildeMB} for $\mB$ and $\widetilde{\mB}$. For $\tau=x+iy\in \HH$, $r\in \R\setminus \{0\}$, $k\in \frac 12 \Z$, and $\re(s)>1$, we define the series
\begin{align}\label{eqDefineFks}
	\begin{split}
	&F_k(\tau;r,s)\defeq -\sum_{\gamma =\begin{psmallmatrix}
			*&*\\c&d
		\end{psmallmatrix}\in \mB} \nu_{\Theta}(\gamma )^{-2k}(c\tau+d)^{-k} \varphi_{s,k}(\tfrac{r^2}2\gamma \tau), \\
	&\widetilde{F}_k(\tau;r,s)\defeq \sum_{\widetilde{\gamma} =\begin{psmallmatrix}
			*&*\\c&d
		\end{psmallmatrix}\in \widetilde {\mB}}  \nu_{\Theta}(\gamma)^{-2k}(c\tau +d)^{-k} \varphi_{s,k}(\tfrac{r^2}2 \widetilde{\gamma }\tau). 
	\end{split}
\end{align}
These two series converge uniformly and absolutely for $s$ in any compact subset of $\{s\in \C: \re (s)>1\}$ due to \eqref{eqWhittakerVarphi-growthrate}. 
Note that $\mB$ and $\widetilde{\mB}$ are defined up to the equivalence relation modulo $\{\pm I\}$. The series in \eqref{eqDefineFks} are well-defined because 
\[\nu_{\Theta}(-\gamma)^{-1}(-cz-d)^{-\frac 12}=-i\nu_{\Theta}(\gamma)^{-1}\cdot i(cz+d)^{-\frac 12}=\nu_{\Theta}(\gamma)^{-1}(cz+d)^{-\frac 12}\]
by \eqref{eqMultiplierOn-Minus-gamma} for $c>0$ and $\nu_{\Theta}\begin{psmallmatrix}
	*&*\\0&*
\end{psmallmatrix}=1$.

For all $r\in \R$, we have the following functional equation. 
\begin{proposition}\label{propFunctionalequation-ats}
	For $r\in \R$, $k\in \tfrac 12\Z$, $\tau=x+iy\in \HH$, and for $F_k(\tau,r;s)$, $\widetilde F_k(\tau,r;s)$ defined at \eqref{eqDefineFks}, we have
	\begin{equation}
		F_k(\tau;r,s)+(-i\tau)^{-k}\widetilde F_k(-\tfrac 1{\tau};r,s)=\varphi_{s,k}(\tfrac{r^2}2\tau)=(2\pi r^2 y)^{-\frac k2} M_{\frac k2,s-\frac 12}(2\pi r^2 y) e^{\pi i r^2 x}. 
	\end{equation}
\end{proposition}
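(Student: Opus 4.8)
The plan is to establish the identity term by term, exploiting the bijection $\gamma\mapsto\gamma S$ between $\mB\sqcup\{[I]\}$ and $\widetilde{\mB}$ (immediate from Definition~\ref{defMB-and-tildeMB}, since $S^2=-I$ gives $\widetilde{\mB}[S]=\mB\sqcup\{[I]\}$) together with the cocycle relations for $j$ and $\nu_{\Theta}$. Since both series in~\eqref{eqDefineFks} converge absolutely and uniformly for $\re(s)>1$, every rearrangement below is justified. First I would set $\sigma=-1/\tau=S\tau$ and write each $\widetilde{\gamma}\in\widetilde{\mB}$ as $\widetilde{\gamma}=\gamma S$ with $\gamma\in\mB\sqcup\{[I]\}$. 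Because $S^2=-I$ acts trivially on $\HH$, we get $\widetilde{\gamma}\sigma=\gamma SS\tau=\gamma\tau$, so the Whittaker factor $\varphi_{s,k}(\tfrac{r^2}2\widetilde{\gamma}\sigma)$ becomes $\varphi_{s,k}(\tfrac{r^2}2\gamma\tau)$, precisely the factor attached to $\gamma$ on the other side of the equation. This folding of $\widetilde{F}_k$ onto the index set of $F_k$ is exactly what the special choice of coset representatives in \S\ref{subsectionCosetRepB} is designed to achieve.

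Next I would combine the automorphic and multiplier factors. Expanding $\nu_{\Theta}(\gamma S)^{-2k}=w_k(\gamma,S)^{-1}\nu_{\Theta}(\gamma)^{-2k}\nu_{\Theta}(S)^{-2k}$ via the weight-$k$ multiplier relation for $\nu_{\Theta}^{2k}$ (Definition~\ref{defMultiplierSystem}), and $j(\gamma S,\sigma)^{-k}=w_k(\gamma,S)\,j(S,S\tau)^{-k}j(\gamma,\tau)^{-k}$ via the same cocycle relation together with $S^2\tau=\tau$, the factors $w_k(\gamma,S)$ cancel. What survives is the scalar $(-i\tau)^{-k}j(S,S\tau)^{-k}\nu_{\Theta}(S)^{-2k}$ multiplying $\nu_{\Theta}(\gamma)^{-2k}j(\gamma,\tau)^{-k}\varphi_{s,k}(\tfrac{r^2}2\gamma\tau)$. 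Using $j(S,S\tau)=S\tau=-1/\tau$, the relation $(-i\tau)(-1/\tau)=i$, and the argument computation $\arg(-i\tau)+\arg(-1/\tau)=\tfrac\pi2=\arg(i)$ (so that no branch correction is needed), one finds $(-i\tau)^{-k}(-1/\tau)^{-k}=i^{-k}$; combined with $\nu_{\Theta}(S)^{-2k}=\ee(-\tfrac18)^{-2k}=e^{\pi ik/2}$ this gives the scalar $i^{-k}e^{\pi ik/2}=1$.

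Putting these together, each summand of $(-i\tau)^{-k}\widetilde{F}_k(-1/\tau;r,s)$ equals $\nu_{\Theta}(\gamma)^{-2k}j(\gamma,\tau)^{-k}\varphi_{s,k}(\tfrac{r^2}2\gamma\tau)$ summed over $\gamma\in\mB\sqcup\{[I]\}$; the $\gamma=[I]$ term is $\varphi_{s,k}(\tfrac{r^2}2\tau)$ while the sum over $\mB$ equals $-F_k(\tau;r,s)$ by definition, and rearranging yields the functional equation. The closed form in the last display then follows by substituting $\tfrac{r^2}2\tau=\tfrac{r^2x}2+i\tfrac{r^2y}2$ into the definitions of $\varphi_{s,k}$ and $\Mform_s$, giving $\varphi_{s,k}(\tfrac{r^2}2\tau)=\Mform_s(2\pi r^2y)\,e^{\pi ir^2x}$.

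The main obstacle I anticipate is the careful handling of the half-integer powers: one must verify both that the two $w_k(\gamma,S)$ cocycle factors cancel exactly rather than leaving a residual root of unity, and that $(-i\tau)^{-k}(-1/\tau)^{-k}=i^{-k}$ holds with no $2\pi k$ winding term, which is precisely where keeping $\arg\in(-\pi,\pi]$ throughout is essential. The convergence and the index-set bijection are comparatively routine given Proposition~\ref{propMB-and-tildeMB-properties}.
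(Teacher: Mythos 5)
Your proof is correct and follows essentially the same route as the paper: both unfold $(-i\tau)^{-k}\widetilde F_k(-\tfrac1\tau;r,s)$ over the bijection $\widetilde{\mB}=\mB[S]\sqcup\{[S]\}$, use $S^2=-I$ to identify $\widetilde{\gamma}(S\tau)=\gamma\tau$, and check that the residual scalar $(-i\tau)^{-k}(-1/\tau)^{-k}\nu_{\Theta}(S)^{-2k}$ equals $1$. The only cosmetic difference is that the paper evaluates the product $\nu_{\Theta}(\gamma S)\,j(\gamma S,S\tau)^{1/2}$ directly from the explicit relation \eqref{eqNuTheta-gammaS-property} instead of invoking the abstract $w_k$-cocycle cancellation, but the branch bookkeeping is identical.
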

\begin{proof}
	We start with $\widetilde{F}_k(-\frac 1\tau;r,s)$. By Proposition~\ref{propMB-and-tildeMB-properties}, we choose representatives of $\mathcal{P}/\Z^\times$ and $\widetilde{\mathcal{P}}/\Z^\times$ as
	\begin{align*}
		&\{(c,d)\in \Z\times \Z: c=2,4,6,\cdots, \ (c,d)=1\}, \quad \text{and}\\
		\{(1,0)\}\cup &\{(c,-d)\in \Z\times \Z: d=2,4,6,\cdots, \ (c,d)=1\},\quad  \text{respectively.}
	\end{align*}
	The corresponding representatives of $\mB$ and $\widetilde{\mB}$ are chosen by
	\begin{align}
		\label{eqCosetRepof-MB}
		&\mathfrak{B}\defeq \{\begin{psmallmatrix}
			a&b+2ta\\c&d+2tc
		\end{psmallmatrix}\in \Gamma_{\Theta}: c\in 2\Z_+, \ d\Mod{2c}^*,\ ad\equiv 1\Mod{2c},\ t\in \Z\},\\
		\label{eqCosetRepof-tildeMB}
		&\text{and} \quad \{\begin{psmallmatrix}
			0&-1\\1&0
		\end{psmallmatrix}\}\cup \mathfrak{B}S,\quad \text{respectively}. 
	\end{align}
	For $\tau\in \HH$ and $\gamma=\begin{psmallmatrix}
		*&*\\c&d
	\end{psmallmatrix}\in \Gamma_{\Theta}$ with $c>0$, by \eqref{eqNuTheta-gammaS-property} we get
	\begin{align*}
		\nu_{\Theta}(\gamma S) \(d(-\tfrac 1\tau)-c\)^{\frac 12}&=(\sgn d)\ee(-\tfrac 18)\nu_{\Theta}(\gamma)\cdot (\sgn d)(-\tfrac 1\tau)^{\frac 12}(c\tau+d)^{\frac 12}\\
		&=\ee(\tfrac 18)\tau^{-\frac 12}\nu_{\Theta}(\gamma)(c\tau+d)^{\frac 12}
	\end{align*}
	Hence we have
	\begin{align*}
		&\widetilde{F}_k(-\tfrac 1\tau;r,s)=\nu_{\Theta}(S)^{-2k}(-\tau^{-1})^{-k} \varphi_{s,k}(\tfrac{r^2}2 \tau)\\
		&+\sum_{2|c>0} \sum_{t\in \Z} \sum_{\substack{d\Mod{2c}^*\\ad\equiv 1(2c)}} \nu_{\Theta}\(\begin{psmallmatrix}
			a&b+2ta\\c&d+2tc
		\end{psmallmatrix}S\)^{-2k}\frac{\varphi_{s,k}\(\frac{r^2}2\begin{psmallmatrix}
			a&b+2ta\\c&d+2tc
		\end{psmallmatrix}S(-\frac 1\tau)\)}{\((d+2tc)(-\frac 1\tau)-c\)^{k}}\\
	&=\ee(\tfrac k4)e^{-\pi i k}\tau^k\varphi_{s,k}(\tfrac{r^2}2 \tau)+\sum_{\gamma\in \mB}\ee(-\tfrac k4)\tau^k \nu_{\Theta}(\gamma)^{-2k}\frac{\varphi_{s,k}(\tfrac{r^2}2 \gamma \tau)}{(c\tau +d)^{k}} \\
	&=\ee(-\tfrac k4)\tau^k\(\varphi_{s,k}(\tfrac{r^2}2 \tau)-F_k(\tau;r,s)\). 
	\end{align*}
	We finish the proof by noticing $(-i\tau)^k = \ee(-\frac k4)\tau^k$ for $\tau\in \HH$. 
\end{proof}

In Proposition~\ref{propFunctionalequation-ats}, if we take $s=\frac k2$, by \eqref{eqMWhittaker-special-form} we have
\begin{equation}\label{eqFunctionalequation-atsk2}
	F_k(\tau; r,\tfrac k2)+(-i\tau)^{-k}\widetilde F_k(-\tfrac 1{\tau}; r,\tfrac k2)= e^{\pi i r^2 \tau}. 
\end{equation}
This is clear if the weight $k>2$ because $s=\frac k2>1$. Since we are focusing on the case $k=2$ and $k=\frac 32$ in this paper, by computing their Fourier expansions in Lemma~\ref{lemmaFourierExpansionAts}, we are able to prove the following two theorems, Theorem~\ref{theoremFourierExpansionForWeight2Ats=1} and Theorem~\ref{theoremFourierExpansionForWeight3/2Ats=3/4}, which show that $F_2(\tau;r,s)$ and $\widetilde F_2(\tau;r,s)$ can be analytically continued to $s=1$ and $F_{\frac 32}(\tau;r,s)$ and $\widetilde F_{\frac 32}(\tau;r,s)$ can be analytically continued to $s=\frac 34$, respectively. After proving these theorems, we conclude that \eqref{eqFunctionalequation-atsk2} still holds for $k=2$ and $\frac 32$ by analytic continuation.

\subsection{Computation of Fourier expansions}
\label{subsectionComputeFourierExpofMaassPoincare}
\begin{lemma}\label{lemmaFourierExpansionAts}
	Let $\tau=x+iy\in \HH$. For $n\in \Z$, we define
    \begin{align*}
        &B_{2k,n}(r;s,y)\defeq -\Gamma(2s)\pi 
    \ee(-\tfrac k4)\cdot \\
        &\left\{\begin{array}{ll}
              \displaystyle \frac{W_{\frac k2,s-\frac 12}(2\pi n y)}{\Gamma(s+\frac k2)(2\pi r^2y)^{\frac k2} } \left|\frac{r^2}n\right|^{\frac {1}2} \sum_{2|c>0}\frac{K(r^2,n,c,\nu_{\Theta}^{2k})}c J_{2s-1}\(\frac{2\pi |r|\sqrt n}{c}\), & n>0, \\
		   \displaystyle \frac{(\pi r^2)^{s-\frac k2}2^{-s-\frac k2}y^{1-s-\frac k2}}{\Gamma(s+\frac k2)\Gamma(s-\frac k2)(s-\frac 12)}\sum_{2|c>0}\frac{K(r^2,0,c,\nu_{\Theta}^{2k})}{c^{2s}}, & n=0, \\
		  \displaystyle \frac{W_{-\frac k2,s-\frac 12}(2\pi |n| y)}{\Gamma(s-\frac k2)(2\pi r^2y)^{\frac k2}}   \left|\frac{r^2}n\right|^{\frac {1}2} \sum_{2|c>0}\frac{K(r^2,n,c,\nu_{\Theta}^{2k})}c I_{2s-1}\(\frac{2\pi |r^2n|^{\frac 12}}{c}\), & n<0, 
        \end{array}\right. 
    \end{align*}
    and 
    \begin{align*}
        &\widetilde{B}_{2k,n}(r;s,y)\defeq \Gamma(2s)\pi \ee(-\tfrac k4)\cdot\\
        &\left\{\begin{array}{ll}
             \displaystyle \frac{W_{\frac k2,s-\frac 12}(2\pi n y)}{\Gamma(s+\frac k2)(2\pi r^2y)^{\frac k2} } \left|\frac{r^2}n\right|^{\frac {1}2} \sum_{2\nmid d>0}\frac{\widetilde{K}(r^2,n,d,\nu_{\Theta}^{2k})}d J_{2s-1}\(\frac{2\pi |r|\sqrt n}{d}\), & n>0, \\
		      \displaystyle \frac{(\pi r^2)^{s-\frac k2}2^{-s-\frac k2}y^{1-s-\frac k2}}{\Gamma(s+\frac k2)\Gamma(s-\frac k2)(s-\frac 12)}\sum_{2\nmid d>0}\frac{\widetilde{K}(r^2,0,d,\nu_{\Theta}^{2k})}{d^{2s}}, & n=0,\\
		    \displaystyle \frac{W_{-\frac k2,s-\frac 12}(2\pi |n| y)}{\Gamma(s-\frac k2)(2\pi r^2y)^{\frac k2}}   \left|\frac{r^2}n\right|^{\frac {1}2} \sum_{2\nmid d>0}\frac{\widetilde{K}(r^2,n,d,\nu_{\Theta}^{2k})}d I_{2s-1}\(\frac{2\pi |r^2n|^{\frac 12}}{d}\), & n<0. 
        \end{array}
        \right. 
    \end{align*}
    Then the Fourier expansions of $F_k(\tau,r;s)$ and $\widetilde{F}_k(\tau,r;s)$ at the cusp at $\infty$ are
	\begin{align}
    \label{eqFourierExpansionFAts-on-cusp-Infinity}
		F_k(\tau;r,s)&= \sum_{n\in \Z} B_{2k,n}(r;s,y)\; e^{\pi i n x},\\
    \label{eqFourierExpansion-tildeFAts-on-cusp-infty}
        \widetilde{F}_k(\tau;r,s)&= \sum_{n\in \Z}\widetilde{B}_{2k,n}(r;s,y)\;e^{\pi i n x} . 
	\end{align}
\end{lemma}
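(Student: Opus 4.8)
The plan is to compute each Fourier coefficient by the classical unfolding method, using the explicit coset representatives $\mathfrak{B}$ and $\mathfrak{B}S\cup\{S\}$ from \eqref{eqCosetRepof-MB}--\eqref{eqCosetRepof-tildeMB}. First I would record that $F_k(\,\cdot\,;r,s)$ and $\widetilde F_k(\,\cdot\,;r,s)$ are $2$-periodic: since $\nu_{\Theta}(T^2)=1$ and right multiplication by $A=[T^2]$ permutes $\mB$ (Proposition~\ref{propMB-and-tildeMB-properties}(1)), reindexing $\gamma\mapsto\gamma T^2$ shows $F_k(\tau+2;r,s)=F_k(\tau;r,s)$, and likewise for $\widetilde F_k$. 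Hence the expansions are in $e^{\pi i n x}$ with $B_{2k,n}(r;s,y)=\tfrac12\int_{-1}^1 F_k(x+iy;r,s)e^{-\pi i n x}\,dx$. Because the series converges absolutely and locally uniformly for $\re(s)>1$, I may interchange summation and integration freely throughout.

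Next I would carry out the unfolding. Writing the $\mB$-sum via $\mathfrak{B}$ as a sum over $c\in 2\Z_+$, over $d\bmod 2c$ with $ad\equiv1$, and over $t\in\Z$, and noting that $\gamma_t=\gamma T^{2t}$ so that $\gamma_t\tau=\gamma(\tau+2t)$ and $\nu_{\Theta}(\gamma_t)=\nu_{\Theta}(\gamma)$, the $t$-sum together with the substitution $x\mapsto x+2t$ collapses $\sum_t\int_{-1}^1$ into a single integral $\int_\R$. After the substitution $u=x+d/c$ (so that $c\tau+d=c(u+iy)$ and $\gamma\tau=\tfrac ac-\tfrac1{c^2(u+iy)}$), the entire dependence on $a$ and $d$ factors out of the $u$-integral as the phase $\ee\!\big(\tfrac{r^2a+nd}{2c}\big)$. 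Here is the one genuinely delicate point: since $r$ is real, $\ee(\tfrac{r^2a}{2c})$ depends on the chosen representative of $a\bmod 2c$, and Proposition~\ref{propMB-and-tildeMB-properties}(3) guarantees that the canonical $\mB$-representative has $|a|<c$; summing over $d\bmod2c$ therefore reproduces exactly the real-variable Kloosterman sum $K(r^2,n,c,\nu_{\Theta}^{2k})$ from \eqref{eqReal-Val-Kloosterman-Sums}. This reduces $B_{2k,n}$ to $-\tfrac12\sum_{2|c>0}c^{-k}K(r^2,n,c,\nu_{\Theta}^{2k})\,\mathcal I_n(c,y)$, where
\[
\mathcal I_n(c,y)=\int_{-\infty}^{\infty}(u+iy)^{-k}\,\Mform_s\!\Big(\tfrac{2\pi r^2 y}{c^2(u^2+y^2)}\Big)\,\ee\!\Big(-\tfrac{r^2 u}{2c^2(u^2+y^2)}\Big)\,e^{-\pi i n u}\,du.
\]

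The main work, and the main obstacle, is the evaluation of $\mathcal I_n(c,y)$. This is the standard Whittaker-seed integral from the theory of Maass--Poincar\'e series; I would evaluate it by the known formulas (as in \cite{QihangFirstAsympt} and the references given there), splitting into three cases according to the sign of $n$. For $n>0$ the integral produces the decaying Whittaker function $W_{\frac k2,s-\frac12}(2\pi n y)$ times a $J$-Bessel function $J_{2s-1}(\tfrac{2\pi|r|\sqrt n}{c})$; for $n<0$ one gets $W_{-\frac k2,s-\frac12}(2\pi|n|y)$ times $I_{2s-1}(\tfrac{2\pi|r^2 n|^{1/2}}c)$; and for $n=0$ a pure power $y^{1-s-k/2}$. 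All the Gamma factors $\Gamma(2s)$, $\Gamma(s\pm\tfrac k2)$ and the phase $\ee(-\tfrac k4)$ (coming from $(u+iy)^{-k}$ along the line of integration) emerge from this computation. Substituting the three cases back into $-\tfrac12\sum_c c^{-k}K\cdot\mathcal I_n$ yields precisely the stated formula for $B_{2k,n}(r;s,y)$.

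Finally I would treat $\widetilde F_k$ in the same way, now unfolding over the representatives $\mathfrak{B}S$ together with the single element $S$. Multiplication by $S$ sends the bottom row $(c,d)$ to $(d,-c)$, so the relevant modulus is the odd entry $d$ and the extracted phase becomes $\ee(\tfrac{r^2 b-nc}{2d})$; by Proposition~\ref{propMB-and-tildeMB-properties}(3) (now $|b|<|d|$) the $d$-sum reproduces $\widetilde K(r^2,n,d,\nu_{\Theta}^{2k})$ from \eqref{eqReal-Val-Kloosterman-Sums-tilde}, while the extra term $S$ contributes exactly the $d=1$ summand with $\widetilde K(r^2,n,1,\nu_{\Theta}^{2k})=\ee(\tfrac k4)$ dictated by the convention. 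Since the $u$-integral is literally the same $\mathcal I_n(c,y)$ as before, the identical three-case evaluation gives the stated formula for $\widetilde B_{2k,n}(r;s,y)$, completing the proof.
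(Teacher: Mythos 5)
Your proposal is correct and follows essentially the same route as the paper: unfold against the special coset representatives, use Proposition~\ref{propMB-and-tildeMB-properties}(3) to fix the representative $|a|<c$ (resp.\ $|b|<d$) so that the real-variable Kloosterman sums \eqref{eqReal-Val-Kloosterman-Sums}--\eqref{eqReal-Val-Kloosterman-Sums-tilde} emerge, and evaluate the remaining seed integral by the standard Whittaker--Bessel formula. The only (cosmetic) difference is that the paper packages the $t$-sum into an auxiliary $2$-periodic function $f(\tau;c)$ and computes its Fourier coefficients once via \cite[Proposition~3.6]{JeonKangKim2012}, whereas you unfold $F_k$ directly; the integral $\mathcal I_n(c,y)$ you isolate is exactly the paper's $f_n(y)$.
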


\begin{proof}
	When $r^2$ is an integer, this is a standard computation in the theory of harmonic Maass forms, see \cite[Proof of Theorem~1.9]{BruinierBookBorcherds}, \cite[Proof of Theorem~3.2]{BrmOno2006ivt}, \cite[Proof of Theorem~3.2]{BringmannOno2012}, \cite[Theorem~3.2]{JeonKangKim2012} and \cite[\S5.2]{QihangExactFormula} for examples. However, since we require $r$ in $\varphi_{s,k}(\frac {r^2}2 \tau)$ to be real and use real-variable Kloosterman sums \eqref{eqReal-Val-Kloosterman-Sums}, \eqref{eqReal-Val-Kloosterman-Sums-tilde} here, we provide a detailed proof.
	
	We first define an auxiliary function
	\[f(\tau;c)\defeq \sum_{t\in \Z} \frac{|\tau+2t|^k}{(\tau+2t)^k} M_{\frac k2,\,s-\frac 12}\(\frac{2\pi r^2 y}{c^2|\tau+2t|^2}\)\ee\(\frac{r^2/2}{-c^2}\re \(\frac 1{\tau+2t}\)\). \]
	This function is invariant under $\tau\rightarrow \tau+2$ and has the Fourier expansion
	\[f(\tau;c)=:\sum_{n\in \Z} f_{n}(y) e^{\pi i n x} \quad \text{for }\tau=x+iy,\]
	where the Fourier coefficient $f_n(y)$ can be computed by
	\begin{align*}
		f_n(y)&=\frac 12\int_{-1}^1 f(\tau;c)e^{-\pi i n x} dx\\
		&=\frac 12\int_{\R} \frac{|x+iy|^k}{(x+iy)^k} M_{\frac k2,\,s-\frac 12}\(\frac{2\pi r^2 y}{c^2(x^2+y^2)}\)\ee\(\frac{r^2/2}{-c^2}\re \(\frac 1{x+iy}\)-\frac{nx}2\)dx. 
	\end{align*}
	By letting $x=-yu$ where $y>0$, we have
	\begin{equation}
		\frac{|x+iy|^k}{(x+iy)^k}=\ee(-\tfrac k4)\(\frac{1-iu}{1+iu}\)^{\frac k2}. 
	\end{equation}
	Then we continue and apply \cite[Proposition~3.6]{JeonKangKim2012} to get
	\begin{align}\label{eqFourier-expansion-auxillaryfunction-ftauc}
		\begin{split}
		f_n(y)&=\frac{y\ee(-\frac k4)}2\int_{\R}\(\frac{1-iu}{1+iu}\)^{\frac k2}M_{\frac k2,\,s-\frac 12}\(\frac{2\pi r^2 }{c^2y(u^2+1)}\)\ee\(\frac{r^2u}{2c^2y(u^2+1)}+\frac{nyu}2\)du\\
		&=\left\{\begin{array}{ll}
			\displaystyle \frac{\pi \ee(-\frac k4) \Gamma(2s)}{c\cdot \Gamma(s+\frac k2)} \left|\frac{r^2}n\right|^{\frac 12}W_{\frac k2,s-\frac 12}(2\pi ny) J_{2s-1}\(\frac{2\pi |r|\sqrt n}c\),&\ n>0;\vspace{5px}\\
			\displaystyle \frac{\pi \ee(-\frac k4)\Gamma(2s) (\pi r^2)^s}{2^s c^{2s}(s-\frac 12)\Gamma(s+\frac k2)\Gamma(s-\frac k2)} y^{1-s} ,&\ n=0;\vspace{5px}\\
			\displaystyle \frac{\pi \ee(-\frac k4) \Gamma(2s)}{c\cdot \Gamma(s-\frac k2)} \left|\frac{r^2}n\right|^{\frac 12}W_{-\frac k2,s-\frac 12}(2\pi |n|y) I_{2s-1}\(\frac{2\pi |r^2 n|^{\frac 12}}c\) ,&\ n<0;
		\end{array}
		\right.
		\end{split}
	\end{align}
	
	Now we compute the Fourier expansions with the help of $f(\tau;c)$. By \eqref{eqCosetRepof-MB} and \eqref{MultiplierSystemBasicProprety}
	\begin{align*}
		F_k(\tau;r,s)&=-\sum_{2|c>0}\sum_{\substack{d\Mod{2c}^*\\-c<a<c\\ad\equiv 1\Mod{2c}}} \frac{\nu_{\Theta}\begin{psmallmatrix}
			*&*\\c&d
		\end{psmallmatrix}^{-2k}}{|2\pi r^2 y|^{\frac k2}} \\
	&\cdot \sum_{t\in \Z} \frac{|c\tau+d+2t|^k}{(c\tau+d+2t)^k}M_{\frac k2,s-\frac 12}\(\frac{2\pi r^2 y}{|c\tau+d+2t|^2}\) \ee\(\frac {r^2}2\(\frac ac-\frac{1}{c(c\tau+d+2t)}\)\)\\
	&=-\sum_{2|c>0} \sum_{\substack{d\Mod{2c}^*\\-c<a<c\\ad\equiv 1\Mod{2c}}} \frac{\nu_{\Theta}\begin{psmallmatrix}
			*&*\\c&d
		\end{psmallmatrix}^{-2k}}{|2\pi r^2 y|^{\frac k2}} \ee\(\frac{r^2a}{2c}\)f\(\tau+\frac dc;c\). 
	\end{align*}
	The choice of $a$ in $\ee(\frac{r^2a}{2c})$ is important and due to Proposition~\ref{propMB-and-tildeMB-properties}. The real-variable Kloosterman sums defined in~\eqref{eqReal-Val-Kloosterman-Sums} are therefore involved. We have
	\[f\(\tau+\frac dc;c\)=\sum_{n\in \Z} f_n(y) \ee\(\frac{nd}{2c}\) e^{\pi i n x}\]
	and \eqref{eqFourierExpansionFAts-on-cusp-Infinity} is proved by \eqref{eqFourier-expansion-auxillaryfunction-ftauc}. 
	
    The proof of \eqref{eqFourierExpansion-tildeFAts-on-cusp-infty} follows in a similar way as the proof of \eqref{eqFourierExpansionFAts-on-cusp-Infinity}, by recalling Proposition~\ref{propMB-and-tildeMB-properties}, noticing that every $\widetilde{\gamma}\in \widetilde{\mB}$ has the form
    \[\widetilde{\gamma}=\left[\begin{psmallmatrix}
		b&-a\\d&-c
	\end{psmallmatrix}\right]\quad \text{for }\left[\begin{psmallmatrix}
	a&b\\c&d
	\end{psmallmatrix}\right]\in \mB\text{ where we choose }d>0\]
	and $-d<b<d$, as well as the Fourier expansion of $f(\tau-\frac cd;d)$ with \eqref{eqFourier-expansion-auxillaryfunction-ftauc}.
\end{proof}

The following two theorems play a key role in the proofs of Theorem~\ref{theoremMainDim4} and Theorem~\ref{theoremMainDim3}. We prove them in Section~\ref{sectionProof-of-theorem-FourierExpansion-weight2-s1} and Section~\ref{sectionProof-of-theorem-FourierExpansion-weight3/2-s3/4}, respectively.

\begin{theorem}\label{theoremFourierExpansionForWeight2Ats=1}
Let $\tau=x+iy\in \HH$ and $r\in \R\setminus\{0\}$. For $s\in[1,1.001]$, both $B_{4,n}(r;s,y)$ and $\widetilde{B}_{4,n}(r;s,y)$ are convergent and bounded by
\begin{align*}
\begin{array}{r}
     |B_{4,n}(r;s,y)| \vspace{5px}\\
    |\widetilde{B}_{4,n}(r;s,y)| 
\end{array}
\ll_\ep 
\left\{\begin{array}{ll}
     n^{1+\ep} e^{-\pi y n}& \text{if }n\geq 1,\ r^2n\leq 1,\\
     (|r|^{-1}+|r|^{-\frac 12+\ep}) n^{\frac 34+\ep} e^{-\pi y n}& \text{if }n\geq 1,\ r^2n\geq 1,\\
      (|r|^{-1}+|r|^{1+\ep}) |n|^2 e^{-\pi y |n|+2\pi |r||n|^{1/2}}/\Gamma(s-1) &\text{if }n\leq -1. 
\end{array}\right.
\end{align*}
The limits $\lim_{s\rightarrow 1^+} B_{4,0}(r;s,y)$ and $\lim_{s\rightarrow 1^+} \widetilde{B}_{4,0}(r;s,y)$ both exists. 
Therefore, both expansions of $F_2(\tau;r,s)$ \eqref{eqFourierExpansionFAts-on-cusp-Infinity} and $\widetilde F_2(\tau;r,s)$ \eqref{eqFourierExpansion-tildeFAts-on-cusp-infty} are absolutely convergent for $s\in [1,1.001]$. 

Moreover, $F_{2}(\tau;r,1)$ and $\widetilde{F}_{2}(\tau;r,1)$ are explicitly given by 
\begin{equation}\label{eqFourierExpansionForWeight2Ats=1-evenc}
			F_2(\tau;r,1)=\frac{\sin(\pi r^2) }{y\pi^2 r^2}+\pi\sum_{n=1}^\infty e^{\pi i n \tau} \(\frac n{r^2}\)^{\frac 12} \sum_{2|c>0}\frac{K(r^2,n,c,\nu_{\Theta}^{4})}c J_{1}\(\frac{2\pi |r|\sqrt n}{c}\)
		\end{equation}
		and
		\begin{equation}\label{eqFourierExpansionForWeight2Ats=1-oddd}
			\widetilde{F}_2(\tau;r,1)=\frac{\sin(\pi r^2)}{y\pi^2 r^2}-\pi \sum_{n=1}^\infty e^{\pi i n \tau} \(\frac n{r^2}\)^{\frac 12} \sum_{2\nmid d>0}\frac{\widetilde K(r^2,n,{\widetilde{c}},\nu_{\Theta}^{4})}{\widetilde{c}} J_{1}\(\frac{2\pi |r|\sqrt n}{{\widetilde{c}}}\). 
		\end{equation}
		They satisfy
		\begin{equation}\label{eqFourierExpansionForWeight2Ats=1-FunctionalEquation}
			F_2(\tau;r,1)+(-i\tau)^{-2}\widetilde F_2(-1/\tau;r,1)=e^{\pi i r^2 \tau}. 
		\end{equation}
\end{theorem}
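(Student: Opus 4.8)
The plan is to work from the Fourier coefficient formulas of Lemma~\ref{lemmaFourierExpansionAts} specialized to $k=2$ (where $\ee(-\tfrac{k}{4})=\ee(-\tfrac12)=-1$, $W_{k/2,\,s-1/2}=W_{1,\,s-1/2}$, and $(2\pi r^2y)^{k/2}=2\pi r^2y$), and to reduce the whole statement to estimating, for each fixed $n$, the Bessel-weighted sums of real-variable Kloosterman sums
\[
\sum_{2|c>0}\frac{K(r^2,n,c,\nu_{\Theta}^4)}{c}\,J_{2s-1}\Big(\tfrac{2\pi|r|\sqrt n}{c}\Big),
\qquad
\sum_{2\nmid\widetilde c>0}\frac{\widetilde K(r^2,n,\widetilde c,\nu_{\Theta}^4)}{\widetilde c}\,J_{2s-1}\Big(\tfrac{2\pi|r|\sqrt n}{\widetilde c}\Big)
\]
for $n>0$, their $I$-Bessel analogues for $n<0$, and the Dirichlet series $\sum_{2|c>0}K(r^2,0,c,\nu_{\Theta}^4)c^{-2s}$ for $n=0$. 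For $\re(s)>1$ the trivial bound $|K(r^2,n,c,\nu_{\Theta}^4)|\le c$ together with $J_{2s-1}(z)\ll z^{2s-1}$ as $z\to0$ already gives absolute convergence; the entire difficulty is to push the estimates to the edge $s=1$, where the trivial bound only produces a divergent $\sum c^{-1}$.

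The cancellation needed at $s=1$ will come from treating the $c$-sums spectrally. First I would use Lemma~\ref{lemmaExponential-interpolation} to write each real-variable sum $K(r^2,n,c,\nu_{\Theta}^4)$ as a finite linear combination of ordinary Kloosterman sums $S(k,n,c,\nu_{\Theta}^4)=S(k,n,2c)$ (by \eqref{eqKloosterman-Sums-Relation-Even-Theta-to-theta}), with weights of the shape $\sin(\pi(r^2-k))/(\ee(\tfrac{r^2-k}{2c})-1)$ expanded via Lemma~\ref{lemmaBernoulli-numbers-expansion}. This reduces matters to bounding smoothly weighted partial sums $\sum_{c\le X}S(k,n,2c)$, which I would control using the spectral estimates for sums of Kloosterman sums recorded in \S\ref{subsectionMaass-forms} (of Kuznetsov and Goldfeld--Sarnak type, cf.\ \cite[Theorem~2]{gs}). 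Proposition~\ref{propAppendix-KLsums-evenodd-relation} then lets me transfer the same analytic input between the even-$c$ sum defining $K$ and the odd-$c$ sum defining $\widetilde K$. Partial summation against the Bessel weight, splitting the $c$-range at $c\asymp|r|\sqrt n$, yields the two cases $r^2n\le1$ and $r^2n\ge1$ of the stated bound, uniformly for $s\in[1,1.001]$; the detailed exponent bookkeeping producing $n^{1+\ep}$ and $(|r|^{-1}+|r|^{-1/2+\ep})n^{3/4+\ep}$ is routine once the square-root cancellation is in hand.

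For the remaining coefficients I would argue separately. The terms with $n<0$ constitute the non-holomorphic part: there $I_{2s-1}$ grows exponentially, producing the factor $e^{2\pi|r||n|^{1/2}}$, but each coefficient carries $1/\Gamma(s-\tfrac k2)=1/\Gamma(s-1)$, which tends to $0$ as $s\to1^+$ and so controls (and ultimately eliminates) the non-holomorphic contribution. For $n=0$, after the same interpolation the series $\sum_{2|c>0}K(r^2,0,c,\nu_{\Theta}^4)c^{-2s}$ becomes a combination of Ramanujan-type sums and hence is expressible through ratios of zeta functions; this gives its meromorphic continuation, whose simple pole at $s=1$ exactly cancels the zero of $1/\Gamma(s-1)$, so that $\lim_{s\to1^+}B_{4,0}(r;s,y)$ and $\lim_{s\to1^+}\widetilde B_{4,0}(r;s,y)$ exist and both equal $\sin(\pi r^2)/(y\pi^2r^2)$. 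Together with the bounds of the previous paragraph, this establishes absolute convergence of \eqref{eqFourierExpansionFAts-on-cusp-Infinity} and \eqref{eqFourierExpansion-tildeFAts-on-cusp-infty} throughout $s\in[1,1.001]$.

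Finally, to read off \eqref{eqFourierExpansionForWeight2Ats=1-evenc} and \eqref{eqFourierExpansionForWeight2Ats=1-oddd} I set $s=1$ in the $n>0$ coefficients: then $2s-1=1$, $\Gamma(2s)/\Gamma(s+1)=1$, and $W_{1,1/2}(2\pi ny)=2\pi ny\,e^{-\pi ny}$ by \eqref{eqWWhittaker-special-form}, so after combining $e^{-\pi ny}e^{\pi inx}=e^{\pi in\tau}$ and simplifying the prefactor (using $\ee(-\tfrac12)=-1$) one is left exactly with the displayed $J_1$-sums; the $n=0$ limit supplies the term $\sin(\pi r^2)/(y\pi^2r^2)$ and the $n<0$ terms vanish at $s=1$. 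The functional equation \eqref{eqFourierExpansionForWeight2Ats=1-FunctionalEquation} then follows from Proposition~\ref{propFunctionalequation-ats} in the form \eqref{eqFunctionalequation-atsk2}: the identity holds for $\re(s)>1$ by construction, and both sides continue analytically to $s=1$ now that the series are known to converge there. The main obstacle is the boundary convergence of the second paragraph, namely extracting genuine square-root cancellation from $\sum_{c\le X}S(k,n,2c)$ uniformly in the auxiliary integer $k$ arising from the interpolation; this uniformity is exactly what forces us through the spectral theory of Maass forms rather than the Weil bound alone.
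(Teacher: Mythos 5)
Your overall architecture matches the paper's (reduce to the coefficient formulas of Lemma~\ref{lemmaFourierExpansionAts}, bound the Bessel-weighted Kloosterman sums for $n>0$, use the $1/\Gamma(s-1)$ factor to kill $n<0$, extract the $n=0$ limit from a zeta-quotient with a pole at $s=1$, specialize the Whittaker functions at $s=1$, and get the functional equation from Proposition~\ref{propFunctionalequation-ats} by analytic continuation), and every endpoint you state — including $\lim_{s\to1^+}B_{4,0}=\sin(\pi r^2)/(y\pi^2r^2)$ — agrees with the paper. The one substantive divergence is the engine for the $n>0$ estimates. You assert that at $s=1$ one is ``forced through the spectral theory of Maass forms rather than the Weil bound alone,'' but for weight $2$ this is not the case, and the paper does not use any spectral input here. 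The point is that for $s\in[1,1.001]$ one has $J_{2s-1}(x)\ll\min(x^{-1/2},x)$, so the tail $c\gg|r|\sqrt n$ of the $c$-sum carries an extra factor $|r|\sqrt n/c$; combined with the Weil-type bound $K(r^2,n,c,\nu_\Theta^4)\ll_\ep (n,c)^{1/2+\ep}c^{1/2+\ep}$ for the \emph{individual} real-variable sums (Lemma~\ref{lemmaWeiltypebound-real-val-k2}, obtained exactly by your interpolation step via Lemma~\ref{lemmaExponential-interpolation} plus the ordinary Weil bound), the summand is $\ll|r|\sqrt n\,(n,c)^{1/2+\ep}c^{-3/2+\ep}$ and the series converges absolutely at $s=1$ with no cancellation in $c$ needed; splitting at $c\asymp|r|\sqrt n$ then gives precisely the two stated regimes (Propositions~\ref{propEstimate-Sum-of-real-variable-Kloosterman-sums-weightk2-byWeilbound} and~\ref{propEstimate-Sum-of-real-variable-Kloosterman-sums-weightk2-with-IJBessel}). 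The situation you describe — borderline divergence under the Weil bound, remedied by Goldfeld--Sarnak-type estimates for $\sum_{c\le X}S(k,n,c)$ uniformly in the interpolation parameter $k$ — is genuinely the one occurring in the weight-$3/2$ analogue (Theorem~\ref{theoremFourierExpansionForWeight3/2Ats=3/4}), where $J_{2s-1}(x)\ll x^{1/2}$ only; the machinery of \S\ref{subsectionMaass-forms} is built for that theta-multiplier case and would have to be re-derived for the weight-$0$ spectrum on $\Gamma_0(2)$ to serve your purpose here. So your route can be made to work but imports the hardest part of the dimension-$3$ proof into a place where the elementary argument already closes; if you follow the paper's path, the only nontrivial estimate you need is the Weil-type bound for the real-variable sums, which your own first reduction already provides.
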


\begin{theorem}\label{theoremFourierExpansionForWeight3/2Ats=3/4}
Let $\tau=x+iy\in \HH$, and $r\in \R\setminus \{0\}$. For $s\in [\frac 34,1.001]$, both $|B_{3,n}(r;s,y)|$ and $|\widetilde{B}_{3,n}(r;s,y)|$ are convergent and bounded by
\begin{align*}
    \ll_{\ep}\left\{\begin{array}{ll}
    n^{\max(\frac{437}{588},\frac{11+\kappa}{16})+\ep} e^{-\pi y n},& \text{if } n \geq 1, \ r^2n\leq 1,\\
     (|r|^{-\frac 12}+|r|^\ep )n^{\frac 12+\ep} e^{-\pi y n},& \text{if }n\geq 1,\ r^2n\geq 1, \\
     \frac{(|r|^{-1/2}+|r|^3) |n|^5}{\Gamma(s-\frac 34) } e^{-\pi y |n|+2\pi |r||n|^{1/2}},&\text{if }n\leq -1\text{ is not a negative square},\\
      |n/r|^{\frac 12}e^{-\pi y | n|}+\frac{(|r|^{-1/2}+|r|^3) |n|^5}{\Gamma(s-\frac 34) } e^{-\pi y |n|+2\pi |r||n|^{1/2}} &\text{if } n=-m^2 \text{ for some }m\in \Z_+. 
\end{array}\right.
\end{align*}
Then both $F_{\frac 32}(\tau;r,s)$ \eqref{eqFourierExpansionFAts-on-cusp-Infinity} and $\widetilde F_{\frac 32}(\tau;r,s)$ \eqref{eqFourierExpansion-tildeFAts-on-cusp-infty} are absolutely convergent for $s\in [\frac 34,1.001]$. Explicitly, we have
		\begin{align}
		\begin{split}
				F_{\frac 32}(\tau;r,\tfrac 34)&=\pi \ee(\tfrac 18)\sum_{n=1}^\infty e^{\pi i n \tau} \(\frac n{r^2}\)^{\frac 14} \sum_{2|c>0} \frac{K(r^2,n,c,\nu_{\Theta}^3)}{c} J_{\frac 12}\(\frac{2\pi |r| \sqrt{n}}{c}\)\\
			& + \frac{\sin(\pi r^2)}{\pi r\sinh(\pi r)}\(\sqrt{\frac 2y} + 2 \pi^{\frac 12} \sum_{m=1}^\infty m\Gamma(-\tfrac 12,2\pi m^2 y)e^{-\pi i m^2 \tau}\)
		\end{split}
		\end{align}
		and
		\begin{align}
			\begin{split}
				\widetilde F_{\frac 32}(\tau;r,\tfrac 34)&=\pi \ee(-\tfrac 38)\sum_{n=1}^\infty  e^{\pi i n \tau} \(\frac n{r^2}\)^{\frac 14} \sum_{2\nmid {\widetilde{c}}>0} \frac{\widetilde K(r^2,n,{\widetilde{c}},\nu_{\Theta}^3)}{{\widetilde{c}}} J_{\frac 12}\(\frac{2\pi |r| \sqrt n}{\widetilde{c}}\)\\
			&+\frac{\sin(\pi r^2)}{\pi r\sinh(\pi r)}\(\sqrt{\frac 2y} +  2 \pi^{\frac 12}\sum_{m=1}^\infty m\Gamma(-\tfrac 12,2\pi m^2 y)e^{-\pi i m^2 \tau}\). 
			\end{split}
		\end{align}
		They satisfy
		\begin{equation}
			F_{\frac 32}(\tau;r,\tfrac 34)+(-i\tau )^{-\frac 32} \widetilde F_{\frac 32}(-1/\tau;r,\tfrac 34) =e^{\pi i r^2 \tau}. 
		\end{equation}
\end{theorem}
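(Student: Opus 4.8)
The plan is to prove the three assertions in turn — the convergence and bounds of the Fourier coefficients on $[\tfrac34,1.001]$, the explicit formulas at $s=\tfrac34$, and the functional equation — all starting from the Fourier expansion of Lemma~\ref{lemmaFourierExpansionAts}. That lemma expresses $B_{3,n}(r;s,y)$ and $\widetilde B_{3,n}(r;s,y)$, a priori only for $\re(s)>1$, as $c$-sums of the real-variable Kloosterman sums $K(r^2,n,c,\nu_\Theta^3)$ and $\widetilde K(r^2,n,c,\nu_\Theta^3)$ weighted by Whittaker and Bessel factors. The whole problem is thus to continue these coefficients to $s=\tfrac34$, control the resulting Fourier series, and transport the functional equation of Proposition~\ref{propFunctionalequation-ats} across the continuation. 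I will describe the even-$c$ coefficient $B_{3,n}$; the odd-$c$ coefficient $\widetilde B_{3,n}$ is handled identically, replacing $K$ by $\widetilde K$ and using the even/odd conversions of Proposition~\ref{propAppendix-KLsums-evenodd-relation}.

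The decisive step is to estimate the inner $c$-sum $\sum_{2\mid c>0}\tfrac{K(r^2,n,c,\nu_\Theta^3)}{c}\,J_{2s-1}\!\big(\tfrac{2\pi|r|\sqrt n}{c}\big)$ at $s=\tfrac34$, where $J_{2s-1}=J_{1/2}$ and, since $J_{1/2}(z)=\sqrt{2/(\pi z)}\sin z$, the Weil bound only makes the summand $O(c^{-1+\ep})$, so that genuine cancellation among Kloosterman sums is required. To unlock it I would first invoke Lemma~\ref{lemmaExponential-interpolation} together with Lemma~\ref{lemmaBernoulli-numbers-expansion} to rewrite each real-variable Kloosterman sum as a rapidly convergent average over integers $m$ of the classical half-integral weight sums $S(m,n,c,\nu_\Theta^3)$, thereby reducing everything to partial sums $\sum_{c\le X}S(m,n,c,\nu_\Theta^3)/c$. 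These I would estimate through the Kuznetsov/Goldfeld--Sarnak formalism \cite{gs} for $\nu_\Theta^3$ on $\Gamma(2)$ — equivalently $\nu_\theta$ on $\Gamma_0(4)$ via \eqref{eqKloosterman-Sums-Relation-Even-Theta-to-theta} — invoking the Kim--Sarnak bound \cite{KimSarnak764} and Lemma~\ref{lemmaSelberg-eigenvalue}. This is exactly where the exponent $\tfrac{437}{588}=\tfrac34-\tfrac1{147}$ emerges, from balancing the Kim--Sarnak spectral contribution against the Weil/large-sieve contribution, and where the alternative exponent $\tfrac{11+\kappa}{16}$ with $2^{\nu_2(n)}\ll n^\kappa$ reflects the growth of the $2$-power conductor of the index~$n$.

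The structural heart of the weight-$\tfrac32$ case is the exceptional eigenvalue $\lambda_0=\tfrac3{16}$ of $\Delta_{3/2}$, realized by the residual form $y^{-1/4}\overline\theta$ of Lemma~\ref{lemmaMaass-form-coeffs-thetapm}, whose Fourier coefficients are supported on $n=0$ and the negative squares $n=-m^2$. I would show that in the spectral expansion this eigenvalue produces, at the spectral point $s_0=\tfrac34$, a pole of the relevant Kloosterman--Dirichlet series and Bessel-weighted sums; since the coefficients $B_{3,n}$ for $n\le0$ carry the vanishing factor $1/\Gamma(s-\tfrac34)$, a pole/zero cancellation survives precisely at $n=0$ and $n=-m^2$ and supplies the closed form $\frac{\sin(\pi r^2)}{\pi r\sinh(\pi r)}$. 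Concretely, the $n=0$ prefactor $y^{1-s-3/4}$ specializes to $y^{-1/2}$ and yields the constant term $\sqrt{2/y}$, while the $n=-m^2$ terms produce $m\,\Gamma(-\tfrac12,2\pi m^2y)e^{-\pi i m^2\tau}$; all remaining $n\le-1$ contribute only the $1/\Gamma(s-\tfrac34)\to0$ terms recorded in the bound. For $n\ge1$ the theta coefficients vanish, so no main term appears; restoring the Whittaker factor $W_{3/4,s-1/2}(2\pi ny)=(2\pi ny)^{3/4}e^{-\pi ny}$ via \eqref{eqWWhittaker-special-form} gives the exponential decay $e^{-\pi yn}$ and the two-regime bounds according to whether $r^2n\le1$ or $r^2n\ge1$, i.e.\ whether the Bessel argument $\tfrac{2\pi|r|\sqrt n}{c}$ stays uniformly small.

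With the coefficients continued and bounded, summing over $n$ gives absolute convergence of both Fourier series on $[\tfrac34,1.001]$, and the series are analytic in $s$ for $\re(s)>\tfrac34$ and continuous up to the endpoint. The functional equation is then the easy part: Proposition~\ref{propFunctionalequation-ats} gives $F_{3/2}(\tau;r,s)+(-i\tau)^{-3/2}\widetilde F_{3/2}(-1/\tau;r,s)=\varphi_{s,3/2}(\tfrac{r^2}2\tau)$ for $\re(s)>1$; both sides continue analytically to $\re(s)>\tfrac34$, the right-hand side tends to $e^{\pi i r^2\tau}$ as $s\to\tfrac34$ by \eqref{eqMWhittaker-special-form}, and the established convergence at the endpoint yields the identity at $s=\tfrac34$ in the limit. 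I expect the main obstacle to be the spectral estimate together with the clean extraction and identification of the $\lambda_0=\tfrac3{16}$ residual term — verifying the pole/zero cancellation at $n=0$ and the negative squares and matching the constants to $\frac{\sin(\pi r^2)}{\pi r\sinh(\pi r)}$ — since this requires combining the half-integral weight Kuznetsov formula, the Kim--Sarnak bound, and the precise analytic behaviour of the Bessel-weighted $c$-sums.
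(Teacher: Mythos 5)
Your proposal follows essentially the same route as the paper: Lemma~\ref{lemmaFourierExpansionAts} plus the interpolation Lemmas~\ref{lemmaExponential-interpolation} and~\ref{lemmaBernoulli-numbers-expansion} to reduce real-variable Kloosterman sums to integer-indexed ones, spectral estimates for $\sum_{c\le X}S(m,n,c,\nu_\Theta^3)/c$ with the residual eigenvalue $\lambda_0=\tfrac3{16}$ (supported on $n=0$ and negative squares) producing the pole at $s=\tfrac34$ that cancels against $1/\Gamma(s-\tfrac34)$ to yield $\frac{\sin(\pi r^2)}{\pi r\sinh(\pi r)}$, then dominated convergence and analytic continuation of Proposition~\ref{propFunctionalequation-ats}. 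The only caveat is that the terms with interpolation index $m=0$ fall outside the Goldfeld--Sarnak/Kuznetsov framework and are handled in the paper via explicit Euler products, Perron's formula, and Burgess's bound (\S\ref{subsectionWeight3/2mn=0alphaApns} and Proposition~\ref{propSumofKloostermansums-weight32-one-of-mn0}), a detail your plan glosses over but which does not change the overall strategy.
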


\section{Kloosterman sums in weight 2}
\label{SectionKLweight2}
 In this section we assume $m,n,c\in \Z$ and $c>0$.  Recall the notation from \S\ref{subsectionKloosterman-Sums-Defs}.

For the standard Kloosterman sum $S(m,n,c)$, by \cite[(2)]{SarnakTsimerman09} we have the Weil bound
\begin{equation}
	|S(m,n,c)|\leq \sigma_0(c) \sqrt{\gcd(m,n,c)}\sqrt c. 
\end{equation}
By Proposition~\ref{propAppendix-KLsums-evenodd-relation}, for positive integers $2|c$ and $2\nmid \widetilde{c}$ we have
\begin{align}\label{eqWeilBound-Theta4}
    \begin{split}
        |S(m,n,c,\nu_\Theta^4)|&\leq \sigma_0(2c)\sqrt{\gcd(m,n,2c)}\sqrt{2c}, \\
        |S(m,n,\widetilde{c},\nu_\Theta^4)|&\leq 2\sqrt 2\sigma_0(4\widetilde{c})\sqrt{\gcd(m,n,2\widetilde{c})}\sqrt{\widetilde{c}}. 
    \end{split}
\end{align}

When $mn=0$, the Kloosterman sums are Ramanujan sums. For $n\neq 0$, 
\[S(n,0,c)=S(0,n,c)=\sum_{d\Mod c^*}\ee\(\frac{nd}c\)=\sum_{d|(n,c)}d\mu\(\frac{c}d\). \]
Then we have
\begin{equation}\label{eqSum-of-KL-sum-evenandoddc-Theta4-0n}
	\sum_{c\leq x} \frac{S(0,n,c)}c=\sum_{\substack{d|n\\d\leq x}}\sum_{t\leq \frac xd} \frac{\mu(t)}t\ll \min\(x,\sigma_0(n)\)\ll_\ep n^\ep. 
\end{equation}
The same bound holds for $\sum_{2|c} S(m,n,c,\nu_{\Theta}^4)/c$ and $\sum_{2\nmid \widetilde{c}} S(m,n,\widetilde{c},\nu_{\Theta}^4)/\widetilde{c}$. Moreover, we have $S(0,0,c)=\phi(c)$ and 
\begin{equation}
	\left. \begin{array}{rcl}
		\displaystyle \sum_{1\leq c\leq x}\frac{S(0,0,c)}c&=&\displaystyle\sum_{c\leq x}\frac{\phi(c)}c\\
	\displaystyle 	\sum_{2|c\leq x}\frac{S(0,0,c,\nu_{\Theta}^4)}c&=&\displaystyle \sum_{2|c\leq x}\frac{\phi(2c)}c\\
		\displaystyle \sum_{2\nmid \widetilde{c}\leq x}\frac{S(0,0,\widetilde{c},\nu_{\Theta}^4)}{\widetilde c}&=&\displaystyle \sum_{2\nmid \widetilde{c}\leq x}\frac{-\phi(\widetilde{c})}{\widetilde{c}}
	\end{array}
	\right\}=\left\{\begin{array}{l}
		\displaystyle \frac 6{\pi^2} x \vspace{10px}\\
		\displaystyle \frac 4{\pi^2} x\vspace{10px}\\
		\displaystyle -\frac 4{\pi^2} x
	\end{array}\right. 
	+O(\log x). 
\end{equation}

\section{Real-variable Kloosterman sums in weight 2}
\label{SectionRVKLweight2}
In this section we suppose $k=2$, $n\in \Z$ and $r\in \R_+$. Recall the real-variable Kloosterman sums $K(r,n,c,\nu_{\Theta}^{4})$ and $\widetilde{K}(r,n,c,\nu_{\Theta}^{4})$ from \eqref{eqReal-Val-Kloosterman-Sums} and \eqref{eqReal-Val-Kloosterman-Sums-tilde}. Our goal is to estimate
\[\sum_{2|c\leq x}\frac{K(r^2,n,c,\nu_{\Theta}^4)}c\quad \text{and}\quad \sum_{2\nmid \widetilde{c}\leq x}\frac{\widetilde{K}(r^2,n,\widetilde{c},\nu_{\Theta}^4)}{\widetilde{c}}\]
and conclude the properties of the expansion in Lemma~\ref{lemmaFourierExpansionAts}.

\subsection{Case \texorpdfstring{$n\neq 0$}{n not 0}}

We first prove the Weil bound for individual real-variable Kloosterman sums. 
\begin{lemma}\label{lemmaWeiltypebound-real-val-k2}
	For $r\in \R\setminus \Z$ and $n\in \Z\setminus \{0\}$, we have
	\[K(r,n,c,\nu_{\Theta}^4)\ll_\ep (n,c)^{\frac 12+\ep} c^{\frac 12+\ep} \quad\text{and}\quad  \widetilde{K}(r,n,\widetilde{c},\nu_{\Theta}^4)\ll_\ep (n,\widetilde{c})^{\frac 12+\ep} \widetilde{c}^{\frac 12+\ep}\]
	for $2|c>0$ and $2\nmid \widetilde{c}>0$.
\end{lemma}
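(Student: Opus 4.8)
The plan is to reduce each real-variable Kloosterman sum to a finite linear combination of ordinary (integer-index) Kloosterman sums and then apply the Weil bound \eqref{eqWeilBound-Theta4} term by term. The first observation is that in weight $k=2$ the multiplier is trivial: since $\ep_d^{4}=1$ and $(\frac{2c}d)^{4}=1$, the defining sum \eqref{eqReal-Val-Kloosterman-Sums} collapses to
\[K(r,n,c,\nu_{\Theta}^4)=\sum_{\substack{-c<a<c\\(a,2c)=1\\ad\equiv1\Mod{2c}}}\ee\(\frac{ra+nd}{2c}\),\]
and similarly $\widetilde K(r,n,\widetilde c,\nu_{\Theta}^4)=-\sum\ee\(\frac{rb-nc}{2\widetilde c}\)$ over the index set in \eqref{eqReal-Val-Kloosterman-Sums-tilde}.

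First I would apply Lemma~\ref{lemmaExponential-interpolation} to the factor $\ee(\frac{ra}{2c})$ (legitimate since $-c\le a<c$), choosing the representatives modulo $2c$ with $|r-k|<c$ as in the remark following that lemma. Exchanging the order of summation, the inner sum over $a$ becomes $\sum_{(a,2c)=1,\,ad\equiv1}\ee(\frac{ka+nd}{2c})$. Here I would note that, for $c\ge2$, the open range $-c<a<c$ together with $(a,2c)=1$ runs over exactly one representative of each reduced residue class modulo $2c$ (the only omitted class, that of $\pm c$, is not coprime to $2c$), so this inner sum is precisely $S(k,n,c,\nu_{\Theta}^4)$ by \eqref{eqKloosterman-Sums-on-Theta}. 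This yields the exact identity
\[K(r,n,c,\nu_{\Theta}^4)=\frac{1}{2c}\sum_{|r-k|<c}\frac{2i\sin(\pi(r-k))}{\ee\(\frac{r-k}{2c}\)-1}\,S(k,n,c,\nu_{\Theta}^4),\]
where $k$ ranges over integers in the window $|r-k|<c$. The same computation, using that the $\widetilde K$-sum specializes to $S(m,n,\widetilde c,\nu_{\Theta}^4)$ at integer $r=m$, gives the identical formula for $\widetilde K$ with $c$ replaced by $\widetilde c$.

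Next I would estimate the coefficients. Since $k\in\Z$ we have $|\sin(\pi(r-k))|=|\sin(\pi r)|$, while $|\ee(z)-1|=2|\sin(\pi z)|\ge4|z|$ for $|z|\le\tfrac12$ (because $\sin(\pi t)\ge2t$ on $[0,\tfrac12]$); with $z=(r-k)/(2c)$ this gives
\[\left|\frac{2i\sin(\pi(r-k))}{\ee\(\frac{r-k}{2c}\)-1}\right|=\frac{|\sin(\pi r)|}{|\sin(\pi(r-k)/(2c))|}\le\frac{c\,|\sin(\pi r)|}{|r-k|}.\]
Inserting this together with the Weil bound $|S(k,n,c,\nu_{\Theta}^4)|\ll_\ep\gcd(k,n,2c)^{1/2}c^{1/2+\ep}$ from \eqref{eqWeilBound-Theta4} reduces the whole problem to bounding $|\sin(\pi r)|\sum_{|r-k|<c}\gcd(k,n,2c)^{1/2}/|r-k|$.

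The main obstacle, and the only place where the hypothesis $r\notin\Z$ and the factor $\sin(\pi r)$ are essential, is the term $k=k_0$ with $k_0$ the nearest integer to $r$, where $1/|r-k|$ blows up as $r\to\Z$. Here I would keep the sine factor: since $|r-k_0|=\|r\|\le\tfrac12$, one has $|\sin(\pi r)|/|r-k_0|=\sin(\pi\|r\|)/\|r\|\le\pi$, so this single term contributes $\ll_\ep\gcd(k_0,n,2c)^{1/2}c^{1/2+\ep}\ll_\ep(n,c)^{1/2+\ep}c^{1/2+\ep}$, using $\gcd(n,2c)\le2(n,c)$. For the remaining terms $k\ne k_0$ I would bound $|\sin(\pi r)|\le1$ and split the gcd over divisors $g\mid(n,2c)$, writing $k=gm$; after a short case analysis (according to whether or not the nearest multiple $gm_0$ of $g$ equals $k_0$), the contribution of a fixed $g$ is $\tfrac1g\bigl(2g+\log(2c)+O(1)\bigr)\ll\log(2c)$, since removing the excluded nearest term leaves all other terms at distance $\gtrsim1/g$ from $r/g$. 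Summing $\sum_{g\mid(n,2c)}g^{1/2}\ll_\ep(n,c)^{1/2+\ep}$ then gives the claimed estimate. The argument for $\widetilde K$ is verbatim the same, with $c$ replaced by $\widetilde c$ and the odd-modulus Weil bound from \eqref{eqWeilBound-Theta4}.
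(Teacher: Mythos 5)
Your proof is correct and follows essentially the same route as the paper: apply Lemma~\ref{lemmaExponential-interpolation} to convert $K$ and $\widetilde K$ into window-restricted linear combinations of the integer Kloosterman sums $S(k,n,c,\nu_\Theta^4)$, control the nearest-integer term via $\sin(\pi\|r\|)/\|r\|\le\pi$, bound the rest by the Weil bound \eqref{eqWeilBound-Theta4} together with a divisor-splitting of $\gcd(k,n,2c)^{1/2}$. The only (immaterial) differences are bookkeeping: the paper isolates both $\floor{r}$ and $\ceil{r}$ rather than a single nearest integer and uses $|\ee(t)-1|\ge\pi|t|$ in place of your $4|t|$.
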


\begin{proof}
	For $k\in \Z$, recall the Weil-type bound of Kloosterman sums \eqref{eqWeilBound-Theta4} and $\sigma_0(n)\ll_\ep n^\ep$. 
	Then by Lemma~\ref{lemmaExponential-interpolation}, 
	\begin{align*}
		K(r,n,c,\nu_{\Theta}^4)&=\frac ic \sum_{\substack{k\in \Z\\|r-k|<c}} \frac{\sin(\pi(r-k))}{\ee(\frac{r-k}{2c})-1} S(k,n,c,\nu_{\Theta}^4). 
	\end{align*}
	Since
	\begin{equation}\label{eq:crude bound et-1 abs}
		|\ee(t)-1|=\sqrt{2-2\cos(2\pi t)} \geq \pi |t| \quad \text{for }t\in[-\tfrac 12,\tfrac 12], 
	\end{equation}
	we get
	\begin{align*}
		|K(r,n,c,\nu_{\Theta}^4)|&\leq 2\sum_{\substack{k\in \Z\\ r-c<k<r+c}}\left| \frac{\sin(\pi(r-k))}{\pi(r-k)} \right| \cdot |S(k,n,c,\nu_{\Theta}^4)|\\
		&\leq 2\sum_{\substack{k\in \Z\\ r-c<k\leq \floor{r}-1}} \frac {|S(k,n,2c)|}{\pi(r-k)} + 2\big|S(\floor{r},n,2c)\big|\\
		&+ 2\big|S(\ceil{r},n,2c)\big| + 2\sum_{\substack{k\in \Z\\ \ceil{r}+1\leq k<r+c}} \frac {|S(k,n,2c)|}{\pi(k-r)}. 
	\end{align*}
	By the fact that $\sigma_s(n)\ll_{s,\ep} n^{s+\ep}$ for $s\geq 0$, we reorder the summation and get the desired bound: the sum over $r-c<k\leq \floor{r}-1$ is bounded in the following way: 
	\begin{align*}
		\sum_{\substack{k\in \Z\\ r-c<k\leq \floor{r}-1}} \frac {|S(k,n,2c)|}{r-k}&\ll_\ep c^{\frac 12+\ep} \sum_{\substack{k\in \Z\\ r-c<k\leq \floor{r}-1}} \frac{\sqrt{(k,n,c)}}{r-k}\\
		&\ll_\ep c^{\frac 12+\ep} \sum_{\delta|(n,c)} \delta^{\frac 12} \sum_{\substack{t\in \Z\\\frac{r-c}{\delta}<t\leq \frac{\floor{r}-1}{\delta}}}\frac{1}{r-\delta t}\\
		&\ll_\ep  (n,c)^{\frac 12+\ep} c^{\frac 12+\ep}; 
	\end{align*}
	the bound over $\ceil{r}+1\leq k<r+c$ follows the same way. 
    
	The proof for $\widetilde{K}(r,n,\widetilde{c},\nu_{\Theta}^4)$ follows from \eqref{eqrelation-KLsum-evenodd-k2} and the same process. 
\end{proof}

\begin{proposition}\label{propEstimate-Sum-of-real-variable-Kloosterman-sums-weightk2-byWeilbound}
	For $x\geq 1,r\in \R_+$ and $n\neq 0$, we have the following estimates: 
\[\sum_{2|c\leq x}\frac{|K(r^2,n,c,\nu_{\Theta}^4)|}c \ll_\ep |n|^\ep x^{\frac 12+\ep},\quad \sum_{2\nmid \widetilde{c}\leq x}\frac{|K(r^2,n,\widetilde{c},\nu_{\Theta}^4)|}{\widetilde{c}}\ll_\ep |n|^\ep x^{\frac 12+\ep}.  \]
\end{proposition}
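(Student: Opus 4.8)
The plan is to deduce the proposition directly from the individual Weil-type bound of Lemma~\ref{lemmaWeiltypebound-real-val-k2}, reducing the problem to an elementary sum over $c$ organized by the value of $(n,c)$. First I would invoke Lemma~\ref{lemmaWeiltypebound-real-val-k2} to replace each summand by
\[|K(r^2,n,c,\nu_{\Theta}^4)| \ll_\ep (n,c)^{\frac 12+\ep}\, c^{\frac 12+\ep},\]
which is valid whenever $r^2\notin\Z$; in the remaining case $r^2\in\Z$ the real-variable sum specializes to the ordinary Kloosterman sum $S(r^2,n,c,\nu_{\Theta}^4)$, and the same bound follows from the classical Weil bound~\eqref{eqWeilBound-Theta4} (using $(r^2,n,2c)\leq 2(n,c)$ and $\sigma_0(2c)\ll_\ep c^\ep$). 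Dividing by $c$ turns the left-hand side into $\sum_{2|c\leq x}(n,c)^{\frac 12+\ep}\,c^{-\frac 12+\ep}$, so the entire problem reduces to estimating this arithmetic sum.

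Next I would dispose of the $\gcd$ weight by the standard device of bounding $(n,c)^{\frac 12+\ep}\leq \sum_{d|(n,c)} d^{\frac 12+\ep}$ (legitimate because $(n,c)$ is itself one of the divisors being summed) and then interchanging the order of summation, which gives
\[\sum_{2|c\leq x}\frac{(n,c)^{\frac 12+\ep}}{c^{\frac 12-\ep}}\leq \sum_{d|n} d^{\frac 12+\ep}\sum_{\substack{c\leq x\\ d|c}} c^{-\frac 12+\ep}.\]
For the inner sum I would write $c=dm$ and apply the elementary estimate $\sum_{m\leq M} m^{-\frac 12+\ep}\ll M^{\frac 12+\ep}$ with $M=x/d$, so that $d^{\frac 12+\ep}$ times the inner sum is $O_\ep\big(x^{\frac 12+\ep} d^{-\frac 12+\ep}\big)$. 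Summing over $d|n$ then leaves $x^{\frac 12+\ep}\sum_{d|n} d^{-\frac 12+\ep}$, and since $d^{-\frac 12+\ep}\leq 1$ for small $\ep$ this is at most $x^{\frac 12+\ep}\sigma_0(n)\ll_\ep |n|^\ep x^{\frac 12+\ep}$, which is the claimed bound.

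Finally, the bound for the odd-modulus sum $\sum_{2\nmid\widetilde c\leq x}|\widetilde K(r^2,n,\widetilde c,\nu_{\Theta}^4)|/\widetilde c$ follows verbatim by the same argument, now using the companion estimate $\widetilde K(r,n,\widetilde c,\nu_{\Theta}^4)\ll_\ep (n,\widetilde c)^{\frac 12+\ep}\widetilde c^{\frac 12+\ep}$ from Lemma~\ref{lemmaWeiltypebound-real-val-k2}. I expect no genuine obstacle in this proposition: all the analytic content already sits in the per-modulus Weil bound, and the only point requiring care is the bookkeeping in the divisor sum --- specifically checking that the exponents conspire so that the sum over $d|n$ contributes merely an $O_\ep(|n|^\ep)$ factor rather than an additional power of $x$.
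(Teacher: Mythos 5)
Your proposal is correct and follows exactly the route the paper takes: the paper's own proof consists of the single remark that the case $r^2\in\Z$ follows from the Weil bound \eqref{eqWeilBound-Theta4} and the case $r^2\notin\Z$ from Lemma~\ref{lemmaWeiltypebound-real-val-k2}, leaving the divisor-sum bookkeeping implicit. Your explicit handling of $\sum_{d|n}d^{\frac12+\ep}\sum_{d|c\leq x}c^{-\frac12+\ep}$ is precisely the omitted computation, and it is carried out correctly.
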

\begin{proof}
The case $r^2\in \Z$ follows by directly applying the Weil bound of Kloosterman sums, and the case $r^2\notin \Z$ follows by directly applying Lemma~\ref{lemmaWeiltypebound-real-val-k2}. 
\end{proof}

\begin{proposition}\label{propEstimate-Sum-of-real-variable-Kloosterman-sums-weightk2-with-IJBessel}
	Let $s\in[1,1.001]$ and $r\in \R_+$. If $n>0$, both the following sums are absolute convergent and have the same estimate depending on $r^2 n\leq1 $ or $r^2n\geq 1$: 
	\[\text{both}\quad \left. \begin{array}{r}
		\displaystyle\sum_{2|c>0} \frac{K(r^2,n,c,\nu_{\Theta}^4)}{c} J_{2s-1}\(\frac{2\pi |r|\sqrt n}c\)\\
		\displaystyle\sum_{2\nmid \widetilde{c}>0} \frac{\widetilde{K}(r^2,n,\widetilde{c},\nu_{\Theta}^4)}{\widetilde{c}} J_{2s-1}\(\frac{2\pi |r|\sqrt n}{\widetilde{c}}\)\\
	\end{array}\right\} \ll_\ep \left\{\begin{array}{ll}
	    |r|n^{\frac 12+\ep}, & \text{if }r^2\leq \frac 1n, \vspace{5px}\\
	     (1+|r|^{\frac 12+\ep})n^{\frac 14+\ep},& \text{if }r^2\geq \frac 1n.
	\end{array}\right.  \]
	If $n<0$, the following sums are absolute convergent and have estimates
	\[\left. \begin{array}{r}
		\displaystyle\sum_{2|c>2\pi |r^2 n|^{1/2}} \frac{K(r^2,n,c,\nu_{\Theta}^4)}{c} I_{2s-1}\(\frac{2\pi |r^2 n|^{\frac 12}}c\)\\
		\displaystyle\sum_{2\nmid \widetilde{c}>2\pi |r^2 n|^{1/2}} \frac{\widetilde{K}(r^2,n,\widetilde{c},\nu_{\Theta}^4)}{\widetilde{c}} I_{2s-1}\(\frac{2\pi |r^2 n|^{\frac 12}}{\widetilde{c}}\)\\
	\end{array}\right\} \ll_\ep (1+|r|^{1+\ep})|n|^{\frac 12+\ep}. 
	\]
    \[\left. \begin{array}{r}
		\displaystyle\sum_{2|c\leq 2\pi |r^2 n|^{1/2}} \frac{K(r^2,n,c,\nu_{\Theta}^4)}{c} I_{2s-1}\(\frac{2\pi |r^2 n|^{\frac 12}}c\)\\
		\displaystyle\sum_{2\nmid \widetilde{c}\leq 2\pi |r^2 n|^{1/2}} \frac{\widetilde{K}(r^2,n,\widetilde{c},\nu_{\Theta}^4)}{\widetilde{c}} I_{2s-1}\(\frac{2\pi |r^2 n|^{\frac 12}}{\widetilde{c}}\)\\
	\end{array}\right\} \ll_\ep (1+|r|^{2+\ep})|n|^{1+\ep}e^{2\pi |r||n|^{1/2}}. 
	\]
\end{proposition}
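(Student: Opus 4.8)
The plan is to reduce every sum to the partial-summation estimate
$\sum_{2|c\leq x}|K(r^2,n,c,\nu_\Theta^4)|/c\ll_\ep|n|^\ep x^{1/2+\ep}$
of Proposition~\ref{propEstimate-Sum-of-real-variable-Kloosterman-sums-weightk2-byWeilbound}, together with elementary bounds on the Bessel factors. The same partial-summation bound holds verbatim for $\widetilde K$, so I would treat the even-$c$ sum with $K$ and the odd-$\widetilde c$ sum with $\widetilde K$ in parallel, using only this input. The Bessel inputs I would use are the uniform inequality $|J_\nu(z)|\leq (z/2)^\nu/\Gamma(\nu+1)$ (valid for all $z\geq0$, $\nu\geq-\tfrac12$), the large-argument decay $J_\nu(z)\ll z^{-1/2}$ for $z\geq1$, and, for the modified Bessel function, the small-argument bound $I_\nu(z)\ll z^\nu$ on $0<z\leq1$ together with the crude global bound $I_\nu(z)\leq e^{z}$. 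Since $\nu=2s-1\in[1,1.002]$ for $s\in[1,1.001]$, we have $z^\nu\ll z$ whenever $z\ll1$, which is what makes the small-argument bounds usable.

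For $n>0$ I would set $X=2\pi|r|\sqrt n$. If $r^2n\leq1$ then $X/c\leq X/2\leq\pi$ for every $c\geq2$, so the first inequality gives $|J_{2s-1}(X/c)|\ll X/c$; hence the sum is $\ll X\sum_{c}|K|/c^2$, and one application of Abel summation against Proposition~\ref{propEstimate-Sum-of-real-variable-Kloosterman-sums-weightk2-byWeilbound} gives $\sum_{c}|K|/c^2\ll_\ep|n|^\ep$ and thus the bound $\ll|r|n^{1/2+\ep}$. If $r^2n\geq1$ I would split the sum at $c=X$: for $c\geq X$ the argument is $\leq1$, so $J_{2s-1}(X/c)\ll X/c$ and the tail contributes $X\sum_{c\geq X}|K|/c^2\ll_\ep|n|^\ep X^{1/2+\ep}$ (using the tail estimate $\sum_{c\geq X}|K|/c^2\ll|n|^\ep X^{-1/2+\ep}$); for $2\leq c\leq X$ the argument is $\geq1$, so $J_{2s-1}(X/c)\ll(X/c)^{-1/2}$ and, bounding $c^{1/2}\leq X^{1/2}$, this head contributes $X^{-1/2}\sum_{c\leq X}|K|c^{-1/2}\leq\sum_{c\leq X}|K|/c\ll_\ep|n|^\ep X^{1/2+\ep}$. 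Both pieces are $\ll_\ep|r|^{1/2+\ep}n^{1/4+\ep}$ after substituting $X=2\pi|r|\sqrt n$, which is the claimed bound.

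For $n<0$ I would set $Y=2\pi|r^2n|^{1/2}=2\pi|r||n|^{1/2}$. On the tail $c>Y$ the argument $Y/c$ is $<1$, so $I_{2s-1}(Y/c)\ll Y/c$ and the tail is $\ll Y\sum_{c>Y}|K|/c^2\ll_\ep|n|^\ep Y^{1/2+\ep}\ll(1+|r|^{1+\ep})|n|^{1/2+\ep}$; this also gives the asserted absolute convergence. For the finite head $c\leq Y$ I would simply use $I_{2s-1}(Y/c)\leq e^{Y/c}\leq e^{Y}$, so the head is $\leq e^{Y}\sum_{c\leq Y}|K|/c\ll_\ep e^{Y}|n|^\ep Y^{1/2+\ep}$, which is absorbed into $(1+|r|^{2+\ep})|n|^{1+\ep}e^{2\pi|r||n|^{1/2}}$. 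The full exponential $e^{Y}=e^{2\pi|r||n|^{1/2}}$ is genuinely forced by the $\widetilde c=1$ term of the odd sum, whose argument equals $Y$; for the even sum only $e^{Y/2}$ occurs, but stating a single uniform bound is harmless because these lossy $n<0$ estimates will later be multiplied by the vanishing factor $1/\Gamma(s-1)$ appearing in $B_{4,n}$ as $s\to1^+$.

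The only genuine computation I expect is the subcase $n>0$, $r^2n\geq1$: choosing the splitting point $c=X$ and carrying out the two Abel summations — one against the weight $c^{-2}$ for the tail and one against $c^{-1/2}$ (or just $c^{1/2}\leq X^{1/2}$) for the head — so that both contributions land on the common exponent $X^{1/2+\ep}$. Everything else, and in particular the $n<0$ estimates, is deliberately lossy and should follow at once from the crude Bessel bounds; so the main obstacle is the bookkeeping in the oscillatory regime of $J_\nu$ rather than any structural difficulty.
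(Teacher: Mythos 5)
Your proposal is correct and follows essentially the same route as the paper: split each sum at the transition point of the Bessel function, use $J_{2s-1}(z)\ll\min(z,z^{-1/2})$ and $I_{2s-1}(z)\ll\min(z,e^{z})$, and control the Kloosterman sums by the Weil-type input (the paper inserts the pointwise bound of Lemma~\ref{lemmaWeiltypebound-real-val-k2} directly, whereas you Abel-sum against Proposition~\ref{propEstimate-Sum-of-real-variable-Kloosterman-sums-weightk2-byWeilbound}, which is derived from that same lemma — an equivalent bookkeeping choice yielding the same exponents).
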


\begin{proof}
The proof requires the following bounds on $J$- and $I$-Bessel functions. By \cite[(10.14.1)]{dlmf}, $|J_\nu(x)|\leq 1$ for $\nu\geq 0$; by \cite[(10.7.3), (10.7.8)]{dlmf},
	\begin{equation}\label{eqBound-of-J--1/2tonu}
		J_\nu(x)\ll_\nu \min\(x^{-\frac 12},x^\nu\),\quad \text{hence } J_{2s-1}(x)\ll \min\(x^{-\frac 12},x\) \text{ for }s\in[1,1.001].  
	\end{equation}
	By Lemma~\ref{lemmaWeiltypebound-real-val-k2} and \eqref{eqBound-of-J--1/2tonu}, we have the following two cases: 

    (1) when $r^2\leq \frac 1n$, the following sum converges absolutely and has estimate
	\begin{align}\label{eqEstimate-SumofKLsum-withJ2-prog1-r2smallerthan1/n}
		\begin{split}
		\sum_{2|c>0} \frac{K(r^2,n,c,\nu_{\Theta}^4)}{c} J_{2s-1}\(\frac{2\pi |r|\sqrt n}c\)&\ll_\ep |r|n^{\frac 12} \sum_{2|c>0} (n,c)^{\frac 12+\ep} c^{-\frac 32+\ep}\\
		&\ll_{\ep} |r| n^{\frac 12+\ep};
		\end{split}
	\end{align}

    (2) when $r^2\geq \frac 1n$, we break the sum at $|r^2 n|^{1/2}$ and still get the absolute convergence
    \begin{align}\label{eqEstimate-SumofKLsum-withJ2-prog1}
        \begin{split}
		&\sum_{2|c>0}
        \frac{K(r^2,n,c,\nu_{\Theta}^4)}{c} J_{2s-1}\(\frac{2\pi |r|\sqrt n}c\)\\
        &\ll_\ep |r^2 n|^{-\frac 14} \sum_{2|c,\, c\leq |r^2n|^{1/2}} (n,c)^{\frac 12+\ep}c^\ep+|r^2 n|^{\frac 12} \sum_{2|c,\,c\geq |r^2n|^{1/2}} (n,c)^{\frac 12+\ep} c^{-\frac 32+\ep}\\
        &\ll_\ep |r^2 n|^{\frac 14+\ep}. 
		\end{split}
    \end{align}
	The case $2\nmid \widetilde{c}$ follows similarly.

	For $n<0$ we need following bounds for the $I$-Bessel function: by \cite[(10.30.1)]{dlmf}, for fixed $\nu\geq 0$, $I_\nu(x)\ll_\nu  x^\nu$ for $x\leq 1$, hence $I_{2s-1}(x)\ll x$ for $x\leq 1$; by \cite[(10.30.4)]{dlmf}, $I_\nu(x)\ll e^x$ for $x\geq 1$.
    
	Those series are absolute convergent following from Lemma~\ref{lemmaWeiltypebound-real-val-k2}, Proposition~\ref{propEstimate-Sum-of-real-variable-Kloosterman-sums-weightk2-byWeilbound} and the above inequalities. The proof for case $2\nmid \widetilde{c}$ follows by the same argument. We have finished the proof of the proposition. 
\end{proof}

\subsection{Case \texorpdfstring{$n=0$}{n=0}}
\label{subsectionKLRVk2n=0}
For $2|c>0$, we define and compute
\begin{align}\label{eqComputation-in-weight-2-part-phic}
	\begin{split}
		\phi_c(r)&\defeq \sum_{\substack{1\leq a<c\\(a,c)=1}} e^{\pi i r a/c}=\sum_{1\leq a\leq c}\sum_{\delta|(a,c)} \mu(\delta) e^{\pi i r a/c}=\sum_{\delta|c}\mu(\delta)\sum_{\substack{a=m\delta\\1\leq m\leq \frac c\delta}}e^{\pi i r m \delta /c}\\
		&=\sum_{\delta|c}\mu(\delta)
		\frac{e^{\pi i r}-1}{1-e^{-\pi i r \delta/c}}. 
	\end{split}
\end{align}
Recall the real-variable Kloosterman sum defined in \eqref{eqReal-Val-Kloosterman-Sums}. We have 
\[2\re \varphi_c(r^2)=K(r^2,0,c,\nu_\Theta^4).\]
Let $Z(s)$ denote the zeta function
\begin{equation}
	Z(s)\defeq \sum_{2|c>0}\frac{\phi_c(r^2)}{c^{2s}}, \quad \re s>1. 
\end{equation}
We want to understand the behavior of $Z(s)$ when $s\rightarrow 1^+$. By \eqref{eqComputation-in-weight-2-part-phic}, for $\re s>1$ we have
\begin{align*}
	Z(s)&=(e^{\pi i r^2}-1)\sum_{2|c>0} \sum_{\delta|c}\frac{\mu(\delta)}{1-e^{-\pi i r^2 \delta/c}} c^{-2s}\\
	(\delta\text{ odd, }c=2m\delta)&=(e^{\pi i r^2}-1)\sum_{2\nmid \delta >0} \frac{\mu(\delta)}{(2\delta)^{2s}} \sum_{m=1}^\infty \frac{m^{-2s}}{(1-\ee\(\frac{-r^2}{4m}\))}\\
	(\delta\text{ even, }c=m\delta)&+(e^{\pi i r^2}-1)\sum_{2|\delta>0} \frac{\mu(\delta)}{\delta^{2s}} \sum_{m=1}^\infty \frac{m^{-2s}}{(1-\ee\(\frac{-r^2}{2m}\))}\\
	&=:\Sigma_1+\Sigma_2. 
\end{align*}
To apply Lemma~\ref{lemmaBernoulli-numbers-expansion} we require $m\geq r^2$, which will only affect finitely many of $m$. Then for $\Sigma_1$ we have
\begin{align*}
	\sum_{m=1}^\infty \frac{m^{-2s}}{(1-\ee\(\frac{-r^2}{4m}\))}&=O_r(1)+\sum_{m>r^2}\frac{m^{-2s}}{(1-\ee\(\frac{-r^2}{4m}\))}\\
	&=O_r(1)+\sum_{m>r^2}\frac{m^{-2s+1}}{\pi i r^2/2}-\sum_{\ell=1}^\infty B_\ell \frac{(-2\pi i r^2)^{\ell-1} }{\ell! 4^{\ell-1}}\sum_{m>r^2}\frac 1{m^{2s+\ell-1}}\\
	&=\sum_{m>r^2}\frac{m^{-2s+1}}{\pi i r^2/2}+O_r(1),
\end{align*} 
where the last step is the result of 
\[\sum_{m>r^2} \frac 1{m^{2s+\ell-1}}=O((r^2)^{-\ell}) \quad \text{for }\re s\geq 1,\quad \text{when }\ell\geq 1. \]
We have similar estimates for $\Sigma_2$. Then the main contribution to $Z(s)$ is given by
\begin{align*}
	Z(s)	&=\frac{e^{\pi i r^2}-1}{\pi i r^2/2} \sum_{2\nmid \delta>0} \frac{\mu(\delta)}{(2\delta)^{2s}} \zeta(2s-1)\\
	&+\frac{e^{\pi i r^2}-1}{\pi i r^2} \sum_{2|\delta>0} \frac{\mu(\delta)}{\delta^{2s}} \zeta(2s-1)+O_r(1). 
\end{align*}
Since 
\[\lim_{s\rightarrow 1^+} \frac{\zeta(2s-1)}{\Gamma(s-1)}=\frac 12\quad \text{and}\quad \sum_{2\nmid \delta>0}\frac{\mu(\delta)}{\delta^{2}}=\frac{8}{\pi^{2}},\]
 we have
\begin{equation}    \label{eqweight-2-evenc-n0residue}
		\lim_{s\rightarrow 1^+} \frac{1}{\Gamma(s-1)}\sum_{2|c>0}\frac{K(r^2,0,c,\nu_{\Theta}^4)}{c^{2s}}=\lim_{s\rightarrow 1^+} \frac{2\re Z(s)}{\Gamma(s-1)}
		=\frac{2\sin(\pi r^2)}{\pi^3r^2}. 
\end{equation}

The similar process applies to $\widetilde K(r^2,0,d,\nu_{\Theta}^4)$. We have $\widetilde K(r^2,0,1,\nu_{\Theta}^{4})=-1$ and
\begin{equation*}
	\widetilde{K}(r^2,0,d,\nu_{\Theta}^{4})=-\sum_{\substack{-d< b<d\\ 2|b,\ (b,d)=1}}e^{\pi i r^2 b/d}=(e^{\pi i r^2}-e^{-\pi i r^2})\sum_{\delta|2d}\frac{\mu(\delta)}{\ee(-\frac{r\delta}{2d})-1}
\end{equation*}
for odd $d>1$. Finally, we find
\begin{equation}\label{eqweight-2-oddd-n0residue}
	\lim_{s\rightarrow 1^+}\frac{1}{\Gamma(s-1)}\sum_{2\nmid d>0}\frac{\widetilde K(r^2,0,d,\nu_{\Theta}^4)}{d^{2s}}
	=\frac {2\sin(\pi r^2)}{\pi^3 r^2}. 
\end{equation}

\section{Proof of Theorem~\ref{theoremFourierExpansionForWeight2Ats=1} and Theorem~\ref{theoremMainDim4}}
\label{sectionProof-of-theorem-FourierExpansion-weight2-s1}

We are now ready to prove Theorem~\ref{theoremFourierExpansionForWeight2Ats=1}. 

\begin{proof}[Proof of Theorem~\ref{theoremFourierExpansionForWeight2Ats=1}]
We first cite the following bounds of Whittaker function \cite[(13.14.21)]{dlmf}: for $n\neq 0$,
\begin{equation}\label{eq:WhittakerboundE-piny}
    \frac{|W_{\pm\frac k2,s-\frac 12}(2\pi |n| y)|}{|2\pi r^2 y|^{\frac k2}} \left|\frac{r^2 }n\right|^{\frac 12}\ll e^{-\pi |n| y} \left|\frac{n} {r^2}\right|^{\frac {k-1}2}
\end{equation}
	Combining Proposition~\ref{propEstimate-Sum-of-real-variable-Kloosterman-sums-weightk2-with-IJBessel} and \S\ref{subsectionKLRVk2n=0}, for $r\in \R_+$ and $n\in \Z$, we conclude that for $s\in[1,1.001]$, $B_{4,n}(r;y,s)$ and $\widetilde{B}_{4,n}(r;y,s)$ are absolute convergent and have the claimed bound. 
    
    By dominated convergence theorem, the factor $e^{-\pi y|n|}$ ensures that the Fourier expansions of $F_2(\tau,r;s)$ and $\widetilde{F}_2(\tau,r;s)$ for $s\in [1,1.001]$ in Lemma~\ref{lemmaFourierExpansionAts} are both absolutely convergent. By analytic continuation,  $F_2(\tau,r;1)$ and $\widetilde{F}_2(\tau,r;1)$ has the corresponding Fourier expansion given by $B_{4,n}(r;1,y)$ and $\widetilde{B}_{4,n}(r;1,y)$. The explicit formula in \eqref{eqFourierExpansionForWeight2Ats=1-evenc} and \eqref{eqFourierExpansionForWeight2Ats=1-oddd} follows from \eqref{eqWWhittaker-special-form}, \eqref{eqweight-2-evenc-n0residue} and \eqref{eqweight-2-oddd-n0residue}. Note that when $n\leq -1$, $B_{4,n}(r;y,s)$ and $\widetilde{B}_{4,n}(r;y,s)$ are absolute convergent and converge to $0$ when $s\rightarrow 1^+$ because of the Gamma factor. 
    
    The functional equation \eqref{eqFourierExpansionForWeight2Ats=1-FunctionalEquation} follows from Proposition~\ref{propFunctionalequation-ats}, analytic continuation, and \eqref{eqFunctionalequation-atsk2}. 
\end{proof}

To remove the $n=0$ term $\frac{\sin(\pi r^2)}{y\pi^2 r^2}$ in Theorem~\ref{theoremFourierExpansionForWeight2Ats=1} such that the resulting functions fulfill the conditions of Theorem~\ref{theoremBonRadSeip-2022}, we need to use the weight $2$ Eisenstein series. 
\begin{definition}[{\cite[(1.5)]{DiamondShurman}}]
	For $\tau=x+iy\in \HH$ and $x,y\in \R$, the weight $2$ Eisenstein series $E_2(\tau)$ is defined by
	\[E_2(\tau)=1-24\sum_{n=1}^\infty \sigma_1(n) e^{2\pi i n \tau},\quad \text{where }\sigma_1(n)=\sum_{d|n} d. \]
\end{definition}
The weight 2 Eisenstein series is not modular, but it satisfies the following identity
    \begin{equation} \label{eq:E2transform}
    E_2(\tau)-\tau^{-2} E_2(-1/\tau)=-\frac{12}{2\pi i \tau}. 
    \end{equation}

Let $\sigma_1(x)=0$ for $x\notin \Z$ and define 
\begin{equation}
    \mathcal{E}_2(\tau)\defeq -\frac 3{\pi y}+E_2\(\frac {\tau+1}2\)-E_2(\tau)=-\frac 3{\pi y}+24\sum_{n=1}^\infty \(\sigma_1\(\frac n2\)-(-1)^n\sigma_1(n)\)e^{\pi i n\tau}. 
\end{equation} 
It is straightforward by \eqref{eq:E2transform} that
\[\mathcal{E}_2(\tau+2)=\mathcal{E}_2(\tau)\quad \text{and}\quad \mathcal{E}_2(\tau)+(\tau/i)^{-2}\mathcal{E}_2(-1/\tau)=0. \]
Therefore, by defining the series 
\begin{equation}\label{eqDef-G2-tildeG2_1}
\begin{split}
    G_2(\tau;r)&\defeq F_2(\tau;r,1)+\frac{\sin(\pi r^2)}{6\pi r^2} \mathcal{E}_2(\tau)\\
    &=\sum_{n=1}^\infty e^{\pi i n \tau} \cdot \pi \( \frac n{r^2}\)^{\frac 12} \sum_{2|c>0}\frac{K(r^2,n,c,\nu_{\Theta}^{4})}c J_{1}\(\frac{2\pi |r|\sqrt n}{c}\)\\
    &+\sum_{n=1}^\infty e^{\pi i n \tau } \cdot \frac{8\sin(\pi r^2)}{\pi r^2}\(\sigma_1\(\frac n2\)-(-1)^n \sigma_1(n)\),
\end{split}    
\end{equation}
and
\begin{equation}\label{eqDef-G2-tildeG2_2}
	\begin{split}
    \widetilde{G}_2(\tau;r)&\defeq \widetilde {F}_2(\tau;r,1)+\frac{\sin(\pi r^2)}{6\pi r^2} \(E_2^*(\tfrac{\tau+1}2)-\Theta(\tau)^4\)\\
    &=\sum_{n=1}^\infty e^{\pi i n \tau} \cdot (-\pi) \( \frac n{r^2}\)^{\frac 12} \sum_{2\nmid {\widetilde{c}}>0}\frac{\widetilde{K}(r^2,n,{\widetilde{c}},\nu_{\Theta}^{4})}{\widetilde c} J_{1}\(\frac{2\pi |r|\sqrt n}{{\widetilde{c}}}\)\\
    &+\sum_{n=1}^\infty e^{\pi i n \tau } \cdot  \frac{8\sin(\pi r^2)}{\pi r^2}\(\sigma_1\(\frac n2\)-(-1)^n \sigma_1(n)\),
	\end{split}
\end{equation}
we get that they satisfy
\begin{equation}
	G_2(\tau;r)+(\tau/i)^{-2}\widetilde{G}_2(\tau;r)=e^{\pi i r^2 \tau}. 
\end{equation}

\begin{proof}[Proof of Theorem~\ref{theoremMainDim4}]\label{ProofofTheoremMainDim4}
	The coefficients $b_{4,n}(r)$ and $\widetilde{b}_{4,n}(r)$ can be read from \eqref{eqDef-G2-tildeG2_1} and~\eqref{eqDef-G2-tildeG2_2}. The bound for them in $n$ is clear by Proposition~\ref{propEstimate-Sum-of-real-variable-Kloosterman-sums-weightk2-with-IJBessel}. It remains to determine the values 
    \begin{equation}\label{eq:values_b4ntildeb4n}
          B_{4,n}(0)=\lim_{r\rightarrow 0^+} B_{4,n}(r)\quad\text{and}\quad \widetilde{B}_{4,n}(0)=\lim_{r\rightarrow 0^+} \widetilde{B}_{4,n}(r)\quad \text{for }n\geq 1.
    \end{equation}
    By \cite[(10.2.2)]{dlmf}:
    \begin{equation}\label{eqExpansionofJ1}
        \frac{\pi\sqrt{n}}{|r|}J_1\(\frac{2\pi |r|\sqrt n}{c}\)=\frac{\pi^2 n}{c}\(1+\sum_{j=1}^\infty \frac{(-1)^j \(\pi |r|\sqrt{n}/c\)^{2j}}{j!(j+1)!}\),
    \end{equation}
    we claim that the limits in \eqref{eq:values_b4ntildeb4n} are given by
    \begin{align}\label{eq:b4nANDtildeb4nAS_KLsumr=0}
    \begin{split}
        B_{4,n}(0)=\pi^2 n\sum_{2|c>0}\frac{S(0,n,c,\nu_\Theta^4)}{c^2}\quad \text{and}\quad \widetilde{B}_{4,n}(0)=\pi^2 n\sum_{2\nmid \widetilde{c}>0}\frac{S(0,n,\widetilde{c},\nu_\Theta^4)}{\widetilde{c}^2}. 
    \end{split}
    \end{align}
    This can be proved by absolute convergence with Proposition~\ref{propEstimate-Sum-of-real-variable-Kloosterman-sums-weightk2-byWeilbound}. 
    
    To compute $B_{4,n}(0)$ and $\widetilde{B}_{4,n}(0)$ explicitly, we define the Ramanujan sum 
    \[R_c(n)\defeq \sum_{d\Mod c} \ee\(\frac{nd}c\). \]
    It is well-known that $R_c(n)$ is a multiplicative function of $c$, 
    \begin{equation}
        \sum_{c=1}^\infty \frac{R_c(n)}{c^s}=\frac{\sigma_{s-1}(|n|)}{|n|^{s-1}\zeta(s)},\quad \text{and}\quad R_{p^j}(n)=\left\{\begin{array}{ll}
            0, & j\geq \nu_p(n)+2, \\
            -p^{j-1}, & j=\nu_p(n)+1, \\
            \varphi(p^j), & j\leq \nu_p(n). 
        \end{array}\right.
    \end{equation}
    By defining $h=\nu_2(n)$ and computing the $2$-factor of the $L$-function, we conclude that
    \begin{align*}
        B_{4,n}(0)&=\frac{\pi^2 n}{4}\sum_{c=1}^\infty \frac{R_{4c}(n)}{c^2}=8\sigma_1(n)\cdot \left\{\begin{array}{ll}
            \displaystyle\frac{2^h-3}{2^{h+1}-1}, & 2|n,  \vspace{2px}\\
            0,& 2\nmid n, 
        \end{array}\right. \\
        \widetilde{B}_{4,n}(0)&=\pi^2 n\sum_{2\nmid \widetilde{c}>0} \frac{R_{\widetilde{c}}(n)}{\widetilde{c}^{2}}=8\sigma_1(n)\cdot \frac{2^h}{2^{h+1}-1}. 
    \end{align*}
    This concludes the proof. 
\end{proof}

\section{Kloosterman sums in weight \texorpdfstring{$3/2$}{3/2}}
\label{SectionKLweight3/2}
Recall the notation from \S\ref{sectionPrelimandNotation}. In this section we prove the properties of $S(m,n,c,\nu_\Theta^3)$ for $2| c>0$ and of $S(m,n,\widetilde{c},\nu_\Theta^3)$ for $2\nmid \widetilde{c}>0$, where $m,n\in \Z$.   
\subsection{Case \texorpdfstring{$mn\neq 0$}{mn not 0}}

For half-integer weight $k\in \Z+\frac 12$, for Kloosterman sums with multiplier $\nu_\theta$ we have the Weil bound
\begin{equation}\label{eqWeilBound-theta3}
	|S(m,n,c,\nu_\theta^{2k})|\leq \sigma_0(c)\sqrt{\gcd(m,n,c)}\sqrt c
\end{equation}
by \cite[(2.15)]{BlomerSumofHeckeEvaluesOverQP}. According to the relation \eqref{eqKloosterman-Sums-Relation-Even-Theta-to-theta}, we get the Weil bound for $\nu_{\Theta}$ when $c\in 2\Z_+$:  
\begin{equation}\label{eqWeilBound-Theta3}
	|S(m,n,c,\nu_{\Theta}^{2k})|=|S(m,n,2c,\nu_\theta^{2k})|\leq 2\sigma_0(2c)\sqrt{\gcd(m,n,c)}\sqrt {c}. 
\end{equation}

\begin{proposition}\label{propSumofKloostermansums-weight32-mnnot0}
	For $mn\neq 0$, we have the following estimate: 
	\begin{align}\label{eqSumofKloostermansums-weight32-mnnot0}
		\begin{split}
			\sum_{2|c\leq x} \frac{S(m,n,c,\nu_{\Theta}^3)}{c}&-\left\{
			\begin{array}{ll}
				\ee(-\frac 18) \frac{16}{\pi^2} x^{\frac 12}& \text{if both }m\text{ and }n\text{ are negative squares}\\
				0& \text{other }m,n\text{ with } mn\neq 0
			\end{array}
			\right. \\
			&=O_\ep \( \big(x^{\frac 16}+A_u(m,n)\big)|xmn|^\ep\). 
		\end{split}
	\end{align}
	Here $A_u(m,n)$ is defined as in \cite[Theorem~1.4]{QihangFirstAsympt} and \cite[Theorem~1.3]{QihangSecondAsympt}: writing $m=t_mu_m^2w_m^2$ and $n=t_nu_n^2w_n^2$, where $t_m,t_n$ are square-free, $u_m,u_n$ are even, $w_m,w_n$ are odd, and $\delta= \frac 1{147}$, we define
	\begin{equation}\label{eqdefAumn}
		A_u(m,n)\defeq \(|m|^{\frac 12-8\delta}+u_m\)^{\frac 18}\(|n|^{\frac 12-8\delta }+u_n\)^{\frac 18}|mn|^{\frac 3{16}}. 
	\end{equation} 
	We have $A_u(m,n)\ll |mn|^{\frac 14}$, and if $2^{\nu_2(m)}$ and $2^{\nu_2(n)}$ are bounded, $A_u(m,n)\ll |mn|^{\frac 14-\delta}$. 
\end{proposition}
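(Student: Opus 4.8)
The plan is to reduce the statement to the asymptotic formulas for $\Gamma_0(4)$ Kloosterman sums proved in \cite{QihangFirstAsympt, QihangSecondAsympt}, and then to propagate the main term through the reduction. First I would pass from the multiplier $\nu_\Theta$ to $\nu_\theta$: by \eqref{eqKloosterman-Sums-Relation-Even-Theta-to-theta} we have $S(m,n,c,\nu_\Theta^3)=S(m,n,2c,\nu_\theta^3)$ for even $c$, and reindexing $c\mapsto 2c$ gives
\begin{equation*}
\sum_{2|c\le x}\frac{S(m,n,c,\nu_\Theta^3)}{c}=2\sum_{c'\le 2x}\frac{S(m,n,c',\nu_\theta^3)}{c'},
\end{equation*}
where the right-hand summand vanishes unless $4\mid c'$. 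This is exactly a sum of weight $3/2$ Kloosterman sums on $\Gamma_0(4)$, the object estimated in the cited works, so it remains to apply their asymptotic with $X=2x$ and to read off the constant.

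Next I would recall the mechanism producing the main term. Via the Kuznetsov formula for the weight $3/2$ hyperbolic Laplacian on $\Gamma_0(4)$, the only contributions of size a positive power of $X$ come from exceptional eigenvalues $\lambda=s(1-s)\in[\lambda_0,\tfrac14)$, each contributing a term of order $X^{2s-1}$. By Lemma~\ref{lemmaSelberg-eigenvalue} Selberg's conjecture holds on $\Gamma_0(4)$, so the unique exceptional eigenvalue is $\lambda_0=\tfrac3{16}$, i.e. $s_0=\tfrac34$, arising from the theta function; it yields the single main term of order $X^{1/2}$, while the cuspidal spectrum with $\lambda\ge\tfrac14$ and the continuous spectrum are absorbed into the error $O_\ep\big((X^{1/6}+A_u(m,n))|Xmn|^\ep\big)$, which is precisely the content of \cite[Theorem~1.4]{QihangFirstAsympt} and \cite[Theorem~1.3]{QihangSecondAsympt}.

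The dichotomy in the statement then comes from Lemma~\ref{lemmaMaass-form-coeffs-thetapm}: the $\lambda_0$-eigenspace is spanned by the normalized theta eigenforms, whose Fourier coefficients $\rho_0^{(f)},\rho_0^{(g)}$ vanish unless the index is a negative square. Hence the $X^{1/2}$ term survives exactly when both $m$ and $n$ are negative squares, and its constant is read off from those explicit coefficient values together with the regularized norm $\int_{\Gamma_0(4)\setminus\HH}y^{1/2}|\theta|^2\,d\mu=2\pi$. Substituting $X=2x$ (so $X^{1/2}=\sqrt2\,x^{1/2}$) and including the factor $2$ from the reindexing converts the $\Gamma_0(4)$ constant into the claimed $\ee(-\tfrac18)\tfrac{16}{\pi^2}$; the phase $\ee(-\tfrac18)$ is inherited from $\nu_\Theta(S)$. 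Finally, the bounds $A_u(m,n)\ll|mn|^{1/4}$ and the refinement $|mn|^{1/4-\delta}$ when $2^{\nu_2(m)},2^{\nu_2(n)}$ are bounded follow directly from \eqref{eqdefAumn}.

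The substantive difficulty is entirely carried by the cited asymptotics: obtaining the power-saving exponent $\tfrac16$ in $X$ together with the sharp polynomial dependence $A_u(m,n)$ rests on the Kuznetsov trace formula, the spectral large sieve, and the Kim--Sarnak bound $7/64$ toward Ramanujan (reflected in $\delta=\tfrac1{147}$). Granting those inputs, the only genuine work here is bookkeeping---verifying that the restriction to even $c$ and the passage to level $4$ introduce no spurious exceptional contribution, that the theta eigenvalue $\tfrac3{16}$ is the unique source of the $x^{1/2}$ term, and that the constant and the negative-square condition emerge exactly as stated.
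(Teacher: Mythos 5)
Your proposal follows essentially the same route as the paper: reduce to $\Gamma_0(4)$ via \eqref{eqKloosterman-Sums-Relation-Even-Theta-to-theta}, invoke the asymptotics of \cite[Theorem~1.4]{QihangFirstAsympt} and \cite[Theorem~1.3]{QihangSecondAsympt}, isolate the $\lambda_0=\tfrac{3}{16}$ contribution using Lemma~\ref{lemmaSelberg-eigenvalue} and the explicit theta coefficients of Lemma~\ref{lemmaMaass-form-coeffs-thetapm}, and track the constant through the reindexing $X=2x$. The only cosmetic difference is that the paper routes the $m<0$ case through the conjugation identity $S(m,n,c,\nu_\theta^3)=\overline{S(-m,-n,c,\nu_\theta)}$ (which is also where the phase $\ee(-\tfrac18)$ actually arises, rather than from $\nu_\Theta(S)$), but this does not change the substance of the argument.
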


\begin{remark}
	In other words, $u_n=2^{\floor{\nu_2(n)/2}}$. 
\end{remark}

\begin{proof}
	By \eqref{eqKloosterman-Sums-Relation-Even-Theta-to-theta}, we have
	\begin{align}\label{eqEstimate-Theta-as-estimate-theta-2x}
		\begin{split}
			\sum_{2|c\leq x} \frac{S(m,n,c,\nu_{\Theta}^3)}{c}&=\sum_{4|2c\leq 2x} \frac{S(m,n,2c,\nu_{\theta}^3)}{c}\\
		\quad &=2\sum_{4|c\leq 2x} \frac{S(m,n,c,\nu_{\theta}^3)}{c}=2\overline{\sum_{4|c\leq 2x} \frac{S(-m,-n,c,\nu_{\theta})}{c}}.
		\end{split}
	\end{align} 
	So it suffices to prove the asymptotics for the sum of $S(m,n,c,\nu_{\theta}^3)$ and $S(m,n,c,\nu_{\theta})$. Note that $\nu_{\theta}^3=\nu_{\theta}^{-1}$ is a weight $-\frac 12$ multiplier system on $\Gamma_0(4)$. 
	
	Recall \S\ref{subsectionMaass-forms} for the theory of Maass forms. 
	We apply \cite[Theorem~1.4]{QihangFirstAsympt} (respectively \cite[Theorem~1.3]{QihangSecondAsympt}) to estimate when $m>0$ and $n>0$ (respectively $m>0$ and $n<0$). For $k=\pm \frac 12$, the multiplier $\nu_{\theta}$ is on $\Gamma_0(4)$ and satisfies \cite[Definition~1.1]{QihangFirstAsympt,QihangSecondAsympt} by \eqref{eqWeilBound-theta3} and taking $D=4$. Therefore, for $k=\pm \frac 12$, 
	\[\sum_{4|c\leq x} \frac{S(m,n,c,\nu_{\theta}^{2k})}{c}-\sum_{r_j\in i(0,\frac 14]} \tau_j^{\pm}(m,n)\frac{x^{2s_j-1}}{2s_j-1}\ll _\ep  \(A_u(m,n)+x^{\frac 16}\)|xmn|^\ep\]
	where $s_j=\im r_j+\frac 12$ runs over eigenvalues $\lambda_j=\frac 14+r_j^2<\frac 14$ of weight $k$ hyperbolic Laplacian on $\Gamma_0(4)$. Here we write $\tau_j^{\pm}$ to mention that $\tau_j(m,n)$ depends on the weight $k=\pm \frac 12$. By Lemma~\ref{lemmaSelberg-eigenvalue}, the Shimura correspondence of eigenvalues of $\Delta_0$ and $\Delta_{\frac 12}$ (\cite[Theorem~5.6]{QihangFirstAsympt}, \cite[p. 304]{sarnakAdditive}), and \eqref{eqCorrespondence-Maass-and-holomorphic}, we only need to consider $\tau_0^{\pm}(m,n)$, where $r_0=\frac i4$ and $s_0=\frac 34$. 
	
	By \cite{gs} (corrected in \cite[(6.3)]{AAAlgbraic16}), or by combining \cite[Theorem~1.4]{QihangFirstAsympt} and \cite[Theorem~1.3]{QihangSecondAsympt}, when $m>0$ and $k=\pm \frac 12$, the main contribution to the estimate above is
	\[2\tau_0^{\pm}(m,n) x^{\frac 12}= 4\sqrt 2\,\ee(\tfrac k4)\overline{\rho_0(m)}\rho_0(n)|mn|^{\frac 14} \frac{\Gamma(\frac 34+\sgn(n)\frac k2)}{\Gamma(\frac 34-\frac k2)} x^{\frac 12}, 
\]
where $\rho_0(n)$ for $n\in \Z\setminus \{0\}$ is given by \eqref{eqMaass-form-coeffs-thetapm}. We get
\begin{equation}\label{eqTau0+}
2\tau_0^{+}(m,n)=\left\{\begin{array}{ll}
	\ee(\tfrac 18) \frac{4\sqrt 2}{\pi^2},&\ \text{if }m,n\text{ are both positive squares;,}\\
	0,&\ \text{other }m>0,\ n\neq 0,
\end{array}
\right.
\end{equation}
and
\begin{equation}\label{eqTau0-}
	2\tau_0^{-}(m,n)=0\quad \text{if }m>0,\ n\neq 0.  
\end{equation}
Thus, the estimate of \eqref{eqSumofKloostermansums-weight32-mnnot0} is given by \eqref{eqEstimate-Theta-as-estimate-theta-2x} and the following three cases: (1) when $m>0$, \eqref{eqTau0-} suggests the coefficient for $x^{\frac 12}$ as $0$; (2) when $m<0$, \eqref{eqTau0+} suggests main term consisting of $x^{\frac 12}$ as 
\[2\ee(-\tfrac 18)\frac{4\sqrt 2}{\pi^{2}} (2x)^{\frac 12}=\ee(-\tfrac 18)\frac{16}{\pi^{2}} x^{\frac 12}\quad \text{when }m\text{ and }n\text{ are negative squares};\]
(3) in the other cases when $m<0$, \eqref{eqTau0+} and \eqref{eqTau0-} combined show that the coefficient of $x^{\frac 12}$ is $0$. We have finished the proof.
\end{proof}

A similar method applies to the sum over $4|c$ by Lemma~\ref{lemmaMaass-form-coeffs-thetapm}, hence we state the following proposition without proof. 
\begin{proposition}\label{propSumofKloostermansums-weight32-mnnot0-4|c}
	For $mn\neq 0$, we have
	\begin{align}\label{eqSumofKloostermansums-weight32-mnnot0-4|c}
		\begin{split}
			\sum_{4|c\leq x} \frac{S(m,n,c,\nu_{\Theta}^3)}{c}&-\left\{
			\begin{array}{ll}
				\ee(-\frac 18) \frac{8}{\pi^2} x^{\frac 12}& \text{both }m\text{ and }n\text{ are negative squares}\\
				0& \text{other } mn\neq 0
			\end{array}
			\right. \\
			&\ll_\ep \( x^{\frac 16}+A_u(m,n)\)	|xmn|^\ep,  
		\end{split}
	\end{align}
	where $A_u(m,n)$ is defined at \eqref{eqdefAumn}. 
\end{proposition}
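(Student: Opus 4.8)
The plan is to run the proof of Proposition~\ref{propSumofKloostermansums-weight32-mnnot0} verbatim, but with $\Gamma_0(4)$ replaced by $\Gamma_0(8)$: the extra factor of $2$ in the divisibility condition ($4\mid c$ here instead of $2\mid c$) is exactly what forces the level to double once we pass from $\nu_{\Theta}$ to $\nu_{\theta}$. First I would rewrite the sum using \eqref{eqKloosterman-Sums-Relation-Even-Theta-to-theta}. Since $4\mid c$ implies $2\mid c$, we have $S(m,n,c,\nu_{\Theta}^3)=S(m,n,2c,\nu_{\theta}^3)$, and substituting $c'=2c$ (so that $8\mid c'$ and $c'\le 2x$) yields
\[
\sum_{4\mid c\le x}\frac{S(m,n,c,\nu_{\Theta}^3)}{c}
=2\sum_{8\mid c'\le 2x}\frac{S(m,n,c',\nu_{\theta}^3)}{c'}.
\]
Using $\nu_{\theta}^4=1$, so that $\nu_{\theta}^3=\overline{\nu_{\theta}}$, together with the conjugation relation \eqref{eqKloosterman-Sums-Conjugate-ThetasMulti}, this becomes $2\,\overline{\sum_{8\mid c'\le 2x}S(-m,-n,c',\nu_{\theta})/c'}$, exactly as in \eqref{eqEstimate-Theta-as-estimate-theta-2x} but one divisibility level higher.

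The key structural observation is that for $8\mid c'$ every matrix in $\SL_2(\Z)$ with lower-left entry $c'$ already lies in $\Gamma_0(8)$, and the cusp at $\infty$ of $\Gamma_0(8)$ again has width $1$; hence this partial sum is precisely the Kloosterman-sum sum on $\Gamma_0(8)$ with multiplier $\nu_{\theta}$. I would then apply \cite[Theorem~1.4]{QihangFirstAsympt} and \cite[Theorem~1.3]{QihangSecondAsympt} on $\Gamma_0(8)$. Their hypotheses hold by the Weil bound \eqref{eqWeilBound-theta3} with $D=8$, and by Lemma~\ref{lemmaSelberg-eigenvalue} there is no exceptional eigenvalue on $\Gamma_0(8)$, so the only contribution to the main term comes from the residual eigenvalue $\lambda_0=\tfrac{3}{16}$ (with $s_0=\tfrac34$), whose eigenspace is spanned by the Maass form attached to $\theta$; indeed the Corollary to the Serre--Stark theorem gives $M_{\frac12}(\Gamma_0(8),\nu_{\theta})=\C\,\theta$. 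The error term keeps the shape $O_{\ep}\big((x^{1/6}+A_u(m,n))|xmn|^{\ep}\big)$, since $A_u(m,n)$ in \eqref{eqdefAumn} depends only on $m,n$ and the bound for the remaining spectrum is of the same polynomial type at level $8$.

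Finally I would track the constant. The main-term coefficient $2\tau_0^{\pm}$ is bilinear in the normalized Fourier coefficients $\rho_0(m),\rho_0(n)$ of the Maass--theta form, and on $\Gamma_0(8)$ these are given by \eqref{eqMaass-form-coeffs-thetapm-Level8}; each is smaller than its $\Gamma_0(4)$ counterpart in \eqref{eqMaass-form-coeffs-thetapm} by the factor $4^{-1/4}=2^{-1/2}$. Hence the product $\overline{\rho_0(m)}\rho_0(n)$, and therefore $2\tau_0^{\pm}$, is exactly halved relative to the computation in Proposition~\ref{propSumofKloostermansums-weight32-mnnot0}. Carrying this through with the same range factor $(2x)^{1/2}=\sqrt2\,x^{1/2}$ and the same final conjugation, the constant $\ee(-\tfrac18)\tfrac{16}{\pi^2}$ is replaced by $2\cdot\ee(-\tfrac18)\tfrac{2\sqrt2}{\pi^2}\cdot\sqrt2=\ee(-\tfrac18)\tfrac{8}{\pi^2}$ in the case where both $m$ and $n$ are negative squares, while $\tau_0^{\pm}$ (hence the coefficient of $x^{1/2}$) vanishes in all other cases, as dictated by \eqref{eqTau0+} and \eqref{eqTau0-}. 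This gives \eqref{eqSumofKloostermansums-weight32-mnnot0-4|c}.

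All the analytic input is supplied by the cited asymptotic theorems, by Lemma~\ref{lemmaSelberg-eigenvalue}, and by Lemma~\ref{lemmaMaass-form-coeffs-thetapm}, so the only genuinely non-routine step is the bookkeeping of the main-term constant through the level-$8$ normalization of the theta form; that factor of $2$ in the Petersson norm of $\theta$ on $\Gamma_0(8)$ (versus $\Gamma_0(4)$) is what I expect to be the main thing to get right.
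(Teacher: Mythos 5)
Your proposal is correct and is essentially the proof the paper intends: the paper states this proposition without proof, remarking only that ``a similar method applies to the sum over $4|c$ by Lemma~\ref{lemmaMaass-form-coeffs-thetapm}'', and that lemma's level-$8$ normalization \eqref{eqMaass-form-coeffs-thetapm-Level8} is exactly the $4^{-1/4}=2^{-1/2}$ rescaling of $\rho_0$ you use to halve $2\tau_0^{\pm}$ and arrive at $\ee(-\tfrac18)\tfrac{8}{\pi^2}x^{1/2}$. Your bookkeeping of the substitution $c'=2c$, the passage to $\Gamma_0(8)$, the absence of exceptional eigenvalues (Lemma~\ref{lemmaSelberg-eigenvalue}), and the one-dimensionality of $M_{\frac12}(\Gamma_0(8),\nu_{\theta})$ all check out.
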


We also have the following proposition for the odd case. 
\begin{proposition}\label{propSumofKloostermansums-weight32-mnnot0-oddd}
	For $mn\neq 0$, we have the following estimate: 
	\begin{align}\label{eqSumofKloostermansums-weight32-mnnot0-oddd}
		\begin{split}
			\sum_{2\nmid \widetilde{c}\leq x} \frac{S(m,n,\widetilde{c},\nu_{\Theta}^3)}{\widetilde{c}}&+\left\{
			\begin{array}{ll}
				\ee(-\frac 18) \frac{16}{\pi^2} x^{\frac 12}& \text{both }m\text{ and }n\text{ are negative squares}\\
				0& \text{other } mn\neq 0
			\end{array}
			\right. \\
			&\ll_\ep \( x^{\frac 16}+A_u(m,n)\)	|xmn|^\ep. 
			\end{split}
	\end{align}
	Where $A_u(m,n)$ is defined as in \eqref{eqdefAumn}. 
\end{proposition}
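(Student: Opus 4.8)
The plan is to deduce this estimate from the two even-modulus results, Proposition~\ref{propSumofKloostermansums-weight32-mnnot0} and Proposition~\ref{propSumofKloostermansums-weight32-mnnot0-4|c}, by inverting the even--odd relation \eqref{eqrelation-KLsum-evenodd-k3/2} of Proposition~\ref{propAppendix-KLsums-evenodd-relation}. Concretely, for odd $\widetilde{c}>0$ and $c=2\widetilde{c}$, relation \eqref{eqrelation-KLsum-evenodd-k3/2} reads $S(m,4n,c,\nu_{\Theta}^3)=\sqrt{2}\,\eta_m\,S(m,n,\widetilde{c},\nu_{\Theta}^3)$, where $\eta_m=-1$ if $m\equiv 0,3\Mod 4$ and $\eta_m=+1$ if $m\equiv 1,2\Mod 4$; crucially the sign $\eta_m$ depends only on $m\bmod 4$ and not on $\widetilde{c}$, so it may be pulled out of the sum. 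Solving for $S(m,n,\widetilde{c},\nu_{\Theta}^3)$ and re-indexing by $c=2\widetilde{c}$ (so that $\widetilde{c}\le x$ becomes $c\le 2x$ and ``$\widetilde{c}$ odd'' becomes $c\equiv 2\Mod 4$), I would rewrite
\begin{equation*}
\sum_{2\nmid \widetilde{c}\le x}\frac{S(m,n,\widetilde{c},\nu_{\Theta}^3)}{\widetilde{c}}
=\sqrt{2}\,\eta_m\sum_{\substack{c\le 2x\\ c\equiv 2\,(4)}}\frac{S(m,4n,c,\nu_{\Theta}^3)}{c}.
\end{equation*}

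Next I would split the inner sum over $c\equiv 2\Mod 4$ as $\sum_{2|c\le 2x}-\sum_{4|c\le 2x}$ and apply Proposition~\ref{propSumofKloostermansums-weight32-mnnot0} and Proposition~\ref{propSumofKloostermansums-weight32-mnnot0-4|c} to these two sums, in each case with the second Kloosterman argument $4n$ in place of $n$ and with the cutoff $2x$ in place of $x$. Since $4n$ is a negative square if and only if $n$ is, the main terms appear under exactly the same hypothesis on $(m,n)$, and their difference equals $\ee(-\tfrac18)\tfrac{8}{\pi^2}(2x)^{1/2}=\ee(-\tfrac18)\tfrac{8\sqrt2}{\pi^2}x^{1/2}$ when both $m$ and $n$ are negative squares, and $0$ otherwise. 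Multiplying by $\sqrt{2}\,\eta_m$, and noting that a negative square $m=-j^2$ satisfies $m\equiv 0,3\Mod 4$ so that $\eta_m=-1$ in the relevant case, produces the main term $-\ee(-\tfrac18)\tfrac{16}{\pi^2}x^{1/2}$, which is exactly the quantity added on the left-hand side of \eqref{eqSumofKloostermansums-weight32-mnnot0-oddd}.

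Finally I would check that the error terms combine correctly. The factor $\sqrt{2}\,|\eta_m|=\sqrt{2}$ is a harmless constant, $(2x)^{1/6}\ll x^{1/6}$, and $|2x\cdot m\cdot 4n|^\ep\ll_\ep |xmn|^\ep$. It remains to compare $A_u(m,4n)$ with $A_u(m,n)$: writing $n=t_nu_n^2w_n^2$ as in \eqref{eqdefAumn}, multiplication by $4$ leaves the squarefree part $t_n$ and the odd part $w_n$ unchanged and replaces $u_n=2^{\floor{\nu_2(n)/2}}$ by $u_{4n}=2u_n$, whence $A_u(m,4n)\ll A_u(m,n)$. Thus the error is $O_\ep\big((x^{1/6}+A_u(m,n))|xmn|^\ep\big)$, as claimed. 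I do not expect a genuine obstacle here: all of the analytic input is already contained in Proposition~\ref{propSumofKloostermansums-weight32-mnnot0} and Proposition~\ref{propSumofKloostermansums-weight32-mnnot0-4|c}, and the present statement is essentially a bookkeeping corollary of them. The only steps demanding care are tracking the constant $\tfrac{16}{\pi^2}$ and the sign $\eta_m$ through the two applications, and confirming that the two main-term hypotheses are triggered simultaneously.
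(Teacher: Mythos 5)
Your proposal is correct and follows essentially the same route as the paper: both invert the even--odd relation \eqref{eqrelation-KLsum-evenodd-k3/2}, split the resulting sum over $c\equiv 2\Mod 4$ as $\sum_{2|c}-\sum_{4|c}$, apply Proposition~\ref{propSumofKloostermansums-weight32-mnnot0} and Proposition~\ref{propSumofKloostermansums-weight32-mnnot0-4|c} with second argument $4n$ and cutoff $2x$, and use that a negative square $m$ forces the sign $-1$. Your write-up is in fact slightly more complete than the paper's, since you explicitly verify $A_u(m,4n)\ll A_u(m,n)$ and the harmlessness of the cutoff change, which the paper leaves implicit.
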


\begin{proof}
	Define $\xi\in \{\pm 1\}$ by $\xi=-1$ if $m\equiv 0,3\Mod 4$ and $\xi=+1$ if $m\equiv  1,2\Mod 4$. By \eqref{eqrelation-KLsum-evenodd-k3/2} we have
	\begin{align}
		\begin{split}
			\sum_{2\nmid \widetilde{c}\leq x}\frac{S(m,n,\widetilde{c},\nu_{\Theta}^3)}{\widetilde{c}}
			&=\frac{\xi}{ \sqrt 2}\sum_{2\| c\leq 2x}\frac{S(m,4n,c,\nu_{\Theta}^3)}{c/2}\\
			&=\xi\sqrt 2\(\sum_{2|c\leq 2x}-\sum_{4|c\leq 2x}\)\frac{S(m,4n,c,\nu_{\Theta}^3)}{c/2}. 
		\end{split}
	\end{align}
	The main term involving $x^{\frac 12}$ is then given by Proposition~\ref{propSumofKloostermansums-weight32-mnnot0} and Proposition~\ref{propSumofKloostermansums-weight32-mnnot0-4|c}. Note that when $m$ is a negative square, we always have $m\equiv 0,3\Mod 4$ and hence $\xi=-1$. 
\end{proof}

\subsection{Case \texorpdfstring{$mn=0$}{mn=0}}\label{subsectionWeight3/2mn=0alphaApns}
This case does not satisfy the conditions of \cite[Theorem~1.4]{QihangFirstAsympt} and \cite[Theorem~1.3]{QihangSecondAsympt}, so we deal with it separately in this subsection. 

Recall our notations in \S\ref{sectionPrelimandNotation}: $\ep_d$ at \eqref{ThetaMultiplier}; for any prime $p$ and $n\in \Z\setminus \{0\}$, let $p^\nu\|n$ denote that $p^\nu|n$ while $p^{\nu+1}\nmid n$, and we write $\nu_p(n)=\nu$ for the integer $\nu$. 
First we need the following property: for positive odd integers $a,b$,
\begin{equation}\label{eqQuadratic-reciprocity-involving-ep}
	\(\frac ab\)=\(\frac ba\) \ep_a\ep_b\ep_{ab}^{-1}=\(\frac ba\) \ep_a^{-1}\ep_b^{-1}\ep_{ab}.
\end{equation}

The following construction is from \cite[Chapter~1]{WangPeiMFHalfBook} and can be traced back to \cite{Maass1937theta,Bateman1951ThreeSquares}. We record the explicit construction and results here for our application. 
For $n\in \Z$, $k\in \Z+\frac 12$ and $p$ as a prime, we define
\begin{align}
	\alpha_{2k}(2^\nu,n)&\defeq \sum_{a=1}^{2^\nu} \(\frac{2^\nu}a\)\ep_a^{2k} \ee\(\frac{na}{2^\nu}\),\quad \nu\geq 2;\\
    \alpha_{2k}(p^\nu,n)&\defeq \ep_{p^\nu}^{-2k}\sum_{a=1}^{p^\nu} \(\frac{a}{p^\nu}\)\ee\(\frac{na}{p^\nu}\),\quad \nu \geq 0,\ p>2.
\end{align}
Then for $2|c>0$, the condition $ad\equiv 1\Mod {2c}$ implies $(\frac{2c}d)=(\frac{2c}a)$ and $\ep_d=\ep_a$, hence we have
\begin{equation}\label{eqKLsum-n0weight3-into-prods}
S(0,n,c,\nu_\Theta^{2k})=S(n,0,c,\nu_{\Theta}^{2k})=\sum_{a\Mod{2c}^*} \ep_a^{2k}\(\frac{2c}a\)\ee\(\frac{na}{2c}\)=\prod_{p^\nu\|2c} \alpha_{2k}(p^\nu,n).  
\end{equation}
For $2\nmid \widetilde{c}>0$, note that $2|a$, $2|d$ and the condition $ad\equiv 1\Mod {\widetilde{c}}$ implies $(\frac{2a}{\widetilde{c}})=(\frac{2d}{\widetilde{c}})$. With the help of \eqref{eqQuadratic-reciprocity-involving-ep} we have
\begin{align}\label{eqKLsum-n0weight3-into-prods-oddd}
	\begin{split}
		S(0,n,\widetilde{c},\nu_{\Theta}^{2k})&=S(n,0,\widetilde{c},\nu_{\Theta}^{2k})=\ee(\tfrac k4)\sum_{\substack{a\Mod{2\widetilde{c}}\\2|a,\,(a,\widetilde{c})=1}} \ep_{\widetilde{c}}^{-2k}\(\frac{2a}{\widetilde c}\)\ee\(\frac{na}{2\widetilde{c}}\)\\
		&=\ee(\tfrac k4)\ep_{\widetilde{c}}^{-2k}\sum_{a\Mod{\widetilde{c}}^*} \(\frac{a}{\widetilde c}\)\ee\(\frac{na}{\widetilde{c}}\)
		=\ee(\tfrac k4)\prod_{p^\nu\|\widetilde{c}} \alpha_{2k}(p^\nu,n). 
	\end{split}
\end{align}
For $p$ as a prime, we also define
\begin{align*}
	A_{2k}(2,n,s)\defeq \sum_{\nu=2}^\infty \frac{\alpha_{2k}(2^\nu,n)}{2^{2s\nu}} \quad \text{and}\quad 
	A_{2k}(p,n,s)\defeq \sum_{\nu=0}^\infty \frac{\alpha_{2k}(p^\nu,n)}{p^{2s\nu}} \quad \text{for }p>2. 
\end{align*}
Then \eqref{eqKLsum-n0weight3-into-prods} and \eqref{eqKLsum-n0weight3-into-prods-oddd} imply
\begin{equation}\label{eqKloostermanzeta-n0-product-asA}
	\sum_{2|c>0} \frac{S(n,0,c,\nu_{\Theta}^{2k})}{(2c)^{2s}}=\prod_{p} A_{2k}(p,n,s)
\end{equation}
and
\begin{equation}\label{eqKloostermanzeta-n0-product-asA-oddd}
	\sum_{2\nmid \widetilde{c}>0} \frac{S(n,0,\widetilde{c},\nu_{\Theta}^{2k})}{\widetilde{c}^{2s}}=\prod_{p>2} A_{2k}(p,n,s).
\end{equation}
In the following two subsections we compute $\alpha_{2k}$ and $A_{2k}$ in the cases $p=2$ or $p>2$, respectively.  

\subsubsection{\texorpdfstring{$p=2$}{p=2}}

It is direct to calculate the results for $\nu=0,1$. For $\nu\geq 2$ we divide into cases for $\nu$ is even or odd. 

If $\nu\geq 2$ is even, then for odd $d$, $1\leq d<2^\nu$, we can write $d=4d'+1$ or $d=4d'+3$ for $d'$ from $1$ to $2^{\nu-2}$. Since $2|\nu$, $(\frac{2^\nu}{d})=1$ and we have
\begin{align}
\begin{split}
	\alpha_{2k}(2^\nu,n) &= \sum_{d'=1}^{2^{\nu-2}}\left\{ \ee\(\frac{n}{2^\nu}\)   \ee\(\frac{nd'}{2^{\nu-2}}\)+i^{2k} \ee\(\frac{3n}{2^\nu} \) \ee\(\frac{nd'}{2^{\nu-2}}\)\right\}\\
	&=\(\ee\(\frac{n}{2^\nu}\)+i^{2k}\ee\(\frac{3n}{2^\nu} \)\)\sum_{d'=1}^{2^{\nu-2}}\ee\(\frac{nd'}{2^{\nu-2}}\)\\
    &=\left\{\begin{array}{ll}
         0,& \text{if }2^{\nu-2}\nmid n \\
         \ee(\frac \ell 4)(1+(-1)^\ell i^{2k}) 2^{\nu-2},&\text{if }n=2^{\nu-2}\ell,\ \ell\in \Z.  
    \end{array}
    \right. 
\end{split}
\end{align}

If $\nu\geq 3$ is odd, then $(\frac {2^\nu}d)=(\frac 2d)$ and we get
\begin{align}
    \begin{split}
        \alpha_{2k}(2^\nu,n) &=\(\ee\(\frac{n}{2^\nu}\)-i^{2k}\ee\(\frac{3n}{2^\nu}\) -\ee\(\frac{5n}{2^\nu}\)+i^{2k}\ee\(\frac{7n}{2^\nu} \)\) \sum_{d'=1}^{2^{\nu-3}} \ee\(\frac{nd'}{2^{\nu-3}}\)\\
        &=\left\{\begin{array}{ll}
             0,& \text{if }2^{\nu-3}\nmid n, \\
             0,& \text{if }n=2^{\nu-3}\ell,\ \ell\in \Z,\ 2|\ell,\\
             4\ee(\frac \ell 8)\mathbf{1}_\Z\(\frac{\ell-2k}4\)2^{\nu-3},&\text{if }n=2^{\nu-3}\ell,\ \ell\in \Z,\ 2\nmid \ell. 
        \end{array}
        \right.
    \end{split}
\end{align}
Here $\mathbf{1}_{\Z}(x)=1$ if $x\in \Z$ and $0$ otherwise.

According to \cite[Chapter~2]{WangPeiMFHalfBook}, or by direct calculation, we have the following lemma. The facts that $\ee(\frac{u-1}8)=(\frac{u}2)$ for $u\equiv 1\Mod 4$ and $\ee(\frac{u+1}8)=(\frac{-u}2)$ for $u\equiv 3\Mod 4$ are helpful. 
\begin{lemma}\label{lemmaA2k2ns}
	For $k=\lambda+\frac 12=\frac 12$ or $\frac 32$, let $h=\nu_2(n)$ and we have
	\begin{align*}
		&A_{2k}(2,n,s)=2^{-4s}(1+i^{2k})\times \\
		&\left\{
		\displaystyle \begin{array}{ll}
			\displaystyle  \frac{1-2^{(h-1)(1-2s)}}{1-2^{2(1-2s)}}-2^{(h-1)(1-2s)},&\ 2\nmid h\geq 1;\vspace{10px} \\
			-1, &\ h=0,\ n\equiv -2k\Mod 4;\vspace{10px}\\
			\displaystyle \frac{1-2^{h(1-2s)}}{1-2^{2(1-2s)}}-2^{h(1-2s)}, &\ 2|h\geq 2,\ \dfrac{n}{2^{h}}\equiv -2k\Mod 4;\vspace{10px}\\
			\displaystyle \frac{1-2^{(h+2)(1-2s)}}{1-2^{2(1-2s)}}+\(\frac{(-1)^{\lambda}n/2^{h}}2\)2^{\frac 12+(h+1)(1-2s)}, &\ 2|h,\ \dfrac{n}{2^{h}}\equiv 2k\Mod 4;\\
		\end{array}
		\right.
	\end{align*}
	
\end{lemma}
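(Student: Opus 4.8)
The plan is to substitute the explicit evaluations of $\alpha_{2k}(2^\nu,n)$ computed immediately above directly into the defining Dirichlet series $A_{2k}(2,n,s)=\sum_{\nu\ge 2}\alpha_{2k}(2^\nu,n)2^{-2s\nu}$, and to split this sum according to the parity of $\nu$. Fix $n$ and write $h=\nu_2(n)$, $n=2^h n_0$ with $n_0$ odd. The even-$\nu$ evaluation shows that the even-index term vanishes unless $2^{\nu-2}\mid n$, i.e.\ $\nu\le h+2$, while the odd-$\nu$ evaluation shows that the odd-index term vanishes unless $\nu=h+3$ and $n_0\equiv 2k\pmod 4$ — in particular an odd-$\nu$ term can occur only when $h$ is even. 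Thus the series is always a finite sum whose shape is governed by the parity of $h$ and the residue of $n_0$ modulo $4$, which is exactly the case division in the statement.

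For the even-$\nu$ terms, write $\ell=n/2^{\nu-2}$. When $\nu\le h$ the $2$-adic valuation of $\ell$ is at least $2$, so $\ee(\ell/4)=1$ and the summand equals $(1+i^{2k})2^{\nu-2}2^{-2s\nu}$; summing these over even $\nu\le h$ and factoring out $2^{-4s}(1+i^{2k})$ yields a finite geometric series in the ratio $2^{2(1-2s)}$, evaluating to $\tfrac{1-2^{h(1-2s)}}{1-2^{2(1-2s)}}$ when $h$ is even and to $\tfrac{1-2^{(h-1)(1-2s)}}{1-2^{2(1-2s)}}$ when $h$ is odd. The one or two boundary terms are then handled by parity. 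For $h$ odd the largest even index is $\nu=h+1$, where $\ell=2n_0$ forces $\ee(\ell/4)=-1$ and the term contributes $-2^{(h-1)(1-2s)}$; this yields case~1. For $h$ even the largest even index is $\nu=h+2$, where $\ell=n_0$ is odd and the summand carries the factor $\tfrac{1-i^{2k}}{1+i^{2k}}\ee(n_0/4)$; a direct check for $k=\tfrac12,\tfrac32$ shows this factor equals $-1$ if $n_0\equiv-2k\pmod4$ and $+1$ if $n_0\equiv 2k\pmod4$, producing $\mp 2^{h(1-2s)}$. This gives cases~3 and~4, the degenerate instance $h=0,\ n_0\equiv-2k$ (where the geometric sum is empty) being recorded separately as case~2.

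Finally, when $h$ is even and $n_0\equiv 2k\pmod4$ there is the additional odd-$\nu$ term at $\nu=h+3$, equal to $4\,\ee(n_0/8)\,2^{h}\,2^{-2s(h+3)}$. Factoring out $2^{-4s}(1+i^{2k})$ and rewriting the power of $2$ as $2^{\frac12+(h+1)(1-2s)}$ reduces its coefficient to $\tfrac{2\,\ee(n_0/8)}{1+i^{2k}}$, and the identities $\ee(\tfrac{u-1}8)=(\tfrac u2)$ for $u\equiv 1\pmod4$ and $\ee(\tfrac{u+1}8)=(\tfrac{-u}2)$ for $u\equiv 3\pmod4$, recorded before the lemma, identify this with $\big(\tfrac{(-1)^\lambda n_0}{2}\big)2^{1/2}$, matching the last line of case~4. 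Assembling the four cases completes the proof. The only real subtlety is the root-of-unity bookkeeping: one must track the factors $\tfrac{1-i^{2k}}{1+i^{2k}}$, $\ee(\ell/4)$ and $\tfrac{2\,\ee(n_0/8)}{1+i^{2k}}$ simultaneously for $k=\tfrac12$ and $k=\tfrac32$, and confirm that the residue conditions on $n_0$ modulo $4$ align so that the signs and the Kronecker symbol $(\tfrac{\cdot}{2})$ come out correctly — this is where the Gauss-sum evaluations do the essential work.
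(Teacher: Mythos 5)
Your proposal is correct, and it is precisely the ``direct calculation'' that the paper invokes (the paper simply cites \cite[Chapter~2]{WangPeiMFHalfBook} or direct computation, having already supplied the evaluations of $\alpha_{2k}(2^\nu,n)$ and the identities $\ee(\frac{u\mp1}{8})=(\frac{\pm u}{2})$ that you use). I verified the geometric-series bookkeeping, the boundary terms at $\nu=h+1$ and $\nu=h+2$, and the root-of-unity factor $\tfrac{4\ee(n_0/8)}{1+i^{2k}}=2^{3/2}\bigl(\tfrac{(-1)^{\lambda}n_0}{2}\bigr)$ at $\nu=h+3$ for both $k=\tfrac12$ and $k=\tfrac32$; all four cases come out as stated.
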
 
Particularly, when $-n=m^2\geq 1$ is a square, we can write $m_o$ as the odd part of $m$. We get $2|h$, $\frac n{2^h}=-m_o^2\equiv 3\Mod 4$, $(\frac{-n/2^h}2)=(\frac{m_o^{2}}{2})=1$, 
\begin{equation}\label{eqA32n-bound-square}
	\frac {\sqrt 2}{32}\leq \left|\frac{A_3(2,-m^2,s)}{1+2^{-(2s-\frac 12)}}\right| \leq \frac{\sqrt 2}4 \(\frac{\nu_2(n)}2+1\)\quad \text{ for }\re s\in[\tfrac 12,1],
\end{equation}
and
\begin{equation}\label{eqA32n-value-square-s34}
		\left.\frac{A_3(2,-m^2,s)}{1+2^{-(2s-\frac 12)}}\right|_{s=\frac 34}=\frac{2^{-3}(1-i)(2-2^{-h/2}+(\frac{m_o^2}2)2^{-h/2})}{3/2} = \frac{\ee(-\tfrac 18)}{3\sqrt 2}. 
\end{equation}

When $n\neq 0$ is not a negative square, we still have the rough bound
\begin{equation}\label{eqA32n-bound-nonsquare}
	0\leq |A_3(2,n,s)| \leq \frac{\sqrt 2}4 \(\frac{\nu_2(n)}2+1\)\quad \text{ for }\re s\in[\tfrac 12,1]. 
\end{equation}

\subsubsection{\texorpdfstring{$p>2$}{p>2}}
For $\nu=0$ we have $\alpha_{2k}(1,n)=1$. For $\nu\geq 1$, we write $a=a'+pb$ for $a'$ from $1$ to $p-1$ and $b$ from $1$ to $p^{\nu-1}$. Then $(\frac ap)=(\frac{a'}p)$ and we have
\begin{align*}
	\alpha_{2k}(p^\nu,n)=\ep_{p^\nu}^{-2k} \sum_{a'=1}^{p-1} \(\frac{a'}{p^\nu}\) \ee\(\frac{na'}{p^{\nu}}\)\sum_{b=1}^{p^{\nu-1}} \ee\(\frac{nb}{p^{\nu-1}}\). 
\end{align*}
If $\nu-1>\nu_p(n)$, i.e. $\nu\geq \nu_p(n)+2$, the latter sum on $b$ is zero, otherwise the latter sum is $p^{\nu-1}$. The remaining calculation is under the assumption $\nu\leq \nu_p(n)+1$. 

If $\nu$ is odd, then 
\begin{align*}
	\alpha_{2k}(p^\nu,n)&=\ep_{p^\nu}^{-2k}p^{\nu-1} \sum_{a=1}^{p-1} \(\frac ap\) \ee\(\frac{an/p^{\nu-1}}p\)\\
	&=\left\{
	\begin{array}{ll}
		0, &\ p|\frac{n}{p^{\nu-1}}, \text{ i.e. }\nu\leq \nu_p(n);\\
		p^{\nu-\frac 12}\(\frac{(-1)^{k-1/2}\,n/p^{\nu-1}}p\), &\ \nu=\nu_p(n)+1. 
	\end{array}
	\right. 
\end{align*}
Here in the last case we used $\ep_{p^\nu}=\ep_{p}$ for odd $\nu$ and $\ep_p^2=(\frac{-1}p)$. 

If $\nu$ is even, then $\ep_{p^\nu}=1$ and $(\frac{a'}{p^\nu})=1$ for $\gcd(a',p)=1$. We have
\begin{align*}
	\alpha_{2k}(p^\nu,n)=p^{\nu-1} \sum_{a=1}^{p-1} \ee\(\frac{an/p^{\nu-1}}p\)=p^{\nu-1}\cdot\left\{
	\begin{array}{ll}
		p-1, & \ \nu\leq \nu_p(n);\\
		-1, & \ \nu=\nu_p(n)+1. 
	\end{array}
	\right. 
\end{align*}

Finally, we conclude
\begin{align}
	a_{2k}(p^\nu,n)=\left\{
	\begin{array}{ll}
		1,&\ \nu=0;\\
		0, &\ 2\nmid \nu\text{ and }1\leq\nu\leq \nu_p(n)  ;\\
		p^{\nu-1}(p-1), &\ 2|\nu \text{ and }  2\leq \nu\leq \nu_p(n) ;\\
		p^{\nu-\frac 12} \(\frac{(-1)^{k-1/2}\, n/p^{\nu-1}}p\), &\ 2\nmid \nu \text{ and }  \nu=\nu_p(n)+1 ;\\
		-p^{\nu-1}, &\  2|\nu \text{ and }  \nu=\nu_p(n)+1;\\
		0, & \ \nu\geq \nu_p(n)+2. 
	\end{array}
	\right. 
\end{align}
Then we have the following lemma: 
\begin{lemma}
	For prime $p>2$, $k\in \Z+\frac 12$, $n\neq 0$ and $h\defeq \nu_p(n)$, we have
	\begin{align}
		A_{2k}(p,n,s)=1+\left\{
		\begin{array}{ll}
			\displaystyle \frac{p^{2(1-2s)}-p^{(h+1)(1-2s)}}{1-p^{2(1-2s)}}(1-p^{-1})-p^{(h+1)(1-2s)-1}, &\text{if }2\nmid h;\vspace{10px}\\
			\displaystyle  \frac{p^{2(1-2s)}-p^{(h+2)(1-2s)}}{1-p^{2(1-2s)}}(1-p^{-1})&  \vspace{10px}\\
			\displaystyle \qquad \qquad +\(\frac{(-1)^{k-1/2}\, n/p^h}p\)p^{(h+1)(1-2s)-\frac 12}, &\text{if }2| h.
		\end{array}
		\right. 
	\end{align}
\end{lemma}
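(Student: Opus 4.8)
The plan is to prove the formula by substituting the explicit values of $\alpha_{2k}(p^\nu,n)$ just obtained into the local Dirichlet series $A_{2k}(p,n,s)=\sum_{\nu=0}^\infty \alpha_{2k}(p^\nu,n)\,p^{-2s\nu}$ and then summing the resulting finite geometric progression. The key simplification is that this is really a \emph{finite} sum: since $\alpha_{2k}(p^\nu,n)=0$ whenever $\nu\ge \nu_p(n)+2$, only the terms with $0\le \nu\le h+1$ survive, where $h=\nu_p(n)$. In particular there is no convergence issue and the identity holds for all $s\in\C$. The surviving nonzero terms are $\nu=0$ (contributing $1$), the even indices $\nu$ with $2\le\nu\le h$ (each contributing $p^{\nu-1}(p-1)\,p^{-2s\nu}$), and the single boundary term $\nu=h+1$.

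First I would collect the even-index contribution. Writing $\nu=2j$ and factoring out $(1-p^{-1})$, each such term equals $(1-p^{-1})\,x^{\,j}$ with $x=p^{2(1-2s)}$, so the total is $(1-p^{-1})\sum_{j=1}^{M} x^j=(1-p^{-1})\frac{x-x^{M+1}}{1-x}$, where $M=\lfloor h/2\rfloor$. Here the whole case split is governed by the parity of $h$: if $h$ is odd the largest even index is $h-1$, so $M=(h-1)/2$ and $x^{M+1}=p^{(h+1)(1-2s)}$; if $h$ is even the largest even index is $h$, so $M=h/2$ and $x^{M+1}=p^{(h+2)(1-2s)}$. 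This accounts for the two different numerators $p^{2(1-2s)}-p^{(h+1)(1-2s)}$ and $p^{2(1-2s)}-p^{(h+2)(1-2s)}$ appearing in the statement.

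Next I would treat the boundary term $\nu=h+1$, whose value depends on the parity of $\nu=h+1$, i.e.\ again on the parity of $h$. If $h$ is odd then $h+1$ is even and $\alpha_{2k}(p^{h+1},n)=-p^{h}$, giving the contribution $-p^{h-2s(h+1)}=-p^{(h+1)(1-2s)-1}$. If $h$ is even then $h+1$ is odd and $\alpha_{2k}(p^{h+1},n)=p^{h+\frac12}\bigl(\frac{(-1)^{k-1/2}\,n/p^{h}}{p}\bigr)$, giving $\bigl(\frac{(-1)^{k-1/2}\,n/p^{h}}{p}\bigr)p^{h+\frac12-2s(h+1)}=\bigl(\frac{(-1)^{k-1/2}\,n/p^{h}}{p}\bigr)p^{(h+1)(1-2s)-\frac12}$. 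Adding the constant $1$, the even-index geometric sum, and this boundary term then reproduces the two cases of the asserted formula exactly.

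The computation involves no real obstacle beyond careful bookkeeping: the only things to watch are the exponent arithmetic that rewrites $\nu-2s\nu$ and $h+\tfrac12-2s(h+1)$ in the normalized form $(h+1)(1-2s)-c$, and the correct identification of the upper endpoint $M$ of the geometric sum in each parity case. I would double-check these two points and confirm that the Legendre-symbol factor is carried through unchanged from the $\nu=h+1$ value of $\alpha_{2k}$.
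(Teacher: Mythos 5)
Your proposal is correct and matches what the paper intends: the lemma is stated immediately after the explicit evaluation of $\alpha_{2k}(p^\nu,n)$ and is meant to follow by exactly the substitution and finite geometric summation you carry out, with the parity of $h$ governing both the endpoint of the even-index sum and the form of the $\nu=h+1$ boundary term. Your exponent bookkeeping (e.g.\ $x^{M+1}=p^{(h+1)(1-2s)}$ versus $p^{(h+2)(1-2s)}$, and $h-2s(h+1)=(h+1)(1-2s)-1$) checks out against the stated formula.
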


When $-n=m^2\geq 1$ is a square, by writing $h=\nu_p(n)$, we have $2|h$, $(\frac{-n/p^h}p)=1$, 
\begin{equation}\label{eqA3pn-bound-square}
	1\leq \left|\frac{A_3(p,-m^2,s)}{1+p^{-(2s-\frac 12)}}\right| \leq \frac{\nu_p(n)}2+2\quad \text{ for }\re s\in[\tfrac 12,1],
\end{equation}
and
\begin{equation}\label{eqA3pn-value-square-s34}
	\left.\frac{A_3(p,-m^2,s)}{1+p^{-(2s-\frac 12)}}\right|_{s=\frac 34}=\frac{1+p^{-1}-p^{-\frac{h+2}2}+p^{-\frac{h+1}2-\frac 12}}{1+p^{-1}} = 1. 
\end{equation}

When $n\neq 0$ is not a negative square, we still have the rough bound
\begin{equation}\label{eqA3pn-bound-nonsquare}
	0\leq |A_3(p,n,s)| \leq \frac{\nu_p(n)}2+2\quad \text{ for }\re s\in[\tfrac 12,1]. 
\end{equation}

A special case is that if $p>2$ and $p\nmid n$, i.e. $\nu_p(n)=0$, then 
\begin{equation}\label{eqA3-pnmidn-square}
	A_3(p,n,s)=1+\(\frac{-n} p \) p^{-(2s-\frac 12)}=1+\(\frac{-4n} p \) p^{-(2s-\frac 12)}=\frac{1- p^{-(4s-1)}} {1-(\frac{-4n}p) p^{-(2s-\frac 12)}}. 
\end{equation}

We have the following lemma on $A_3(p,n,s)$ at $s=\frac 34$ for $n\neq 0$. 
\begin{lemma}\label{lemmaA3pn-product-convergent-s34}
	If $n$ is not a negative square nor $0$, then 
	\[\prod_{p\nmid 2n} A_3(p,n,s)\quad \text{	is convergent at }s=\tfrac 34.\]
\end{lemma}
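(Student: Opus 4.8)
The plan is to reduce the convergence of the product to a statement about a Dirichlet $L$-function at the edge of absolute convergence. Specializing \eqref{eqA3-pnmidn-square} to $s=\tfrac34$ (so that $2s-\tfrac12=1$ and $4s-1=2$), for every prime $p\nmid 2n$ we get
\[
A_3(p,n,\tfrac34)=\frac{1-p^{-2}}{1-\chi(p)p^{-1}},\qquad \chi\defeq\Bigl(\tfrac{-4n}{\cdot}\Bigr),
\]
where $\chi$ is the Kronecker symbol, which by the remarks in \S\ref{sectionPrelimandNotation} is a Dirichlet character modulo $|4n|$. The key point is that the hypothesis on $n$ forces $\chi$ to be \emph{non-principal}: the Kronecker symbol $(\tfrac{D}{\cdot})$ is the principal character exactly when $D$ is a perfect square, and $-4n$ is a perfect square if and only if $n=-m^2$ is a negative square (or $n=0$). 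Excluding precisely these cases is where the hypothesis is used, and it leaves $\chi$ non-trivial. Note also that for $p\mid 4n$ we have $\chi(p)=0$, and since the primes dividing $2n$ and those dividing $4n$ coincide, the product over $p\nmid 2n$ may be regarded as a product over all $p$ with the missing denominator factors equal to $1$.

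Next I would pass to logarithms. Using $\log(1-p^{-2})=O(p^{-2})$ together with $-\log(1-\chi(p)p^{-1})=\chi(p)p^{-1}+O(p^{-2})$ (valid since $|\chi(p)|\le 1$), one obtains
\[
\log A_3(p,n,\tfrac34)=\frac{\chi(p)}{p}+O(p^{-2}).
\]
Summing over $p$, the error terms form an absolutely convergent series, so the convergence of $\prod_{p\nmid 2n}A_3(p,n,\tfrac34)$ (taking the product over primes in increasing order) is equivalent to the convergence of $\sum_{p}\chi(p)p^{-1}$. Equivalently, one can argue directly on the quotient: the numerator $\prod_{p\nmid 2n}(1-p^{-2})$ converges absolutely, while the denominator is the Euler product $\prod_{p}(1-\chi(p)p^{-1})$, which should converge to $L(1,\chi)^{-1}$.

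Finally I would invoke the classical fact that for a non-principal Dirichlet character $\chi$ the prime series $\sum_p \chi(p)p^{-1}$ converges, so that the Euler product of $L(s,\chi)$ converges up to $s=1$ and $L(1,\chi)\ne0$. Combining this with the absolutely convergent numerator shows the quotient tends to a finite nonzero limit, establishing the lemma. The only non-routine ingredient is this edge-of-convergence/non-vanishing statement for $L(1,\chi)$, which I expect to be the main (though entirely standard) obstacle; once $\chi$ has been identified as non-principal via the hypothesis on $n$, the rest is bookkeeping.
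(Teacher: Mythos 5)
Your argument is correct, and the identification of the hypothesis with the non-principality of $\chi=\bigl(\tfrac{-4n}{\cdot}\bigr)$ is exactly the crux in the paper as well; but the route you take through the Euler product is genuinely different from the paper's. The paper expands $\prod_{p\nmid 2n}A_3(p,n,s)$ as the Dirichlet series $\sum_{k\ \mathrm{squarefree}}\bigl(\tfrac{-4n}{k}\bigr)k^{-(2s-1/2)}$ (following Bateman), and at $s=\tfrac34$ its convergence reduces, after writing $|\mu(k)|=\sum_{d^2\mid k}\mu(d)$, to the \emph{conditional} convergence of $\sum_m\chi(m)m^{-1}$ for non-principal $\chi$ --- an entirely elementary fact via Abel summation on the bounded partial sums of $\chi$, with no input about $L(1,\chi)$ needed for convergence alone (non-vanishing enters only to assert the limit $K(-4n)\sum_{(\ell,4n)=1}\mu(\ell)\ell^{-2}$ is positive). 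Your version instead takes logarithms of the Euler factors and reduces to the convergence of the prime series $\sum_p\chi(p)p^{-1}$, which is a strictly deeper (though classical) statement: it is essentially Mertens' theorem for non-principal characters and genuinely requires $L(1,\chi)\neq0$, as you correctly flag. So both proofs work and both yield a finite nonzero limit; the paper's buys a more elementary convergence argument plus an explicit closed form for the value, while yours is shorter to state but leans on heavier classical machinery at the single non-routine step you identified.
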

\begin{proof}
	This proof can be traced back to \cite[Lemma~4.1 and (4.05)]{Bateman1951ThreeSquares}. The product
	\[\prod_{p\nmid 2n} A_3(p,n,s)=\sum_{\substack{k\text{ square-free}\\(k,2n)=1}}\(\frac{-n}k\)\frac 1{k^{2s-\frac 12}}=\sum_{k\text{ square-free}}\(\frac{-4n}k\)\frac 1{k^{2s-\frac 12}}\]
	is convergent at $s=\frac 34$ and converge to a positive value
	\[K(-4n)\sum_{(\ell,4n)=1}\frac{\mu(\ell)}{\ell^2},\quad \text{where} \quad K(-4n)=\sum_{m=1}^\infty \(\frac{-4n}m\)\frac 1m. \qedhere\]
\end{proof}

We conclude the following proposition based on the above discussion.  
\begin{proposition}\label{prop_KloostermanSelbergZetaF_m=0_weightk3/2}
	For $n\neq 0$, let $Z_n(s)$ be the Kloosterman-Selberg zeta function 
    \[Z_n(s)=\sum_{2|c>0}\frac{S(n,0,c,\nu_{\Theta}^3)}{c^{2s}}\quad \text{or}\quad \sum_{2\nmid \widetilde{c}>0}\frac{S(n,0,\widetilde{c},\nu_{\Theta}^3)}{{\widetilde{c}}^{2s}}. \]
    Then for either case, there exist functions $C_1(n,s)$ and $C_2(n,s)$ entire in $s$, and positive absolute constants $C$, $C'$ such that for $s\in [\frac 12,1]$,
	\[0<C'\leq |C_1(n,s)|\leq C\log n\quad \text{and}\quad |C_2(n,s)|\leq C\log n, \]
    and we have
	\[Z_n(s)
	=\left\{\begin{array}{ll}
		\displaystyle C_1(n,s) \frac{\zeta(2s-\frac 12)}{\zeta(4s-1)},&\text{if }n=-m^2<0;\vspace{10px}\\
		\displaystyle C_2(n,s) \frac{L(2s-\frac 12, (\frac{-4n}\cdot))}{\zeta(4s-1)}, &\text{other }n\neq 0. 
	\end{array}\right. \]
	Specifically, 
	\[\Res_{s=\frac 34} \sum_{2|c>0}\frac{S(n,0,c,\nu_{\Theta}^3)}{c^{2s}} =\left\{\begin{array}{ll}
		 \ee(-\frac 18)\frac{2}{\pi^2},&\text{ if }n=-m^2<0;\\
		0, &\text{ other }n\neq 0
		\end{array}\right. 
		\]
		and 
	\[\Res_{s=\frac 34} \sum_{2\nmid \widetilde{c}>0}\frac{S(n,0,\widetilde{c},\nu_{\Theta}^3)}{{\widetilde{c}}^{2s}} =\left\{\begin{array}{ll}
		\ee(\frac 38)\frac{2}{\pi^2},&\text{ if }n=-m^2<0;\\
		0, &\text{ other }n\neq 0
	\end{array}\right. 
	\]
\end{proposition}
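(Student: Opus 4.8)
The plan is to read off the analytic structure of $Z_n(s)$ directly from the Euler products established in \eqref{eqKloostermanzeta-n0-product-asA} and \eqref{eqKloostermanzeta-n0-product-asA-oddd}, namely $Z_n(s)=2^{2s}\prod_p A_3(p,n,s)$ in the even case and $Z_n(s)=\ee(\tfrac 38)\prod_{p>2}A_3(p,n,s)$ in the odd case. The key point is that the Euler factors at the \emph{generic} primes $p\nmid 2n$ are given in closed form by \eqref{eqA3-pnmidn-square}, $A_3(p,n,s)=\frac{1-p^{-(4s-1)}}{1-(\frac{-4n}p)p^{-(2s-\frac 12)}}$, which are exactly the Euler factors of $L(2s-\tfrac 12,(\tfrac{-4n}\cdot))/\zeta(4s-1)$. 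So the first step is to factor out this quotient of $L$-functions and absorb the remaining finitely many anomalous factors at $p\mid 2n$ into $C_1$ or $C_2$.

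Concretely, I would write $Z_n(s)=\widehat C(n,s)\cdot \frac{L(2s-\frac 12,\chi)}{\zeta(4s-1)}$ with $\chi=(\tfrac{-4n}\cdot)$, where $\widehat C(n,s)$ is the finite product over $p\mid 2n$ of the local corrections $A_3(p,n,s)(1-p^{-(4s-1)})^{-1}$ (times the normalizing factor $2^{2s}$ or $\ee(\tfrac 38)$, and with the $p=2$ factor dropped in the odd case); here one uses that $\chi(p)=0$ for $p\mid 2n$, so these primes carry no Euler factor of $L$. When $-n=m^2$ is a perfect square the character $\chi$ is principal, so $L(2s-\frac 12,\chi)$ differs from $\zeta(2s-\frac 12)$ only by the Euler factors at $p\mid 2n$; folding these into the prefactor turns the local denominators into $1+p^{-(2s-\frac 12)}$ and produces $C_1(n,s)$ together with the quotient $\zeta(2s-\frac 12)/\zeta(4s-1)$. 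When $n$ is not a negative square, $\chi$ is a non-principal real character and I keep $L(2s-\frac 12,\chi)$, producing $C_2(n,s)$. Each $A_3(p,n,s)$ is a finite Dirichlet polynomial in $p^{-2s}$ (the defining series terminates at $\nu\le \nu_p(n)+1$, as computed via Lemma~\ref{lemmaA2k2ns} for $p=2$ and the corresponding odd-prime computation), so $C_1,C_2$ are finite products of entire functions, hence entire. The stated size bounds then follow by multiplying the per-prime estimates: the lower bound $|C_1|\ge C'>0$ comes from the left inequalities in \eqref{eqA32n-bound-square} and \eqref{eqA3pn-bound-square}, which keep each local factor bounded away from $0$, while the upper bounds come from the right inequalities there together with \eqref{eqA32n-bound-nonsquare} and \eqref{eqA3pn-bound-nonsquare}, the growth being controlled by the $p=2$ factor (of size $\ll \nu_2(n)$); convergence of the tail is Lemma~\ref{lemmaA3pn-product-convergent-s34}.

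For the residue at $s=\tfrac 34$ I would specialize the factorization. There $4s-1=2$, so $\zeta(4s-1)=\zeta(2)=\pi^2/6$. If $n$ is not a negative square, $\chi$ is non-principal, $L(2s-\frac 12,\chi)$ is entire, and $Z_n(s)$ is therefore holomorphic at $s=\tfrac 34$, giving residue $0$. If $-n=m^2$, the factor $\zeta(2s-\frac 12)$ has a simple pole at $s=\tfrac 34$ (where $2s-\frac 12=1$); since $\zeta(w)\sim (w-1)^{-1}$ and $w-1=2(s-\tfrac 34)$, its residue in $s$ is $\tfrac 12$. It remains to evaluate the prefactor at $s=\tfrac 34$: by \eqref{eqA3pn-value-square-s34} every odd local correction equals $1$, and by \eqref{eqA32n-value-square-s34} the $p=2$ correction equals the universal constant $\ee(-\tfrac 18)/(3\sqrt 2)$. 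Assembling $2^{3/2}\cdot \tfrac{6}{\pi^2}\cdot \tfrac{\ee(-1/8)}{3\sqrt 2}\cdot \tfrac 12=\tfrac{2\ee(-1/8)}{\pi^2}$ in the even case (and the analogous product without the $2^{2s}$ and $p=2$ factors, times $\ee(\tfrac 38)$, in the odd case) gives the two residues $\ee(-\tfrac 18)\tfrac 2{\pi^2}$ and $\ee(\tfrac 38)\tfrac 2{\pi^2}$.

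The main obstacle is the bookkeeping of the local factors at $p\mid 2n$, and in particular the $p=2$ factor, whose theta-multiplier makes it genuinely different from the odd primes: verifying the correct normalization (the $2^{2s}$ in the even case versus the $\ee(\tfrac 38)$ in the odd case) and the uniform control of $C_1,C_2$ on the whole segment $s\in[\tfrac 12,1]$ — especially the lower bound $|C_1|\ge C'>0$, which must survive uniformly away from the zeros and poles of the ambient zeta and $L$-factors — is where the care is needed. The residue itself is then a short specialization once the explicit values \eqref{eqA32n-value-square-s34} and \eqref{eqA3pn-value-square-s34} are in hand.
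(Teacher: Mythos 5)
Your proposal is correct and follows essentially the same route as the paper's (very terse) proof: factor $Z_n(s)$ via the Euler products \eqref{eqKloostermanzeta-n0-product-asA}--\eqref{eqKloostermanzeta-n0-product-asA-oddd}, peel off $L(2s-\tfrac12,(\tfrac{-4n}{\cdot}))/\zeta(4s-1)$ from the generic primes using \eqref{eqA3-pnmidn-square}, absorb the finitely many local factors at $p\mid 2n$ into $C_1,C_2$, and read off the residue at $s=\tfrac34$ from \eqref{eqA32n-value-square-s34} and \eqref{eqA3pn-value-square-s34}, with the same final arithmetic. The one caveat is your aside that the growth of $C_1,C_2$ is ``controlled by the $p=2$ factor'': multiplying the cited per-prime bounds actually yields $\prod_{p\mid 2n}(\nu_p(n)/2+2)$, a divisor-function-type quantity that is not $O(\log n)$ in general --- but this is an issue with the upper bound as stated in the proposition itself (the paper's proof is equally silent on it), and it affects neither the factorization nor the residue computation.
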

\begin{proof}
	The proof is direct by combining \eqref{eqKloostermanzeta-n0-product-asA}, \eqref{eqKloostermanzeta-n0-product-asA-oddd}, \eqref{eqA32n-bound-square}, \eqref{eqA32n-value-square-s34}, \eqref{eqA32n-bound-nonsquare}, \eqref{eqA3pn-bound-square}, \eqref{eqA3pn-value-square-s34}, \eqref{eqA3pn-bound-nonsquare}, \eqref{eqA3-pnmidn-square} and Lemma~\ref{lemmaA3pn-product-convergent-s34}. Since for every $n$, there are only finitely many $p|2n$, the factors $\zeta(2s-\frac 12)/\zeta(4s-1)$ and $L(2s-\frac 12,(\frac{-4n}\cdot))/\zeta (4s-1)$ are the result of \eqref{eqA3-pnmidn-square}. The residue in the end is given by \eqref{eqA32n-value-square-s34}, \eqref{eqA3pn-value-square-s34} and 
	\[\Res_{s=\frac 34}\ 2^{2s}\,\frac{\ee(-\frac 18)}{3\sqrt 2}\frac{\zeta(2s-\frac 12)}{\zeta(4s-1)}=\ee(-\tfrac 18)\frac{2}{\pi^2},\quad \Res_{s=\frac 34}\frac{\ee(\frac 38)\zeta(2s-\frac 12)}{(1+2^{-1})\zeta(4s-1)}=\ee(\tfrac 38)\frac{2}{\pi^2}. \qedhere \]
\end{proof}

Using Perron's formula (see e.g. \cite[\S17]{davenport}), we get the following proposition.

\begin{proposition}\label{propSumofKloostermansums-weight32-one-of-mn0}
	For $n\neq 0$, let 
    \[S_n(x)=\sum_{2|c\leq x} \frac{S(n,0,c,\nu_{\Theta}^3)}{c\cdot \ee(-\frac 18)}\quad\text{or}\quad \sum_{2\nmid\widetilde{c}\leq x} \frac{S(n,0,\widetilde{c},\nu_{\Theta}^3)}{\widetilde{c}\cdot \ee(\frac 38)}. \]
    Then we have
	\begin{equation}S_n(x)
	=\left\{\begin{array}{ll}
			\displaystyle \frac{8}{\pi^2}x^{\frac 12}+O_\ep\(x^{\frac 16+\ep} n^\ep\),&\ n=-m^2<0;\vspace{5px}\\
			\displaystyle O_\ep\(x^{\frac 16+\ep} |n|^{\frac 3{16}+\ep}\),&\ \text{other }n\neq 0, \\
		\end{array}
		\right.
	\end{equation}
	The same bound holds if $S(n,0,\cdot,\nu_\Theta^3)$ is changed to $S(0,n,\cdot,\nu_\Theta^3)$ because of \eqref{eqKloosterman-Sums-alter-mn-inTheta}. 
\end{proposition}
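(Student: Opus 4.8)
The plan is to apply Perron's formula to the Dirichlet series attached to $Z_n$ and then to shift the contour past the simple pole at $s=\tfrac34$ identified in Proposition~\ref{prop_KloostermanSelbergZetaF_m=0_weightk3/2}. First I would set $b_c=S(n,0,c,\nu_{\Theta}^3)/\ee(-\tfrac18)$ in the even case (and $b_{\widetilde c}=S(n,0,\widetilde c,\nu_{\Theta}^3)/\ee(\tfrac38)$ in the odd case), so that $S_n(x)=\sum_{2\mid c\le x}b_c/c$, and introduce
\[
E(w)\defeq\sum_{2\mid c}\frac{b_c}{c^{\,w+1}}=\ee(-\tfrac18)^{-1}Z_n\!\left(\tfrac{w+1}2\right),
\]
which converges absolutely for $\re w>\tfrac12$ by the Weil bound \eqref{eqWeilBound-Theta3} (note $\gcd(n,0,c)=\gcd(n,c)$, contributing $O_\ep(n^\ep)$). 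By Proposition~\ref{prop_KloostermanSelbergZetaF_m=0_weightk3/2}, $E(w)$ is the product of a factor that is $O_\ep(n^\ep)$ and, being a Dirichlet polynomial supported on integers dividing a power of $2n$, bounded uniformly in $\im w$, times $\zeta(w+\tfrac12)/\zeta(2w+1)$ when $n=-m^2$ and times $L(w+\tfrac12,(\tfrac{-4n}{\cdot}))/\zeta(2w+1)$ otherwise. Thus $E(w)$ has a single simple pole at $w=\tfrac12$ (from $\zeta(w+\tfrac12)$) when $n$ is a negative square, and is holomorphic for $\re w>0$ in every other case, since $L(\cdot,(\tfrac{-4n}{\cdot}))$ is then entire.

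Next I would apply the truncated Perron formula at $\sigma_0=\tfrac12+\ep$ and height $T$, and shift the contour to the line $\re w=\sigma_1$ for a suitable $\sigma_1\in(0,\tfrac12)$. The constraint $\sigma_1>0$ ensures $\re(2w+1)>1$ on and to the right of the new line, so $\zeta(2w+1)\ne0$ there and $1/\zeta(2w+1)=O(1)$; this is precisely why the contour cannot be pushed past $\re w=0$, but also why no zeros of $\zeta$ are crossed. The only pole encountered is $w=\tfrac12$, present only when $n=-m^2$, and a direct residue computation using $\Res_{s=3/4}Z_n(s)=\ee(-\tfrac18)\tfrac2{\pi^2}$ gives the main term
\[
\Res_{w=\frac12}E(w)\frac{x^w}{w}=\frac{8}{\pi^2}\,x^{1/2},
\]
matching the statement; in all non-square cases there is no pole and hence no main term.

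To estimate the remaining integrals I would invoke the convexity bound for $\zeta$ and for $L(\cdot,\chi)$, $\chi=(\tfrac{-4n}{\cdot})$, in hybrid form: on $\re w=\sigma_1$ one has $L(\sigma_1+\tfrac12+it,\chi)\ll_\ep(|n|(1+|t|))^{(1/2-\sigma_1)/2+\ep}$, while $1/\zeta(2w+1)=O(1)$ and the remaining factor is $O_\ep(n^\ep)$. Hence the shifted vertical integral is $\ll_\ep x^{\sigma_1}(|n|T)^{(1/2-\sigma_1)/2}n^\ep$, the horizontal segments are dominated by the truncation term in the range $x>|n|^{1/2}$, and the Perron truncation error is $\ll_\ep x^{1/2+\ep}T^{-1}n^\ep$. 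Choosing $\sigma_1=\tfrac1{10}$ and $T=x^{1/3}|n|^{-1/6}$ balances these and produces an error $\ll_\ep x^{1/6+\ep}|n|^{1/6+\ep}\le x^{1/6+\ep}|n|^{3/16+\ep}$, while the regime $x\le|n|^{1/2}$ (where $T<1$) is covered by the trivial bound $S_n(x)\ll_\ep x^{1/2+\ep}n^\ep$, which is itself $\le x^{1/6+\ep}|n|^{3/16+\ep}$ there. In the square case the same argument with $\zeta(w+\tfrac12)$ in place of $L$ removes the conductor factor and yields the sharper error $x^{1/6+\ep}n^\ep$. The odd-modulus sums follow identically after replacing $\ee(-\tfrac18)$ by $\ee(\tfrac38)$ and using the corresponding residue, and the last sentence of the statement follows from \eqref{eqKloosterman-Sums-alter-mn-inTheta}.

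The main obstacle is securing the error uniformly in $n$ with the correct split between the $t$-aspect exponent $\tfrac16$ and the conductor-aspect exponent $\tfrac3{16}$: this forces the hybrid convexity bound for Dirichlet $L$-functions together with an $n$-dependent truncation height $T$, and it requires verifying that the auxiliary factors $C_i(n,s)$ from Proposition~\ref{prop_KloostermanSelbergZetaF_m=0_weightk3/2} contribute only $O_\ep(n^\ep)$ with no growth in $\im w$, so that the vertical integral genuinely converges after truncation and the horizontal pieces remain subordinate.
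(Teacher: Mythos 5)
Your proposal is correct and proves the stated bound (in fact a slightly stronger one), but it diverges from the paper's proof at the key analytic step. Both arguments run Perron's formula through the Euler-product factorization of Proposition~\ref{prop_KloostermanSelbergZetaF_m=0_weightk3/2} and pick up the main term from the simple pole of $\zeta(2s-\tfrac12)/\zeta(4s-1)$ at $s=\tfrac34$ with the same residue computation. The difference is in how the shifted contour is controlled in the non-square case: the paper pushes the contour essentially all the way to $\re w=\delta$, so that $L(w+\tfrac12,(\tfrac{-4n}{\cdot}))$ is evaluated at the edge $\re=\tfrac12+\delta$ of the critical strip, where plain convexity would only give conductor exponent $\tfrac14$; to reach $|n|^{3/16}$ it invokes Burgess's hybrid subconvexity bound $L(\sigma+it,\chi)\ll|n|^{\frac38(1-\sigma)+\ep}|s|^{\frac{1-\sigma}4+\ep}$ and keeps the truncation height fixed at $T=x^{1/3}$. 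You instead stop at $\re w=\sigma_1=\tfrac1{10}$, where mere convexity already gives exponent $\tfrac15$ in both conductor and $t$, and then let $T=x^{1/3}|n|^{-1/6}$ depend on $n$; balancing gives $x^{1/6+\ep}|n|^{1/6+\ep}$, and since $\tfrac16<\tfrac3{16}$ this implies the stated estimate without any subconvexity input. The price is the extra bookkeeping you correctly flag: the regime $T<1$ (i.e.\ $x\le|n|^{1/2}$) must be handled by the trivial Weil-bound estimate, the square case must be re-optimized with $T=x^{1/3}$ to avoid a spurious $|n|^{1/6}$ in front of the error, and the factors $C_i(n,s)$ must be checked to be $O_\ep(n^\ep)$ uniformly on vertical lines with $\re s\ge\tfrac12$ (which holds since they are finite products of local factors bounded via \eqref{eqA32n-bound-nonsquare}, \eqref{eqA3pn-bound-nonsquare} for $\re s\in[\tfrac12,1]$) --- a point the paper's own proof also uses implicitly. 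Your route is more elementary and marginally sharper in the conductor aspect; the paper's is shorter to state because Burgess is quoted off the shelf and $T$ is independent of $n$.
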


\begin{proof}[Proof outline]
	We only prove the case for $2|c$ because the other case follows in the same way. We define the Kloosterman-Selberg zeta function as
	\[Z(s)\defeq \sum_{2|c>0} \frac{S(n,0,c,\nu_{\Theta}^3)}{c^{2s}}. \]
	The Weil bound \eqref{eqWeilBound-Theta3} gives us
	\[\sum_{2|c\leq x}\frac{|S(n,0,c,\nu_{\Theta}^3)|}{c^{\frac 32+\delta}}\ll_{\ep,\delta} |n|^\ep \log x,\quad \text{for any }\delta>0. \]
	Applying Perron's formula, for $\delta>0$ we get
	\[\left|\sum_{2|c\leq x} \frac{S(n,0,c,\nu_{\Theta}^3)}{c} - \frac 1{2\pi i}\int_{\frac 12+\delta -iT}^{\frac 12+\delta+iT} Z(\tfrac{1+s}2)\frac{x^s}s ds\right|\ll_{\ep,\delta} \frac{x^{\frac 12+\delta}}T |n|^\ep \log x.  \]
 We then shift the path of integral to $\frac 12+\delta-iT\rightarrow \delta-iT\rightarrow \delta+iT\rightarrow \frac 12+\delta +iT$. 
	
	Estimates on the integral along this path follow from the uniform bounds of $\zeta(s)$ and $L(s,(\frac{-4n}\cdot))$ in the critical strip. By \cite[Corollary~1.17]{Montgomery_Vaughan_2006_Book}, we get the convexity bound
	\[\zeta(\sigma+it)\ll (1+\tau^{1-\sigma})\min\(\frac{1}{|\sigma-1|},\log t\) \quad \text{for }\delta \leq \sigma\leq 2,\ |t|\geq 1. \]
	By Burgess's bound \cite[Theorem~1]{HeathBrown1978HybridBound} \cite{Burgess1963bound} we have the subconvexity bound
	\[L\(\sigma+it,\(\frac{-4n}\cdot\)\)\ll |n|^{\frac 38(1-\sigma)+\ep} |s|^{\frac{1-\sigma}4+\ep} \quad \text{for }\frac 12\leq \sigma\leq 1,\ |t|\geq 1. \]
	Therefore, by combining above and Proposition~\ref{prop_KloostermanSelbergZetaF_m=0_weightk3/2} we get
	\begin{align*}
		\frac 1{2\pi i}&\(\int_{\frac 12+\delta -iT}^{\delta-iT}+\int_{\delta -iT}^{\delta+iT}+\int_{\delta+iT}^{\frac 12+\delta+iT}\) Z(\tfrac{1+s}2)\frac{x^s}s dx\\
		&\ll_{\ep,\delta}\left\{\begin{array}{ll}
			|xnT|^\ep \(x^{\delta} T^{\frac 12}+x^{\frac 12+\delta}T^{-1}\),&\ n=-m^2<0;\\
			|n|^{\frac 3{16}+\ep}|xnT|^\ep \(x^{\delta} T^{\frac 12}+x^{\frac 12+\delta}T^{-1}\),&\ \text{other }n\neq 0. 
		\end{array}
		\right.
	\end{align*}
	In the first case of the proposition where $n$ is a negative square, $Z(\frac{1+s}2)$ has a pole at $s=\frac 12$ with residue
	\[\Res_{s=\frac 12}Z(\tfrac{1+s}2)=2\Res_{s=\frac 34}Z(s)=\ee(-\tfrac 18)\frac{4}{\pi^2}. \]
	So the residue of $Z(\tfrac{1+s}2)\frac{x^s}s$ at $s=\frac 12$ is $\ee(-\tfrac 18)\frac{8}{\pi^2}x^{\frac 12}$. We finish the proof by taking $T=x^{\frac 13}$, $\delta=\ep$ and combining all of the above calculations. 
\end{proof}

\begin{remark}
	Burgess's bound has been improved by recent research in subconvexity bounds for $L$-functions, but it is enough for our estimates here due to the worse bound on Kloosterman sums with $mn\neq 0$ in Proposition~\ref{propSumofKloostermansums-weight32-mnnot0} in the $mn$-aspect. Comparing with the proposition, the second author wishes to conclude better ``subconvexity bound" for sum of Kloosterman sums in general ($mn\neq 0$). Such estimates will also improve the conclusions in estimates in $n$-aspect in this paper.  
\end{remark}

There is a remaining case $m=n=0$, which we quickly deal with here. 
For $2|c>0$, 
\begin{equation}
	S(0,0,c,\nu_{\Theta}^3)=\left\{\begin{array}{ll}
		\sqrt 2\ee(-\frac 18)n\phi(2n),&\ \frac{\sqrt {2c}}2=n\in \Z_+;\\
		0,&\ \text{otherwise}. 
	\end{array}
	\right.
\end{equation}
Therefore, we get
\begin{align*}
	\sum_{2|c>0}\frac{S(0,0,c,\nu_{\Theta}^3)}{c^{2s}}&= \frac{\sqrt 2\ee(-\frac 18)}{2^{1-2s}} \sum_{n=1}^\infty \frac{\phi(2n)}{(2n)^{4s-1}}\\
	&=\frac{\sqrt 2\ee(-\frac 18)}{2^{1-2s}}\cdot \frac 1{2^{4s-1}-1}\cdot \frac{\zeta(4s-2)}{\zeta(4s-1)},
\end{align*}
\begin{equation}
	\Res_{s=\frac 34}	\sum_{2|c>0}\frac{S(0,0,2c,\nu_{\Theta}^3)}{c^{2s}}= \frac{2\ee(-\frac 18)}{3} \times \frac 14 \times \frac{6}{\pi^2}=\frac{\ee(-\frac 18)}{\pi^2},  
\end{equation}
and by Perron's formula, 
\begin{equation}\label{eqpropSumofKloostermansums-weight32-bothmn0}
	\sum_{2|c\leq x} \frac{S(0,0,c,\nu_{\Theta}^3)}{c}=\ee(-\tfrac 18)\frac{4}{\pi^2} x^{\frac 12}+O_\ep(x^{\frac 16+\ep}). 
\end{equation}

For $2\nmid \widetilde{c}>0$ we also have
\begin{equation}
	S(0,0,\widetilde{c},\nu_{\Theta}^3)=\left\{\begin{array}{ll}
		\ee(\frac 38)n\phi(n),&\ \sqrt {\widetilde{c}}=n\text{ is odd};\\
		0,&\ \text{otherwise}. 
	\end{array}
	\right.
\end{equation}
Then
\begin{align*}
	\sum_{2\nmid \widetilde{c}>0}\frac{S(0,0,\widetilde{c},\nu_{\Theta}^3)}{{\widetilde{c}}^{2s}}&= \ee(\tfrac 38) \sum_{\substack{n=1\\n\text{ odd}}}^\infty \frac{\phi(n)}{n^{4s-1}}\\
	&=\ee(\tfrac 38) \frac{2^{4s-1}-2}{2^{4s-1}-1}\cdot \frac{\zeta(4s-2)}{\zeta(4s-1)}. 
\end{align*}
We have
\begin{equation}
	\Res_{s=\frac 34}\sum_{2\nmid \widetilde{c}>0}\frac{S(0,0,\widetilde{c},\nu_{\Theta}^3)}{{\widetilde{c}}^{2s}}= \frac{2\ee(\frac 38)}{3} \cdot \frac 14\cdot \frac{6}{\pi^2}=\frac{\ee(\frac 38)}{\pi^2}
\end{equation}
and apply Perron's formula to get
\begin{equation}\label{eqpropSumofKloostermansums-weight32-bothmn0-oddd}
	\sum_{2\nmid \widetilde{c}\leq x} \frac{S(0,0,\widetilde{c},\nu_{\Theta}^3)}{\widetilde{c}}=\ee(\tfrac 38)\frac{4}{\pi^2} x^{\frac 12}+O_\ep(x^{\frac 16+\ep}). 
\end{equation}

\section{Real-variable Kloosterman sums in weight \texorpdfstring{$3/2$}{3/2}}
\label{SectionRVKLweight3/2}
Recall the real-variable Kloosterman sums defined in \eqref{eqReal-Val-Kloosterman-Sums}. In this section we assume $r^2\in \R_+$. We would like to analyze the convergence of
    \[\sum_{2|c>0}\frac{K(r^2,n,c,\nu_{\Theta}^3)}c J_{2s-1}\(\frac{2\pi |r|\sqrt n}c\)\quad \text{for }n>0\]
and find properties of 
    \[\sum_{2|c>0}\frac{K(r^2,0,c,\nu_{\Theta}^3)}{c^{2s}}\quad \text{and}\quad \sum_{2|c>0}\frac{K(r^2,n,c,\nu_{\Theta}^3)}c I_{2s-1}\(\frac{2\pi |r^2 n|^{\frac 12}}c\)\quad \text{for }n<0\]
when $s\rightarrow \frac 34^+$, as well as the corresponding cases for $2\nmid \widetilde{c}>0$. 

First we prove a Weil-type bound for $K(r^2,n,c,\nu_{\Theta}^3)$ and $\widetilde{K}(r^2,n,{\widetilde{c}},\nu_{\Theta}^3)$ for $2|c$ and $2\nmid {\widetilde{c}}$. 
\begin{lemma}\label{lemmaWeiltypebound-real-val-k3/2}
	For $r^2\in \R_+$, $n\neq 0$, $2|c$ and $2\nmid {\widetilde{c}}$, we have 
	\[K(r^2,n,c,\nu_{\Theta}^3)\ll_\ep (n,c)^{\frac 12+\ep} c^{\frac 12+\ep}\quad \text{and}\quad  \widetilde{K}(r^2,n,{\widetilde{c}},\nu_{\Theta}^3)\ll_\ep (n,{\widetilde{c}})^{\frac 12+\ep} {\widetilde{c}}^{\frac 12+\ep}. \]
\end{lemma}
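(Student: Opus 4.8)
The plan is to follow the argument of Lemma~\ref{lemmaWeiltypebound-real-val-k2} almost verbatim, replacing the weight~$2$ Weil bound by its weight~$3/2$ counterpart \eqref{eqWeilBound-Theta3}. The starting point is to linearize the real argument $r^2$ by means of Lemma~\ref{lemmaExponential-interpolation}. Since $K(r^2,n,c,\nu_\Theta^3)$ specializes to the integer-variable sum $S(k,n,c,\nu_\Theta^3)$ whenever $r^2$ is replaced by an integer $k$, expanding $\ee(\frac{r^2a}{2c})$ and interchanging the two finite sums gives
\[
K(r^2,n,c,\nu_\Theta^3)=\frac{i}{c}\sum_{\substack{k\in\Z\\|r^2-k|<c}}\frac{\sin(\pi(r^2-k))}{\ee(\frac{r^2-k}{2c})-1}\,S(k,n,c,\nu_\Theta^3),
\]
a representation of the real-variable sum as a finite $\C$-linear combination of honest Kloosterman sums.

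First I would isolate the two interpolation nodes $k=\floor{r^2}$ and $k=\ceil{r^2}$. There one has $|\sin(\pi(r^2-k))|=\sin(\pi\|r^2\|)$ while $|\ee(\frac{r^2-k}{2c})-1|\ge \pi|r^2-k|/(2c)$ by \eqref{eq:crude bound et-1 abs}, so the corresponding coefficients are bounded by an absolute constant, uniformly in $c$ and in the distance $\|r^2\|$, because the vanishing of the numerator exactly compensates the small denominator. For every remaining node $|\sin|\le 1$, and the same inequality bounds the coefficient by $\ll 1/|r^2-k|$. Inserting the Weil bound \eqref{eqWeilBound-Theta3}, namely $|S(k,n,c,\nu_\Theta^3)|\ll_\ep (k,n,c)^{1/2}c^{1/2+\ep}$, reduces the claim to estimating $\sum_{r^2-c<k<r^2+c}\frac{(k,n,c)^{1/2}}{|r^2-k|}$. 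Reordering this sum over divisors $\delta\mid(n,c)$ and writing $k=\delta t$ turns it into harmonic sums of the type controlled by Lemma~\ref{lemmaInequality-estimate-kAlpha-to-r-to-x}, which yields the desired bound $\ll_\ep (n,c)^{1/2+\ep}c^{1/2+\ep}$.

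For $\widetilde{K}(r^2,n,\widetilde{c},\nu_\Theta^3)$ with $\widetilde{c}$ odd I would first reduce to the even-modulus situation. The relation \eqref{eqrelation-KLsum-evenodd-k3/2} expresses $S(m,n,\widetilde{c},\nu_\Theta^3)$, up to an explicit sign and a factor $\sqrt2$, in terms of $S(m,4n,2\widetilde{c},\nu_\Theta^3)$, so that the Weil bound \eqref{eqWeilBound-Theta3} transfers to odd modulus with the same shape. With this bound in hand, the interpolation identity for $\widetilde{K}$ and the summation over nodes proceed exactly as above, giving the stated estimate.

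Since the argument is essentially a transcription of the weight~$2$ proof, no genuinely new obstacle arises; the points demanding care are bookkeeping rather than analytic. One must check that passing from the real multiplier $\nu_\Theta^4$ to the genuinely complex multiplier $\nu_\Theta^3$ causes no difficulty---it does not, as only $|S(k,n,c,\nu_\Theta^3)|$ ever enters the estimate---and one must correctly track the sign and the $\sqrt2$ factor in \eqref{eqrelation-KLsum-evenodd-k3/2} when treating odd $\widetilde{c}$. The only mildly delicate step is the uniform $O(1)$ control of the two interpolation nodes nearest $r^2$, which is what prevents the bound from degrading as $r^2$ approaches an integer.
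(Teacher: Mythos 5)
Your proposal is correct and coincides with the paper's argument: the paper's proof of this lemma simply states that one repeats the proof of Lemma~\ref{lemmaWeiltypebound-real-val-k2} (interpolation via Lemma~\ref{lemmaExponential-interpolation}, the lower bound \eqref{eq:crude bound et-1 abs}, and the divisor reordering) with the half-integral weight Weil bound \eqref{eqWeilBound-Theta3} in place of \eqref{eqWeilBound-Theta4}, and handles odd $\widetilde{c}$ by combining \eqref{eqrelation-KLsum-evenodd-k3/2} with \eqref{eqWeilBound-Theta3}, exactly as you describe.
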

\begin{proof}
	The proof is the same as Lemma~\ref{lemmaWeiltypebound-real-val-k2}, taking the Weil bound for half-integral weight Kloosterman sums \eqref{eqWeilBound-Theta3} into account. For the $2\nmid {\widetilde{c}}$ case, we combine \eqref{eqrelation-KLsum-evenodd-k3/2} and \eqref{eqWeilBound-Theta3} to get the desired estimate. 
\end{proof}

\begin{proposition}\label{propEstimate-sumofKlsum-real-val-k3/2-Weil1/2bound}
	For $r^2\in \R_+$, $x\geq 1$, $n\in \Z\setminus \{0\}$, $2|c$ and $2\nmid {\widetilde{c}}$, we have
	\[\left. \begin{array}{r}
		\displaystyle \sum_{2|c\leq x}\frac{|K(r^2,n,c,\nu_{\Theta}^3)|}c\\
		\displaystyle \sum_{2\nmid {\widetilde{c}}\leq x}\frac{|\widetilde{K}(r^2,n,{\widetilde{c}},\nu_{\Theta}^3)|}{\widetilde{c}}\\
	\end{array}\right\}
	\ll_\ep |n|^\ep x^{\frac12+\ep}. \]
\end{proposition}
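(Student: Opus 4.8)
The plan is to deduce the claim directly from the pointwise Weil-type bound of Lemma~\ref{lemmaWeiltypebound-real-val-k3/2}, exactly as in the weight~$2$ case treated in Proposition~\ref{propEstimate-Sum-of-real-variable-Kloosterman-sums-weightk2-byWeilbound}. First I would separate the two cases $r^2\in\Z$ and $r^2\notin\Z$. When $r^2\in\Z$, the real-variable Kloosterman sum $K(r^2,n,c,\nu_{\Theta}^3)$ specializes to the ordinary sum $S(r^2,n,c,\nu_{\Theta}^3)$, so the Weil bound \eqref{eqWeilBound-Theta3} applies and gives $|K(r^2,n,c,\nu_{\Theta}^3)|\ll_\ep (n,c)^{1/2}c^{1/2+\ep}$; when $r^2\notin\Z$, the same pointwise estimate is furnished by Lemma~\ref{lemmaWeiltypebound-real-val-k3/2}. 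In either case we have $|K(r^2,n,c,\nu_{\Theta}^3)|\ll_\ep (n,c)^{1/2+\ep}c^{1/2+\ep}$ for all $2\mid c$, and analogously $|\widetilde{K}(r^2,n,\widetilde{c},\nu_{\Theta}^3)|\ll_\ep (n,\widetilde{c})^{1/2+\ep}\widetilde{c}^{1/2+\ep}$ for $2\nmid\widetilde{c}$.

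With the pointwise bound in hand, the remaining work is an elementary summation. Dividing by $c$ and organizing the sum according to the value of the greatest common divisor $\delta=(n,c)$, I would write
\[
\sum_{2\mid c\leq x}\frac{|K(r^2,n,c,\nu_{\Theta}^3)|}{c}
\ll_\ep \sum_{2\mid c\leq x}(n,c)^{\frac12+\ep}c^{-\frac12+\ep}
\leq \sum_{\delta\mid n}\delta^{\frac12+\ep}\sum_{\substack{c\leq x\\ \delta\mid c}}c^{-\frac12+\ep}.
\]
The inner sum is controlled using $\sum_{m\leq M}m^{-1/2+\ep}\ll M^{1/2+\ep}$, which gives $\sum_{\delta\mid c\leq x}c^{-1/2+\ep}\ll\delta^{-1}x^{1/2+\ep}$, so that the whole expression is at most $x^{1/2+\ep}\sum_{\delta\mid n}\delta^{-1/2+\ep}\leq x^{1/2+\ep}\sigma_0(n)\ll_\ep|n|^\ep x^{1/2+\ep}$, as required.

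The odd case follows verbatim from the bound on $\widetilde{K}$ in Lemma~\ref{lemmaWeiltypebound-real-val-k3/2} (which itself rests on the even--odd relation \eqref{eqrelation-KLsum-evenodd-k3/2}), replacing the sum over $2\mid c$ by the sum over $2\nmid\widetilde{c}$ and $(n,c)$ by $(n,\widetilde{c})$. I do not expect a genuine obstacle here: the entire content is the pointwise Weil-type estimate, and once that is granted the divisor-sum manipulation is routine. The only mild point to watch is keeping the gcd factor under control uniformly, which is handled cleanly by the reorganization over $\delta\mid n$ above; this is precisely why the weight~$2$ analogue in Proposition~\ref{propEstimate-Sum-of-real-variable-Kloosterman-sums-weightk2-byWeilbound} could be dispatched in a single sentence.
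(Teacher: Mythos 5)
Your proposal is correct and follows exactly the route the paper takes: the paper's proof is the one-line observation that the claim "follows directly from Lemma~\ref{lemmaWeiltypebound-real-val-k3/2} and the Weil bound of Kloosterman sums (when $r^2\in\Z$)," and you have simply written out the routine divisor-sum computation that the authors leave implicit. No gaps.
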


\begin{proof}
	The proof follows directly from Lemma~\ref{lemmaWeiltypebound-real-val-k3/2} and the Weil bound of Kloosterman sums (when $r^2\in \Z$). 
\end{proof}

\subsection{Case \texorpdfstring{$n>0$}{n>0}}
We begin with an estimate for sums of real-variable Kloosterman sums. 
\begin{proposition}\label{propEstimate-sumofKlsum-real-val-k3/2-QihangAsymptBound<1/4}
	For $r^2\in \R_+$, $x\geq 1$, $n\in \Z_+$, and $X=\max(x,r^2)$, we have
	\[\left. \begin{array}{r}
		\displaystyle \sum_{2|c\leq x}\frac{K(r^2,n,c,\nu_{\Theta}^3)}c\\
		\displaystyle \sum_{2\nmid {\widetilde{c}}\leq x}\frac{\widetilde{K}(r^2,n,{\widetilde{c}},\nu_{\Theta}^3)}{\widetilde{c}}
	\end{array} \right\} \ll_\ep  X^{\frac 14}n^{\frac 3{16}}\(n^{\frac 1{16}-\delta}+u_n^{\frac 18}\)|Xn|^\ep, \]
	where $\delta=\frac 1{147}$ and $u_n=2^{\floor{\nu_2(n)}/2}\ll n^{1/2}$. 
\end{proposition}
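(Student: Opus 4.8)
The plan is to treat the generic case $r^2\notin\Z$ by expanding each real-variable Kloosterman sum into ordinary Kloosterman sums via Lemma~\ref{lemmaExponential-interpolation}, and then to extract the cancellation in the $c$-aspect supplied by Proposition~\ref{propSumofKloostermansums-weight32-mnnot0}. When $r^2\in\Z$ there is nothing to do: $K(r^2,n,c,\nu_{\Theta}^3)=S(r^2,n,c,\nu_{\Theta}^3)$, so Proposition~\ref{propSumofKloostermansums-weight32-mnnot0} applies directly, and since $r^2\ge0$ and $n>0$ the square main term never occurs; the bound then follows from $A_u(r^2,n)\ll (r^2)^{1/4}n^{3/16}(n^{1/16-\delta}+u_n^{1/8})\ll X^{1/4}n^{3/16}(n^{1/16-\delta}+u_n^{1/8})$ and $x^{1/6}\ll X^{1/4}$. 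So assume $r^2\notin\Z$. Exactly as in the proof of Lemma~\ref{lemmaWeiltypebound-real-val-k3/2}, applying Lemma~\ref{lemmaExponential-interpolation} to $\ee(\tfrac{r^2a}{2c})$ and using that $\{a:-c<a<c,\ (a,2c)=1\}$ is a reduced residue system modulo $2c$ yields
\[
K(r^2,n,c,\nu_{\Theta}^3)=\frac{i}{c}\sum_{\substack{j\in\Z\\ |r^2-j|<c}}\frac{\sin(\pi(r^2-j))}{\ee(\tfrac{r^2-j}{2c})-1}\,S(j,n,c,\nu_{\Theta}^3).
\]

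Next I would substitute this into $\sum_{2|c\le x}K(r^2,n,c,\nu_{\Theta}^3)/c$ and interchange summation, making $j$ the outer variable. Writing $w=r^2-j$ and $h_j(c)=\bigl(c(\ee(\tfrac{w}{2c})-1)\bigr)^{-1}$, the inner sum is $\sum_{2|c,\ |w|<c\le x}\tfrac{S(j,n,c,\nu_{\Theta}^3)}{c}h_j(c)$, and the crucial observation is that $h_j$ is slowly varying: from \eqref{eq:crude bound et-1 abs} one has $|h_j(c)|\ll |w|^{-1}$, while a short computation (Taylor expansion of $\ee(\tfrac{w}{2c})$ for $c\ge 2|w|$, together with the crude bound $|h_j'(c)|\ll |w|^{-2}\ll c^{-2}$ for $|w|<c<2|w|$) gives $|h_j'(c)|\ll c^{-2}$ uniformly for $c>|w|$. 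Thus partial summation against $T_j(t)=\sum_{2|c\le t,\ c>|w|}S(j,n,c,\nu_{\Theta}^3)/c$ reduces the inner sum to the size of $T_j$. Since $n>0$ is never a negative square, Proposition~\ref{propSumofKloostermansums-weight32-mnnot0} (for $j\neq0$) and Proposition~\ref{propSumofKloostermansums-weight32-one-of-mn0} (for $j=0$) give $T_j(t)\ll_\ep (t^{1/6}+A_u(j,n))\,|tjn|^\ep$ with no main term. Combining this with $|h_j(x)|\ll|w|^{-1}$, $\int_{|w|}^x t^{1/6}|h_j'(t)|\,dt\ll|w|^{-5/6}$ and $\int_{|w|}^x |h_j'(t)|\,dt\ll|w|^{-1}$ yields, for each $j$,
\[
\Bigl|\sum_{\substack{2|c,\ |w|<c\le x}}\tfrac{S(j,n,c,\nu_{\Theta}^3)}{c}\,h_j(c)\Bigr|\ll_\ep\Bigl(\tfrac{x^{1/6}+A_u(j,n)}{|w|}+|w|^{-5/6}\Bigr)|xjn|^\ep.
\]

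Finally I would sum over $j$. The decisive simplification is that $|\sin(\pi(r^2-j))|=|\sin(\pi r^2)|$ is independent of $j$ and satisfies $2\|r^2\|\le|\sin(\pi r^2)|\le\pi\|r^2\|$. Using \eqref{eqInequalityX+YpowAlpha} one checks $A_u(j,n)\ll|j|^{1/4}n^{3/16}(n^{1/16-\delta}+u_n^{1/8})$, and since only $j$ with $|r^2-j|<x$ contribute we have $|j|\ll X$. Applying Lemma~\ref{lemmaInequality-estimate-kAlpha-to-r-to-x} with $\alpha=\tfrac14$ to $\sum_j |j|^{1/4}/|r^2-j|$ produces the factor $\frac{(r^2)^{1/4}+1}{\|r^2\|}+x^{1/4}$, whose singular term $1/\|r^2\|$ is exactly absorbed by $|\sin(\pi r^2)|\ll\|r^2\|$, leaving $\ll X^{1/4}$; the $j$-independent contributions $x^{1/6}/|w|$ and $|w|^{-5/6}$ are handled the same way and are $\ll x^{1/6}\ll X^{1/4}$ (up to $|X|^\ep$). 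This produces the claimed bound $\ll_\ep X^{1/4}n^{3/16}(n^{1/16-\delta}+u_n^{1/8})|Xn|^\ep$. The odd-modulus sum is entirely parallel: one expands $\widetilde K(r^2,n,\widetilde{c},\nu_{\Theta}^3)$ by the same interpolation and invokes Proposition~\ref{propSumofKloostermansums-weight32-mnnot0-oddd} (reached through \eqref{eqrelation-KLsum-evenodd-k3/2}) in place of Proposition~\ref{propSumofKloostermansums-weight32-mnnot0}. I expect the main obstacle to be the bookkeeping in this last step, namely retaining the genuine decay $|h_j(c)|\ll|w|^{-1}$ rather than crudely bounding $|\sin(\pi w)|/|w|$ by a constant, so that the cancellation between $|\sin(\pi r^2)|$ and the $1/\|r^2\|$ singularity of Lemma~\ref{lemmaInequality-estimate-kAlpha-to-r-to-x} succeeds and the exponent $\tfrac14$ of $X$ (rather than $\tfrac12$) is obtained.
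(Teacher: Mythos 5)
Your proposal is correct and follows essentially the same route as the paper's proof: the case $r^2\in\Z$ is reduced to Proposition~\ref{propSumofKloostermansums-weight32-mnnot0} directly, and for $r^2\notin\Z$ one expands $K(r^2,n,c,\nu_\Theta^3)$ via Lemma~\ref{lemmaExponential-interpolation}, interchanges the sums, applies partial summation in $c$ against the bounds of Propositions~\ref{propSumofKloostermansums-weight32-mnnot0} and \ref{propSumofKloostermansums-weight32-one-of-mn0}, and then sums over the integer frequencies with Lemma~\ref{lemmaInequality-estimate-kAlpha-to-r-to-x}, using $|\sin(\pi r^2)|\ll\|r^2\|$ to cancel the $1/\|r^2\|$ singularity. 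The only (harmless) differences are that you make the derivative bound on the kernel $h_j$ explicit and fold the $j=0$ term into the general estimate, whereas the paper treats $k=0$ separately.
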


\begin{proof}
When $r^2\in \Z_+$, the proposition follows from the better bounds in Proposition~\ref{propSumofKloostermansums-weight32-mnnot0} and Proposition~\ref{propSumofKloostermansums-weight32-mnnot0-4|c}. 
	We prove the bound for $r^2\in \R_+\setminus \Z$ and the sum on $2|c$ and the other case $2\nmid {\widetilde{c}}$ follows similarly. 
	By Lemma~\ref{lemmaExponential-interpolation}, we have
	\begin{align}
		\begin{split}
			\sum_{2|c\leq x}\frac{K(r^2,n,c,\nu_{\Theta}^3)}c
			&=\sum_{2|c\leq x} \sum_{\substack{k\in \Z\\|r^2-k|<c}} \frac{i\sin(\pi r^2)(-1)^k}{\ee(\frac{r^2-k}{2c})-1}\frac{S(k,n,c,\nu_{\Theta}^3)}{c^2}\\
			&=\frac{i\sin(\pi r^2)}{\pi} \sum_{\substack{k\in \Z\\|r^2-k|\leq x}}(-1)^k \sum_{\substack{2|c\\|r^2-k|<c\leq x}} \frac{S(k,n,c,\nu_{\Theta}^3)}{\(\ee(\frac{r^2-k}{2c})-1\)c^2}
		\end{split}
	\end{align}
	For $k\in \Z$ and $n>0$, we apply Proposition~\ref{propSumofKloostermansums-weight32-mnnot0} and Proposition~\ref{propSumofKloostermansums-weight32-one-of-mn0} to get
	\begin{equation}
		S(x)\defeq \sum_{2|c\leq x}\frac{S(k,n,c,\nu_{\Theta}^3)}{c} \ll_\ep \(x^{\frac 16}+A_u(k,n)\)|xkn|^\ep. 
	\end{equation}
	By partial summation and \eqref{eq:crude bound et-1 abs} we get
	\begin{align*}
		\sum_{\substack{2|c\\|r^2-k|<c\leq x}}& \frac{S(k,n,c,\nu_{\Theta}^3)}{\(\ee(\frac{r^2-k}{2c})-1\)c^2}=\int_{|r^2-k|}^x \frac {t^{-1}dS(t)}{\ee(\frac{r^2-k}{2t})-1}\\
		&=\frac{t^{-1}S(t)}{\ee(\frac{r^2-k}{2t})-1}\Big|_{|r^2-k|}^x+\int_{|r^2-k|}^x S(t) \frac{\ee(\frac{r^2-k}{2t})-1+\pi i \ee(\frac{r^2-k}{2t})\frac{r^2-k}{t}}{\(\ee(\frac{r^2-k}{2t})-1\)^2t^2}dt\\
		&\ll \(x^{\frac 16}+A_u(k,n)\)\frac{|xkn|^\ep}{|r^2-k|}. 
	\end{align*}
	By Proposition~\ref{propSumofKloostermansums-weight32-mnnot0}, 
	\[A_u(k,n)\ll |k|^{\frac 14}n^{\frac 3{16}}\(n^{\frac 1{16}-\delta}+u_n^{\frac 18}\), \]
	where $\delta=\frac 1{147}$ and $u_n=2^{\floor{\nu_2(n)/2}}\leq n^{1/2}$. 
	
	If $x<r^2$, summing $k$ in $|r^2-k|\leq x$ with the help of Lemma~\ref{lemmaInequality-estimate-kAlpha-to-r-to-x} and $\frac{\sin (\pi r^2)}{\|r^2\|}\in [2,\pi]$, we get the desired bound. If $x>r^2$, we have an extra contribution from $k=0$. By Proposition~\ref{propSumofKloostermansums-weight32-one-of-mn0}, a similar partial summation process as above gives a better bound: 
	\begin{align*}
		\sum_{\substack{2|c\\r^2<c\leq x}} \frac{S(0,n,c,\nu_{\Theta}^3)}{(\ee(\frac{r^2}{2c})-1)c^2}\ll_\ep x^{\frac 16+\ep} n^{\frac 3{16}+\ep}. 
	\end{align*}
	In either case we get the desired bound and the proposition is proved. 
\end{proof}

Comparing Proposition~\ref{propEstimate-sumofKlsum-real-val-k3/2-Weil1/2bound} and Proposition~\ref{propEstimate-sumofKlsum-real-val-k3/2-QihangAsymptBound<1/4}, we find that Proposition~\ref{propEstimate-sumofKlsum-real-val-k3/2-Weil1/2bound} is better when $x$ is small, while Proposition~\ref{propEstimate-sumofKlsum-real-val-k3/2-QihangAsymptBound<1/4} is better when $x$ is large. In particular,  Proposition~\ref{propEstimate-sumofKlsum-real-val-k3/2-QihangAsymptBound<1/4} allows us to establish the convergence of sums in the following result, since the exponent of $x$ is smaller than $\frac 12$. To estimate the growth rate of the following sums as $n$ becomes large, both of the propositions above should be applied to refine the estimate. 

\begin{proposition}\label{propSum-real-val-Kloosterman-with-J2s-1}
	For $r^2\in \R_+$ and $n\in \Z_+$, the following sums are convergent for $s\geq \frac 34$, and both have the same estimate depending on $r^2n\leq 1$ or $r^2 n\geq 1$: 
	\[\left. \begin{array}{r}
		\displaystyle \sum_{2|c>0}\frac{K(r^2,n,c,\nu_{\Theta}^3)}c J_{2s-1}\(\frac{2\pi |r|\sqrt n}{c}\)\\
		\displaystyle \sum_{2\nmid {\widetilde{c}}>0}\frac{\widetilde{K}(r^2,n,{\widetilde{c}},\nu_{\Theta}^3)}{\widetilde{c}} J_{2s-1}\(\frac{2\pi |r|\sqrt n}{{\widetilde{c}}}\)
	\end{array}\right\} 
	\ll_\ep \left\{\begin{array}{ll}
	      |r|^{\frac 12} \max\(n^{\frac {145}{294}}, n^{\frac {7+\kappa}{16}}\)n^{\ep}& \text{if }r^2\leq \frac 1n, \\
	    (1+|r|^{\frac 12+\ep}) n^{\frac 14+\ep} & \text{if }r^2\geq \frac 1n. 
	\end{array} \right.  \]
    Here $\kappa\in [0,1]$ is determined by $2^{\nu_2(n)}\ll |n|^\kappa$. 
\end{proposition}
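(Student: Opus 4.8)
The plan is to set $A\defeq 2\pi|r|\sqrt n$ and to exploit the two transition regimes of the Bessel factor $J_{2s-1}(A/c)$: as in \eqref{eqBound-of-J--1/2tonu} one has $J_{2s-1}(z)\ll\min(z^{-1/2},z^{2s-1})$ for $s\in[\tfrac34,1.001]$, so the argument $A/c$ is ``large'' (oscillatory, decaying like $(c/A)^{1/2}$) for $c\lesssim A$ and ``small'' (monotone, of size $(A/c)^{2s-1}$) for $c\gtrsim A$. I would carry out the estimate by Abel summation against the partial sums $S(x)=\sum_{2|c\le x}K(r^2,n,c,\nu_\Theta^3)/c$, for which two complementary bounds are available: the cancellation bound $S(x)\ll X^{1/4}n^{3/16}(n^{1/16-\delta}+u_n^{1/8})|Xn|^\ep$ of Proposition~\ref{propEstimate-sumofKlsum-real-val-k3/2-QihangAsymptBound<1/4} (with $X=\max(x,r^2)$ and $\delta=\tfrac1{147}$), which converges but is lossy in $n$, and the absolute bound $\sum_{2|c\le x}|K|/c\ll n^\ep x^{1/2+\ep}$ of Proposition~\ref{propEstimate-sumofKlsum-real-val-k3/2-Weil1/2bound}, which has clean $n$-dependence but is only logarithmically convergent.

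In the regime $r^2\le 1/n$ (so $A\ll 1$) every even $c\ge2$ lies in the small-argument range, and I would simply insert $J_{2s-1}(A/c)\ll(A/c)^{2s-1}$ and Abel-sum against the cancellation bound. At $s=\tfrac34$ the relevant integral $\int^\infty t^{-5/4}\,dt$ converges, and the exponent bookkeeping $\tfrac14+\tfrac3{16}=\tfrac7{16}$, together with $\tfrac7{16}+(\tfrac1{16}-\delta)=\tfrac{145}{294}$ and $\tfrac7{16}+\tfrac\kappa{16}=\tfrac{7+\kappa}{16}$ coming from $u_n^{1/8}\ll n^{\kappa/16}$, produces exactly the stated bound $|r|^{1/2}\max(n^{145/294},n^{(7+\kappa)/16})n^\ep$. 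This same computation also exhibits the absolute convergence of the sum for $s\ge\tfrac34$ in this regime.

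In the regime $r^2\ge 1/n$ (so $A\gg 1$) I would split the sum into three ranges, $2\le c\le A$, $A<c\le A^2$, and $c>A^2$. On $c\le A$ the Bessel factor contributes $(c/A)^{1/2}$, and combining this with the individual Weil-type bound of Lemma~\ref{lemmaWeiltypebound-real-val-k3/2} and $\sum_{c\le A}(n,c)^{1/2+\ep}c^\ep\ll A^{1+\ep}n^\ep$ gives the main contribution $A^{1/2+\ep}n^\ep=|r|^{1/2+\ep}n^{1/4+\ep}$. On the middle range $A<c\le A^2$ the Bessel factor contributes $(A/c)^{1/2}$; here I would use the absolute bound of Proposition~\ref{propEstimate-sumofKlsum-real-val-k3/2-Weil1/2bound}, whose only defect, the logarithmic divergence, costs just a factor $\log A\ll n^\ep$ over this polynomially bounded range, again yielding $|r|^{1/2+\ep}n^{1/4+\ep}$. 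Finally on the deep tail $c>A^2$ one has $(A/c)^{1/2}<A^{-1/2}$, so the cancellation bound of Proposition~\ref{propEstimate-sumofKlsum-real-val-k3/2-QihangAsymptBound<1/4} can be afforded: the extra $A^{-1/2}$ decay exactly offsets the $A^{1/2}$ coming out of the Bessel normalization and tames the lossy factor $n^{3/16}(n^{1/16-\delta}+u_n^{1/8})\ll n^{1/4+\ep}$, leaving $\ll n^{1/4+\ep}$. The three ranges combine to $(1+|r|^{1/2+\ep})n^{1/4+\ep}$, and the same arguments establish convergence. The odd-modulus sums over $\widetilde c$ are identical, invoking the $\widetilde K$-versions of the two propositions.

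I expect the deep tail in the regime $r^2\ge 1/n$ to be the main obstacle: neither available bound alone suffices there, since the absolute bound is not convergent at $c=\infty$ while the cancellation bound is too lossy in $n$ to be inserted near the turning point $c\asymp A$, so the whole estimate hinges on choosing the cut at $c\asymp A^2$ so that the Bessel decay is strong enough to absorb the lossy $n$-dependence while the clean absolute bound covers the intermediate oscillatory-to-monotone transition. A secondary technical point is the case $r^2>A$ (i.e.\ $|r|>2\pi\sqrt n$), where $X=\max(x,r^2)$ stays pinned at $r^2$ on an initial range of $x$ and the plateau of $S(x)$ must be tracked carefully; but since the target bound $|r|^{1/2+\ep}n^{1/4+\ep}$ is itself large for such $|r|$, the estimates there go through with room to spare.
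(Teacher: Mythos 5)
Your proposal is correct and follows essentially the same route as the paper: for $r^2\le 1/n$ a single partial summation against the cancellation bound of Proposition~\ref{propEstimate-sumofKlsum-real-val-k3/2-QihangAsymptBound<1/4}, and for $r^2\ge 1/n$ a three-range split at $c\asymp A$ and $c\asymp A^2$ (the paper writes the outer cut as $|r^2n|^{\beta}$ and takes $\beta=1$, which is your $A^2$), with the Weil-type bounds on the head and middle and the cancellation bound on the tail. Your exponent bookkeeping ($\tfrac7{16}+\tfrac1{16}-\tfrac1{147}=\tfrac{145}{294}$, $u_n^{1/8}\ll n^{\kappa/16}$) matches the paper's.
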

\begin{remark}
	For simplicity, the exponent of $n$ in the last bound is $\frac {145}{294}\approx 0.493\ldots$ if $\nu_2(n)$ is absolutely bounded, e.g. if $n$ is odd, and the exponent is $\frac 12=0.5$ if $n$ is a power of $2$. 
\end{remark}

\begin{proof}
	We prove the case for $2|c>0$ and the other case $2\nmid {\widetilde{c}}>0$ can be proved in the same way. The convergence for $\re(s)>1$ is given by the definition in \eqref{eqDefineFks}, so we only need to focus on $s\in [\frac 34,1+\delta]$ for any fixed $\delta>0$. 
    
    By \cite[(10.7.3), (10.7.8), (10.14.1), (10.14.4)]{dlmf}, for $x\in \R$ and $\nu\geq -\frac 12$, 
\begin{equation}\label{eqboundJBesselfor3/2}
	J_{\nu}(x) \ll \min\(x^{-\frac 12}, x^{\nu}\),\quad \text{hence }J_{2s-1}(x)\ll \min\(x^{-\frac 12},x^{\frac 12}\). 
\end{equation}
By \cite[(10.6.2)]{dlmf},
\begin{align}\label{eqbound-derivJ2s-1}
	\begin{split}
		\frac d{dt} J_{2s-1}\(\frac {2\pi |r|\sqrt n}t\) &=\frac{2\pi |r|\sqrt n}{t^2} J_{2s}\(\frac {2\pi |r|\sqrt n}t\)-\frac{(2s-1)}{t}J_{2s-1}\(\frac {2\pi  |r|\sqrt n}t\)\\
		&\ll\frac{|r^2n|^{s- \frac 12}}{t^{2s}}\ll \frac{|r^2 n|^{\frac 14}}{t^{\frac 32}}\quad\text{ if }t\gg |r|n^{\frac 12}. 
		\end{split}
	\end{align}
The convergence in the proposition is by partial summation and  Cauchy's criterion: for $y>x>\max(1,r^2,2\pi |r|\sqrt n)$, Proposition~\ref{propEstimate-sumofKlsum-real-val-k3/2-QihangAsymptBound<1/4} and \eqref{eqbound-derivJ2s-1} gives
\begin{equation}\label{eq:weight3/2SumKLsumJ2s-1Cauchy}
	\sum_{\substack{2|c\\ x<c\leq y}}\frac{K(r^2,n,c,\nu_{\Theta}^3)}c J_{2s-1}\(\frac{2\pi  |r|\sqrt n}{c}\)\ll_{\ep} |r|^{\frac 12} n^{\frac 12+\ep} x^{-\frac 14+\ep}.  
\end{equation}

Let $\beta>\frac 12$, we will choose it later. We have the following two cases: 

(1) when $r^2\geq \frac 1n$, by Lemma~\ref{lemmaWeiltypebound-real-val-k3/2} and \eqref{eqboundJBesselfor3/2}, we have
\begin{align}\label{eqEstimateSumof-real-val-KL-J-weight3/2-prog1}
	\begin{split}
		\sum_{2|c\leq |r^2n|^\beta }&\frac{K(r^2,n,c,\nu_{\Theta}^3)}c J_{2s-1}\(\frac{2\pi  |r|\sqrt n}{c}\)\\
        &\ll_\ep  |r^2 n|^{-\frac 14}\sum_{2|c\leq |r|\sqrt n } (n,c)^{\frac 12+\ep}c^{\ep}+|r^2 n|^{\frac 14}\sum_{\substack{2|c\\|r^2 n|^{1/2}\leq c\leq |r^2 n|^\beta}} (n,c)^{\frac 12+\ep} c^{-1+\ep}\\
        &\ll_\ep|r^2n|^{\frac 14+\beta\ep+\ep}.  
	\end{split}
\end{align}
By partial sum with Proposition~\ref{propEstimate-sumofKlsum-real-val-k3/2-Weil1/2bound} and \eqref{eqbound-derivJ2s-1}, we have
\begin{equation}\label{eqEstimateSumof-real-val-KL-J-weight3/2-prog2}
	\sum_{2|c\geq  |r^2n|^\beta }\frac{K(r^2,n,c,\nu_{\Theta}^3)}c J_{\frac 12}\(\frac{2\pi |r|\sqrt n}{c}\)\ll_\ep \max(|r|^{\beta},|r|)^{\frac 12} n^{\frac{7}{16}-\frac \beta 4+\ep}\(n^{\frac 1{16}-\delta}+u_n^{\frac 18}\). 
\end{equation}
Whenever $\beta\geq 1$, the exponent of $n$ in \eqref{eqEstimateSumof-real-val-KL-J-weight3/2-prog2} is dominated by \eqref{eqEstimateSumof-real-val-KL-J-weight3/2-prog1}. 

(2) when $r^2\leq \frac 1n$, by partial sum with Proposition~\ref{propEstimate-sumofKlsum-real-val-k3/2-Weil1/2bound} and \eqref{eqbound-derivJ2s-1}, we also have
\begin{equation}\label{eqEstimateSumof-real-val-KL-J-weight3/2-prog2-r2n<1}
	\sum_{2|c}\frac{K(r^2,n,c,\nu_{\Theta}^3)}c J_{\frac 12}\(\frac{2\pi |r|\sqrt n}{c}\)\ll_\ep |r|^{\frac 12} n^{\frac{7}{16}+\ep}\(n^{\frac 1{16}-\delta}+u_n^{\frac 18}\). 
\end{equation}

The proof for the case $2|c$ is done by combining \eqref{eqEstimateSumof-real-val-KL-J-weight3/2-prog2-r2n<1} and \eqref{eqEstimateSumof-real-val-KL-J-weight3/2-prog1} with $\beta=1$. 
\end{proof}

\subsection{Case \texorpdfstring{$n=0$}{n=0}}

\begin{proposition}\label{propSum-real-val-Kloosterman-with-constantterm-residue}
	For $r\in \R_+$, we have
	\begin{align*}
		\lim_{s\rightarrow \frac 34^+} \frac 1{\Gamma(s-\frac 34)}\sum_{2|c>0}\frac{K(r^2,0,c,\nu_{\Theta}^3)}{c^{2s}}=\ee(-\tfrac 18)\frac{\sin(\pi r^2)}{\pi^2r\sinh(\pi r)},\\
		\lim_{s\rightarrow \frac 34^+} \frac 1{\Gamma(s-\frac 34)}\sum_{2\nmid {\widetilde{c}}>0}\frac{\widetilde{K}(r^2,0,{\widetilde{c}},\nu_{\Theta}^3)}{{\widetilde{c}}^{2s}}=\ee(\tfrac 38)\frac{\sin(\pi r^2)}{\pi^2r\sinh(\pi r)}. 
	\end{align*} 
\end{proposition}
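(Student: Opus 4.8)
The plan is to reduce the real-variable sum to ordinary Kloosterman sums via the exponential interpolation of Lemma~\ref{lemmaExponential-interpolation}, exactly as in the weight-$2$ computation of \S\ref{subsectionKLRVk2n=0}, and then to read off the pole at $s=\frac34$ from the Kloosterman--Selberg zeta functions already analyzed in Proposition~\ref{prop_KloostermanSelbergZetaF_m=0_weightk3/2}. Writing $Z(s)=\sum_{2|c>0}K(r^2,0,c,\nu_\Theta^3)c^{-2s}$ and applying Lemma~\ref{lemmaExponential-interpolation} to the factor $\ee(r^2a/2c)$ in \eqref{eqReal-Val-Kloosterman-Sums} (using $\sin(\pi(r^2-k))=(-1)^k\sin(\pi r^2)$ for $k\in\Z$, and that the residues $-c<a<c$ coprime to $2c$ form a complete set of invertible classes mod $2c$), I would obtain
\[
K(r^2,0,c,\nu_\Theta^3)=\frac{i\sin(\pi r^2)}{c}\sum_{\substack{k\in\Z\\|r^2-k|<c}}\frac{(-1)^k}{\ee\!\big(\tfrac{r^2-k}{2c}\big)-1}\,S(k,0,c,\nu_\Theta^3),
\]
the representative $|r^2-k|<c$ being chosen so that $z=\frac{r^2-k}{2c}$ satisfies $|z|<\frac12$. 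This is the same manipulation used in the proof of Proposition~\ref{propEstimate-sumofKlsum-real-val-k3/2-QihangAsymptBound<1/4}.

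Next I would expand the kernel by Lemma~\ref{lemmaBernoulli-numbers-expansion}, $\frac{1}{\ee(z)-1}=\frac{1}{2\pi i z}+\sum_{\ell\ge1}B_\ell\frac{(2\pi i z)^{\ell-1}}{\ell!}$. The leading term $\frac{1}{2\pi iz}=\frac{c}{\pi i(r^2-k)}$ cancels the $1/c$ and, after summing over $c$, produces for each $k$ the Kloosterman--Selberg zeta $Z_k(s)=\sum_{2|c>0}S(k,0,c,\nu_\Theta^3)c^{-2s}$, while the $\ell$-th term contributes (up to entire factors) $Z_k(s+\tfrac\ell2)$, which is holomorphic at $s=\frac34$. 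Thus only the leading term is singular at $s=\frac34$, and
\[
\Res_{s=3/4}Z(s)=\frac{\sin(\pi r^2)}{\pi}\sum_{k\in\Z}\frac{(-1)^k}{r^2-k}\,\Res_{s=3/4}Z_k(s).
\]
By Proposition~\ref{prop_KloostermanSelbergZetaF_m=0_weightk3/2} the residue $\Res_{s=3/4}Z_k(s)$ vanishes unless $k=-m^2$ is a non-positive square, in which case it equals $\tfrac{2}{\pi^2}\ee(-\tfrac18)$ for $m\ge1$ and, by the separate $m=n=0$ computation \eqref{eqpropSumofKloostermansums-weight32-bothmn0}, equals $\tfrac1{\pi^2}\ee(-\tfrac18)$ for $m=0$.

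Collecting the surviving terms $k=-m^2$ (so $r^2-k=r^2+m^2$ and $(-1)^k=(-1)^m$) gives
\[
\Res_{s=3/4}Z(s)=\frac{\ee(-\frac18)\sin(\pi r^2)}{\pi^3}\Big(\frac{1}{r^2}+2\sum_{m=1}^\infty\frac{(-1)^m}{r^2+m^2}\Big).
\]
I would then finish using the Mittag--Leffler expansion $\frac{1}{r\sinh(\pi r)}=\frac1\pi\sum_{n\in\Z}\frac{(-1)^n}{r^2+n^2}=\frac{1}{\pi r^2}+\frac2\pi\sum_{m\ge1}\frac{(-1)^m}{r^2+m^2}$, which turns the bracket into $\frac{\pi}{r\sinh(\pi r)}$ and yields $\ee(-\frac18)\frac{\sin(\pi r^2)}{\pi^2r\sinh(\pi r)}$; since $\Gamma(s-\frac34)^{-1}\sim(s-\frac34)$ at $s=\frac34$, this residue is exactly the claimed limit. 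The odd-modulus sum is handled identically, replacing $S,Z_k$ by their $2\nmid\widetilde c$ analogues and using the residues from Proposition~\ref{prop_KloostermanSelbergZetaF_m=0_weightk3/2} and \eqref{eqpropSumofKloostermansums-weight32-bothmn0-oddd}, which carry the phase $\ee(\frac38)$ instead of $\ee(-\frac18)$.

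The main obstacle is rigor in the two interchanges: passing from the double sum over $(c,k)$ to the sum of $Z_k$, and extracting the residue term by term from the infinite $k$-sum. Because $\Res_{s=3/4}Z_k=0$ for non-square $k$, the final series over residues converges absolutely (dominated by $\sum_m(r^2+m^2)^{-1}$), but the individual $k$-contributions away from the pole only decay like $|r^2-k|^{-1}|k|^\ep$ and are not absolutely summable, so the justification must exploit the oscillation from $(-1)^k$ together with the sign structure of $Z_k(s)\sim L\big(2s-\tfrac12,(\tfrac{-4k}{\cdot})\big)/\zeta(4s-1)$. I would control this using the uniform Weil-type bounds of Lemma~\ref{lemmaWeiltypebound-real-val-k3/2} and Proposition~\ref{propEstimate-sumofKlsum-real-val-k3/2-Weil1/2bound} to show that the non-leading Bernoulli contributions assemble into a function holomorphic in a neighborhood of $s=\frac34$, so that the residue is genuinely carried by the leading term alone; a convenient auxiliary simplification is that the $B_1=-\frac12$ term drops out entirely, since $\sum_{k\bmod 2c}(-1)^kS(k,0,c,\nu_\Theta^3)=0$ for every even $c$.
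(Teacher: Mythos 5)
Your proposal follows essentially the same route as the paper's proof: interpolate $\ee(r^2a/2c)$ via Lemma~\ref{lemmaExponential-interpolation}, expand the kernel with Lemma~\ref{lemmaBernoulli-numbers-expansion}, isolate the $\ell=0$ term as the only singular contribution, identify the surviving residues at $k=-m^2$ from Proposition~\ref{prop_KloostermanSelbergZetaF_m=0_weightk3/2} and \eqref{eqpropSumofKloostermansums-weight32-bothmn0}, and close with the partial-fraction identity for $\pi/(r\sinh(\pi r))$. The residue bookkeeping (including the factor $\tfrac12$ at $k=0$) and the final constant are correct, and your observation that the $B_1$ term vanishes identically is a nice simplification the paper does not use.

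The one substantive issue is exactly at the step you flag as the ``main obstacle,'' where your proposed resolution is not the right one. If you pass to the \emph{full} zeta functions $Z_k(s)=\sum_{2|c>0}S(k,0,c,\nu_\Theta^3)c^{-2s}$, then indeed $\sum_k\frac{(-1)^k}{r^2-k}Z_k(s)$ is only conditionally convergent near $s=\frac34$, and exploiting the oscillation of $(-1)^k$ against the sign structure of $L(2s-\frac12,(\frac{-4k}{\cdot}))$ would be genuinely delicate. But this detour is unnecessary: the interpolation formula naturally produces the \emph{truncated} sums $\sum_{2|c>|r^2-k|}S(k,0,c,\nu_\Theta^3)c^{-2s}$, and the paper keeps this truncation. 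Integrating by parts against $A(x)=\sum_{2|c\le x}S(k,0,c,\nu_\Theta^3)/c$ and using the power-saving error $O_\ep(x^{1/6+\ep}|k|^{3/16+\ep})$ of Proposition~\ref{propSumofKloostermansums-weight32-one-of-mn0}, the tail beyond $c>|r^2-k|$ is $\ll_\ep|r^2-k|^{-2s+\frac{17}{12}+\ep}$ for non-square $k\ne0$; combined with the prefactor $\frac1{r^2-k}$ this gives $|r^2-k|^{-\frac{13}{12}+\ep}$ at $s=\frac34$, which is absolutely summable over $k$ with no cancellation needed. The same partial summation applied to the terms $k=-m^2$ produces the main term $\frac{\mathfrak R}{(4s-3)(r^2+m^2)^{2s-1/2}}$ with an error uniform in $\ell$, which is what rigorously justifies extracting the residue term by term. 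You should also note that Lemma~\ref{lemmaExponential-interpolation} requires $r^2\notin\Z$; the case $r^2\in\Z_+$ must be handled separately (as in the paper, it is immediate from Proposition~\ref{prop_KloostermanSelbergZetaF_m=0_weightk3/2} together with $\sin(\pi r^2)=0$).
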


\begin{proof}
The case $r^2\in \Z_+$ follows from Proposition~\ref{prop_KloostermanSelbergZetaF_m=0_weightk3/2} and that for any $m\in \Z_+$, 
\begin{equation}\label{eqLimitofsinsinh}
    \lim_{r\rightarrow \sqrt m} \frac{\sin(\pi r^2)}{r\sinh(\pi r)}=0. 
\end{equation}
So we only need to prove the case $r^2\in \R_+\setminus \Z$. 

	By Lemma~\ref{lemmaExponential-interpolation} and Lemma~\ref{lemmaBernoulli-numbers-expansion}, we have
	\begin{align*}
		\sum_{2|c>0}&\frac{K(r^2,0,c,\nu_{\Theta}^3)}{c^{2s}}=\sum_{2|c>0}\sum_{\substack{k\in \Z\\|r^2-k|<c}}\frac{i\sin(\pi r^2)(-1)^k}{\ee(\frac{r^2-k}{2c})-1}\cdot\frac{S(k,0,c,\nu_{\Theta}^3)}{c^{2s+1}}\\
		&=\frac{\sin(\pi r^2)}{\pi}\sum_{k\in \Z} \frac{(-1)^k}{r^2-k} \sum_{2|c>|r^2-k|}\frac{S(k,0,c,\nu_{\Theta}^3)}{c^{2s}}\\
		&+\frac{\sin(\pi r^2)}{\pi}\sum_{k\in \Z} \sum_{\ell=1}^\infty B_\ell  \frac{(-1)^k(\pi i)^\ell(r^2-k)^\ell}{(r^2-k)\ell!} \sum_{2|c>|r^2-k|}\frac{S(k,0,c,\nu_{\Theta}^3)}{c^{2s+\ell}} \\
		&=:\Sigma_0+\Sigma_1. 
	\end{align*}
	We are going to prove that the summands for $k=-m^2\leq 0$ in $\Sigma_0$ contributes to the result in the proposition, and prove that $\Sigma_1$ and the remaining sums in $\Sigma_0$ are convergent when $s\rightarrow \frac 34^+ $. 
	
	We begin by estimating the innermost sum for $\ell\geq 0$. Let
	\[A(x)\defeq \sum_{2|c\leq x}\frac{S(k,0,c,\nu_{\Theta}^3)}c \quad \text{and} \quad \mathfrak{R}\defeq \ee(-\tfrac 18)\frac{8}{\pi^2}.  \]
	By Proposition~\ref{propSumofKloostermansums-weight32-one-of-mn0}, when $k=-m^2<0$ and $s>\frac 34$, we have
	\begin{align*}
		\sum_{2|c>r^2+m^2}& \frac{S(-m^2,0,c,\nu_{\Theta}^3)}{c^{2s+\ell}}=\int_{r^2+m^2}^\infty \frac{dA(t)}{t^{2s+\ell-1}}\\
		&=-\frac{\mathfrak{R}(r^2+m^2)^{\frac 12}+O_\ep((r^2+m^2)^{\frac 16+\ep}(m^2)^{\frac 14+\ep})}{(r^2+m^2)^{2s+\ell-1}}\\&-(1-2s-\ell)\int_{r^2+m^2}^\infty \frac{\mathfrak{R} t^{\frac 12}+O(t^{\frac 16+\ep} m^{\frac 14+\ep})}{t^{2s+\ell}} dt\\
		&=\frac 1{4s-3+2\ell} \cdot\frac {\mathfrak{R}}{(r^2+m^2)^{2s+\ell-\frac 32}}+O_\ep\(\frac 1{(r^2+m^2)^{2s+\ell-\frac {17}{12}-\ep}}\),  
	\end{align*}
	It is important to note that the last implied constant does not depend on $\ell\geq 0$, because it is uniformly bounded from above and below for $s\in [\frac 34,1]$ and $\ell\geq 0$. 
	
	Similarly, we also have
	\begin{align*}
		\sum_{2|c>r^2} \frac{S(0,0,c,\nu_{\Theta}^3)}{c^{2s+\ell}}&=\frac 1{4s-3+2\ell} \cdot\frac {\mathfrak{R}/2}{(r^2)^{2s+\ell-\frac 32}}+O_\ep\((r^2)^{-2s-\ell+\frac 76+\ep}+1\). 
	\end{align*}
	Additionally, when $k\neq 0$ nor a negative square,
	\begin{align*}
		\sum_{2|c>|r^2-k|} \frac{S(k,0,c,\nu_{\Theta}^3)}{c^{2s+\ell}}&\ll _\ep |r^2-k|^{-2s-\ell+\frac{17}{12}+\ep}  . 
	\end{align*}
	
	Therefore, we can apply the $\ell=0$ case in $\Sigma_0$ to get
	\begin{align*}
		\Sigma_0&=\frac{\sin(\pi r^2)\mathfrak{R}}{2\pi(4s-3)}\(\frac{1}{(r^2)^{2s-\frac 12}}+\sum_{m=1}^\infty \frac{2(-1)^m}{(r^2+m^2)^{2s-\frac 12}}\)\\
		&+\frac{\sin(\pi r^2)}{\pi}O_\ep\(\sum_{k\in \Z} |r^2-k|^{-2s+\frac 5{12}+\ep}\)\\
		&=\frac{\sin(\pi r^2)\mathfrak{R}}{2\pi(4s-3)}\(\frac{1}{(r^2)^{2s-\frac 12}}+\sum_{m=1}^\infty \frac{2(-1)^m}{(r^2+m^2)^{2s-\frac 12}}\)+O(1). 
	\end{align*}
	The last $O(1)$ part is bounded by an absolute constant because $2s-\frac 5{12}\geq \frac{13}{12}$ and $\frac{\sin(\pi r^2)}{\|r^2\|}\in [2,\pi]$ for all $r\in \R$. Since $r>0$ and
	\begin{equation}
		\frac{1}{r^2}+\sum_{m=1}^\infty \frac{2(-1)^m}{r^2+m^2}=\frac{\pi}{r\sinh(\pi r)}, 
	\end{equation}
	we get
	\[\lim_{s\rightarrow \frac 34^+} \frac{\Sigma_0}{\Gamma(s-\frac 34)}=\frac{\sin(\pi r^2)\mathfrak{R}}{8\pi}\cdot \frac{\pi}{r \sinh(\pi r)}=\ee(-\tfrac 18)\frac{\sin(\pi r^2)}{\pi^2 r \sinh(\pi r)}. \]
	
	For $\Sigma_1$, we have
	\begin{align*}
		\Sigma_1&=\frac{\sin(\pi r^2)\mathfrak{R}}{2\pi}\(\frac{1}{(r^2)^{2s-\frac 12}}+\sum_{m=1}^\infty \frac{2(-1)^m}{(r^2+m^2)^{2s-\frac 12}}\)\sum_{\ell=1}^\infty \frac{B_\ell (\pi i )^\ell}{\ell!(4s-3+2\ell)} \\
		&+\frac{\sin(\pi r^2)}{\pi}O_\ep\(\sum_{k\in \Z} |r^2-k|^{-2s+\frac {5}{12}+\ep}\sum_{\ell=1}^\infty \frac{B_\ell (\pi i )^\ell}{\ell!(4s-3+2\ell)}\). 
	\end{align*}
	It is then direct to conclude $\Sigma_1$ converges to a finite value for $s\rightarrow \frac 34^+$ because 
	\[\sum_{\ell=1}^\infty \frac{B_\ell (\pi i )^\ell}{\ell!(4s-3+2\ell)}\quad \text{converges for }s\in [\tfrac 34,1]. \]
	This is the reason why we separate the $\ell=0$ and $\ell\geq 1$ terms in Lemma~\ref{lemmaBernoulli-numbers-expansion}. We have finished the proof for the case $2|c$. 
	
	The other case $2\nmid {\widetilde{c}}$ follows by the same process applying Proposition~\ref{propSumofKloostermansums-weight32-one-of-mn0} and \eqref{eqpropSumofKloostermansums-weight32-bothmn0-oddd}. 
\end{proof}

\subsection{Case \texorpdfstring{$n<0$}{n<0}}

Similarly to the proof of Proposition~\ref{propEstimate-Sum-of-real-variable-Kloosterman-sums-weightk2-with-IJBessel}, the following proposition follows from Lemma~\ref{lemmaWeiltypebound-real-val-k3/2}, Proposition~\ref{propEstimate-sumofKlsum-real-val-k3/2-Weil1/2bound}, partial summation, Proposition~\ref{propEstimate-sumofKlsum-real-val-k3/2-QihangAsymptBound<1/4}, and by \cite[(10.29.1)]{dlmf}
	\begin{align}\label{eqEstimateIBessel2s-1}
    \begin{split}
	    \frac{d}{dt}I_{2s-1}\(\frac{2\pi |r^2n|^{\frac 12}}{t}\)&=-\frac{\pi |r^2n|^{\frac 12}}{t^2}\(I_{2s}\(\frac{2\pi |r^2n|^{\frac 12}}{t}\)+I_{2s-2}\(\frac{2\pi |r^2n|^{\frac 12}}{t}\)\)\\
&\ll\left\{\begin{array}{ll}
	    \displaystyle \frac{|r^2n|^{\frac 12}}{t^2} & \text{for }t\geq 2\pi |r^2n|^{\frac 12}, \vspace{3px} \\
		     \displaystyle |r^2n|^{\frac 12} e^{2\pi |r| |n|^{1/2}} & \text{for } 1\leq t\leq 2\pi |r^2n|^{\frac 12}.  
		\end{array} \right.
	\end{split}
	\end{align}. 
    We omit the proof here.
    
\begin{proposition}\label{propSum-real-val-Kloosterman-with-I2s-1-Anti-residue-EstimateNnot-m2}
    Let $r^2\in \R_+$, $n\in \Z$, $n<0$ and $s\in [\frac 34,1.001]$. 
    
    (1) If $n$ is not a negative square, the following sums are convergent and we have
    \begin{align*}
    \left.
        \begin{array}{r}
             \displaystyle \sum_{2|c>0}\frac{K(r^2,n,c,\nu_\Theta^3)}{c}I_{2s-1}\(\frac{2\pi |r^2n|^{\frac 12}}{c}\) \\
            \displaystyle \sum_{2\nmid {\widetilde{c}}>0}\frac{\widetilde{K}(r^2,n,{\widetilde{c}},\nu_\Theta^3)}{{\widetilde{c}}}I_{2s-1}\(\frac{2\pi |r^2n|^{\frac 12}}{{\widetilde{c}}}\) 
        \end{array}
        \right\} \ll (1+|r|^{2})|n|e^{2\pi |r| |n|^{1/2}}. 
    \end{align*}
    
    (2) If $r^2=q\in \Z_+$, $m\in \Z_+$ and $n=-m^2$, the following sums are convergent and we have
    \begin{align*}
        \begin{array}{r}
             \displaystyle \sum_{2|c>0}\frac{S(q,-m^2,c,\nu_\Theta^3)}{c}I_{2s-1}\(\frac{2\pi m\sqrt q}{c}\)  \\
            \displaystyle \sum_{2\nmid {\widetilde{c}}>0}\frac{S(q,-m^2,{\widetilde{c}},\nu_\Theta^3)}{{\widetilde{c}}}I_{2s-1}\(\frac{2\pi m\sqrt q}{{\widetilde{c}}}\) 
        \end{array}\ll |qn| e^{2\pi |qn|^{1/2}}. 
    \end{align*}
\end{proposition}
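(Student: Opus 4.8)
The plan is to follow the strategy used for the weight~$2$ analogue, Proposition~\ref{propEstimate-Sum-of-real-variable-Kloosterman-sums-weightk2-with-IJBessel}: split each sum over $c$ at the point where the argument of the Bessel factor equals~$1$, estimate $I_{2s-1}$ separately in the two resulting ranges, and then sum the (real-variable) Kloosterman sums using the estimates of \S\ref{SectionRVKLweight3/2}. Concretely, I would set $C\defeq 2\pi|r^2n|^{1/2}=2\pi|r|\,|n|^{1/2}$ and split at $c=C$, where $C/c=1$. I will describe the sum over $2\mid c$ with $K(r^2,n,c,\nu_{\Theta}^3)$; the odd-modulus sum with $\widetilde K(r^2,n,\widetilde c,\nu_{\Theta}^3)$ is handled identically, invoking the corresponding odd statements (the relation \eqref{eqrelation-KLsum-evenodd-k3/2} having already been built into Lemma~\ref{lemmaWeiltypebound-real-val-k3/2} and Proposition~\ref{propEstimate-sumofKlsum-real-val-k3/2-QihangAsymptBound<1/4}).

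In the range $1\le c\le C$ the argument satisfies $C/c\ge1$, so by \cite[(10.30.4)]{dlmf} we have $I_{2s-1}(C/c)\ll e^{C/c}\le e^{C}=e^{2\pi|r||n|^{1/2}}$; pulling this out and summing absolute values with Proposition~\ref{propEstimate-sumofKlsum-real-val-k3/2-Weil1/2bound} yields a contribution $\ll e^{2\pi|r||n|^{1/2}}|n|^{\ep}C^{1/2+\ep}$, comfortably dominated by the target $(1+|r|^2)|n|e^{2\pi|r||n|^{1/2}}$. The complementary range $c>C$ is the crux, and I expect it to be the main obstacle. Here the argument lies in $(0,1)$ and $I_{2s-1}$ decays, but at $s=\tfrac34$ the order is $2s-1=\tfrac12$ and $I_{1/2}(C/c)\asymp (C/c)^{1/2}$, so the pointwise Weil bound $|K|\ll c^{1/2+\ep}$ of Lemma~\ref{lemmaWeiltypebound-real-val-k3/2} only produces terms of size $\asymp C^{1/2}c^{-1+\ep}$, whose sum diverges; the absolute-value estimate is therefore \emph{insufficient}, in contrast to weight~$2$ where $I_1(x)\asymp x$ supplied enough decay. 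The remedy is to apply partial summation against the sign-sensitive partial sums $\sum_{2\mid c\le x}K(r^2,n,c,\nu_{\Theta}^3)/c$, which by Proposition~\ref{propEstimate-sumofKlsum-real-val-k3/2-QihangAsymptBound<1/4} grow only like $x^{1/4}$ (with $X=\max(x,r^2)$ and the explicit $n,r$-dependence recorded there), and to bound the derivative of the weight $I_{2s-1}(C/t)$ by $\ll C/t^2$ for $t\ge C$ via \eqref{eqEstimateIBessel2s-1}. Since $\tfrac14-2<-1$, the resulting integral $\int_{C}^{\infty}t^{1/4+\ep}\,C\,t^{-2}\,dt$ converges, which simultaneously proves absolute convergence of the tail and shows its contribution is again dominated by the target bound.

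For part~(2) I would specialize $r^2=q\in\Z_+$, so that $K$ becomes the genuine Kloosterman sum $S(q,-m^2,c,\nu_{\Theta}^3)$, and repeat the same split. The only new point is that the partial sums $\sum_{2\mid c\le x}S(q,-m^2,c,\nu_{\Theta}^3)/c$ must retain a subdiagonal growth exponent: the main term $\ee(-\tfrac18)\tfrac{16}{\pi^2}x^{1/2}$ of Proposition~\ref{propSumofKloostermansums-weight32-mnnot0} appears only when \emph{both} entries are negative squares, and since the first entry $q$ is positive it is absent, so that proposition gives growth $\ll x^{1/6}$; the identical partial-summation argument then yields convergence and the claimed bound $\ll|qn|e^{2\pi|qn|^{1/2}}$. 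In both parts the essential difficulty is exactly this borderline convergence in the tail $c>C$, which forces one to exploit the cancellation in the Kloosterman sums through the mean-value estimate of Proposition~\ref{propEstimate-sumofKlsum-real-val-k3/2-QihangAsymptBound<1/4} rather than the crude Weil bound; once convergence is secured, the final inequalities hold with wide margins because the target estimates are deliberately crude.
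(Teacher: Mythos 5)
Your proposal is correct and follows exactly the route the paper intends: the paper omits the proof but cites precisely the ingredients you deploy (Lemma~\ref{lemmaWeiltypebound-real-val-k3/2}, Proposition~\ref{propEstimate-sumofKlsum-real-val-k3/2-Weil1/2bound}, partial summation against Proposition~\ref{propEstimate-sumofKlsum-real-val-k3/2-QihangAsymptBound<1/4}, and the $I$-Bessel derivative bound \eqref{eqEstimateIBessel2s-1}), and you correctly identify the one genuine subtlety, namely that at $s=\tfrac34$ the absolute-value estimate diverges in the tail $c>2\pi|r||n|^{1/2}$ so cancellation in the Kloosterman sums must be used. The only point worth making explicit is that Proposition~\ref{propEstimate-sumofKlsum-real-val-k3/2-QihangAsymptBound<1/4} is stated for $n\in\Z_+$, so in part (1) you should remark that its proof carries over verbatim to $n<0$ not a negative square, because the $x^{1/2}$ main term of Proposition~\ref{propSumofKloostermansums-weight32-mnnot0} is then absent for every $k$ --- exactly the observation recorded in the remark following the proposition, and the same mechanism you already invoke in part (2).
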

\begin{remark}
    In case (1), we need the estimates for 
    \[\sum_{2|c\leq x} \frac{S(k,n,c,\nu_\Theta^3)}{c}\quad\text{and}\quad \sum_{2\nmid {\widetilde{c}}\leq x}\frac{S(k,n,{\widetilde{c}},\nu_\Theta^3)}{{\widetilde{c}}}\]
    for all $k\in \Z$; in case (2), we keep $q> 0$. In both cases, we avoid to have both $k,n\in \{-t^2: t\in \Z\}$, which corresponds to the ``other $mn\neq 0$" cases in Proposition~\ref{propSumofKloostermansums-weight32-mnnot0} and Proposition~\ref{propSumofKloostermansums-weight32-mnnot0-oddd}. 
\end{remark}
\begin{proposition}\label{propSum-real-val-Kloosterman-with-I2s-1-residue}
	For $r\in \R_+$ and $m,n\in \Z$, $n<0$, let
    \[\mathscr{F}(r,n)=\left\{\begin{array}{ll}
        \displaystyle\ee(-\tfrac 18)\frac{4(rm)^{\frac 12}\sin(\pi r^2)}{\pi^2 r\sinh(\pi r)},  & \text{if }n=-m^2<0, \\
        0, & \text{if }n\text{ is not a negative square,}
    \end{array}\right. \]
    and $\widetilde{\mathscr{F}}(r,n)=-\mathscr{F}(r,n)$.
    Then we have the following limits: 
	\begin{align*}
		\lim_{s\rightarrow \frac 34^+} 
		\frac 1{\Gamma(s-\frac 34)} \sum_{2|c>0} \frac{K(r^2,n,c,\nu_{\Theta}^3)}{c} I_{2s-1}\(\frac{2\pi r|n|^{\frac 12}}c\)
		&= \mathscr{F}(r,n),\\
        \lim_{s\rightarrow \frac 34^+} 
		\frac 1{\Gamma(s-\frac 34)} \sum_{2\nmid {\widetilde{c}}>0} \frac{\widetilde{K}(r^2,n,{\widetilde{c}},\nu_{\Theta}^3)}{{\widetilde{c}}} I_{2s-1}\(\frac{2\pi r|n|^{\frac 12}}{\widetilde{c}}\)&=\widetilde{\mathscr{F}}(r,n). 
	\end{align*}
\end{proposition}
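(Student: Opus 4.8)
The plan is to mirror the proof of Proposition~\ref{propSum-real-val-Kloosterman-with-constantterm-residue}, the only new ingredient being the $I$-Bessel weight. First I would dispose of the case $r^2\in\Z_+$. There $\sin(\pi r^2)=0$, so $\mathscr{F}(r,n)=\widetilde{\mathscr{F}}(r,n)=0$, whereas Proposition~\ref{propSum-real-val-Kloosterman-with-I2s-1-Anti-residue-EstimateNnot-m2} bounds both sums uniformly for $s\in[\tfrac34,1.001]$; dividing by $\Gamma(s-\tfrac34)\to+\infty$ as $s\to\tfrac34^+$ then forces the limit to be $0$ (for $n=-m^2$ one invokes part~(2), which is exactly why $q>0$ was retained there). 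This reduces everything to $r^2\in\R_+\setminus\Z$, and I present the argument for $2|c$; the odd case is word-for-word the same with the estimates for $2\nmid\widetilde{c}$.

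For $r^2\notin\Z$, applying Lemma~\ref{lemmaExponential-interpolation} to the real argument $r^2$ and then Lemma~\ref{lemmaBernoulli-numbers-expansion} to $1/(\ee(\tfrac{r^2-k}{2c})-1)$ rewrites $K(r^2,n,c,\nu_{\Theta}^3)/c$ as a sum over $k\in\Z$ with $|r^2-k|<c$, whose $\ell=0$ Bernoulli term carries the factor $\tfrac{\sin(\pi r^2)}{\pi}\tfrac{(-1)^k}{r^2-k}$ and whose $\ell\ge1$ terms form a tail $\Sigma_1$. Just as in Proposition~\ref{propSum-real-val-Kloosterman-with-constantterm-residue}, I would show that $\Sigma_1$, together with the part of the $\ell=0$ sum coming from $k$ that are neither $0$ nor negative squares, remains bounded as $s\to\tfrac34^+$ --- using the uniform-in-$k$ estimates of Propositions~\ref{propSumofKloostermansums-weight32-mnnot0} and~\ref{propSumofKloostermansums-weight32-one-of-mn0} and the derivative bound~\eqref{eqEstimateIBessel2s-1} in a partial summation --- so these pieces vanish after division by $\Gamma(s-\tfrac34)$. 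Hence the pole at $s=\tfrac34$ is carried solely by the $\ell=0$, negative-square terms.

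To extract that pole I would, for each fixed $k$, replace $I_{2s-1}(\tfrac{2\pi r|n|^{1/2}}{c})$ by its leading small-argument term $(\pi r|n|^{1/2}/c)^{2s-1}/\Gamma(2s)$; by the Weil bound~\eqref{eqWeilBound-Theta3} the remainder is $O(c^{-1/2-2s+\ep})$-summable and holomorphic at $s=\tfrac34$, so it does not contribute to the residue. This replaces each inner sum, up to a holomorphic remainder, by $\tfrac{(\pi r|n|^{1/2})^{2s-1}}{\Gamma(2s)}\sum_{2|c>0}\frac{S(k,n,c,\nu_{\Theta}^3)}{c^{2s}}$. A Mellin/partial-summation argument then converts the $x^{1/2}$ main terms of Proposition~\ref{propSumofKloostermansums-weight32-mnnot0} (for $k<0$) and of Proposition~\ref{prop_KloostermanSelbergZetaF_m=0_weightk3/2} via~\eqref{eqKloosterman-Sums-alter-mn-inTheta} (for $k=0$) into residues of this Kloosterman--Selberg zeta: it is holomorphic at $s=\tfrac34$ unless both $n$ and $k$ are negative squares, in which case the residue is $\ee(-\tfrac18)\tfrac4{\pi^2}$ for $k=-j^2<0$ and $\ee(-\tfrac18)\tfrac2{\pi^2}$ for $k=0$. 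In particular, if $n$ is not a negative square no $k$ produces a pole and the limit is $0=\mathscr{F}(r,n)$.

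For $n=-m^2$ I would assemble the residue. Evaluating $(\pi rm)^{2s-1}/\Gamma(2s)$ at $s=\tfrac34$ gives $2(rm)^{1/2}$, and using $(-1)^{-j^2}=(-1)^j$ and $r^2-k=r^2+j^2$ the residue of the full sum becomes
\[
\Res_{s=\frac34}\sum_{2|c>0}\frac{K(r^2,n,c,\nu_{\Theta}^3)}{c}\,I_{2s-1}\!\left(\frac{2\pi rm}{c}\right)
=2(rm)^{1/2}\,\frac{\sin(\pi r^2)\,\ee(-\tfrac18)}{\pi}\cdot\frac2{\pi^2}\left(\frac1{r^2}+\sum_{j\ge1}\frac{2(-1)^j}{r^2+j^2}\right).
\]
The identity $\tfrac1{r^2}+\sum_{j\ge1}\tfrac{2(-1)^j}{r^2+j^2}=\tfrac{\pi}{r\sinh(\pi r)}$ collapses the right-hand side to $\mathscr{F}(r,n)$, and since $\Gamma(s-\tfrac34)\sim(s-\tfrac34)^{-1}$ the claimed limit equals this residue; the odd case produces the phase $\ee(\tfrac38)=-\ee(-\tfrac18)$ (from Proposition~\ref{propSumofKloostermansums-weight32-mnnot0-oddd} and the odd part of Proposition~\ref{prop_KloostermanSelbergZetaF_m=0_weightk3/2}), yielding $\widetilde{\mathscr{F}}=-\mathscr{F}$. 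The main obstacle is the uniform control needed to interchange $\lim_{s\to3/4^+}$ with the infinite sum over $k$: one must show that the Bernoulli tail $\Sigma_1$, the non-negative-square contributions, and the Bessel-approximation remainders all stay bounded and convergent as $s\to\tfrac34^+$, which relies on the $A_u(k,n)$-uniform error terms of Propositions~\ref{propSumofKloostermansums-weight32-mnnot0} and~\ref{propSumofKloostermansums-weight32-one-of-mn0} combined with~\eqref{eqEstimateIBessel2s-1}, exactly as engineered in the proof of Proposition~\ref{propSum-real-val-Kloosterman-with-constantterm-residue}.
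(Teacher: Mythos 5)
Your proposal is correct and follows essentially the same route as the paper: the same reduction (via Proposition~\ref{propSum-real-val-Kloosterman-with-I2s-1-Anti-residue-EstimateNnot-m2} and the vanishing of $\sin(\pi r^2)$) to the case $n=-m^2$, $r^2\notin\Z$, the same decomposition through Lemmas~\ref{lemmaExponential-interpolation} and~\ref{lemmaBernoulli-numbers-expansion} into a polar $\ell=0$ piece and a bounded tail, and the same final evaluation via $\frac1{r^2}+\sum_{j\ge1}\frac{2(-1)^j}{r^2+j^2}=\frac{\pi}{r\sinh(\pi r)}$. Your only deviation --- replacing $I_{2s-1}$ by its leading monomial to reduce to residues of the Kloosterman--Selberg zeta function, rather than carrying the Bessel factor through the partial summation as the paper does --- is exactly the heuristic the paper itself records in the remark preceding its proof, made rigorous, and yields the identical residue computation.
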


\begin{remark}
Heuristically, this proposition is similar to Proposition~\ref{propSum-real-val-Kloosterman-with-constantterm-residue}, by noticing that the coefficient of $x^{\frac 12}$ in Proposition~\ref{propSumofKloostermansums-weight32-mnnot0} is twice the value in Proposition~\ref{propSumofKloostermansums-weight32-one-of-mn0} and four times the value in \eqref{eqpropSumofKloostermansums-weight32-bothmn0}, and by \cite[(10.30.1)]{dlmf}:
\[I_{2s-1}\(\frac{2\pi r|n|^{\frac 12}}c\)\sim \frac 1{\Gamma(2s)}\(\frac{\pi r|n|^{\frac 12}}c\)^{2s-1}\quad \text{for }c\rightarrow \infty. \]
We give a detailed argument below.
\end{remark}

\begin{proof}
Recall \eqref{eqLimitofsinsinh}, hence the case for $n<0$ not a negative square or for $r^2\in \Z_+$ has been covered by Proposition~\ref{propSum-real-val-Kloosterman-with-I2s-1-Anti-residue-EstimateNnot-m2}. Here we only need to consider the case $n=-m^2<0$ and $r^2\in \R_+\setminus \Z$. We write $m>0$ for simplicity. 

By applying Lemma~\ref{lemmaExponential-interpolation} and Lemma~\ref{lemmaBernoulli-numbers-expansion}, we get
	\begin{align*}
		&\sum_{2|c>0}\frac{K(r^2,-m^2,c,\nu_{\Theta}^3)}{c}I_{2s-1}\( \frac{2\pi rm}c \)\\
		&=\frac{\sin(\pi r^2)}{\pi}\sum_{k\in \Z} \frac{(-1)^k}{r^2-k} \sum_{2|c>|r^2-k|}\frac{S(k,-m^2,c,\nu_{\Theta}^3)}{c}I_{2s-1}\(\frac{2\pi rm}c\)\\
		&+\frac{\sin(\pi r^2)}{\pi}\sum_{k\in \Z} \sum_{\ell=1}^\infty B_\ell  \frac{(-1)^k(\pi i)^\ell(r^2-k)^\ell}{(r^2-k)\ell!} \sum_{2|c>|r^2-k|}\frac{S(k,-m^2,c,\nu_{\Theta}^3)}{c^{1+\ell}} I_{2s-1}\(\frac{2\pi rm}c\)\\
		&=:\Sigma_0+\Sigma_1,  
	\end{align*}
    where $\Sigma_0$ is given by
    \begin{equation}\label{eq:weight3/2Sigma0part0I2s-1n<0n=-m2estimate}
        \Sigma_0=\frac{\sin(\pi r^2)}{\pi}\sum_{k\in \Z} \frac{(-1)^k}{r^2-k} \sum_{2|c>|r^2-k|}\frac{S(k,-m^2,c,\nu_{\Theta}^3)}{c}I_{2s-1}\(\frac{2\pi rm}c\). 
    \end{equation}
    We estimate the latter sum with the help of results in \S\ref{SectionKLweight3/2}.  
	Let
	\[A(x)\defeq \sum_{2|c\leq x}\frac{S(k,-m^2,c,\nu_{\Theta}^3)}c \quad \text{and} \quad \mathfrak{R}\defeq \ee(-\tfrac 18)\frac{16}{\pi^2}.  \]
	By \cite[(10.29.2)]{dlmf}, for $\alpha=2\pi rm$,
	\begin{equation}
		\frac {d}{dt} I_{2s-1}\(\frac{\alpha}t\)=-\frac {\alpha}{t^2}I_{2s}\(\frac{\alpha}t\)-\frac{2s-1}{t} I_{2s-1}\(\frac{\alpha}t\). 
	\end{equation}
	By \cite[(10.25.2)]{dlmf}, 
	\begin{equation}\label{eqBesselI-asympt-zsmall}
		I_{\nu}(x)=\frac{(x/2)^\nu}{\Gamma(\nu+1)}+O(x^{\nu+2}) \quad \text{for }\nu\in [0,3]\text{ and }x\in [0,1]. 
	\end{equation}
    By \cite[(10.30.4)]{dlmf} and \eqref{eqBesselI-asympt-zsmall},
    \begin{equation}\label{eqBesselI-asympt-zlarge}
        I_{\nu}(x)\ll e^x \quad \text{for }\nu\in [0,3] \text{ and }x\geq 0. 
    \end{equation}
	When $k=-q^2<0$ with $q>0$, note that $r^2+q^2\geq 2rq$ and $q\geq \pi m$ implies $r^2+q^2\geq 2\pi rm$. By applying Proposition~\ref{propSumofKloostermansums-weight32-mnnot0}, \eqref{eqBesselI-asympt-zsmall}, and \eqref{eqBesselI-asympt-zlarge}, we have that for $s\in(\frac 34,1.001]$, $2s-1>\frac 12$ and
	\begin{align*}
		&\sum_{2|c>r^2+q^2}\frac{S(-q^2,-m^2,c,\nu_{\Theta}^3)}{c}I_{2s-1}\(\frac{2\pi rm}c\)\\
        &=A(t)I_{2s-1}\(\frac{2\pi rm}t\)\Bigg|_{t=r^2+q^2}^\infty-\int_{r^2+q^2}^\infty A(t)\frac{d}{dt} I_{2s-1}\(\frac{2\pi rm}t\) dt\\
		&=-\frac{\mathfrak{R} (r^2+q^2)^{\frac 12}}{\Gamma(2s)} \(\frac{\pi rm}{r^2+q^2}\)^{2s-1}  +\frac{\mathfrak{R}(2s-1)}{\Gamma(2s)}\int_{r^2+q^2}^\infty {t^{-\frac 12}} \(\frac{\pi rm}t\)^{2s-1} dt\\
		&\ +\left\{\begin{array}{ll}
		     \displaystyle O_\ep\((1+r^3)m^3(r^2+q^2)^{\frac 54-2s+\ep}\),  &\ q\geq \pi m \\
		     \displaystyle O_\ep \((1+r^3)m^3e^{2\pi rm}\),&\ q\leq \pi m 
		\end{array}\right. \\
		&=\frac{\mathfrak{R}(\pi rm)^{2s-1}}{(4s-3)\Gamma(2s)}(r^2+q^2)^{\frac 32-2s}+\left\{\begin{array}{ll}
		     \displaystyle O_\ep\((1+r^3)m^3(r^2+q^2)^{\frac 54-2s+\ep}\),  &\ q\geq \pi m \\
		     \displaystyle O \((1+r^3)m^3e^{2\pi rm}\),&\ q\leq \pi m 
		\end{array}\right. 
	\end{align*}
    with crude bounds in the big-$O$ term. 
	Similarly, with Proposition~\ref{propSumofKloostermansums-weight32-one-of-mn0}, we have
	\begin{align*}
		\sum_{2|c>r^2}&\frac{S(0,-m^2,c,\nu_{\Theta}^3)}{c}I_{2s-1}\(\frac{2\pi rm}c\)\\
		&=\frac{(\pi rm)^{2s-1}\mathfrak{R}/2}{(4s-3)\Gamma(2s)}(r^2)^{\frac 32-2s}+ O \((1+r^3)m^3e^{2\pi rm}\). 
	\end{align*}
	For $k\neq 0$ nor negative square, we also have the crude bound
	\begin{align*}
		\sum_{2|c>|r^2-k|}\frac{S(k,-m^2,c,\nu_{\Theta}^3)}{c}I_{2s-1}\(\frac{2\pi rm}c\)=O_{\ep}\(\frac {(1+r^3)m^3e^{2\pi rm}}{|r^2-k|^{2s-\frac 54-\ep}}\). 
	\end{align*}
	Recall \eqref{eq:weight3/2Sigma0part0I2s-1n<0n=-m2estimate}. Combining the three equations above and noting that there are at most $O(m)$ of $q$'s such that $q\leq \pi m$, we conclude for $\Sigma_0$ that
	\begin{align*}
		\lim_{s\rightarrow \frac 34^+}\frac {\Sigma_0}{\Gamma(s-\frac 34)}&=\lim_{s\rightarrow \frac 34^+}\frac {(\pi rm)^{2s-1}\mathfrak{R}/2}{\Gamma(s-\frac 34)(4s-3)}\cdot  \frac{\sin(\pi r^2)}{\pi\Gamma(2s)}\(\frac 1{r^2}+\sum_{q=1}^\infty \frac{2(-1)^q}{r^2+q^2}\)\\
		&\ +\lim_{s\rightarrow \frac 34^+} \frac {\sin(\pi r^2)}{\Gamma(s-\frac 34)}O_\ep\(\sum_{k\in \Z} \frac{(1+r^3)m^4e^{2\pi rm}}{|r^2-k|^{2s-\frac 94-\ep}}\) \\
        &=\ee(-\tfrac 18)(rm)^{\frac 12} \frac{4\sin(\pi r^2)}{\pi^2 r \sinh(\pi r)}. 
	\end{align*}
	
	The similar method on $\Sigma_1$ in the proof of Proposition~\ref{propSum-real-val-Kloosterman-with-constantterm-residue} allows us to conclude that
	\begin{equation*}
		\Sigma_1=O((1+r^3)m^4e^{2\pi rm})\quad \text{and}\quad \lim_{s\rightarrow \frac 34^+} \frac {\Sigma_1}{\Gamma(s-\frac 34)}=0. 
	\end{equation*}
	The case for $2\nmid {\widetilde{c}}$ follows similarly. We have finished the proof. 
\end{proof}

Combining Proposition~\ref{propSum-real-val-Kloosterman-with-I2s-1-Anti-residue-EstimateNnot-m2} and the proof of Proposition~\ref{propSum-real-val-Kloosterman-with-I2s-1-residue}, we have the following crude estimate. 
\begin{proposition}\label{propSum-real-val-Kloosterman-with-I2s-1-estimatebound-withs}
    For $r^2\in \R_+$ and $m,n\in\Z$, $n<0$, let $\mathscr{F}(r,n)$ and $\widetilde{\mathscr{F}}(r,n)$ be the same as in Proposition~\ref{propSum-real-val-Kloosterman-with-I2s-1-residue}.  Then
    \begin{align*}
    &\left.
        \begin{array}{r}
             \displaystyle \frac 1{\Gamma(s-\frac 34)}\sum_{2|c>0}\frac{K(r^2,n,c,\nu_\Theta^3)}{c}I_{2s-1}\(\frac{2\pi |r^2n|^{\frac 12}}{c}\)-\mathscr{F}(r,n) \\
            \displaystyle  \frac 1{\Gamma(s-\frac 34)}\sum_{2\nmid {\widetilde{c}}>0}\frac{\widetilde{K}(r^2,n,{\widetilde{c}},\nu_\Theta^3)}{{\widetilde{c}}}I_{2s-1}\(\frac{2\pi |r^2n|^{\frac 12}}{{\widetilde{c}}}\)-\widetilde{\mathscr{F}}(r,n) 
        \end{array}
        \right\}\ll (1+|r|^{3})|n|^2 e^{2\pi |r| |n|^{1/2}}. 
    \end{align*}
\end{proposition}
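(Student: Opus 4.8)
The plan is to reduce everything to the proof of Proposition~\ref{propSum-real-val-Kloosterman-with-I2s-1-residue}, observing that its estimates were already carried out uniformly for $s\in[\tfrac34,1.001]$, and that the only step producing $\mathscr{F}(r,n)$ (resp. $\widetilde{\mathscr F}(r,n)$) is the extraction of a single main term whose $s$-dependence can be made explicit. First I would split into cases. If $n$ is not a negative square, then $\mathscr F(r,n)=0$, and the bound is immediate from Proposition~\ref{propSum-real-val-Kloosterman-with-I2s-1-Anti-residue-EstimateNnot-m2}(1) together with the elementary fact that $\Gamma(s-\tfrac34)$ is bounded away from $0$ on the interval, so dividing the raw bound $(1+|r|^2)|n|e^{2\pi|r||n|^{1/2}}$ by $\Gamma(s-\tfrac34)$ only improves it; since $(1+|r|^2)|n|\ll(1+|r|^3)|n|^2$, this is dominated by the claimed right-hand side. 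If $n=-m^2$ with $r^2\in\Z_+$, then $\sin(\pi r^2)=0$ forces $\mathscr F(r,n)=0$, and the same argument applies using part~(2), since there $|qn|^{1/2}=|r||n|^{1/2}$ and $|qn|=r^2|n|\ll(1+|r|^3)|n|^2$.

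The only genuinely new case is $n=-m^2<0$ with $r^2\in\R_+\setminus\Z$, where I would rerun the argument of Proposition~\ref{propSum-real-val-Kloosterman-with-I2s-1-residue} while tracking $s$. Applying Lemma~\ref{lemmaExponential-interpolation} and Lemma~\ref{lemmaBernoulli-numbers-expansion} writes the sum as $\Sigma_0+\Sigma_1$, and the estimates in that proof already give $\Sigma_1\ll(1+|r|^3)m^4e^{2\pi rm}$ and $\Sigma_0=M(s)+E(s)$ with $E(s)\ll(1+|r|^3)m^4e^{2\pi rm}$, where, after the reciprocals $1/|r^2-k|$ supplied by $\Sigma_0$ cancel the factors $(r^2+q^2)$ coming from $k=-q^2$, the aggregate of the extracted main terms is
\[M(s)=\frac{\sin(\pi r^2)\,\mathfrak R\,(\pi rm)^{2s-1}}{2\pi(4s-3)\Gamma(2s)}\sum_{q\in\Z}\frac{(-1)^q}{(r^2+q^2)^{2s-\frac12}},\qquad \mathfrak R=\ee(-\tfrac18)\tfrac{16}{\pi^2}.\]
Since $\Gamma(s-\tfrac34)\ge\Gamma(\tfrac14+0.001)>1$ on $(\tfrac34,1.001]$, the two error contributions satisfy $\frac{E(s)+\Sigma_1}{\Gamma(s-3/4)}\ll(1+|r|^3)m^4e^{2\pi rm}=(1+|r|^3)|n|^2e^{2\pi|r||n|^{1/2}}$, as required. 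The main term is handled via the identity
\[\frac1{(4s-3)\Gamma(s-\tfrac34)}=\frac1{4\Gamma(s+\tfrac14)},\]
which cancels the apparent pole at $s=\tfrac34$, so that $\tfrac{M(s)}{\Gamma(s-3/4)}$ is a continuous (indeed real-analytic) function of $s$ on $[\tfrac34,1.001]$; at $s=\tfrac34$ the series equals $\tfrac{\pi}{r\sinh(\pi r)}$ by the standard partial-fraction expansion, and substituting $\Gamma(\tfrac32)=\tfrac{\sqrt\pi}{2}$ recovers exactly $\mathscr F(r,n)$, in agreement with Proposition~\ref{propSum-real-val-Kloosterman-with-I2s-1-residue}.

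It then remains to bound $\tfrac{M(s)}{\Gamma(s-3/4)}-\mathscr F(r,n)$. Here one uses that $\mathscr F$ is small: from $|\sin(\pi r^2)|\le\min(1,\pi r^2)$ and $\sinh(\pi r)\ge\pi r$ one gets $|\mathscr F(r,n)|\ll m^{1/2}=|n|^{1/4}$, and the explicit form of $\tfrac{M(s)}{\Gamma(s-3/4)}$ is controlled in the same spirit, both being dominated by the generous right-hand side $(1+|r|^3)|n|^2e^{2\pi|r||n|^{1/2}}$. The tilde case is identical, replacing the even-modulus sums by odd-modulus sums of $\nu_\Theta^3$-Kloosterman sums and $\ee(-\tfrac18)$ by $\ee(\tfrac38)$. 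I expect the main obstacle to be exactly this uniform control of the main term across $s\in[\tfrac34,1.001]$: one must verify that the deviation of $\tfrac{M(s)}{\Gamma(s-3/4)}$ from its limiting value $\mathscr F$ does not spoil the bound, which amounts to estimating the factor $(\pi rm)^{2s-1}\sum_{q}(r^2+q^2)^{-(2s-1/2)}$ carefully (it degrades mildly as $s$ increases, behaving like $r^{2-2s}$ for small $r$) and absorbing this degradation into the ample $|n|^2e^{2\pi|r||n|^{1/2}}$ factor on the right.
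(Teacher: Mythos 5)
Your strategy is the same as the paper's, whose entire justification of this proposition is the sentence ``combining Proposition~\ref{propSum-real-val-Kloosterman-with-I2s-1-Anti-residue-EstimateNnot-m2} and the proof of Proposition~\ref{propSum-real-val-Kloosterman-with-I2s-1-residue}'': the case split (non-square $n$ via part (1), integral $r^2$ via part (2), and a rerun of the $\Sigma_0+\Sigma_1$ analysis for $n=-m^2$ with $r^2\notin\Z$), the remark that $\Gamma(s-\tfrac34)\ge\Gamma(0.251)>1$ on the interval, the identity $\frac{1}{(4s-3)\Gamma(s-\frac34)}=\frac{1}{4\Gamma(s+\frac14)}$ removing the apparent pole from the main term, and the verification that $s=\tfrac34$ recovers $\mathscr{F}(r,n)$ are all correct and are exactly what the authors intend.

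The step that does not close as written is the final ``absorption'' of the main term. The $q=0$ contribution to $M(s)/\Gamma(s-\tfrac34)$ has size
\[
\asymp \frac{|\sin(\pi r^2)|\,(\pi r m)^{2s-1}}{(r^2)^{2s-\frac12}} \asymp m^{2s-1}\, r^{2-2s} \qquad (r\le 1),
\]
and for $s\in(1,1.001]$ this tends to infinity as $r\to0^+$, whereas the right-hand side $(1+|r|^3)|n|^2e^{2\pi|r||n|^{1/2}}$ tends to $|n|^2$; so for fixed $m$ and $r$ sufficiently small the main term alone violates the claimed bound. You flag exactly this $r^{2-2s}$ degradation but then assert it can be absorbed, which fails for $s>1$. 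The defect is an artifact of the decomposition, not of the statement: for $r^2<2$ the sum over even $c>r^2$ actually begins at $c=2$, so the correct lower limit in the partial summation for the $k=0$ term is $\max(r^2,2)$; this replaces $(r^2)^{3/2-2s}$ by a bounded quantity, and the true $k=0$ contribution is $O((rm)^{2s-1}\log m)$, as one also sees directly from Proposition~\ref{prop_KloostermanSelbergZetaF_m=0_weightk3/2} together with $I_{2s-1}(2\pi rm/c)\ll (rm/c)^{2s-1}$ for $c\ge2$ and $r\le1$. Either repair closes the gap. The same defect is latent in the paper's own argument (the error term $O((1+r^3)m^3e^{2\pi rm})$ assigned to the $k=0$ term in the proof of Proposition~\ref{propSum-real-val-Kloosterman-with-I2s-1-residue} cannot be correct for $s>1$ and $r\to0$ if the main term is taken with lower cutoff $r^2$), and it is harmless for the application in Theorem~\ref{theoremFourierExpansionForWeight3/2Ats=3/4}, where $r$ is fixed; but your proof should be amended at this point.
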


\section{Proof of Theorem~\ref{theoremFourierExpansionForWeight3/2Ats=3/4} and Theorem~\ref{theoremMainDim3}}
\label{sectionProof-of-theorem-FourierExpansion-weight3/2-s3/4}

Based on the discussion of weight $3/2$ real-variable Kloosterman sums from the previous section, we are now ready to prove Theorem~\ref{theoremFourierExpansionForWeight3/2Ats=3/4}. 

\begin{proof}[Proof of Theorem~\ref{theoremFourierExpansionForWeight3/2Ats=3/4}]
Recall \eqref{eqWWhittaker-special-form} and the bound \eqref{eq:WhittakerboundE-piny} of the Whittaker function. 
	By combining Lemma~\ref{lemmaFourierExpansionAts}, Proposition~\ref{propSum-real-val-Kloosterman-with-J2s-1}, Proposition~\ref{propSum-real-val-Kloosterman-with-constantterm-residue}, and Proposition~\ref{propSum-real-val-Kloosterman-with-I2s-1-estimatebound-withs}, we find that $B_{3,n}(r;s,y)$ and $\widetilde{B}_{3,n}(r;s,y)$ have the claimed bounds for $n\neq 0$ and the limits of $B_{3,0}(r;s,y)$ and $\widetilde{B}_{3,0}(r;s,y)$ for $s\rightarrow \frac 34^+$ exist. 
    
    By dominated convergence theorem, the exponential decay $e^{-\pi |n| y}$ ensures that the Fourier expansions of $F_{\frac 32}(\tau;r,s)$ \eqref{eqFourierExpansionFAts-on-cusp-Infinity} and of $\widetilde{F}_{\frac 32}(\tau;r,s)$ \eqref{eqFourierExpansion-tildeFAts-on-cusp-infty} are absolutely convergent for $s\in [\frac 34,1.001]$. By \eqref{eqWWhittaker-special-form}, Proposition~\ref{propSum-real-val-Kloosterman-with-constantterm-residue}, Proposition~\ref{propSum-real-val-Kloosterman-with-I2s-1-residue} and by analytic continuation, we conclude 
	\[F_{\frac 32}(\tau;r,\tfrac 34)=\lim_{s\rightarrow \frac 34^+} F_{\frac 32}(\tau;r,s)\quad \text{and}\quad \widetilde{F}_{\frac 32}(\tau;r,\tfrac 34)=\lim_{s\rightarrow \frac 34^+} \widetilde{F}_{\frac 32}(\tau;r,s)\]
	and Theorem~\ref{theoremFourierExpansionForWeight3/2Ats=3/4} follows.
\end{proof}

The rest of this section is devoted to the proof of Theorem~\ref{theoremMainDim3}. We use Zagier's famous weight~$\frac 32$ mock modular form on $\Gamma_0(4)$ in \cite[Th\'eor\`eme~2]{Zagier1975weight3/2MF}. Here $q=e^{2\pi i \tau}$ and $\tau=x+iy\in \HH$. 
	\begin{theorem}[Zagier]
		Let $H(n)$ be the Hurwitz class number. Then the function 
		\begin{equation}\label{eq:Zagier'sMockModularForm}
		    \mathcal{H}(\tau)\defeq -\frac 1{12}+\sum_{n=1}^\infty H(n) q^n +\frac 1{8\pi \sqrt y} +\frac 1{4\sqrt\pi}\sum_{m=1}^\infty m\Gamma(-\tfrac 12,4\pi m^2 y)q^{-m^2}
		\end{equation}
		transform like a weight $3/2$ modular form on $\Gamma_0(4)$. 
	\end{theorem}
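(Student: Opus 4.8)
The plan is to produce $\mathcal{H}$, up to an explicit scalar, as the value at the harmonic point of a weight $3/2$ non-holomorphic Eisenstein series on $\Gamma_0(4)$, so that modularity is automatic and the content lies entirely in the Fourier expansion. For $\re(s)$ large, set
\[
E(\tau,s)=\sum_{\gamma=\begin{psmallmatrix}*&*\\c&d\end{psmallmatrix}\in\Gamma_\infty\backslash\Gamma_0(4)} \overline{\nu_\theta(\gamma)}^{\,3}\,(c\tau+d)^{-3/2}\,\im(\gamma\tau)^{\,s-3/4},
\]
where $\Gamma_\infty$ is the stabilizer of $\infty$ in $\Gamma_0(4)$. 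Being an average of $\im(\tau)^{\,s-3/4}$ over $\Gamma_0(4)$ against the weight $3/2$ slash action with multiplier $\nu_\theta^3$, this converges absolutely for $\re(s)$ large and satisfies $E(\,\cdot\,,s)|_{3/2,\nu_\theta^3}\gamma=E(\,\cdot\,,s)$ for every $\gamma\in\Gamma_0(4)$ by construction. The harmonic point is $s=3/4$, where the seed collapses to a constant (cf.\ \eqref{eqMWhittaker-special-form}); the naive holomorphic weight $3/2$ Eisenstein series diverges, so it is precisely the Hecke-regularization (continuation in $s$) that forces the extra non-holomorphic terms and makes $E(\tau,3/4)$ harmonic rather than holomorphic.

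First I would compute the Fourier expansion of $E(\tau,s)$ at $\infty$ by the unfolding argument used in the proof of Lemma~\ref{lemmaFourierExpansionAts} (with the Eisenstein seed in place of the Poincar\'e seed). This expresses the $n$-th coefficient as an explicit Whittaker/Bessel factor times the Kloosterman--Selberg zeta function $Z_n(s)=\sum_{4|c}S(0,n,c,\nu_\theta^3)\,c^{-2s}$, which by \eqref{eqKloosterman-Sums-Relation-Even-Theta-to-theta} is exactly the series studied in Proposition~\ref{prop_KloostermanSelbergZetaF_m=0_weightk3/2}. That proposition supplies both the meromorphic continuation of each coefficient to a neighborhood of $s=3/4$ (hence the continuation and harmonicity of $E$) and the factorization of $Z_n(s)$, up to a bounded Euler factor supported at $2n$, as $\zeta(2s-\tfrac12)/\zeta(4s-1)$ when $-n$ is a square and as $L(2s-\tfrac12,(\tfrac{-4n}{\cdot}))/\zeta(4s-1)$ otherwise.

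Next I would specialize to $s=3/4$ and read off the three types of contributions. For $n>0$ the positive-index Whittaker factor becomes the holomorphic $q^n$ (cf.\ \eqref{eqWWhittaker-special-form}) and the coefficient is controlled by $L(1,(\tfrac{-4n}{\cdot}))/\zeta(2)$; combining Dirichlet's class number formula for this $L$-value with the explicit local factors of $Z_n$ computed in Lemmas~\ref{lemmaA2k2ns} and \ref{lemmaA3pn-product-convergent-s34} (which encode the sum over square divisors of the conductor) identifies it with $H(n)$, while the constant term yields $H(0)=-\tfrac1{12}$. Finally, the simple pole of $\zeta(2s-\tfrac12)$ at $s=3/4$, occurring precisely at the indices $n=-m^2$, produces the non-holomorphic terms $\tfrac1{8\pi\sqrt y}$ and $\tfrac1{4\sqrt\pi}\sum_{m\ge1}m\,\Gamma(-\tfrac12,4\pi m^2y)\,q^{-m^2}$; their shape and the matching of all constants follow from the residue computation of Proposition~\ref{propSum-real-val-Kloosterman-with-I2s-1-residue}, or equivalently by applying $\xi_{3/2}=2iy^{3/2}\overline{\partial_{\bar\tau}}$ and checking that the output is the expected nonzero multiple of $\theta\in M_{1/2}(\Gamma_0(4),\nu_\theta)$, which, being one-dimensional, pins down the remaining scalars.

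The main obstacle is the arithmetic identification of the coefficient $L(1,(\tfrac{-4n}{\cdot}))$ with the Hurwitz class number $H(n)$: one must match the $p$-local Euler factors of $Z_n(s)$ against the local contributions in the Cohen class-number formula, writing $-n=Df^2$ with $D$ fundamental and expressing $H(n)$ through $L(1,\chi_D)$ times a divisor sum over $f$. The genus-theoretic bookkeeping at the prime $2$, already visible in the case distinctions of Lemma~\ref{lemmaA2k2ns}, is the delicate point; everything else reduces to the standard analytic continuation of an Eisenstein series together with the residue and Bessel estimates already assembled in Sections~\ref{SectionKLweight3/2}--\ref{SectionRVKLweight3/2}.
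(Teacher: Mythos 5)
The paper does not prove this statement at all: it is imported verbatim from Zagier with the citation \cite[Th\'eor\`eme~2]{Zagier1975weight3/2MF}, so there is no internal proof to compare against. Your Hecke-trick argument --- analytically continuing the weight $3/2$ Eisenstein series $E(\tau,s)$ on $(\Gamma_0(4),\nu_\theta^3)$ to the harmonic point $s=3/4$ and reading off the Fourier expansion --- is the standard route and is essentially Zagier's original one, so on its own terms the plan is sound. The structural points all check out against the machinery the paper develops for its Poincar\'e-type series: the $n$-th coefficient is a Whittaker factor times $Z_n(s)$, whose continuation and factorization through $\zeta(2s-\tfrac12)/\zeta(4s-1)$ resp.\ $L(2s-\tfrac12,(\tfrac{-4n}{\cdot}))/\zeta(4s-1)$ is exactly Proposition~\ref{prop_KloostermanSelbergZetaF_m=0_weightk3/2}; the negative non-square coefficients vanish at $s=3/4$ because of the $1/\Gamma(s-\tfrac34)$ factor, while at $n=-m^2$ the pole of $\zeta(2s-\tfrac12)$ cancels that zero and the Whittaker function $W_{-3/4,1/4}$ produces the $m\,\Gamma(-\tfrac12,4\pi m^2y)q^{-m^2}$ terms; and the two constant-term pieces give $-\tfrac1{12}$ and $\tfrac1{8\pi\sqrt y}$. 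The one place where the real work lives is exactly where you flag it: identifying the positive coefficients with $H(n)$ requires matching the local factors of $Z_n(\tfrac34)$ (the case analysis of Lemma~\ref{lemmaA2k2ns} at $p=2$ and the divisor sums at odd $p$) against the Hurwitz--Kronecker class number formula $H(n)=\tfrac{h(D)}{w(D)}\sum_{d\mid f}\mu(d)(\tfrac{D}{d})\sigma_1(f/d)$ with $-n=Df^2$; as written this step is a promissory note rather than a proof, and it is the only part that is not routine. Note also that the paper later takes the \emph{other} classical route to the modular behaviour of $\mathcal{H}$ --- the Hirzebruch--Zagier period integral $\psi(\tau)$ of \eqref{eqZagier3/2_nonholo_integral_with_theta} --- but only to derive the companion identity \eqref{eq:zagiertheta}, not to reprove Zagier's theorem; if you wanted a fully self-contained argument, completing the Euler-factor bookkeeping in your sketch (or citing Zagier, as the authors do) is unavoidable.
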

    More precisely, we have the following transformation formula for $\mathcal{H}$:
	\begin{equation}
		\mathcal{H}(\gamma \tau)=\nu_{\theta}(\gamma)^{3} (c\tau+d)^{\frac 32} \mathcal{H}(\tau) \quad \text{for }\gamma=\begin{psmallmatrix}
	a&b\\c&d
	\end{psmallmatrix}\in \Gamma_0(4). 
	\end{equation}
	By \cite[\S2.2]{HirzebruchZagier1976}, we write
	\begin{equation}\label{eqZagier3/2Holomorphic_part_and_nonholo_integral}
		\mathscr{H}_1(\tau)=-\frac 1{12}+\sum_{n=1}^{\infty} H(n)q^n\quad \text{and}\quad \mathcal{H}(\tau)-\mathscr{H}_1(\tau)=\frac{1+i}{16\pi} \psi(\tau)
	\end{equation}
	where
	\begin{equation}\label{eqZagier3/2_nonholo_integral_with_theta}
		\psi(\tau)=\int_{-\overline{\tau}}^{i\infty} \frac{\theta(v)}{(\tau+v)^{\frac 32}} dv
	\end{equation}
	and the integral is taken along the vertical path $v=2iuy-\tau$, $u\in [-1,\infty)$. Then the discussion after \cite[Theorem~2, Corollary]{HirzebruchZagier1976}, together with $\theta(-\frac1{4\tau})=\sqrt{-2i\tau}\theta(\tau)$, shows that
	\begin{align}
		\begin{split}
			(-2i\tau)^{-\frac 32}\mathscr{H}_1(-\tfrac 1{4\tau} ) +\mathscr{H}_1(\tau)&= -\frac 1{24}\theta(\tau)^3-\sqrt{\frac{\tau}{8i}} \int_{\R} \ee(\xi^2 \tau)\frac{1+\ee(2\xi\tau)}{1-\ee(2\xi\tau)} \xi d\xi,\\
			(-2i\tau)^{-\frac 32}\psi(-\tfrac 1{4\tau} ) +\psi(\tau)&=\int_{0}^{i\infty} \frac{\theta(u)}{(\tau+u)^{\frac 32}} du. 
		\end{split}
	\end{align}
	
\begin{lemma}\label{lemmaZagier3/2TwoIntegrals_Same}
	For $\tau\in \HH$, we have
	\[\frac{1+i}{16\pi }\int_{0}^{i\infty} \frac{\theta(u)}{(\tau+u)^{\frac 32}} du =\sqrt{\frac{\tau}{8i}}\int_\R \ee(\xi^2 \tau) \frac{1+\ee(2\xi \tau)}{1-\ee(2\xi \tau)} \xi d\xi. \]
\end{lemma}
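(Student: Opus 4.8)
The plan is to reduce both integrals to a single Gaussian-type integral and then apply the Jacobi transformation of $\theta$. Throughout I write $\Phi(\tau)\defeq\int_0^\infty \xi\, e^{2\pi i \tau \xi^2}\coth(\pi\xi)\,d\xi$, which converges absolutely for $\tau\in\HH$ since $\xi\coth(\pi\xi)\sim|\xi|$ while $|e^{2\pi i\tau\xi^2}|=e^{-2\pi(\im\tau)\xi^2}$.

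First I would rewrite the left-hand integral. As $\im(\tau+u)>0$ along the path, I would insert the Gamma-integral representation $(\tau+u)^{-3/2}=\tfrac{(-i)^{3/2}}{\Gamma(3/2)}\int_0^\infty s^{1/2}e^{i(\tau+u)s}\,ds$ and interchange the order of integration. The interchange is legitimate: the factor $|e^{i\tau s}|=e^{-(\im\tau)s}$ renders the double integral absolutely convergent, so the apparent singularity $\theta(u)\sim(2\im u)^{-1/2}$ as $u\to0$ is harmless (it is integrated against a factor bounded by $(\im\tau+\im u)^{-3/2}$). Expanding $\theta(u)=\sum_{n}e^{2\pi i n^2 u}$ and integrating termwise in $u$ gives the inner integral $\int_0^{i\infty}\theta(u)e^{ius}\,du=i\sum_{n\in\Z}(2\pi n^2+s)^{-1}$, which by the partial-fraction identity $\sum_{n\in\Z}(n^2+a^2)^{-1}=\tfrac{\pi}{a}\coth(\pi a)$ equals $i\sqrt{\pi/2s}\,\coth\sqrt{\pi s/2}$. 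Substituting $s=2\pi\xi^2$ and collecting the purely numerical constants — which I expect to collapse to $\tfrac12$ — yields $\frac{1+i}{16\pi}\int_0^{i\infty}\tfrac{\theta(u)}{(\tau+u)^{3/2}}\,du=\tfrac12\Phi(\tau)$.

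Next I would treat the right-hand integral. Its integrand is even in $\xi$, since both $\xi$ and $\frac{1+\ee(2\xi\tau)}{1-\ee(2\xi\tau)}$ are odd while $e^{2\pi i\xi^2\tau}$ is even; hence the integral over $\R$ is twice that over $(0,\infty)$, and moreover $\frac{1+\ee(2\xi\tau)}{1-\ee(2\xi\tau)}=i\cot(2\pi\xi\tau)$. The substitution $\xi=\eta/(2\tau)$ converts the Gaussian to $e^{\pi i\eta^2/(2\tau)}$ and the multiplier to $i\cot(\pi\eta)$, moving the contour onto the ray through $0$ of argument $\arg\tau$. The integrand $\eta\,e^{\pi i\eta^2/(2\tau)}\cot(\pi\eta)$ is holomorphic at $\eta=0$ (the factor $\eta$ cancels the simple pole of $\cot$) and decays in the sector of directions $\phi$ with $\cos\!\big(2\phi-\arg\tau+\tfrac\pi2\big)<0$; a short check shows this sector contains both the ray of argument $\arg\tau$ and the positive imaginary axis, and that the swept region meets no integer pole of $\cot(\pi\eta)$. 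Rotating the contour to $i\R$, where $\cot(\pi i t)=-i\coth(\pi t)$ and $e^{\pi i(it)^2/(2\tau)}=e^{2\pi i(-1/4\tau)t^2}$, I expect to obtain $\text{RHS}=-\frac{1}{2\sqrt{8i}\,\tau^{3/2}}\Phi(-1/4\tau)$.

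The final step is the transformation law $\Phi(\tau)=-\frac{1}{\sqrt{8i}\,\tau^{3/2}}\Phi(-1/4\tau)$, which together with the two reductions gives $\text{RHS}=\tfrac12\Phi(\tau)=\text{LHS}$. This law follows cleanly from the first step together with the Jacobi identity $\theta(-1/4\tau)=\sqrt{-2i\tau}\,\theta(\tau)$ recorded in the text: since the first step shows $\Phi$ is a fixed nonzero multiple of $\psi_0(\tau)\defeq\int_0^{i\infty}\theta(u)(\tau+u)^{-3/2}\,du$, it suffices to prove the transformation for $\psi_0$; substituting $u\mapsto-1/(4u)$ and applying the Jacobi identity turns $\psi_0(\tau)$ into a constant multiple of $\int_0^{i\infty}\theta(v)(v-1/4\tau)^{-3/2}\,dv=\psi_0(-1/4\tau)$. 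I expect the principal difficulty to be bookkeeping rather than conceptual: justifying the contour rotation in the middle step (the decay sector, the removable singularity at $0$, and the absence of crossed poles), and, equally, keeping every fractional power on its principal branch so that the constants $(-i)^{3/2}$, $\sqrt{-2i}$, and $\sqrt{8i}$ combine correctly — the prefactor $\tfrac{1+i}{16\pi}$ is precisely what makes the three reductions consistent.
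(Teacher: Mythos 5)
Your proof is correct, but it takes a genuinely different route from the one in the paper. The paper's proof expands both sides into series indexed by $n\ge 0$: $\theta(u)=1+2\sum_{n\ge1}\ee(n^2u)$ on the left, and the geometric series $\frac{1+\ee(2\xi\tau)}{1-\ee(2\xi\tau)}=1+2\sum_{n\ge1}\ee(2n\xi\tau)$ (valid for $\xi>0$) on the right; each term is then evaluated in closed form as a multiple of $n\,\ee(-n^2\tau)\Gamma(-\tfrac12,-2\pi i n^2\tau)$, and the two expansions are matched term by term, the only remaining work being a one-line check of constants. You instead collapse both sides onto the single integral $\Phi(\tau)=\int_0^\infty\xi e^{2\pi i\tau\xi^2}\coth(\pi\xi)\,d\xi$. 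I verified the three assertions you leave as bookkeeping: (i) the Gamma-integral representation plus $\sum_{n\in\Z}(n^2+a^2)^{-1}=\tfrac{\pi}{a}\coth(\pi a)$ gives $\int_0^{i\infty}\theta(u)(\tau+u)^{-3/2}\,du=4\pi(1-i)\Phi(\tau)$, so the left side is indeed $\tfrac12\Phi(\tau)$; (ii) the decay sector for $e^{\pi i\eta^2/(2\tau)}$ is $\phi\in(\tfrac{\arg\tau}{2},\tfrac{\arg\tau}{2}+\tfrac{\pi}{2})$, which contains both $\arg\tau$ and $\tfrac\pi2$, the swept sector lies in the open upper half-plane so no pole of $\cot(\pi\eta)$ is crossed, and the rotation yields the right side equal to $-\tfrac{1}{2\sqrt{8i}}\tau^{-3/2}\Phi(-\tfrac1{4\tau})$; (iii) the substitution $u\mapsto-1/(4u)$ together with $\theta(-\tfrac1{4u})=\sqrt{-2iu}\,\theta(u)$ gives $\psi_0(\tau)=-\tfrac{\sqrt{-2i}}{4}\tau^{-3/2}\psi_0(-\tfrac1{4\tau})$ with all branch corrections trivial along $i\R_+$, and since $\sqrt{-2i}/4=1/\sqrt{8i}$ the functional equation of $\Phi$ closes the argument. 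The trade-off is clear: the paper's computation is shorter and requires no contour rotation or Fubini justification, while your argument is conceptually more illuminating — it exhibits both sides as the same period integral of $\theta$ viewed in two frames related by $\tau\mapsto-1/(4\tau)$, and it avoids incomplete Gamma functions entirely — at the price of the branch and contour care you correctly flag as the main difficulty.
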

\begin{proof}
	We start with the left hand side. Since $\theta(u)=1+2\sum_{n=1}^\infty \ee(n^2 u)$, we have
	\[\int_0^{i\infty} (\tau+u)^{-\frac 32} du=2\tau^{-\frac 12},\quad \int_0^{i\infty} \frac{2e^{2\pi i n^2 u}}{(\tau+u)^{\frac 32}} du=2n\sqrt{2\pi} \ee(-\tfrac 18)\ee(-n^2 \tau)\Gamma(-\tfrac 12,-2\pi i n^2 \tau). \]
	For the right side, note that the integrand is an even function of $\xi$, so we only need to deal with the part $\xi>0$. Since $\tau\in \HH$, in this range we have $|\ee(2\xi \tau)|<1$ and 
	\[\frac{1+\ee(2\xi\tau)}{1-\ee(2\xi\tau)}=1+2\sum_{n=1}^\infty \ee(2n\xi \tau). \]
	Hence we get
	\[\int_{\R} \ee(\xi^2 \tau) \xi d\xi=2\int_0^\infty \ee(\xi^2\tau) \xi d\xi=\frac{i}{2\pi \tau}\]
	and
	\[4\int_{0}^\infty e^{2\pi i \xi^2 \tau + 4\pi i n \xi \tau} \xi d\xi=ne^{-2\pi i n^2 \tau}(-2\pi i \tau)^{-\frac 12}\Gamma(-\tfrac 12,-2\pi i n^2 \tau). \]
	The lemma is proved by comparing the two expressions term by term. 
\end{proof}
	
Combining \eqref{eqZagier3/2Holomorphic_part_and_nonholo_integral}, \eqref{eqZagier3/2_nonholo_integral_with_theta} and Lemma~\ref{lemmaZagier3/2TwoIntegrals_Same}, we conclude that for $\tau\in \HH$, 
    \[
	(-2i\tau)^{-\frac 32}\mathcal{H}(-\tfrac 1{4\tau})+\mathcal{H}(\tau)=-\frac 1{24}\theta(\tau)^3.   
    \]
If we define
    \begin{align}
	\begin{split}
		\mathcal{H}^*(\tau)\defeq \mathcal{H}(\tfrac{\tau}2)+\frac 1{48} \Theta(\tau)^3=&-\frac 1{16}+\sum_{n=1}^\infty \(H(n)+\frac{r_3(n)}{48}\) e^{\pi i n \tau}\\
		&+\frac 1{8\pi}\(\sqrt{\frac 2y}+2\sqrt \pi \sum_{m=1}^\infty m\Gamma(-\tfrac 12,2\pi m^2 y)e^{-\pi i m^2 \tau}\),
	\end{split}
    \end{align}
then $\mathcal{H}^*$ satisfies
    \begin{equation}
	(-i\tau)^{-\frac 32}\mathcal{H}^*(-\tfrac 1\tau)+\mathcal{H^*}(\tau)=0. 
    \end{equation}

Therefore, we define
	\begin{align}\label{eq:G3/2inConstruction}
    \begin{split}
    G_{\frac 32}(\tau;r)&\defeq F_{\frac 32}(\tau;r,\tfrac 34)-\frac{8\sin(\pi r^2)}{r\sinh(\pi r)} \mathcal{H}^*(\tau)\\
    &=\frac{\sin(\pi r^2)}{2r\sinh(\pi r)}-\frac{\sin(\pi r^2)}{r\sinh(\pi r)}\sum_{n=1}^\infty e^{\pi i n \tau}\(8H(n)+\frac {r_3(n)}{6}\)\\
    &+\pi \ee(\tfrac 18)\sum_{n=1}^\infty e^{\pi i n \tau} \(\frac{n}{r^2}\)^{\frac 14} \sum_{2|c>0} \frac{K(r^2,n,c,\nu_{\Theta}^3)}{c} J_{\frac 12}\(\frac{2\pi |r^2n|^{\frac 12}}{c}\)
    \end{split}
	\end{align}
and
	\begin{align}\label{eq:tildeG3/2inConstruction}
    \begin{split}
    \widetilde{G}_{\frac 32}(\tau;r)&\defeq \widetilde{F}_{\frac 32}(\tau;r,\tfrac 34)-\frac{8\sin(\pi r^2)}{r\sinh(\pi r)} \mathcal{H}^*(\tau)\\
    &=\frac{\sin(\pi r^2)}{2r\sinh(\pi r)}-\frac{\sin(\pi r^2)}{r\sinh(\pi r)}\sum_{n=1}^\infty e^{\pi i n \tau}\(8H(n)+\frac {r_3(n)}{6}\)\\
    &+\pi \ee(-\tfrac 38)\sum_{n=1}^\infty e^{\pi i n \tau} \(\frac{n}{r^2}\)^{\frac 14} \sum_{2\nmid d>0} \frac{\widetilde{K}(r^2,n,d,\nu_{\Theta}^3)}{d} J_{\frac 12}\(\frac{2\pi |r^2n|^{\frac 12}}{d}\). 
    \end{split}
	\end{align}
These functions satisfy the functional equation
	\begin{equation}
    G_{\frac 32}(\tau;r)+(-i\tau)^{-\frac 32}\widetilde{G}_{\frac 32}(-\tfrac 1\tau;r) =e^{\pi i r^2 \tau}. 
	\end{equation}
Here we write the linear combination $8H(n)+\frac{r_3(n)}6$ because it is always an integer. 

\begin{proof}[Proof of Theorem~\ref{theoremMainDim3}]\label{ProofofTheoremMainDim3}
The functions $b_{3,n}(r)$ and $\widetilde{b}_{3,n}(r)$ can be read from \eqref{eq:G3/2inConstruction} and \eqref{eq:tildeG3/2inConstruction}, respectively, with the help of \cite[(10.16.1)]{dlmf}:
    \[J_{\frac 12}(z)=\(\frac 2{\pi z}\)^{\frac 12} \sin(z). \] 
It remains to compute the values 
    \begin{equation}
         B_{3,n}(0)=\lim_{r\rightarrow 0^+} B_{3,n}(r)\quad\text{and}\quad \widetilde{B}_{3,n}(0)=\lim_{r\rightarrow 0^+} \widetilde{B}_{3,n}(r)\quad \text{for }n\geq 1
    \end{equation}
because by Proposition~\ref{propSum-real-val-Kloosterman-with-J2s-1} and the dominated convergence theorem, 
\begin{align*}
    G_{\frac 32}(\tau;0)&=\lim_{r\rightarrow0}G_{\frac 32}(\tau;r)=\sum_{n=0}^\infty e^{\pi in \tau}\lim_{r\rightarrow 0} b_{3,n}(r),\\\widetilde{G}_{\frac 32}(\tau;0)&=\lim_{r\rightarrow0}{G}_{\frac 32}(\tau;r)=\sum_{n=0}^\infty e^{\pi in \tau}\lim_{r\rightarrow 0} \widetilde b_{3,n}(r)
\end{align*}
Similarly to \eqref{eq:b4nANDtildeb4nAS_KLsumr=0}, by $\sin(z)=z-z^3/3!+\cdots$, we claim that
    \begin{align}\label{eqB3n0B3n0tildeinS(0,n)}
    \begin{split}
    B_{3,n}(0)&=2\pi \ee(\tfrac 18)\sqrt{n} \sum_{2|c>0}\frac{S(0,n,c,\nu_\Theta^3)}{c^{3/2}}\\
    \widetilde{B}_{3,n}(0)&=2\pi \ee(-\tfrac 38)\sqrt{n} \sum_{2\nmid \widetilde{c}>0}\frac{S(0,n,\widetilde{c},\nu_\Theta^3)}{\widetilde{c}^{3/2}}. \\
    \end{split}
    \end{align}
This can be directly proved by applying Proposition~\ref{propEstimate-sumofKlsum-real-val-k3/2-QihangAsymptBound<1/4} and Proposition~\ref{propSum-real-val-Kloosterman-with-J2s-1}, as well as \eqref{eq:weight3/2SumKLsumJ2s-1Cauchy} and Proposition~\ref{propSumofKloostermansums-weight32-one-of-mn0} in order to show
\[\left. \begin{array}{r}
     \displaystyle\sum_{2|c>x}\frac{K(r^2,n,c,\nu_\Theta^3)}{c^{3/2}} \\
     \displaystyle\sum_{2|c>x} \frac{K(0,n,c,\nu_\Theta^3)}{c^{3/2}}
\end{array}\right\} \ll x^{-\frac 14+\ep}n^{\frac 14+\ep}\quad \text{for }x>\max(1,r^2, 2\pi |r|\sqrt{n})\]
which implies
\[\lim_{r\rightarrow 0}\sum_{2|c} \frac{K(r^2,n,c,\nu_\Theta^3)}{c^{3/2}}=\sum_{2|c} \frac{S(0,n,c,\nu_\Theta^3)}{c^{3/2}}\]
and similarly for the case $2\nmid \widetilde{c}$. 

We can evaluate the sums of Kloosterman sums in \eqref{eqB3n0B3n0tildeinS(0,n)} by recalling the calculations in \S\ref{subsectionWeight3/2mn=0alphaApns}. Comparing with the quantity $B_k(n)$ defined by Bateman (taking $s=3$) \cite[(1.02)]{Bateman1951ThreeSquares}, one can find that
    \[B_{2^\nu}(n)=2^{-\frac 32\nu}\ee(\tfrac 38)\overline{\alpha_3(2^{\nu+1},n)},\quad B_{p^\nu}(n)=p^{-\frac 32 \nu}\overline{\alpha_3(p^\nu,n)},\ p>2. \]
Moreover, for $n\geq 1$ and $4^\alpha\|n$ for $\alpha=\floor{\nu_2(n)/2}$, the quantity $\chi_2(n)$ defined and calculated at \cite[under (4.06), also Theorem~B]{Bateman1951ThreeSquares} gives
    \begin{equation}
    1-\chi_2(n)=2^{\frac 32} \ee(\tfrac 18)A_3(2,n,\tfrac 34)=\left\{\begin{array}{ll}
         1,& \text{if } n/4^\alpha\equiv 7\Mod 8,\\
         1-2^{-\alpha},& \text{if }n/4^{\alpha}\equiv 3\Mod 8,  \\
         1-3\times 2^{-\alpha-1}, & \text{if }n/4^\alpha\equiv 1,2,5,6 \Mod 8.
    \end{array}\right. 
    \end{equation}
Additionally, \cite[Theorem~B]{Bateman1951ThreeSquares} gives that
    \begin{align}
    r_3(n)&=2\pi\sqrt n\chi_2(n)\cdot \frac{8K(-4n)}{\pi^2}\prod_{p^2|n}
    \(\sum_{j=1}^{\floor{\frac{\nu_p(n)}2}-1} \frac 1{p^j}+\frac{p^{-\floor{\frac{\nu_p(n)}2}}}{1-\(\frac{-n/p^{2\floor{\nu_p(n)/2}}}p\)\frac 1p}\)\\
    &=2\pi \sqrt n\chi_2(n)\prod_{p>2}A_3(p,n,\tfrac 34), 
    \end{align}
where $K(-4n)=\sum_{m=1}^\infty (\frac{-4n}m)\frac 1m $ is convergent.

Therefore, by the discussion above, as well as \eqref{eqKLsum-n0weight3-into-prods}-\eqref{eqKloostermanzeta-n0-product-asA-oddd}, for $n\geq 1$, if $n\neq 4^\alpha(8\beta+7)$ for any $\alpha,\beta\in \Z$, we have
    \begin{equation}
    B_{3,n}(0)=r_3(n)\cdot \frac{1-\chi_2(n)}{\chi_2(n)}\quad\text{and}\quad 
    \widetilde{B}_{3,n}(0)=\frac{r_3(n)}{\chi_2(n)}=B_{3,n}(0)+r_3(n); 
    \end{equation}
if $n=4^\alpha(8\beta+7)$ for some $\alpha,\beta\in \Z$, we have
    \begin{align}\label{eq:B3n0tildeB3n0WhenNNotr3Reprsted}
    \begin{split}
    B_{3,n}(0)&=\frac{16}{\pi}\sqrt n\(1-\chi_2(n)\)K(-4n)\prod_{p^2|n}
        \(\sum_{j=1}^{\floor{\frac{\nu_p(n)}2}-1} \frac 1{p^j}+\frac{p^{-\floor{\frac{\nu_p(n)}2}}}{1-\(\frac{-n/p^{2\floor{\nu_p(n)/2}}}p\)\frac 1p}\)\\
    \widetilde{B}_{3,n}(0)&=B_{3,n}(0)/\(1-\chi_2(n)\)=B_{3,n}(0). 
    \end{split}
    \end{align}
The formulas in the theorem about $B_{3,n}(0)$, $\widetilde{B}_{3,n}(0)$, $b_{3,n}(0)$, and $\widetilde{b}_{3,n}(0)$ can then be obtained from the relations between $r_3(n)$ and the Hurwitz class number $H(n)$ (see e.g. \cite[\S4.8]{Grosswald1985}).
    
This finishes the proof of Theorem~\ref{theoremMainDim3}. 
\end{proof}

\section{Concluding remarks}
\label{SectionDiscussion}

One of the corollaries of Theorem~\ref{theoremMainDim4} and Theorem~\ref{theoremMainDim3} is that the functions $B_{d,n}(r)$ satisfy the following:
    \begin{align*}
    \begin{array}{l}
          |B_{4,n}(r)|, |\widetilde{B}_{4,n}(r)| \ll_{\ep,c}   n^{3/4+\ep}  \\
         |B_{3,n}(r)|, |\widetilde{B}_{3,n}(r)| \ll_{\ep,c}    n^{1/2+\ep}
    \end{array}
   \quad \text{if }r^2\geq c>0,\quad \ep>0. 
    \end{align*}
In particular, these bounds give improved control over the coefficients $a_{d,n}(r)$, $\widetilde{a}_{d,n}(r)$ of the radial Fourier interpolation formulas~\eqref{eq:radialmain} for $d=3,4$. Optimistically, we conjecture the following stronger bounds to hold. 
\begin{conjecture} The functions $B_{d,n}(r)$ from Theorem~\ref{theoremMainDim3} and Theorem~\ref{theoremMainDim4} satisfy the following bounds:
    \begin{align*}
    \begin{array}{l}
          |B_{4,n}(r)|, |\widetilde{B}_{4,n}(r)| \ll_{\ep,c}   n^{1/2+\ep}  \\
         |B_{3,n}(r)|, |\widetilde{B}_{3,n}(r)| \ll_{\ep,c}    n^{1/4+\ep}
    \end{array}
   \quad \text{if }r^2\geq c>0,\quad \ep>0.
    \end{align*}
\end{conjecture}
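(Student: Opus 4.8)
The plan is to reduce the conjecture to a single cancellation statement about Bessel-weighted sums of real-variable Kloosterman sums, and then to attack that statement spectrally. Writing $F_{d/2}$ as in Theorem~\ref{theoremFourierExpansionForWeight3/2Ats=3/4} and Theorem~\ref{theoremFourierExpansionForWeight2Ats=1}, the holomorphic Fourier coefficients carry a prefactor $(n/r^2)^{1/4}$ in dimension $3$ and $(n/r^2)^{1/2}$ in dimension $4$. Consequently, for $r$ bounded away from $0$, the conjectured bounds $|B_{3,n}(r)|\ll_\ep n^{1/4+\ep}$ and $|B_{4,n}(r)|\ll_\ep n^{1/2+\ep}$ are equivalent to the \emph{square-root-cancellation} estimates
\begin{equation*}
\sum_{2|c>0}\frac{K(r^2,n,c,\nu_\Theta^{2k})}{c}\,J_{k-1}\!\(\frac{2\pi|r|\sqrt n}{c}\)\ll_{\ep,r}n^{\ep},\qquad k\in\{\tfrac 32,2\},
\end{equation*}
together with the analogous statements for $\widetilde K$ on odd moduli. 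Indeed, Proposition~\ref{propSum-real-val-Kloosterman-with-J2s-1} and Proposition~\ref{propEstimate-Sum-of-real-variable-Kloosterman-sums-weightk2-with-IJBessel} currently bound these sums by $n^{1/4+\ep}$, and that single factor of $n^{1/4}$ is exactly the gap between what is proved and what is conjectured. So the whole problem is concentrated in removing the $n^{1/4}$ loss from these weighted sums.

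The first step is to return to integer-index Kloosterman sums. Via Lemma~\ref{lemmaExponential-interpolation} I would expand $K(r^2,n,c,\nu_\Theta^{2k})$ as a combination, with weights $\sin(\pi(r^2-\ell))/(\ee(\tfrac{r^2-\ell}{2c})-1)$, of the ordinary sums $S(\ell,n,c,\nu_\Theta^{2k})$ over integers $\ell$ with $|r^2-\ell|<c$. For $r$ in a fixed compact set the weights decay like $1/|r^2-\ell|$, so the real-variable sum inherits its size from the worst integer sum $\sum_c S(\ell,n,c)\,c^{-1}J_{k-1}(2\pi\sqrt{|\ell|n}/c)$, uniformly in $\ell$. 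The crucial point is that this last object is precisely the geometric side of the \textbf{Kuznetsov trace formula} in weight $k$ on $\Gamma_\Theta$ (equivalently on $\Gamma_0(4)$, via \eqref{eqKloosterman-Sums-Relation-Even-Theta-to-theta}): the Bessel kernel $J_{k-1}$ is exactly the one appearing there. I would therefore apply Kuznetsov to convert each such sum into a spectral average $\sum_j\widehat\phi(t_j)\,\overline{\rho_j(\ell)}\rho_j(n)+(\text{Eisenstein})+(\text{residual})$, where $\phi(x)=J_{k-1}(2\pi\sqrt{|\ell|n}/x)/x$ and $t_j$ runs over the weight-$k$ Maass spectrum.

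The second step is to bound the spectral side in the $n$-aspect. The residual and Eisenstein contributions reproduce the $\theta$-type main terms already isolated in Proposition~\ref{prop_KloostermanSelbergZetaF_m=0_weightk3/2} and \S\ref{subsectionWeight3/2mn=0alphaApns}; these are precisely the divisor-sum (for $d=4$) and Hurwitz-class-number (for $d=3$) corrections subtracted off in the definitions of $b_{d,n}$, $\widetilde b_{d,n}$, so they contribute nothing to $B_{d,n}$. The remaining cuspidal sum I would estimate by the \textbf{spectral large sieve} of Deshouillers--Iwaniec, which delivers square-root cancellation in the length of the spectrum and, on average over $n$, already yields the conjectured exponent. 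The main obstacle is that this is only an \emph{average} statement: the Bessel transform $\widehat\phi(t_j)$ of our test function is essentially flat over the range $t_j\lesssim\sqrt n$ and provides no spectral localization, so a term-by-term estimate using the best individual coefficient bounds (Hecke or Conrey--Iwaniec) simply rebuilds the $n^{1/4}$ loss. Extracting a \emph{pointwise}-in-$n$ bound of the conjectured strength is tantamount to the Linnik--Selberg conjecture for these sums, or equivalently to a power-saving, $n$-uniform subconvexity estimate for $\sum_{c\le x}S(m,n,c,\nu_\Theta^{2k})/c$, exactly the improvement flagged in the remark following Proposition~\ref{propSumofKloostermansums-weight32-one-of-mn0}. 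I do not expect this to be reachable by current spectral technology, which is why the statement is recorded as a conjecture; a \emph{conditional} proof assuming Linnik--Selberg-strength cancellation (or the optimal diagonal-free spectral large sieve) should, however, go through along the lines above.
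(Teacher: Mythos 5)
This statement is recorded in the paper as a conjecture with no proof; the paper itself only remarks that the bounds follow from the Linnik--Selberg conjecture on cancellation in sums of Kloosterman sums. Your proposal correctly recognizes this: your reduction of the conjecture to square-root cancellation in the Bessel-weighted sums of (real-variable) Kloosterman sums, and your diagnosis that a pointwise-in-$n$ bound of the conjectured strength is tantamount to Linnik--Selberg (with Kuznetsov plus the spectral large sieve only giving the result on average over $n$), is exactly the paper's own position, so your conditional roadmap matches the paper's stance and you are right not to claim an unconditional proof.
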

The (hopeful) expectation is that, at least for fixed $r>0$, the coefficients $B_{d,n}(r)$ and $\widetilde{B}_{d,n}(r)$ should grow like coefficients of a weight $d/2$ cusp form, so the above conjecture corresponds roughly to the bound from the Ramanujan-Petersson conjecture. They are consistent with the numerics as far as we can check, and they can also be deduced assuming the Linnik-Selberg conjecture about sums of Kloosterman sums (see \cite[(7)]{SarnakTsimerman09} for $d=4$, $k=2$ and \cite[Conjecture~1.4]{QihangSecondAsympt} for $d=3$, $k=\frac 32$). 

We plan to generalize Theorem~\ref{theoremMainDim4} and Theorem~\ref{theoremMainDim3} to dimensions $d=1,2$ in a future work. In particular, we expect to recover the interpolation formula for even Schwartz functions in dimension 1 \cite[Theorem~1]{RadchenkoViazovska2019}, obtaining an explicit formula for the interpolation basis functions $a_n(x)$ in terms of real-variable Kloosterman sums.

\bibliographystyle{alpha}
\bibliography{allrefs}

\newcommand{\etalchar}[1]{$^{#1}$}
\begin{thebibliography}{CKM{\etalchar{+}}22}

\bibitem[AA16]{AAAlgbraic16}
Scott Ahlgren and Nickolas Andersen.
\newblock Algebraic and transcendental formulas for the smallest parts function.
\newblock {\em Adv. Math.}, 289:411--437, 2016.

\bibitem[AA18]{AAimrn}
Scott Ahlgren and Nickolas Andersen.
\newblock Kloosterman sums and {M}aass cusp forms of half integral weight for the modular group.
\newblock {\em Int. Math. Res. Not. IMRN}, 2018(2):492--570, 2018.

\bibitem[AD19]{ahlgrendunn}
Scott Ahlgren and Alexander Dunn.
\newblock Maass forms and the mock theta function {$f(q)$}.
\newblock {\em Math. Ann.}, 374(3-4):1681--1718, 2019.

\bibitem[Adv23]{Adve}
Anshul Adve.
\newblock Density criteria for {F}ourier uniqueness phenomena in $\mathbb{R}^d$.
\newblock https://arxiv.org/pdf/2306.07475.pdf, 2023.

\bibitem[Bat51]{Bateman1951ThreeSquares}
Paul~T. Bateman.
\newblock On the representations of a number as the sum of three squares.
\newblock {\em Trans. Amer. Math. Soc.}, 71:70--101, 1951.

\bibitem[Bir00]{Biro2000cycleintegral}
Andr\'as Bir\'o.
\newblock Cycle integrals of {Maass} forms of weight 0 and {Fourier} coefficients of {Maass} forms of weight 1/2.
\newblock {\em Acta Arith.}, 94(2):103--152, 2000.

\bibitem[Blo08]{BlomerSumofHeckeEvaluesOverQP}
Valentin Blomer.
\newblock {Sums of Hecke eigenvalues over values of quadratic polynomials}.
\newblock {\em Int. Math. Res. Not. IMRN}, 2008:rnn 059, 2008.

\bibitem[BO06]{BrmOno2006ivt}
Kathrin Bringmann and Ken Ono.
\newblock The $f(q)$ mock theta function conjecture and partition ranks.
\newblock {\em Invent. Math.}, 165(2):243--266, 2006.

\bibitem[BO12]{BringmannOno2012}
Kathrin Bringmann and Ken Ono.
\newblock Coefficients of harmonic {M}aass forms.
\newblock In Krishnaswami Alladi and Frank Garvan, editors, {\em Partitions, q-Series, and Modular Forms, Proceedings of the Conference Held at the University of Florida, Gainesville, FL, March 2008. Developments in Mathematics, vol. 23}, pages 23--38. Springer New York, NY, 2012.

\bibitem[BRS23]{BondarenkoRadchenkoSeip2022}
Andriy Bondarenko, Danylo Radchenko, and Kristian Seip.
\newblock Fourier interpolation with zeros of zeta and {$L$}-functions.
\newblock {\em Constr. Approx.}, 57:405--461, 2023.

\bibitem[Bru02]{BruinierBookBorcherds}
Jan~Hendrik Bruinier.
\newblock {\em Borcherds Products on $O(2,l)$ and Chern Classes of Heegner Divisors}.
\newblock Lecture Notes in Math. Springer Berlin, Heidelberg, 2002.

\bibitem[BS07]{BookerStrombergsson2007}
Andrew~R. Booker and Andreas Str\"ombergsson.
\newblock Numerical computations with the trace formula and the {Selberg} eigenvalue conjecture.
\newblock {\em J. Reine Angew. Math.}, 2007(607):113--161, 2007.

\bibitem[Bur63]{Burgess1963bound}
D.~A. Burgess.
\newblock On character sums and ${L}$-series. {II}.
\newblock {\em Proc. London Math. Soc.}, s3-13(1):524--536, 1963.

\bibitem[Chi07]{Chiera2007PeterssonProduct}
Francesco~L. Chiera.
\newblock On {Petersson} products of not necessarily cuspidal modular forms.
\newblock {\em J. Number Theory}, 122(1):13--24, 2007.

\bibitem[CKM{\etalchar{+}}17]{CKMRV17}
Henry Cohn, Abhinav Kumar, Stephen Miller, Danylo Radchenko, and Maryna Viazovska.
\newblock The sphere packing problem in dimension $24$.
\newblock {\em Annals of Mathematics}, 185(3):1017--1033, 2017.

\bibitem[CKM{\etalchar{+}}22]{CKMRV22}
Henry Cohn, Abhinav Kumar, Stephen Miller, Danylo Radchenko, and Maryna Viazovska.
\newblock Universal optimality of the ${E}_8$ and {L}eech lattices and interpolation formulas.
\newblock {\em Annals of Mathematics}, 196(3):983--1082, 2022.

\bibitem[Dav80]{davenport}
Harold Davenport.
\newblock {\em Multiplicative number theory}, volume~74 of {\em Graduate Texts in Mathematics}.
\newblock Springer-Verlag, New York-Berlin, second edition, 1980.
\newblock Revised by Hugh L. Montgomery.

\bibitem[DFI02]{DFI2002}
W.~Duke, J.B. Friedlander, and H.~Iwaniec.
\newblock The subconvexity problem for {A}rtin {$L$}-functions.
\newblock {\em Invent. Math.}, 149(3):489--577, September 2002.

\bibitem[DFI12]{DFI12}
William~D. Duke, J.~B. Friedlander, and Henryk Iwaniec.
\newblock {Weyl sums for quadratic roots}.
\newblock {\em Int. Math. Res. Not. IMRN}, 2012(11):2493--2549, 01 2012.

\bibitem[DJ13]{JeonKangKim2012}
Chang Heon~Kim Daeyeol~Jeon, Soon-Yi~Kang.
\newblock Weak {Maass–Poincar\'e} series and weight $3/2$ mock modular forms.
\newblock {\em J. Number Theory}, 133(8):2567--2587, 2013.

\bibitem[{\relax DLMF}]{dlmf}
{\it NIST Digital Library of Mathematical Functions}.
\newblock http://dlmf.nist.gov/, Release 1.0.25 of 2019-12-15.
\newblock F.~W.~J. Olver, A.~B. {Olde Daalhuis}, D.~W. Lozier, B.~I. Schneider, R.~F. Boisvert, C.~W. Clark, B.~R. Miller, B.~V. Saunders, H.~S. Cohl, and M.~A. McClain, eds.

\bibitem[DS05]{DiamondShurman}
Fred Diamond and Jerry Shurman.
\newblock {\em A First Course in Modular Forms}, volume 228 of {\em Graduate Texts in Mathematics}.
\newblock Springer New York, 2005.

\bibitem[Gro85]{Grosswald1985}
Emil Grosswald.
\newblock {\em Representations of Integers as Sums of Squares}.
\newblock Springer New York, 1985.

\bibitem[GS83]{gs}
Dorian Goldfeld and Peter Sarnak.
\newblock Sums of {K}loosterman sums.
\newblock {\em Invent. Math.}, 71(2):243--250, 1983.

\bibitem[HB78]{HeathBrown1978HybridBound}
D.~R. Heath-Brown.
\newblock Hybrid bounds for {D}irichlet {$L$}-functions.
\newblock {\em Invent. Math.}, 47:149–170, June 1978.

\bibitem[Hux85]{HuxleyKloostermania}
Martin~N. Huxley.
\newblock Introduction to {Kloostermania}.
\newblock In {\em Elementary and analytic theory of numbers}, 17, pages 217--306. Banach Center Publ., 1985.

\bibitem[HZ76]{HirzebruchZagier1976}
Friedrich Hirzebruch and Don Zagier.
\newblock Intersection numbers of curves on {H}ilbert modular surfaces and modular forms of {N}ebentypus.
\newblock {\em Invent. Math.}, 36(1):57–113, December 1976.

\bibitem[Kim03]{KimSarnak764}
Henry~H. Kim.
\newblock Functoriality for the exterior square of {$\mathrm{GL}_4$} and the symmetric fourth of {$\mathrm{GL}_2$}.
\newblock {\em J. Amer. Math. Soc.}, 16(1):139--183, 2003.
\newblock With appendix 1 by D. Ramakrishnan and appendix 2 by Kim and P. Sarnak.

\bibitem[Kno74]{knopp1974some}
Marvin~I Knopp.
\newblock Some new results on the eichler cohomology of automorphic forms.
\newblock {\em Bull. Amer. Math. Soc.}, 80(4):607--632, 1974.

\bibitem[KNS25]{Kulikov_Nazarov_Sodin_2025}
Aleksei Kulikov, Fedor Nazarov, and Mikhail Sodin.
\newblock Fourier uniqueness and non-uniqueness pairs.
\newblock {\em Journal of Mathematical Physics, Analysis, Geometry}, 21(1):84–130, Jan. 2025.

\bibitem[Maa37]{Maass1937theta}
Hans Maa{\upshape{\ss}}.
\newblock Konstruktion ganzer modulformen halbzahliger dimension mit $\vartheta$-multiplikatoren in einer und zwei variabein ({G}erman).
\newblock {\em Abh. Math. Sem. Univ. Hamburg}, 12:133--162, 1937.

\bibitem[Mum07]{Tata1}
David Mumford.
\newblock {\em Tata Lectures on Theta I}.
\newblock Modern Birkh\"auser Classics. Birkh\"auser Boston, MA, 2007.

\bibitem[MV06]{Montgomery_Vaughan_2006_Book}
Hugh~L. Montgomery and Robert~C. Vaughan.
\newblock {\em Multiplicative Number Theory I: Classical Theory}.
\newblock Cambridge Studies in Advanced Mathematics. Cambridge University Press, 2006.

\bibitem[Pro05]{Proskurin2005}
N.~V. Proskurin.
\newblock On the general {K}loosterman sums.
\newblock {\em J. Math. Sci.}, 129(3):3874--3889, 2005.

\bibitem[RS22a]{Ramos-Stoller}
Jo\~{a}o P.~G. Ramos and Martin Stoller.
\newblock Perturbed {F}ourier uniqueness and interpolation results in higher dimensions.
\newblock {\em Journal of Functional Analysis}, 282(12):109448, 2022.

\bibitem[RS22b]{Ramos-Sousa}
João P.~G. Ramos and Mateus Sousa.
\newblock Fourier uniqueness pairs of powers of integers.
\newblock {\em Journal of the European Mathematical Society}, 24(12):4327--4351, 2022.

\bibitem[RS23]{Ramos-Sousa-2}
Jo{\~a}o P.~G. Ramos and Mateus Sousa.
\newblock Perturbed interpolation formulae and applications.
\newblock {\em Analysis \& PDE}, 16(10):2327--2384, 2023.

\bibitem[RV19]{RadchenkoViazovska2019}
Danylo Radchenko and Maryna Viazovska.
\newblock Fourier interpolation on the real line.
\newblock {\em Publications math{\'e}matiques de l'IH{\'E}S}, 129(1):51--81, 2019.

\bibitem[Sar84]{sarnakAdditive}
Peter Sarnak.
\newblock Additive number theory and {M}aass forms.
\newblock In {\em Number theory ({N}ew {Y}ork, 1982)}, volume 1052 of {\em Lecture Notes in Math.}, pages 286--309. Springer, Berlin, 1984.

\bibitem[Sel65]{selberg}
Atle Selberg.
\newblock On the estimation of {F}ourier coefficients of modular forms.
\newblock In {\em Proc. {S}ympos. {P}ure {M}ath., {V}ol. {VIII}}, pages 1--15. Amer. Math. Soc., Providence, R.I., 1965.

\bibitem[SS76]{SerreStark}
Jean~P. Serre and Harold~M. Stark.
\newblock Modular forms of weight 1/2.
\newblock In {\em Modular Functions of One Variable VI, Proceedings International Conference, University of Bonn, Sonderforschungs-bereich Theoretische Mathematik}, Lecture Notes in Math., pages 27--68. Springer-Verlag Berlin Heidelberg New York, 1976.

\bibitem[ST09]{SarnakTsimerman09}
Peter Sarnak and Jacob Tsimerman.
\newblock {On Linnik and Selberg's conjecture about sums of Kloosterman sums}.
\newblock In Yuri Tschinkel and Yuri Zarhin, editors, {\em Algebra, Arithmetic, and Geometry: Volume II: In Honor of Yu. I. Manin}, pages 619--635, Boston, 2009. Birkh{\"a}user Boston.

\bibitem[Sto21]{StollerInterpolation}
Martin Stoller.
\newblock Fourier interpolation from spheres.
\newblock {\em Trans. Amer. Math. Soc.}, 374(11):8045–8079, 2021.

\bibitem[Sun24a]{QihangExactFormula}
Qihang Sun.
\newblock Exact formulae for ranks of partitions, 2024.
\newblock arXiv:2406.06294 [math.NT].

\bibitem[Sun24b]{QihangFirstAsympt}
Qihang Sun.
\newblock Uniform bounds for {K}loosterman sums of half-integral weight with applications.
\newblock {\em Forum Math.}, 37(1):75--109, 2024.

\bibitem[Sun25]{QihangSecondAsympt}
Qihang Sun.
\newblock Uniform bounds for {K}loosterman sums of half-integral weight, same-sign case.
\newblock {\em Journal of Number Theory}, 274:104--139, 2025.

\bibitem[Via17]{Viazovska}
Maryna~S. Viazovska.
\newblock The sphere packing problem in dimension 8.
\newblock {\em Ann. of Math. (2)}, 185(3):991--1015, 2017.

\bibitem[WP12]{WangPeiMFHalfBook}
Xueli Wang and Dingyi Pei.
\newblock {\em Modular Forms with Integral and Half-Integral Weights}.
\newblock Springer Berlin Heidelberg, 2012.

\bibitem[Zag75]{Zagier1975weight3/2MF}
Don Zagier.
\newblock Nombres de classes et formes modulaires de poids 3/2.
\newblock {\em C. R. Acad. Sci., Paris, S{\'e}r. A}, 281:883--886, 1975.

\end{thebibliography}
\end{document}